\DeclareFontFamily{U}{mathx}{\hyphenchar\font45}
\DeclareFontShape{U}{mathx}{m}{n}{
<5> <6> <7> <8> <9> <10>
<10.95> <12> <14.4> <17.28> <20.74> <24.88>
mathx10}{}
\DeclareSymbolFont{mathx}{U}{mathx}{m}{n}
\DeclareMathAccent{\widecheck}{0}{mathx}{"71}
\numberwithin{equation}{section}
\renewcommand{\O}{\mathbb{O}}
\def\Gaw{\Ga_w}
\def\sic{\wideparen{\si}}
\def\F{\mathbb{E}}
\def\Fb{\mathbb{F}}
\def\vkp{\varkappa}
\renewcommand{\c}{\cdot}
\newcommand{\unl}{\underline{L}}
\newcommand{\pr}{\partial}
\newcommand{\ub}{{\underline{u}}}
\newcommand{\unc}{{\underline{C}}}
\newcommand{\bb}{{\underline{\beta}}}
\newcommand{\bg}{\mathbf{g}}
\newcommand{\mo}{\mathcal{O}}
\newcommand{\chib}{\underline{\chi}}
\newcommand{\omb}{\underline{\omega}}
\def\etab{{\underline{\eta}}}
\def\ee{\mathcal{N}}
\def\mum{[\mu]}
\def\mumb{[\mub]}
\def\sfr{\mathfrak{s}}
\def\sk{\sfr}
\newcommand{\kk}{\mathcal{K}}
\newcommand{\mr}{\mathcal{R}}
\newcommand{\ur}{\underline{\mr}}
\newcommand{\M}{\mathcal{M}}
\newcommand{\D}{\mathbf{D}}
\newcommand{\uf}{\underline{F}}
\newcommand{\cuv}{{C_u^V}}
\newcommand{\ucuv}{{\unc_\ub^V}}
\newcommand{\dd}{{\mathfrak{d}}}
\newcommand{\Ric}{{\ric}}
\DeclareMathOperator{\sRic}{Ric}
\newcommand{\g}{\bg}
\newcommand{\R}{{\mathbf{R}}}
\def\mumc{\widecheck{\mum}}
\def\mumbc{\widecheck{\mumb}}
\def\hot{\widehat{\otimes}}
\def\II{\mathcal{I}}
\def\la{\lambda}
\def\rhoc{\wideparen{\rho}}
\def\fc{\widecheck{f}}
\def\MM{\mathcal{M}}
\def\Cb{\unc}
\def\om{\omega}
\def\ze{\zeta}
\def\Om{\Omega}
\def\aa{{\underline{\a}}}
\def\Si{\Sigma}
\def\si{\sigma}
\def\mub{{\underline{\mu}}}
\def\dk{\dd}
\def\dkb{\slashed{\dk}}
\def\ga{\gamma}
\def\Ga{\Gamma}
\def\xib{\underline{\xi}}
\def\a{\alpha}
\def\b{\beta}
\def\hch{\widehat{\chi}}
\def\hchb{\widehat{\chib}}
\def\trch{\tr\chi}
\def\trchb{\tr\chib}
\def\trchc{\widecheck{\trch}}
\def\trchbc{\widecheck{\trchb}}
\def\de{\delta}
\def\De{\Delta}
\def\nab{\nabla}
\def\ov{\overline}
\def\bbb{\underline{b}}
\def\Gab{\Ga_b}
\def\Gag{\Ga_g}
\def\ep{\varepsilon}
\def\les{\lesssim}
\def\RR{\mr}
\DeclareMathOperator{\curl}{curl}
\DeclareMathOperator{\grad}{grad}
\DeclareMathOperator{\tr}{tr}
\DeclareMathOperator{\sdiv}{div}
\def\bdiv{\mathbf{Div}}
\def\ric{\mathbf{Ric}}
\newtheorem{thm}{Theorem}[section]
\newtheorem{prop}[thm]{Proposition}
\newtheorem{lem}[thm]{Lemma}
\newtheorem{rk}[thm]{Remark}
\newtheorem{df}[thm]{Definition}
\title{Exterior stability of Minkowski spacetime with borderline decay}
\author{Dawei Shen\footnote{Email address: ds4350@columbia.edu \par\indent\hspace{0.256cm} Department of Mathematics, Columbia University, New York, NY, 10027.}}
\begin{document}
\maketitle
\begin{abstract}
\noindent\textbf{Abstract.}
In 1993, the global stability of Minkowski spacetime was proved in the celebrated work of Christodoulou and Klainerman. In 2003, Klainerman and Nicol\`o revisited Minkowski stability in the exterior of an outgoing null cone. In 2023, the author extended the results of Christodoulou-Klainerman to minimal decay assumptions. In this paper, we prove that the exterior stability of Minkowski holds with decay that is borderline compared to the minimal decay considered in 2023.

\medskip
\noindent\textbf{Keywords:}
Minkowski stability, double null foliation, borderline decay, $r^p$--weighted estimates.
\end{abstract}

\bigskip
\begingroup
\small
\begin{center}
{\large Stabilit\'e ext\'erieure de l'espace-temps de Minkowski avec une d\'ecroissance limite}
\end{center}

\quotation
\noindent\textbf{R\'esum\'e.}
En 1993, la stabilit\'e globale de l'espace-temps de Minkowski a \'et\'e prouv\'ee dans les c\'el\`ebres travaux de Christodoulou et Klainerman. En 2003, Klainerman et Nicol\`o ont revisit\'e la stabilit\'e de Minkowski \`a l'ext\'erieur d'un c\^one de lumi\`ere sortant. En 2023, l'auteur a \'etendu les r\'esultats de Christodoulou et Klainerman aux hypoth\`eses de d\'ecroissance minimale. Dans cet article, nous prouvons que la stabilit\'e ext\'erieure de Minkowski tient avec une d\'ecroissance qui est limite par rapport \`a la d\'ecroissance minimale consid\'er\'ee par l'auteur en 2023.

\medskip
\noindent\textbf{Mots-cl\'es:}
stabilit\'e de Minkowski, feuilletage double-nul, d\'ecroissance limite, \'estimations de $r^p$ \`a poids.
\endquotation
\endgroup
\section{Introduction}
\subsection{Einstein vacuum equations and the Cauchy problem}
A Lorentzian $4$--manifold $(\MM,\g)$ is called a vacuum spacetime if it solves the Einstein vacuum equations:
\begin{equation}\label{EVE}
    \Ric(\g)=0\quad\;\; \mbox{ in }\;\MM,
\end{equation}
where $\Ric$ denotes the Ricci tensor of the Lorentzian metric $\g$. The Einstein vacuum
equations are invariant under diffeomorphisms, and therefore one considers equivalence
classes of solutions. Expressed in general coordinates, \eqref{EVE} is a non-linear geometric coupled system of partial differential equations of order 2 for $\g$. In suitable coordinates, for example so-called wave coordinates, it can be shown that \eqref{EVE} is hyperbolic and hence admits an initial value formulation.\\ \\
The corresponding initial data for the Einstein vacuum equations is given by specifying a
triplet $(\Si,g,k)$ where $(\Si,g)$ is a Riemannian $3$--manifold and $k$ is the traceless symmetric $2$--tensor on $\Si$ satisfying the constraint equations:
\begin{align}
\begin{split}\label{constraintk}
R&=|k|^2-(\tr k)^2,\\
D^jk_{ij}&=D_i(\tr k),
\end{split}
\end{align}
where $R$ denotes the scalar curvature of $g$, $D$ denotes the Levi-Civita connection of $g$ and
\begin{align*}
    |k|^2:=g^{ad}g^{bc}k_{ab}k_{cd},\qquad\quad\tr k:=g^{ij}k_{ij}.
\end{align*}
In the future development $(\MM,\g)$ of such initial data $(\Si,g,k)$, $\Si\subset \MM$ is a spacelike hypersurface with an induced metric $g$ and a second fundamental form $k$.\\ \\
The seminal well-posedness results for the Cauchy problem obtained in \cite{cb,cbg} ensure that for any smooth Cauchy data, there exists a unique smooth maximal globally hyperbolic development $(\MM,\g)$ solution of Einstein equations \eqref{EVE} such that $\Si\subset \MM$ and $g$, $k$ are respectively the first and second fundamental forms of $\Si$ in $\MM$. \\ \\
The prime example of a vacuum spacetime is Minkowski spacetime:
\begin{equation*}
    \MM=\mathbb{R}^4,\qquad \g=-dt^2+(dx^1)^2 +(dx^2)^2+(dx^3)^2,
\end{equation*}
for which Cauchy data are given by
\begin{equation*}
    \Si=\mathbb{R}^3,\qquad g=(dx^1)^2+(dx^2)^2+(dx^3)^2,\qquad k=0.
\end{equation*}
In the present work, we consider the problem of the stability of Minkowski spacetime and start with reviewing the literature on this problem.
\subsection{Previous works of the stability of Minkowski spacetime}
In 1993, Christodoulou and Klainerman \cite{Ch-Kl} proved the global stability of Minkowski for the Einstein-vacuum equations, a milestone in the domain of mathematical general relativity. In 2003, Klainerman and Nicol\`o \cite{Kl-Ni} proved the Minkowski stability in the exterior of an outgoing cone using the double null foliation. Moreover, Klainerman and Nicol\`o \cite{knpeeling} showed that under stronger asymptotic decay and regularity properties than those used in \cite{Ch-Kl,Kl-Ni}, asymptotically flat initial data sets lead to solutions of the Einstein vacuum equations which have strong peeling properties. Given that the goal of this paper is to extend the result of \cite{Kl-Ni}, we will state the results of \cite{Ch-Kl,Kl-Ni} in Section \ref{ssec6.2}.\\ \\
We now mention other proofs of Minkowski stability. In 2007, Bieri \cite{Bieri} gave a new proof of global stability of Minkowski requiring one less derivative and fewer vectorfields compared to \cite{Ch-Kl}. Lindblad and Rodnianski \cite{lr1,lr2} gave a new proof of the stability of the Minkowski spacetime using \emph{wave-coordinates} and showing that the Einstein equations verify the so called \emph{weak null structure} in that gauge. Huneau \cite{huneau} proved the nonlinear stability of Minkowski spacetime with a translation Killing field using generalized wave-coordinates. Using the framework of Melrose’s b-analysis, Hintz and Vasy \cite{hv} reproved the stability of Minkowski space. Graf \cite{graf} proved the global nonlinear stability of Minkowski space in the context of the spacelike-characteristic Cauchy problem for Einstein vacuum equations, which together with \cite{Kl-Ni} allows reobtaining \cite{Ch-Kl}. Under the framework of \cite{Kl-Ni} and using $r^p$--weighted estimates of Dafermos and Rodnianski \cite{Da-Ro}, the author \cite{Shen22} reproved the Minkowski stability in exterior regions. More recently, Hintz \cite{Hintz} reproved the Minkowski stability in exterior regions by using the framework of \cite{hv}. Using $r^p$--weighted estimates, the author \cite{Shen23} extends the results of \cite{Bieri} to minimal decay assumption.\\ \\
There are also stability results concerning Einstein's equations coupled with non trivial matter fields, see for example the introduction of \cite{Shen22,Shen23}.
\subsection{Minkowski Stability in \texorpdfstring{\cite{Ch-Kl,Kl-Ni}}{}}\label{ssec6.2}
We recall in this section the results in \cite{Ch-Kl,Kl-Ni}. First, we recall the definition of a \emph{maximal hypersurface}, which plays an important role in the statements of the main theorems in \cite{Ch-Kl,Kl-Ni}.
\begin{df}\label{def6.1}
An initial data $(\Si,g,k)$ is posed on a maximal hypersurface if it satisfies 
\begin{equation}
    \tr k=0.
\end{equation}
In this case, we say that $(\Si,g,k)$ is a maximal initial data set, and the constraint equations \eqref{constraintk} reduce to
\begin{equation}
    R=|k|^2,\qquad \sdiv k=0,\qquad \tr k=0.
\end{equation}
\end{df}
We introduce the notion of \emph{$(s,q)$--asymptotically flat initial data}.
\begin{df}\label{defasym}
Given $s\geq 1$ and $q\geq 0$, we say that a data set $(\Sigma_0,g,k)$ is $(s,q)$--asymptotically flat if there exists a coordinate system $(x^1,x^2,x^3)$ defined outside a sufficiently large compact set such that:
\begin{itemize}
\item In the case of $s\geq 3$
\begin{align}
    \begin{split}
    g_{ij}&=\left(1-\frac{2M}{r}\right)^{-1}dr^2+r^2 d\si_{\mathbb{S}^2}+o_{q+1}(r^{-\frac{s-1}{2}}),\qquad k_{ij}=o_q(r^{-\frac{s+1}{2}}).
    \end{split}
\end{align}
\item In the case of $1\leq s<3$
\begin{align}
    \begin{split}
        g_{ij}&=\de_{ij}+o_{q+1}(r^{-\frac{s-1}{2}}),\qquad k_{ij}=o_q(r^{-\frac{s+1}{2}}).
    \end{split}
\end{align}
\end{itemize}
Here, the notation $f=o_l(r^{-m})$ means that $\pr^\a f=o(r^{-m-|\a|})$ holds for all $|\a|\leq l$.
\end{df}
We also introduce the following functional:
\begin{align*}
    J_0(\Si_0,g,k):=\sup_{\Si_0}\left((d_0^2+1)^3 |\sRic|^2 \right)+\int_{\Si_0}\sum_{l=0}^3(d_0^2+1)^{l+1}|D^l k|^2+\int_{\Si_0}\sum_{l=0}^1 (d_0^2+1)^{l+3}|D^l B|^2,
\end{align*}
where $d_0$ is the geodesic distance from a fixed point $O\in\Si_0$, and $B_{ij}:=(\curl\widehat{{R}})_{ij}$ is the \emph{Bach tensor}, $\widehat{{R}}$ is the traceless part of $\sRic$. Now, we can state the main theorems of \cite{Ch-Kl} and \cite{Kl-Ni}.
\begin{thm}[Global stability of Minkowski space \cite{Ch-Kl}]\label{ckmain}
There exists an $\ep_0>0$ sufficiently small such that if $J_0(\Si_0,g,k)\leq\ep_0^2$, then the initial data set $(\Si_0,g,k)$, $(4,3)$--asymptotically flat (in the sense of Definition \ref{defasym}) and maximal, has a unique, globally hyperbolic, smooth, geodesically complete solution. This development is globally asymptotically flat, i.e. the Riemann curvature tensor tends to zero along any causal or spacelike geodesic. Moreover, there exists a global maximal time function $t$ and an optical function $u$, which satisfies $\g^{\a\b}\pr_\a u\pr_\b u=0$, defined everywhere in an external region.
\end{thm}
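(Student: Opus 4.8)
The plan is to establish the conclusion by a single large continuity (bootstrap) argument run on the maximal globally hyperbolic development $(\MM,\g)$ of $(\Si_0,g,k)$ furnished by \cite{cb,cbg}: one fixes a canonical gauge, assumes a priori smallness of all the associated geometric quantities on a sub-development, shows these bounds are strictly improved, and concludes that the good region is all of $\MM$, from which global existence, geodesic completeness and the decay of the curvature follow. The gauge has two pieces. Globally one constructs a \emph{maximal time function} $t$ by evolving the data under the condition $\tr k=0$ of Definition \ref{def6.1}, which converts \eqref{EVE} into an evolution for $(g,k)$ on the leaves $\Si_t$ coupled to the elliptic lapse equation $\De\phi=|k|^2\phi$ on each $\Si_t$; the Schwarzschild-type leading asymptotics of $g$, carrying the ADM mass $M$ of Definition \ref{defasym}, are propagated. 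In the exterior region one constructs an \emph{optical function} $u$ solving the eikonal equation $\g^{\a\b}\pr_\a u\,\pr_\b u=0$ with data on $\Si_0$, whose level sets $C_u$ are outgoing null cones; the two-surfaces $S_{t,u}:=\Si_t\cap C_u$ are topological spheres. To them one attaches a null frame $(e_3,e_4,e_A)$, with $e_4$ tangent to $C_u$ and $e_3$ the conjugate null direction, decomposes the curvature $\R$ (which equals the Weyl tensor since $\Ric(\g)=0$) into its null components $\a,\b,\rho,\si,\bb,\aa$, and decomposes the connection into the Ricci coefficients $\hch,\hchb,\trch,\trchb,\ze,\om,\omb$, together with the lapse $\phi$ and the second fundamental form $k$ of $\Si_t$.

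The curvature is controlled by energy estimates built from the Bel--Robinson tensor $Q[\R]$, which in vacuum is totally symmetric, trace-free and divergence-free. Contracting $Q$ with the vectorfields adapted to the gauge --- the future unit normal $T$ of $\Si_t$, the (approximate) rotations $O$, the scaling $S$, and the conformal Morawetz field $K\approx \ub^2\,e_4+u^2\,e_3$ --- and integrating $\bdiv\big(Q(X,Y,Z,\cdot)\big)$ over the region bounded by two leaves $\Si_{t_1},\Si_{t_2}$ and two cones $C_{u_1},C_{u_2}$ yields weighted $L^2$ identities, with hypersurface terms on the $\Si_t$ and flux terms on the $C_u$. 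Since $T,O,S,K$ are only \emph{approximate} Killing and conformal Killing fields, their deformation tensors are expressed through the Ricci coefficients and are quadratically small; iterating this identity on $\R$ and on its Lie derivatives $\mathcal{L}_O\R,\mathcal{L}_S\R,\ldots$ up to the required order, one controls a master curvature norm $\mathcal{Q}$, weighted in powers of $r$ and $t$ adapted to the two foliations, by its initial value $\mathcal{Q}_0$ plus nonlinear error integrals. The initial value is bounded, $\mathcal{Q}_0\les J_0(\Si_0,g,k)\le\ep_0^2$, via the constraint equations \eqref{constraintk} and elliptic estimates on $\Si_0$ --- this is precisely what the functional $J_0$, including its Bach-tensor term, is designed to measure. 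Alternatively, and in the spirit of \cite{Shen22,Shen23}, the conformal-$K$ piece may be replaced by the $r^p$--weighted estimates of Dafermos--Rodnianski \cite{Da-Ro} for the spin-$2$ Bianchi system, integrating the resulting hierarchy over the admissible range of $p$ to read off the decay rates.

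Next one recovers the geometry from the curvature. The Ricci coefficients satisfy null transport equations (in $\nab_4$ and $\nab_3$) and Hodge systems on the spheres $S_{t,u}$ whose sources are the null curvature components; together with the Gauss equation for the Gauss curvature $\K$ of $S_{t,u}$, the Codazzi equations for $k$, and the elliptic lapse equation on $\Si_t$, integration along the null and spacelike directions plus elliptic estimates on $S_{t,u}$ yield the full pointwise decay hierarchy for $\{\hch,\hchb,\trch-\tfrac{2}{r},\trchb+\tfrac{2}{r},\ze,\om,\omb\}$, for $k$ and $\phi$, and for $\{\a,\b,\rho-\bar\rho,\si,\bb,\aa\}$. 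One simultaneously controls the intrinsic geometry of the spheres --- area radius and isoperimetric constant --- which is what makes the Sobolev embeddings $H^2(S_{t,u})\hookrightarrow L^\infty(S_{t,u})$ used throughout uniform. The bookkeeping of weights here must be exact, so that every transport integration closes and the slowest-decaying quantities lose no power of $r$ or $t$.

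Finally one closes the bootstrap: with the pointwise bounds in hand, the error integrals in the $\mathcal{Q}$-estimate turn out to be cubic, schematically $\mathcal{Q}\les\ep_0^2+\mathcal{Q}^{3/2}$, and the transport and elliptic estimates for the geometry likewise gain a smallness factor, so for $\ep_0$ small all a priori assumptions are strictly improved. Hence the subset of $\MM$ on which they hold is open and closed, so it is all of $\MM$: the foliations extend to all values of $t$ and $u$, no caustics form on the outgoing cones (one checks $\trch>0$ everywhere), the development is geodesically complete, and $\R$ decays to zero along every causal and spacelike geodesic, comparing the growth of $t$ and $u$ along such a geodesic with the decay hierarchy. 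The maximal time function and the optical function asserted in the statement are the $t$ and $u$ just constructed. I expect the main obstacle to be the global-in-time closure: controlling the top-order weighted energies and the slowly-decaying curvature and connection components with no loss of $r$- or $t$-powers, which forces one to exploit the precise null structure of the Bianchi and null-structure equations (the absence of the most dangerous quadratic terms) and to keep the optical function $u$ regular --- equivalently the spheres $S_{t,u}$ uniformly close to round --- on arbitrarily large cones.
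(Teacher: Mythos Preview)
The paper does not contain a proof of this theorem: Theorem~\ref{ckmain} is simply a statement of the main result of \cite{Ch-Kl}, recalled here as background in Section~\ref{ssec6.2}. There is therefore no ``paper's own proof'' to compare your proposal against.

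That said, your sketch is a faithful high-level outline of the Christodoulou--Klainerman argument in \cite{Ch-Kl}: the maximal foliation coupled to an exterior optical function, the Bel--Robinson energies contracted with the approximate Killing and conformal Killing fields $T,O,S,K$, the recovery of the Ricci coefficients from transport and Hodge systems on the spheres $S_{t,u}$, and the bootstrap closure via the cubic structure of the errors. Your parenthetical remark about replacing the $K$-multiplier by the $r^p$--weighted estimates of \cite{Da-Ro} is closer in spirit to what the present paper does for its own main theorem (Theorem~\ref{maintheorem}), which works in a pure double null gauge on the exterior region only, with no maximal foliation and no rotation vectorfields; but that is a different statement with a different proof, not a proof of Theorem~\ref{ckmain}.
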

\begin{thm}[Exterior stability of Minkowski \cite{Kl-Ni}]\label{knmain}
Consider an initial data set $(\Si_0,g,k)$, $(4,3)$--asymptotically flat and maximal, and assume $J_0(\Si_0,g,k)$ is bounded. Then, given a sufficiently large compact set $K\subset\Si_0$ such that $\Sigma_0 \setminus K$ is diffeomorphic to $\mathbb{R}^3\setminus\overline{B}_1$, and under additional smallness assumptions, there exists a unique future development $(\M,\g)$ of $\Si_0\setminus K$ with the following properties:
\begin{itemize}
    \item $(\M,\bg)$ can be foliated by a double null foliation $(C_u,\Cb_\ub)$ whose outgoing leaves $C_u$ are complete.
    \item We have detailed control of all the quantities associated with the double null foliations of the spacetime, see Theorem 3.7.1 in \cite{Kl-Ni}.
\end{itemize}
\end{thm}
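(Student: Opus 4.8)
The plan is to realize the exterior development $(\M,\g)$ as the solution of a characteristic initial value problem and to control it through a continuity/bootstrap argument that couples $r^p$--weighted energy estimates for the curvature with null transport estimates for the connection coefficients, in the spirit of \cite{Kl-Ni,Shen22}. \textbf{Step 1 (reduction to characteristic data).} Fix a large sphere $S_0\subset\Si_0\setminus K$ and let $C_0$ be the outgoing null cone it generates. Using the maximal spacelike data $(g,k)$ on $\Si_0\setminus K$ together with the constraint equations \eqref{constraintk} and the null structure and Gauss equations restricted to $C_0$, one constructs the full set of Ricci coefficients $(\trch,\hch,\trchb,\hchb,\eta,\etab,\om,\omb,\ze)$ and null curvature components $(\a,\b,\rho,\si,\bb)$ on $C_0$. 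The hypotheses — $(4,3)$--asymptotic flatness, boundedness of $J_0$, and the extra smallness assumption — translate into smallness of suitably $r$--weighted norms of this characteristic data, with the mass parameter $M$ entering only through well-behaved lower-order terms.

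\textbf{Step 2 (bootstrap setup and curvature estimates).} Foliate a candidate spacetime by two optical functions: $u$, whose level sets are the outgoing cones $C_u$, and $\ub$, with null frame $(e_3,e_4,e_A)$ and null lapse $\Om$. Make a family of bootstrap assumptions on the exterior region bounded by $C_0$, a far incoming cone, and $C_u$: $r^p$--weighted $L^2$ bounds, up to a fixed number of angular and $\nab_4$ derivatives, on $\a,\b,\rho,\si,\bb$ and on the renormalized coefficients $\trchc,\hch,\trchbc,\hchb,\eta,\etab,\omc,\ombc,\ze$, together with control of the Gauss curvature $\K$ of the leaves $S(u,\ub)$. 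Then apply the $r^p$--weighted vectorfield method of Dafermos--Rodnianski \cite{Da-Ro} to the Bianchi system: commuting with angular derivatives, with $\nab_4$, and with the rotation and scaling multipliers and integrating over the characteristic region, derive flux bounds of the schematic form $\int_{C_u}r^p(|\a|^2+|\b|^2+\cdots)\les J_0 + (\text{data on }C_0) + (\text{cubic errors})$, the cubic errors being absorbed using the bootstrap bounds and the largeness of $r$, and the weight hierarchy being chosen so that the bulk terms are favorably signed or strictly lower order.

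\textbf{Step 3 (connection estimates and closing).} Integrate the null structure equations — along $e_4$ for the ``$C_u$--good'' coefficients and along $e_3$ for the rest — starting from $C_0$, feeding in the curvature bounds of Step 2, and couple this with elliptic Hodge estimates on the spheres $S(u,\ub)$ for the div--curl systems satisfied by $\hch,\hchb,\eta,\ze,\ldots$, and with the Gauss equation controlling $\K$ (hence the isoperimetric and Sobolev constants of the leaves). This yields estimates on all Ricci coefficients strictly better than the bootstrap assumptions once $\ep_0$ is small, so that the maximal such region is the entire exterior; uniqueness follows from the well-posedness of the characteristic Cauchy problem. A final monotonicity argument — $r$ increasing along $e_4$, and $\Om\to 1$, $\trch\sim 2/r$ towards null infinity — shows that each outgoing cone $C_u$ is future-complete and that $u$ ranges over a semi-infinite interval, giving the double null foliation $\{C_u\}$, $\{\Cb_\ub\}$ claimed in the theorem.

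\textbf{Main obstacle.} The delicate point is running the top-order curvature estimates with borderline $r$--weights: the $p$--hierarchy and the choice of the renormalized (checked) quantities must be arranged so that every error integral produced by commuting with the multipliers, or by the non-constant lapse $\Om$, either carries an extra factor $r^{-1}$ or comes with the right sign, while still retaining enough decay to conclude completeness of the cones $C_u$. Equally delicate is controlling the coupling of $\trchb$ and $\omb$ to the ``bad'' curvature component $\aa$ — which is never estimated directly — through the relevant Bianchi pair and structure equations, without degrading the borderline decay; this is precisely where the present argument must refine the one of \cite{Shen22,Shen23}.
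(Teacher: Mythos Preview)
The statement you were asked to prove, Theorem~\ref{knmain}, is not proved in this paper at all: it is quoted verbatim as a prior result of Klainerman--Nicol\`o \cite{Kl-Ni}, with the detailed estimates deferred to Theorem~3.7.1 there. The present paper states it only as background before formulating its own main result (Theorem~\ref{maintheorem}), which concerns the genuinely different $(1,3)$--asymptotically flat case. So there is no ``paper's own proof'' to compare your proposal against.

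Your sketch is a reasonable high-level outline of the Klainerman--Nicol\`o strategy (double null foliation, bootstrap, Bianchi energy estimates, transport plus Hodge for the connection), but your ``Main obstacle'' paragraph reveals a conflation of the two theorems. You speak of ``borderline $r$--weights'', of the need to ``refine the one of \cite{Shen22,Shen23}'', and of the difficulty of handling $\aa$ without degrading ``borderline decay''. None of this is relevant to Theorem~\ref{knmain}: in the $(4,3)$ setting the weights are comfortably subcritical, $\aa$ is controlled directly (indeed peeling holds under slightly stronger assumptions \cite{knpeeling}), and there is no need to refine anything from \cite{Shen22,Shen23}, which postdate \cite{Kl-Ni}. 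Also, your Step~1 reduces to characteristic data on an outgoing cone $C_0$; the actual argument in \cite{Kl-Ni} starts from spacelike data on $\Si_0\setminus K$ and evolves with a double null foliation emanating from that hypersurface, not from a single cone. If you intended this as a proof of Theorem~\ref{maintheorem} rather than Theorem~\ref{knmain}, then the obstacle you identify is closer to the point, but the proposal as written is attached to the wrong statement.
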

\begin{rk}
Various choices of the parameter $s$ in Definition \ref{defasym} have been made in the literature. In particular:
\begin{itemize}
    \item $s=4$ is used for example in \cite{Ch-Kl,Kl-Ni,graf}.
    \item $s=3+\de$ is used for example in \cite{lr2,hv,Hintz}.
    \item $s=3-\de$ is used in \cite{ionescu} for the Einstein-Klein-Gordon system.
    \item $s\in(2,3]$ is used in \cite{leflochMa} for the Einstein-Klein-Gordon system.
    \item $s=2$ is used in \cite{Bieri}.
    \item $s\in(1,2]$ is used in \cite{Shen23}.
\end{itemize}
Here, $\de$ denotes a constant satisfying $0<\de\ll 1$. Notice that this list focuses on low values of $s$. For large values of $s$, note, for example, \cite{knpeeling} valid for $s>7$ and \cite{Shen22} valid for $s>3$.
\end{rk}
The goal of this paper is to extend the results of \cite{Kl-Ni,Shen22} to $s=1$ which is borderline compared to the range considered in \cite{Shen23}. More precisely, we prove the exterior stability of Minkowski spacetime for $(1,3)$--asymptotically flat initial data.
\subsection{Rough version of the main theorem}
In this section, we state a simple version of our main theorem. For an explicit statement, see Theorem \ref{maintheorem}.
\begin{thm}[Main theorem (first version)]
Let $(\Si_0,g,k)$ be an initial data set which is $(1,3)$--asymptotically flat in the sense of Definition \ref{defasym}. Let $K\subset\Si_0$ be a sufficiently large compact set such that $\Si_0\setminus K$ is diffeomorphic to $\mathbb{R}^3\setminus\ov{B}_1$. Assume that we have a smallness condition on $\Si_0\setminus K$. Then, there exists a unique future development $(\M,\g)$ of $\Si_0\setminus K$ with the following properties:
\begin{itemize}
\item $(\M,\g)$ can be foliated by a double null foliation $(C_u,\unc_\ub)$ whose outgoing leaves $C_u$ are complete for all $u\leq u_0$.
\item We have detailed control of all the quantities associated with the double null foliations of spacetime; see Theorem \ref{maintheorem}.
\end{itemize}
\end{thm}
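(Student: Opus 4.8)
The plan is to realize $(\M,\g)$ as a double--null--foliated development of $\Si_0\setminus K$ and to control it by a bootstrap (continuity) argument, combining the framework of \cite{Kl-Ni,Shen22,Shen23} with the $r^p$--weighted energy estimates of \cite{Da-Ro}. First, from the $(1,3)$--asymptotically flat data one reads off the characteristic data: using the Gauss--Codazzi relations together with the null structure (transport) equations, the pair $(g,k)$ on $\Si_0\setminus K$ determines the initial values of all Ricci coefficients $(\hch,\trchc,\hchb,\trchbc,\eta,\etab,\omc,\ombc)$ and curvature components $(\a,\b,\rho,\si,\bb,\aa)$ on the initial outgoing cone $C_{u_0}$ through $\partial K$ and on its spheres. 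With $s=1$ the asymptotics $g=\de+o_4(1)$, $k=o_3(r^{-1})$, together with the smallness hypothesis, furnish precisely the weighted smallness of these seed quantities needed below. One then fixes the gauge by choosing the two optical functions $u$, $\ub$ so that $C_{u_0}$ is the cone through $\partial K$, $\ub$ is an incoming optical function, and the spheres $S_{u,\ub}$ carry a foliation normalized so that $\trchbc$ and the mass aspect are controlled and the elliptic estimates on $S_{u,\ub}$ are uniform (as in the canonical foliation of \cite{Kl-Ni}).

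Next, on the bootstrap region $\{u\le u_0\}$ I would posit bootstrap assumptions for weighted sup-- and flux--norms of all quantities, organized by the decay predicted by the linearized problem and split into a ``good'' class $\Gag$ and a ``borderline'' class $\Gab$ (the latter containing $\trchbc$, $\hchb$, $\ombc$, the mass aspect, and the slowly decaying curvature $\bb,\aa$). The bootstrap is improved in two interacting steps. For the curvature, one runs the $r^p$--weighted energy hierarchy of \cite{Da-Ro} adapted to the null Bianchi system --- estimating, pair by pair, the weighted fluxes of $(\a,\b)$, $(\b,\rho,\si)$, $(\rho,\si,\bb)$ and $(\bb,\aa)$ through the cones $C_u$ and $\Cb_\ub$ --- for $p$ ranging over the admissible window dictated by $s=1$; this yields the weighted curvature norms $\RR$, $\RRb$ modulo error integrals that are quadratic in $(\Gamma,\text{curvature})$. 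For the Ricci coefficients, one integrates the null structure equations along the $L$-- and $\unl$--directions and closes the Hodge--elliptic systems on $S_{u,\ub}$, converting the curvature bounds into improved bounds for every connection coefficient. A Gr\"onwall argument in $u$ then recovers all bootstrap norms with strictly smaller constants, and the continuity argument closes the solution on $\{u\le u_0\}$.

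The point where the borderline exponent $s=1$ genuinely bites is that the naive execution of the scheme above loses a power of $\log r$ at the top of the $r^p$ hierarchy: since $k=o(r^{-1})$, the mass--aspect-- and $\rho$--type quantities, and the shear $\hch$, decay only at the endpoint rate. I expect the technical heart of the paper to be the removal of this logarithmic loss, via (i) working throughout with the renormalized ``checked'' quantities $\trchc$, $\trchbc$, $\rhoc$, $\bc$, $\bbc$, whose evolution equations have improved structure, so that the badly decaying $\ell=0$ spherical modes are peeled off and propagated by separate ODEs along the cones rather than inside the weighted fluxes; (ii) exploiting the sphere foliation so that the borderline elliptic estimates carry no $r$--growth and the top--order $\log$--terms cancel; and (iii) a careful bookkeeping showing that every genuinely borderline nonlinear interaction appears only inside a null form or a total $C_u$--derivative, and therefore gains exactly the missing power of $r$. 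Checking that all error integrals in the $r^p$ identities are convergent, uniformly in $p$ up to the endpoint, is the main obstacle; everything else is a lengthy but essentially routine adaptation of \cite{Kl-Ni,Shen22,Shen23}.

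Finally, completeness of the outgoing cones $C_u$ for every $u\le u_0$ follows once the bootstrap closes: the uniform estimates show that along each generator of $C_u$ the affine parameter (equivalently $r$) extends to $+\infty$ with $\trch>0$ and uniformly bounded geometry, so no conjugate point or trapped surface forms. Uniqueness of the development in the future domain of dependence of $\Si_0\setminus K$ follows from the geometric uniqueness of the maximal globally hyperbolic development \cite{cb,cbg} together with the uniqueness of the normalized double null foliation. Assembling the closed estimates gives the detailed control of all double--null quantities asserted in Theorem~\ref{maintheorem}.
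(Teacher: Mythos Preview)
Your overall strategy---bootstrap/continuity argument, $r^p$--weighted energy estimates for the Bianchi pairs $(\a,\b)$, $(\b,(\rhoc,-\sic))$, $((\rhoc,\sic),\bb)$, $(\bb,\aa)$, followed by transport and Hodge--elliptic estimates for the connection coefficients, and completeness from $\trch>0$---matches the paper exactly. But several of your guesses about the technical heart of the borderline $s=1$ case are off and would send you in the wrong direction. First, the paper works in a geodesic gauge ($\D_L L=\D_{\unl}\unl=0$, hence $\omb=0$, $\xi=\xib=0$) with the data norm $\II_{(0)}$ read directly on $\Si_0\setminus K$; there is no canonical/last--slice foliation. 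Second, and more importantly, the borderline closure is \emph{not} achieved by null forms, total $C_u$--derivatives, or cancellation of logarithms: in fact the lapse is allowed to grow, $|\log(2\Om)|\les\ep_0\log(3r/|u|)$. Instead the paper introduces two small auxiliary parameters $0<\de_0\ll\de\ll 1$ and a \emph{three}--tier hierarchy $\Gag$ (best), $\Gab=\{\eta,\ze,r^{-1}\log(2\Om)\}$, $\Gaw=\{\hchb\}$ (worst), each carrying its own fractional $r^{-\de_0}|u|^{\de_0}$-- or $r^{-\de}|u|^{\de}$--weight; the relevant renormalized quantities are $\rhoc,\sic,\mu,\mub,\mum,\mumb,\vkp$ (there is no $\bc$ or $\bbc$). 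All nonlinear error integrals in the $r^p$ identities are then handled by a single bookkeeping lemma (Theorem~\ref{wonderfulrp} here, from \cite{Shen23}) showing that every product of two tiers fits into the available weight budget with a strict inequality---no structural cancellation is invoked. Your $\ell=0$ projection idea appears only marginally, in propagating the average--free mass aspects $\mumc,\mumbc$; it is not the main mechanism.
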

\subsection{Structure of the paper}
\begin{itemize}
    \item In Section \ref{sec7}, we recall the fundamental notions and the basic equations.
    \item In Section \ref{sec8}, we present the main theorem. We then state intermediate results, and prove the main theorem. The rest of the paper focuses on the proof of these intermediary results.
    \item In Section \ref{secboot}, we make bootstrap assumptions and prove first consequences. These consequences will be used frequently in the rest of the paper.
    \item In Section \ref{sec9}, we apply $r^p$--weighted estimates to Bianchi equations to control the flux of curvature.
    \item In Section \ref{sec10}, we estimate the $L^p(S)$--norms of curvature components and Ricci coefficients using the null structure equations and Bianchi equations.
\end{itemize}
\subsection{Acknowledgements} The author would like to thank Sergiu Klainerman and J\'er\'emie Szeftel for their support, discussions, and encouragement.
\section{Preliminaries}\label{sec7}
\subsection{Geometry set-up}\label{doublenullfoliation}
Let $\Si_0$ be a spacelike hypersurface. Let $K\subset\Si_0$ be a compact subset such that $\Si_0\setminus K$ is diffeomorphic to $\mathbb{R}^3 \setminus \ov{B_1}$ where $\ov{B_1}$ is the unit closed ball in $\mathbb{R}^3$. We fix a radial foliation on the initial hypersurface $\Sigma_0 \setminus K$ by the level sets of a scalar function $w$. The leaves are denoted by
\begin{equation}
    S_{(0)}(w_1)=\{p\in\Si_0/\, w(p)=w_1\},
\end{equation}
where $w_1\in \mathbb{R}$. We assume that
\begin{equation}
    \pr K=\{p\in\Si_0/\,w(p)=w_0\},\qquad K=\{p\in\Si_0/\, w(p)\leq w_0\},
\end{equation}
where $w_0$ is the area radius of $\pr K$ defined by
\begin{equation}\label{defw0}
    w_0:=\sqrt{\frac{|\pr K|}{4\pi}}.
\end{equation}
The \emph{double null foliation} was first introduced by Klainerman and Nicol\`o in \cite{Kl-Ni} in order to treat the exterior stability of Minkowski. Proceeding as in \cite{Kl-Ni}, we now construct a double null foliation in the future of $\Si_0\setminus K$. For this, we denote $T$ the normal vectorfield of $\Si_0$ oriented towards the future and $N$ the unit vectorfield tangent to $\Si_0$, oriented towards infinity and orthogonal to the leaves $S_0(w)$. We define two null vectors on $\Sigma_0\setminus K$ by
\begin{equation}
    L:=N(w)(T+N),\qquad\quad\unl:=N(w)(T-N).\label{tntn}
\end{equation}
We extend the definition of $L$ and $\unl$ to the future of $\Si_0\setminus K$ by the geodesic equations:
\begin{equation}
    \D_{L}L=0,\qquad \D_{\unl} \unl=0.\label{geo}
\end{equation}
We define a positive function $\Omega$ by the formula
\begin{equation}\label{deflapse}
    \bg(L,\unl)=-\frac{1}{2\Omega^2},
\end{equation}
where $\Om$ is called the \emph{null lapse function}. We then define two optical functions $u$ and $\ub$ in the future of $\Si_0\setminus K$ by the \emph{Eikonal equations}
\begin{equation*}
    L=-\grad u,\qquad\quad \unl=-\grad\ub,
\end{equation*}
and the initial conditions
\begin{equation*}
    u\big|_{\Sigma_0\setminus K}=-w,\qquad\quad\ub\big|_{\Sigma_0\setminus K}=w.
\end{equation*}
We then define the normalized null pair $(e_3,e_4)$ by
\begin{equation}\label{choiceLL}
   e_3=2\unl,\qquad\quad e_4=2\Om^2 L.
\end{equation}
\begin{rk}
    A seemingly more natural null frame to use would be
    \begin{equation}\label{usualchoice}
        e_3=2\Om\unl,\qquad\quad e_4=2\Om L,
    \end{equation}
    see for example \cite{Kl-Ni,Shen22}. The null frame introduced in \eqref{choiceLL} is similar to \cite{Taylor}. The reason for the choice here is related to the fact that $\omb=0$ in this frame, see \eqref{6.6}.
\end{rk}
The spacetime in the future of $\Si_0\setminus K$ is then foliated by the level sets of $u$ and $\ub$ respectively. We use $C_u$ to denote the outgoing null hypersurfaces which are the level sets of $u$ and use $\Cb_\ub$ to denote the incoming null hypersurfaces which are the level sets of $\ub$. We also denote
\begin{equation*}
    S(u,\ub):=C_u\cap\Cb_\ub,
\end{equation*}
which are spacelike $2$--spheres. On a given two sphere $S(u,\ub)$, we choose a local frame $(e_1,e_2)$, we call $(e_1,e_2,e_3,e_4)$ a null frame. As a convention, throughout the paper, we use capital Latin letters $A,B,C,...$ to denote an index from 1 to 2 and Greek letters $\a,\b,\ga,...$ to denote an index from 1 to 4, e.g. $e_A$ denotes either $e_1$ or $e_2$.\\ \\
For any sphere $S(u,\ub)$ of the double null foliation $(u,\ub)$, we denote its causal past by
\begin{align*}
    V(u,\ub):=J^-(S(u,\ub))\cap D^+(\Si_0\setminus K).
\end{align*}
Denoting $u_0=-w_0$ and taking $\ub_*>w_0$, we define the bootstrap region by:
\begin{figure}
  \centering
  \includegraphics[width=0.95\textwidth]{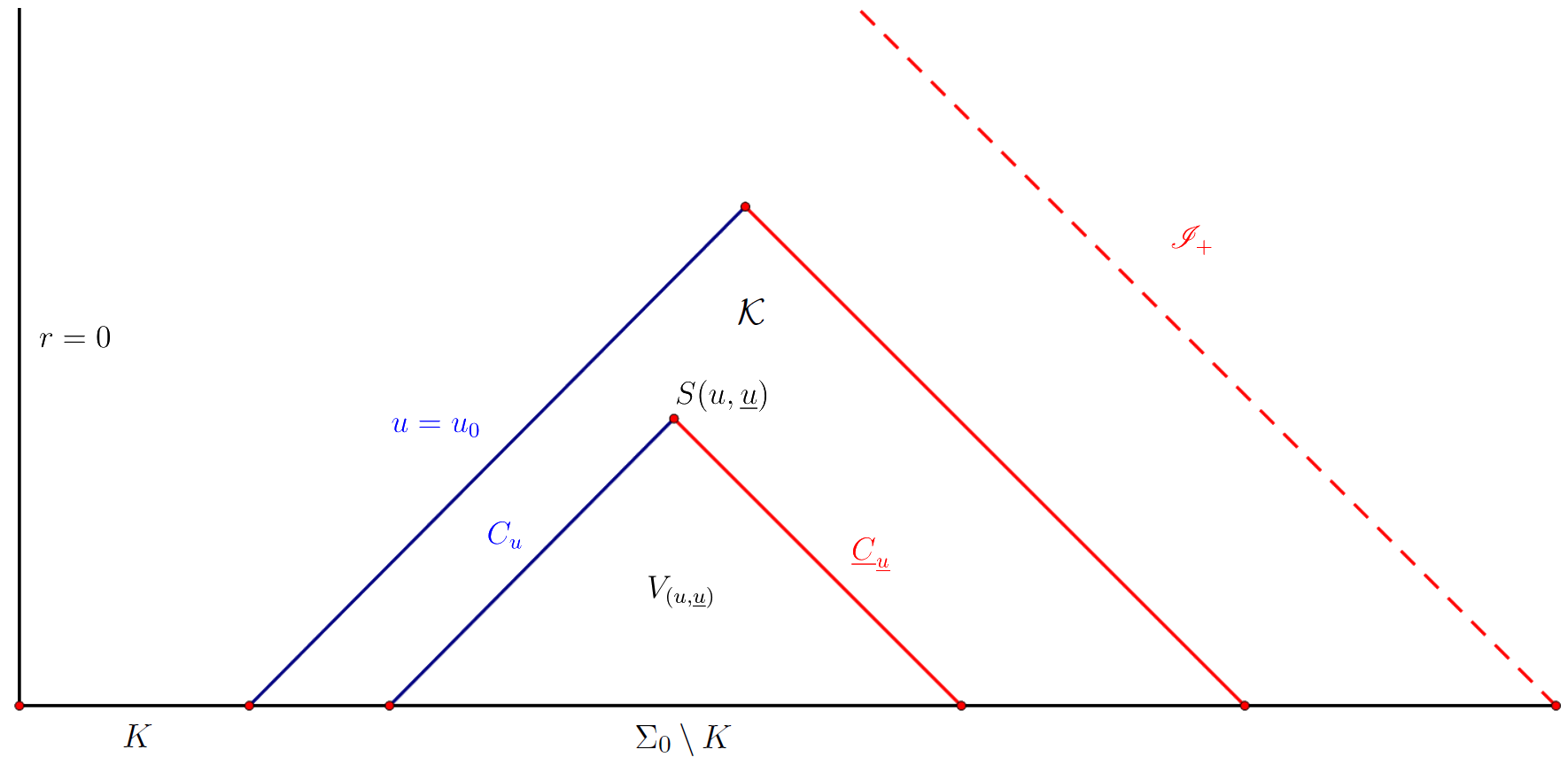}
  \caption{Description of $V(u,\ub)$ and $S(u,\ub)$}\label{fig}
\end{figure}
\begin{align*}
    \kk:=V(u_0,\ub_*).
\end{align*}
See Figure \ref{fig} for a geometric description. By the local existence theorem, the spacetime $\kk$ exists if $\ub_*$ is sufficiently close to $w_0$. We now introduce a local coordinate system $(u,\ub,\phi^A)$ in $\kk$ with $e_4(\phi^A)=0$. In this coordinate system, the spacetime metric $\g$ takes the following form:
\begin{equation}\label{metricg}
\g=-2\Om^2(d\ub\otimes du+du\otimes d\ub)+\ga_{AB}(d\phi^A-\bbb^Adu)\otimes (d\phi^B-\bbb^Bdu),
\end{equation}
where $\ga$ denotes the induced metric on $S(u,\ub)$ and $\bbb:=\bbb^A\pr_{\phi^A}$ denotes the \emph{null shift}. Note that
\begin{equation}\label{pruprub}
   \Om^2e_3=\pr_u+\bbb,\qquad\quad e_4=\pr_{\ub}.
\end{equation}
We recall the null decomposition of the Ricci coefficients and curvature components of the null frame $(e_1,e_2,e_3,e_4)$ as follows:
\begin{align}
\begin{split}\label{defga}
\chib_{AB}&=\g(\D_A e_3, e_B),\qquad\quad \chi_{AB}=\g(\D_A e_4, e_B),\\
\xib_A&=\frac 1 2 \g(\D_3 e_3,e_A),\qquad\quad\,\,  \xi_A=\frac 1 2 \g(\D_4 e_4, e_A),\\
\omb&=\frac 1 4 \g(\D_3e_3 ,e_4),\qquad\quad \,\,\,\;\, \om=\frac 1 4 \g(\D_4 e_4, e_3), \\
\etab_A&=\frac 1 2 \g(\D_4 e_3, e_A),\qquad\quad\;  \eta_A=\frac 1 2 \g(\D_3 e_4, e_A),\\
\ze_A&=\frac 1 2 \g(\D_{e_A}e_4, e_3),
\end{split}
\end{align}
and
\begin{align}
\begin{split}\label{defr}
\a_{AB} &=\R(e_A, e_4, e_B, e_4),\qquad\quad\,\aa_{AB} =\R(e_A, e_3, e_B, e_3), \\
\b_{A} &=\frac 1 2\R(e_A, e_4, e_3, e_4),\qquad\quad\,\,\bb_{A}=\frac 1 2 \R(e_A, e_3, e_3, e_4),\\
\rho&=\frac 1 4 \R(e_3, e_4, e_3, e_4), \qquad\quad\;\;\;\; \si =\frac{1}{4}{^*\R}(e_3, e_4, e_3,e_4),
\end{split}
\end{align}
where $^*\R$ denotes the Hodge dual of $\R$. The null second fundamental forms $\chi, \chib$ are further decomposed in their traces $\trch$ and $\trchb$, and traceless parts $\hch$ and $\hchb$:
\begin{align*}
\trch&:=\de^{AB}\chi_{AB},\qquad\quad \,\hch_{AB}:=\chi_{AB}-\frac{1}{2}\de_{AB}\trch,\\
\trchb&:=\de^{AB}\chib_{AB},\qquad\quad \, \hchb_{AB}:=\chib_{AB}-\frac{1}{2}\de_{AB}\trchb.
\end{align*}
We define the horizontal covariant operator $\nab$ as follows:
\begin{equation*}
\nab_X Y:=\D_X Y-\frac{1}{2}\chib(X,Y)e_4-\frac{1}{2}\chi(X,Y)e_3.
\end{equation*}
We also define $\nab_4 X$ and $\nab_3 X$ to be the horizontal projections:
\begin{align*}
\nab_4 X&:=\D_4 X-\frac{1}{2} \g(X,\D_4e_3)e_4-\frac{1}{2} \g(X,\D_4e_4)e_3,\\
\nab_3 X&:=\D_3 X-\frac{1}{2} \g(X,\D_3e_3)e_3-\frac{1}{2} \g(X,\D_3e_4)e_4.
\end{align*}
A tensor field $\psi$ defined on $\MM$ is called tangent to $S$ if it is a priori defined on the spacetime $\M$ and all the possible contractions of $\psi$ with either $e_3$ or $e_4$ are zero. We use $\nabla_3 \psi$ and $\nabla_4 \psi$ to denote the projection to $S(u,\ub)$ of usual derivatives $\D_3\psi$ and $\D_4\psi$. As a direct consequence of \eqref{defga}, we have the following Ricci formulae:
\begin{align}
\begin{split}\label{ricciformulae}
    \D_A e_3&=\chib_{AB}e_B+\ze_A e_3,\qquad\qquad\qquad\;\,\, \D_A e_4=\chi_{AB}e_B-\ze_A e_4,\\
    \D_3 e_A&=\nab_3 e_A+\eta_A e_3+\xib_A e_4,\qquad\quad\quad \D_4 e_A=\nab_4 e_A+\etab_A e_4+\xi_A e_4,\\
    \D_3 e_3&=-2\omb e_3+2\xib_B e_B,\qquad\qquad\qquad\, \D_3 e_4=2\omb e_4+2\eta_B e_B,\\
    \D_4 e_4&=-2\om e_4+2\xi_B e_B,\qquad\qquad\qquad\;\D_4 e_3=2\om e_3+2\etab_B e_B,\\
    \D_A e_B&=\nab_A e_B+\frac{1}{2}\chi_{AB} e_3+\frac{1}{2}\chib_{AB}e_4.
\end{split}
\end{align}
The following identities hold for the null frame \eqref{choiceLL}:
\begin{align}
\begin{split}\label{6.6}
    \nabla\log\Om&=\frac{1}{2}(\eta+\etab),\qquad\;\;\;\quad\om=-\nab_4(\log\Om), \qquad\quad\;\;\;\omb=0,\\ 
    \etab&=-\ze,\qquad\qquad\qquad\;\;\xi=\xib=0,
\end{split}
\end{align}
see Section 2.3 of \cite{Taylor}.
\subsection{Integral formulae}\label{ssecave}
\begin{df}\label{average}
Given a scalar function $f$ on $S:=S(u,\ub)$, we denote its average and its average free part by
\begin{equation*}
    \ov{f}:=\frac{1}{|S|}\int_{S}f\,d\ga,\qquad\quad\fc:=f-\overline{f}.
\end{equation*}
\end{df}
\begin{lem}\label{dint}
For any scalar function $f$, the following identities hold:
\begin{align*}
    e_4\left(\int_{S(u,\ub)} f d\ga\right)&=\int_{S(u,\ub)}\left(e_4(f) +\trch f\right) d\ga ,\\
    \Om^2e_3\left(\int_{S(u,\ub)} f d\ga\right)&=\int_{S(u,\ub)}\left(\Om^2e_3(f)+ \Om^2\trchb f \right) d\ga .
\end{align*}
Taking $f=1$, we obtain
\begin{equation*}
    e_4(r)=\frac{\ov{\trch}}{2}r,\qquad e_3(r)=\frac{\overline{\Om^2\trchb}}{2\Om^2}r,\label{e3e4r}
\end{equation*}
where $r$ is the \emph{area radius} defined by
\begin{equation*}
    r(u,\ub):=\sqrt{\frac{|S(u,\ub)|}{4\pi}}.
\end{equation*}
\end{lem}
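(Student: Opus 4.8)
This is a first--variation computation, and the natural setting is the coordinate system $(u,\ub,\phi^A)$ of \eqref{metricg}, in which $e_4=\pr_\ub$, $\Om^2 e_3=\pr_u+\bbb$, and the coordinate region over which the $\phi^A$ range does not move as $(u,\ub)$ vary; thus $\int_{S(u,\ub)}f\,d\ga$ is the integral of $f\sqrt{\det\ga}$ over a fixed $\phi$--domain. The only inputs I would need are the evolution equations for the components $\ga_{AB}$ of the induced metric along $\pr_\ub$ and $\pr_u$, the Jacobi identity $\pr\big(\sqrt{\det\ga}\big)=\tfrac12\sqrt{\det\ga}\,\ga^{AB}\,\pr\ga_{AB}$, and the divergence theorem on the closed surface $S$.

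First I would record the evolution of $\ga_{AB}$ using $(\mathcal{L}_X\g)(Y,Z)=\g(\D_Y X,Z)+\g(\D_Z X,Y)$ together with $[\pr_\ub,\pr_{\phi^A}]=[\pr_u,\pr_{\phi^A}]=0$. In the outgoing direction $e_4=\pr_\ub$, and the definition $\chi_{AB}=\g(\D_A e_4,e_B)$ together with the symmetry of $\chi$ give
\[
\pr_\ub\ga_{AB}=(\mathcal{L}_{e_4}\g)(\pr_{\phi^A},\pr_{\phi^B})=2\chi_{AB},\qquad\text{hence}\qquad \pr_\ub\big(\sqrt{\det\ga}\big)=\trch\,\sqrt{\det\ga}.
\]
In the incoming direction I would write $\pr_u=\Om^2 e_3-\bbb$ with $\bbb$ tangent to $S$; the analogous computation, in which the $\pr_{\phi^A}(\Om^2)\,e_3$ term drops out upon contracting with the $S$--tangent $\pr_{\phi^B}$, yields
\[
\pr_u\ga_{AB}=2\Om^2\chib_{AB}-(\mathcal{L}_\bbb\ga)_{AB},\qquad\text{hence}\qquad \pr_u\big(\sqrt{\det\ga}\big)=\big(\Om^2\trchb-\sdiv\bbb\big)\sqrt{\det\ga},
\]
where $\mathcal{L}_\bbb\ga$ is the Lie derivative on $(S,\ga)$ and $\ga^{AB}(\mathcal{L}_\bbb\ga)_{AB}=2\,\sdiv\bbb$.

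Then I would differentiate under the integral sign. For $e_4$ this is immediate:
\[
e_4\Big(\int_{S(u,\ub)}f\,d\ga\Big)=\pr_\ub\!\int f\sqrt{\det\ga}\,d\phi^1d\phi^2=\int_{S(u,\ub)}\big(e_4(f)+\trch\,f\big)\,d\ga.
\]
For $e_3$, observe that $(u,\ub)\mapsto\int_{S(u,\ub)}f\,d\ga$ does not depend on $\phi^A$, so $\Om^2 e_3$ acting on it coincides with $\pr_u$ acting on it; differentiating and substituting $\pr_u f=\Om^2 e_3(f)-\bbb(f)$ gives
\[
\Om^2 e_3\Big(\int_{S}f\,d\ga\Big)=\int_S\big(\Om^2 e_3(f)+\Om^2\trchb\,f\big)\,d\ga-\int_S\big(\bbb(f)+f\,\sdiv\bbb\big)\,d\ga ,
\]
and the last integral equals $\int_S\sdiv(f\bbb)\,d\ga=0$ by the divergence theorem on the closed surface $S$. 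Finally, taking $f=1$ and using $\int_{S(u,\ub)}d\ga=|S(u,\ub)|=4\pi r^2$ together with the definition of the average, the outgoing identity gives $8\pi r\,e_4(r)=\int_S\trch\,d\ga=4\pi r^2\,\ov{\trch}$, i.e. $e_4(r)=\tfrac12 r\,\ov{\trch}$, while the incoming one gives $8\pi\Om^2 r\,e_3(r)=\int_S\Om^2\trchb\,d\ga=4\pi r^2\,\ov{\Om^2\trchb}$, i.e. $e_3(r)=\ov{\Om^2\trchb}\,r/(2\Om^2)$.

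The genuinely delicate point is the incoming direction, where the shift $\bbb$ appears: one must carefully keep the extra term $\mathcal{L}_\bbb\ga$ in $\pr_u\ga_{AB}$ and the extra term $-\bbb(f)$ in $\pr_u f$, and then recognise that these recombine into the total divergence $\sdiv(f\bbb)$ on $S$, which integrates to zero. The outgoing identity involves no shift and is routine, and the $f=1$ specialisations are then immediate.
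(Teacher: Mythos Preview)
Your argument is correct. The paper does not actually give a proof of this lemma---it simply cites Lemma~2.26 in \cite{krcausal} and Lemma~3.1.3 in \cite{Kl-Ni}---so there is no ``paper's own proof'' to compare against beyond those references; your first--variation computation in the transported coordinates $(u,\ub,\phi^A)$, with the careful handling of the shift term $\bbb$ in the incoming direction and its recombination into the total divergence $\sdiv(f\bbb)$, is precisely the standard derivation one finds in those sources.
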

\begin{proof}
See (121) and (122) in \cite{Taylor}.
\end{proof}
\subsection{Hodge systems}\label{ssec7.2}
\begin{df}\label{tensorfields}
For tensor fields defined on a $2$--sphere $S$, we denote by $\sfr_0:=\sfr_0(S)$ the set of pairs of scalar functions, $\sfr_1:=\sfr_1(S)$ the set of $1$--forms and $\sfr_2:=\sfr_2(S)$ the set of symmetric traceless $2$--tensors.
\end{df}
\begin{df}\label{def7.2}
Given $\xi\in\sk_1$ and $U\in\sk_2$, we define their Hodge dual
\begin{equation*}
    {^*\xi}_A := \in_{AB}\xi^B,\qquad {^*U}_{AB}:=\in_{AC} {U^C}_B.
\end{equation*}
\end{df}
\begin{df}
    Given $\xi,\eta\in\sk_1$ and $U,V\in\sk_2$, we denote
\begin{align*}
    \xi\cdot \eta&:= \de^{AB}\xi_A \eta_B,\qquad\qquad\qquad\qquad\qquad\quad\;\,\xi\wedge\eta:= \in^{AB} \xi_A \eta_B,\\
    (\xi \cdot U)_A&:= \de^{BC} \xi_B U_{AC},\qquad\qquad\qquad\qquad (U\wedge V)_{AB}:=\in^{AB}U_{AC}V_{CB},\\
    (\xi\hot \eta)_{AB}&:=\xi_A \eta_B +\xi_B \eta_A -\de_{AB}\xi\cdot \eta.
\end{align*}
\end{df}
\begin{df}
    For a given $\xi\in\sfr_1$, we define the following differential operators:
    \begin{align*}
        \sdiv \xi&:=\de^{AB} \nab_A\xi_B,\\
        \curl \xi&:=\in^{AB} \nab_A \xi_B,\\
        (\nab\hot\xi)_{AB}&:=\nab_A \xi_B+\nab_B \xi_A-\de_{AB}(\sdiv\xi).
    \end{align*}
\end{df}
\begin{df}\label{hodgeop}
    We define the following Hodge type operators, as introduced in section 2.2 in \cite{Ch-Kl}:
    \begin{itemize}
        \item $d_1$ takes $\sfr_1$ into $\sfr_0$ and is given by:
        \begin{equation*}
        d_1 \xi :=(\sdiv\xi,\curl\xi),
        \end{equation*}
        \item $d_2$ takes $\sfr_2$ into $\sfr_1$ and is given by:
        \begin{equation*}
        (d_2 U)_A := \nab^{B} U_{AB}, 
        \end{equation*}
        \item $d_1^*$ takes $\sfr_0$ into $\sfr_1$ and is given by:
        \begin{align*}
        d_1^*(f,f_*)_{A}:=-\nab_A f +{\in_A}^{B}\nab_B f_*,
        \end{align*}
        \item $d_2^*$ takes $\sfr_1$ into $\sfr_2$ and is given by:
        \begin{align*}
        d_2^* \xi := -\frac{1}{2} \nab \hot \xi.
        \end{align*}
    \end{itemize}
\end{df}
We have the following identities:
\begin{align}
    \begin{split}\label{dddd}
        d_1^*d_1&=-\De_1+\mathbf{K},\qquad\qquad  d_1 d_1^*=-\De_{{0}},\\
        d_2^*d_2&=-\frac{1}{2}\De_2+\mathbf{K},\qquad\quad\; d_2 d_2^*=-\frac{1}{2}(\De_1+\mathbf{K}).
    \end{split}
\end{align}
where $\mathbf{K}$ denotes the Gauss curvature on $S$. See for example (2.2.2) in \cite{Ch-Kl} for a proof of \eqref{dddd}. 
\begin{df}\label{dfdkb}
Let $f\in\sk_0$, $\xi\in\sk_1$ and $U\in\sk_2$, we define the weighted angular derivatives $\dkb$:
\begin{align*}
    \dkb U:=rd_2 U,\qquad\quad \dkb\xi:= rd_1\xi,\qquad\quad \dkb f:= rd_1^* f.
\end{align*}
We also denote for any tensor $h\in\sk_k$, $k=0,1,2$,
\begin{equation*}
    h^{(q)}:=\left(h,\,\dkb h\,, ...\, ,\,\dkb^{q} h\right),\qquad q=0,1,2,3.
\end{equation*}
\end{df}
\begin{df}\label{Lpnorms}
For a tensor field $f$ on a $2$--sphere $S$, we denote its $L^p$--norm:
\begin{equation*}
    |f|_{p,S}:= \left(\int_S |f|^p d\ga\right)^\frac{1}{p}.
\end{equation*}
We also introduce the following scale invariant $L^p$--norm:
\begin{align*}
    \|f\|_{p,S}:=|r^{-\frac{2}{p}}f|_{p,S}.
\end{align*}
\end{df}
\subsection{Null structure equations}
\begin{df}\label{renorr}
We define the following renormalized quantity:
\begin{align}
\begin{split}\label{renorslu}
\vkp&:=d_1^*(-\om,\om^\dagger)-\b,
\end{split}
\end{align}
where $\om^\dagger$ is defined by the solution of
\begin{align*}
    \nab_3\om^\dagger=\si,\qquad\quad \om^\dagger\big|_{\Si_0\setminus K}=0.
\end{align*}
We also define the following renormalized curvature components:
\begin{equation}\label{renorq}
    \rhoc:=\rho-\frac{1}{2}\hch\c\hchb,\qquad\quad \sic:=\si-\frac{1}{2}\hch\wedge\hchb.
\end{equation}
The mass aspect functions are defined as follows:
\begin{equation}\label{murenor}
    \mu:=-\sdiv\eta-\rhoc,\qquad\mub:=-\sdiv\etab-\rhoc.
\end{equation}
We also define the following modified mass aspect functions:
\begin{equation}\label{mumrenor}
    \mum:=\mu+\frac{1}{4}\trch\trchb,\qquad \mumb:=\mub+\frac{1}{4}\trch\trchb.
\end{equation}
\end{df}
\begin{prop}\label{nulles}
We have the following null structure equations:
\begin{align}
\begin{split}
\nab_4\eta&=-\chi\cdot(\eta-\etab)-\b,\\
\nab_3\etab&=-\chib\cdot(\etab-\eta)+\bb,\\
\nab_4\hch+\trch\,\hch&=-2\om\hch-\a,\\
\nab_4\trch+\frac{1}{2}(\trch)^2&=-|\hch|^2-2\om\trch,\\
\nab_3\hchb+\trchb\,\hchb&=-\aa,\\
\nab_3\trchb+\frac{1}{2}(\trchb)^2&=-|\hchb|^2,\\
\nab_4\hchb+\frac{1}{2}\trch\,\hchb&=\nab\hot\etab+2\om\hchb-\frac{1}{2}\tr\chib\,\hch+\etab\hot\etab,\\
\nab_3\hch+\frac{1}{2}\trchb\,\hch&=\nab\hot\eta-\frac{1}{2}\trch\,\hchb+\eta\hot\eta,\\
\nab_4\trchb+\frac{1}{2}\trch\trchb&=2\om\trch-2\mub+2|\etab|^2,\\
\nab_3\trch+\frac{1}{2}\trchb\trch&=-2\mu+2|\eta|^2,\\
\nab_3\om&=\rho+\frac{3}{2}|\eta-\etab|^2+\frac{1}{2}(\eta-\etab)\c(\eta+\etab)-\frac{1}{4}|\eta+\etab|^2. 
\end{split}
\end{align}
the Codazzi equations:
\begin{align}
\begin{split}
\sdiv\hch&=\frac{1}{2}\nab\trch-\frac{1}{2}\trch\,\etab-\b+\etab\c\hch,\\ 
\sdiv\hchb&=\frac{1}{2}\nab\trchb+\frac{1}{2}\trchb\,\etab+\bb-\etab\c\hchb,\label{codazzi}
\end{split}
\end{align}
the torsion equation:
\begin{equation}
\curl\eta=-\curl\etab=\sic,\label{torsion}
\end{equation}
and the Gauss equation:
\begin{equation}
    \mathbf{K}+\frac{1}{4}\trch\trchb=-\rhoc.\label{gauss}
\end{equation}
\end{prop}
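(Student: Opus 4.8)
The plan is to derive every identity in Proposition~\ref{nulles} from material already in hand: the definitions \eqref{defga}--\eqref{defr}, the Ricci formulae \eqref{ricciformulae}, the frame identities \eqref{6.6} attached to the lapse--normalized geodesic null pair \eqref{choiceLL} (in particular $\omb=0$, $\xi=\xib=0$, $\om=-\nab_4\log\Om$, $\eta+\etab=2\nab\log\Om$, $\eta-\etab=2\ze$), and the Einstein vacuum equations \eqref{EVE}, which guarantee that the six quantities in \eqref{defr} exhaust the independent components of $\R$ and that every Ricci contraction of $\R$ vanishes. The common mechanism is: for each relevant quadruple of frame indices $(\a,\b,\mu,\nu)$, compute $\R(e_\a,e_\b,e_\mu,e_\nu)$ in two ways --- first by reading its value off \eqref{defr}, and second by expanding $\R(e_\a,e_\b)e_\mu=\D_{e_\a}\D_{e_\b}e_\mu-\D_{e_\b}\D_{e_\a}e_\mu-\D_{[e_\a,e_\b]}e_\mu$ and repeatedly substituting \eqref{ricciformulae} until everything is expressed through the Ricci coefficients and their horizontal derivatives. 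Equating the two expressions and simplifying with \eqref{6.6} yields a transport equation or a Hodge-type identity.

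First, taking all four indices tangent to $S$ produces the Gauss equation \eqref{gauss}: the Gauss equation of the embedding $S\hookrightarrow\M$ relates the intrinsic curvature $\mathbf K$ to the ambient sectional curvature of the $(e_1,e_2)$--plane and to the second fundamental forms $\frac12\chi,\frac12\chib$ read off from $\D_Ae_B=\nab_Ae_B+\frac12\chi_{AB}e_3+\frac12\chib_{AB}e_4$, and a short computation using $\g(e_3,e_4)=-2$ together with the trace/traceless split recasts it as $\mathbf K+\frac14\trch\trchb=-\rhoc$. Taking three indices tangent to $S$ and one null index gives, after the appropriate $\de$--trace, the Codazzi equations \eqref{codazzi} for $\hch$ and $\hchb$, and after contracting with $\in$ the torsion relation \eqref{torsion}; the latter also follows directly from \eqref{6.6}, since $\eta=\ze+\nab\log\Om$ and $\etab=-\ze+\nab\log\Om$ force $\curl\eta=-\curl\etab=\curl\ze$, which one identifies with $\sic$. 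Taking two $S$--indices together with the null pairs $(e_4,e_4)$, $(e_3,e_3)$, $(e_3,e_4)$ supplies the transport equations: differentiating $\D_Ae_4=\chi_{AB}e_B-\ze_Ae_4$ along $e_4$ and matching with $\a_{AB}$, then splitting into trace and traceless parts (using $\hch_{AC}\hch_{CB}=\frac12|\hch|^2\de_{AB}$ in two dimensions), gives the equations for $\trch$ and $\hch$; the $e_3$ analogue with $\omb=0$ gives those for $\trchb$ and $\hchb$; the $(e_3,e_4)$ configuration combined with $\om=-\nab_4\log\Om$ gives the $\nab_3\om$ equation; and the mixed $S$--null configurations along $e_3$, after inserting \eqref{6.6}, give the equations for $\nab_4\eta$ and $\nab_3\etab$. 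Finally, the equations for $\nab_4\trchb$ and $\nab_3\trch$ are obtained by first deriving the raw relations and then substituting the definitions \eqref{murenor} of $\mu$ and $\mub$ to reach the stated form.

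I expect the real work to be organizational rather than conceptual: because \eqref{choiceLL} is the lapse--normalized geodesic pair rather than a symmetric normalization, the consequences collected in \eqref{6.6} --- above all the $\om=-\nab_4\log\Om$ terms and the vanishing of $\omb,\xi,\xib$ --- must be propagated carefully through every intermediate expression, and one must insert the renormalized quantities $\rhoc,\sic,\mu,\mub$ from \eqref{renorq}--\eqref{murenor} at exactly the right moment so that the quadratic Ricci-coefficient corrections are absorbed cleanly. Since all of these computations are classical and the geometric set-up of Section~\ref{doublenullfoliation} coincides with that of the standard references, an acceptable alternative is simply to cite \cite{Ch-Kl,Kl-Ni} (and, for this precise gauge, \cite{Shen22,Shen23}) after verifying that our conventions match theirs.
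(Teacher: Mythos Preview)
Your proposal is correct: the derivation you sketch is the standard one, and the gauge-specific simplifications from \eqref{6.6} are identified accurately. However, the paper's own proof is nothing more than a one-line citation, ``See (3.1)--(3.5) in \cite{kr}'', so in practice you have done far more than what the paper does. Your closing remark that one may simply cite the standard references after checking conventions is exactly the route the paper takes; the only difference is that the paper points to \cite{kr} rather than to \cite{Ch-Kl,Kl-Ni,Shen22,Shen23}. Either way, for such classical identities a citation suffices, and your detailed outline, while correct, is not needed here.
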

\begin{proof}
This follows readily from (3.1)--(3.5) in \cite{kr} and \eqref{6.6}.
\end{proof}
\subsection{Bianchi equations}
We state the Bianchi equations for the curvature components $\a$, $\b$, $\rhoc$, $\sic$, $\bb$ and $\aa$, where $\rhoc$ and $\sic$ are defined in \eqref{renorq}.
\begin{prop}\label{Bianchiequations}
We have the following Bianchi equations:
\begin{align*}
\nab_3\a+\frac{1}{2}\trchb\,\a&=\nab\hot\b-3(\hch\rho+{^*\hch}\si)+(\ze+4\eta)\hot\b,\\
\nab_4\b+2\trch\,\b&=\sdiv\a-2\om\b+\ze\c\a,\\
\nab_3\b+\trchb\,\b&=\nab\rhoc+{^*\nab}\sic+2\hch\cdot\bb+3(\eta\rho+{^*\eta}\si)+\frac{1}{2}d_1^*(-\hch\c\hchb,\hch\wedge\hchb)\\
\nab_4\rhoc+\frac{3}{2}\trch\,\rhoc&=\sdiv\b+(\ze+2\etab)\c\b-\frac{1}{2}\hch\c(\nab\hot\etab+\etab\hot\etab)+\frac{1}{4}\trchb|\hch|^2,\\
\nab_4\sic+\frac{3}{2}\trch\,\sic&=-\sdiv{^*\b}-(\ze+2\etab)\wedge\b-\frac{1}{2}\hch\wedge(\nab\hot\etab+\etab\hot\etab),\\
\nab_3\rhoc+\frac{3}{2}\trchb\,\rhoc&=-\sdiv\bb+(\ze-2\eta)\c\bb-\frac{1}{2}\hchb\c(\nab\hot\eta+\eta\hot\eta)+\frac{1}{4}\trch|\hchb|^2,\\
\nab_3\sic+\frac{3}{2}\trchb\,\sic&=-\sdiv{^*\bb}+(\ze-2\eta)\wedge\bb+\frac{1}{2}\hchb\wedge(\nab\hot\eta+\eta\hot\eta),\\
\nab_4\bb+\trch\,\bb&=-\nab\rhoc+{^*\nab\sic}+2\om\bb+2\hchb\c\b-3(\etab\rho-{^*\etab}\si)+\frac{1}{2}d_1^*(\hch\c\hchb,\hch\wedge\hchb),\\
\nab_3\bb+2\trchb\,\bb&=-\sdiv\aa+(2\ze-\eta)\cdot\aa,\\
\nab_4\aa+\frac{1}{2}\trch\,\aa&=-\nab\hot\bb+4\om\aa-3(\hchb\rho-{^*\hchb}\sigma)+(\zeta-4\etab)\hot\bb.
\end{align*}
\end{prop}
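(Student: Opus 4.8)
The plan is to derive these identities from the second Bianchi identity for the Riemann tensor together with the vacuum equations \eqref{EVE}. Since $\Ric(\g)=0$, the tensor $\R$ coincides with the Weyl tensor: it is totally trace-free and satisfies the first-order divergence system $\D^\mu\R_{\mu\nu\a\b}=0$, which (by the algebraic symmetries of a Weyl field) also encodes the dual identity $\D^\mu{}^*\R_{\mu\nu\a\b}=0$. Because the null components $\a,\aa,\b,\bb,\rho,\si$ of \eqref{defr} exhaust the independent components of $\R$, each displayed equation is obtained by contracting the Bianchi identity against a suitable quintuple of frame vectors drawn from $(e_3,e_4,e_A)$ and reading off both sides in the null frame.

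First I would carry out the contractions. Schematically: projecting onto two $e_4$'s in the curvature slots and extracting the $e_3$-derivative yields the transport equation for $\a$ along $e_3$; the slot patterns $(e_4,e_3,e_4)$ and $(e_3,e_3,e_4)$ produce the $\nab_4\b$ and $\nab_3\b$ equations; the scalar equations for $\rho$ and $\si$ come from the $(e_3,e_4,e_3,e_4)$ contraction and its Hodge dual; and the equations for $\bb$ and $\aa$ follow symmetrically by interchanging the roles of $e_3$ and $e_4$. In each case the spacetime derivatives $\D_3,\D_4,\D_A$ falling on components of $\R$ are rewritten in terms of the horizontal operators $\nab_3,\nab_4$ and the angular operators $\sdiv,\curl,\nab\hot$ via the Ricci formulae \eqref{ricciformulae}; the connection terms produced in this reduction are precisely the Ricci-coefficient quadratic terms on the right-hand sides. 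Here one uses the frame identities \eqref{6.6} special to the normalization \eqref{choiceLL} — namely $\omb=0$, $\xi=\xib=0$, $\eta+\etab=2\nab\log\Om$ and $\eta-\etab=2\ze$ — to reach the displayed normalized form.

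Second, the equations for $\rho$ and $\si$ obtained this way still contain terms of the schematic form $\hchb\c\a$ (and its dual), which carry poor weights; the cleaner system is reached by passing to the renormalized quantities $\rhoc=\rho-\frac12\hch\c\hchb$ and $\sic=\si-\frac12\hch\wedge\hchb$ of \eqref{renorq}. Differentiating these definitions and substituting the $\nab_4$- and $\nab_3$-transport equations for $\hch$ and $\hchb$ from Proposition \ref{nulles} cancels the bad terms: in the $\nab_4\rhoc$ equation the contribution $+\frac12\a\c\hchb$ coming from $\nab_4\hch=-\trch\hch-\a$ kills the $-\frac12\hchb\c\a$ produced by the $\nab_4\rho$ equation, while the several $\trch\,\hch\c\hchb$ terms cancel among themselves, leaving the stated right-hand side. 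The same substitution turns the $\nab_3\b$ and $\nab_4\bb$ equations into the displayed form with $\nab\rhoc\pm{}^*\nab\sic$ plus a $d_1^*$-correction built from $\hch\c\hchb$ and $\hch\wedge\hchb$.

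I expect the only genuine obstacle to be bookkeeping: faithfully tracking every connection-coefficient contribution through the numerous contractions and checking the cancellations in the renormalization step. There is no conceptual difficulty — the computation is classical (see \cite{Ch-Kl,Kl-Ni,kr}) — but it is long and error-prone, so in practice one records the result and refers to the literature.
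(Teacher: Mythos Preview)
Your proposal is correct and follows the standard derivation of these identities from the second Bianchi identity, the Ricci formulae \eqref{ricciformulae}, the frame relations \eqref{6.6}, and the renormalization \eqref{renorq} via the null structure equations of Proposition \ref{nulles}. The paper itself does not carry out this computation but simply cites (2.7)--(2.8) in \cite{AnLuk}, where exactly this argument is recorded; so your sketch is in fact more detailed than what the paper provides, and agrees with the approach in the cited references \cite{Ch-Kl,Kl-Ni,kr,AnLuk}.
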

\begin{proof}
This follows readily from (2.7) and (2.8) in \cite{AnLuk} and \eqref{6.6}. 
\end{proof}
\section{Main theorem}\label{sec8}
In the sequel of this paper, we always denote
\begin{equation}\label{dfcuvucuv}
   V:=V(u,\ub),\qquad C_u^V:=C_u\cap V,\qquad\Cb_\ub^V:=\Cb_\ub\cap V,\qquad S:=S(u,\ub).
\end{equation}
\subsection{Fundamental norms}\label{ssec8.1}
We proceed to define our main norms on a given bootstrap spacetime $\kk=V(u_0,\ub_*)$.
\subsubsection{Schematic notation \texorpdfstring{$\Gag$}{}, \texorpdfstring{$\Gab$}{} and \texorpdfstring{$\Gaw$}{}}
In order to provide a unified treatment of nonlinear terms in the equations of Propositions \ref{nulles} and \ref{Bianchiequations}, we introduce the schematic notations $\Gag$, $\Gab$ and $\Gaw$. Here, the subscripts $g$, $b$ and $w$ denote respectively \emph{good}, \emph{bad} and \emph{worst}.
\begin{df}\label{gammag}
We divide the Ricci coefficients into three parts:
\begin{align*}
    \Gag&:=\left\{\trch-\frac{1}{r},\,\trchb+\frac{4}{r},\,\hch,\,\om,\,\etab\right\},\qquad \Gab:=\left\{\eta,\,\ze,\,r^{-1}\log(2\Om)\right\},\qquad\Gaw:=\{\hchb\}.
\end{align*}
We then denote
\begin{align*}
    \Gag^{(1)}&:=(r\nab)^{\leq 1}\Gag\cup\{r\b,\,r\rhoc,\,r\sic,\,r\mub,\,r\vkp\},\\
    \Gab^{(1)}&:=(r\nab)^{\leq 1}\Gab\cup\{r\mu\},\qquad\qquad\qquad\qquad\Gaw^{(1)}:=(r\nab)^{\leq 1}\Gaw\cup\{r\bb\},
\end{align*}
and for $q=2,3$
\begin{align*}
    \Gag^{(q)}:=(r\nab)^{\leq 1}\Gag^{(q-1)},\qquad\Gab^{(q)}:=(r\nab)^{\leq 1}\Gab^{(q-1)},\qquad\Gaw^{(q)}:=(r\nab)^{\leq 1}\Gaw^{(q-1)}.
\end{align*}
\end{df}
\begin{rk}
    The justification of Definition \ref{gammag} has to do with the expected decay properties of the Ricci coefficients and curvature components, see Section \ref{sssec8.1.3} and Lemma \ref{decayGagGabGaa} below.
\end{rk}
\subsubsection{\texorpdfstring{$\mr$}{} norms (flux of curvature)}\label{secRnorms}
 In the remainder of this paper, we always denote $\de_0$ and $\de$ as two fixed constants satisfying
\begin{align*}
    0<\de_0\ll \de\ll 1.
\end{align*}
For a tensor field $h$ defined on $\cuv$ or $\ucuv$, we denote its $L^2$--flux by
\begin{align}
\begin{split}\label{dfflux}
\|h\|_{2,\cuv}^2&:=\int_{\cuv}|h|^2:=\int_{|u|}^\ub \int_{S(u,\ub')}|h|^2d\ga d\ub',\\
\|h\|_{2,\ucuv}^2&:=\int_{\ucuv}|h|^2:=\int_{-\ub}^u \int_{S(u',\ub)}\Om^2|h|^2d\ga du'.
\end{split}
\end{align}
We define the norms of flux of curvature components in the bootstrap region $\kk$. We denote
\begin{equation*}
    \mr:=\sum_{q=0}^2\big(\mr_{q}[\a,\b,\rhoc,\sic]+\mr_q[\bb]+\ur_q[\b,\rhoc,\sic,\bb]+\ur_q[\aa]\big).
\end{equation*}
where for $q=0,1,2$
\begin{align*}
    \mr_{q}[w]:=\sup_{\mathcal{K}}\mr_{q}[w](u,\ub),\qquad\ur_{q}[w]:= \sup_{\mathcal{K}}\ur_{q}[w](u,\ub).
\end{align*}
It remains to define for $q=0,1,2$
\begin{align*}
\mr_{q}[\a,\b,\rhoc,\sic](u,\ub)&:=|u|^\de\Vert r^{\frac{1}{2}-\de}(r\nab)^q(\a,\b,\rhoc,\sic)\Vert_{2,\cuv},\\
\ur_{q}[\b,\rhoc,\sic,\bb](u,\ub)&:=|u|^\de\Vert r^{\frac{1}{2}-\de}(r\nab)^q(\b,\rhoc,\sic,\bb)\Vert_{2,\ucuv},\\
\mr_q[\bb](u,\ub)&:=|u|^{\frac{1}{2}+2\de}\Vert r^{-2\de}(r\nab)^q\bb\Vert_{2,\cuv},\\
\ur_{q}[\aa](u,\ub)&:=|u|^{\frac{1}{2}+2\de}\Vert r^{-2\de}(r\nab)^q\aa\Vert_{2,\ucuv}.
\end{align*}
\subsubsection{\texorpdfstring{$\mo$}{} norms (\texorpdfstring{$L^p(S)$}{}--norms of geometric quantities)}\label{sssec8.1.3}
We first define the $L^4(S)$--norms of $\Gag$, $\Gab$ and $\Gaw$ in the bootstrap region $\kk$. We define
\begin{align*}
\mo_{[2]}:=\sum_{q=0}^2\sup_\kk\big(\mo_q(\Gag)+\mo_q(\Gab)+\mo_q(\Gaw)\big),
\end{align*}
with
\begin{align*}
\mo_0(\Gag)&:=\|r\Gag\|_{4,S},\qquad\qquad\;\;\mo_1(\Gag):=\|r\Gag^{(1)}\|_{4,S} ,\qquad\qquad\;\;\,\,\mo_2(\Gag):=\|r^{1-\de}|u|^\de\Gag^{(2)}\|_{4,S},\\
\mo_0(\Gab)&:=\|r^{1-\de_0}|u|^{\de_0}\Gab\|_{4,S},\quad\mo_1(\Gab):=\|r^{1-3\de_0}|u|^{3\de_0}\Gab^{(1)}\|_{4,S},\;\;\mo_2(\Gab):=\|r^{1-\de-2\de_0}|u|^{\de+2\de_0}\Gab^{(2)}\|_{4,S},\\
\mo_0(\Gaw)&:=\|r^{1-\de_0}|u|^{\de_0}\Gaw\|_{4,S},\;\;\mo_1(\Gaw):=\|r^{1-\de}|u|^{\de}\Gaw^{(1)}\|_{4,S},\quad\;\;\,\mo_2(\Gaw):=\|r^{\frac{1}{2}-\de-\de_0}|u|^{\frac{1}{2}+\de+\de_0}\Gaw^{(2)}\|_{4,S}.
\end{align*}
We then define the $H^2(S)$--norms for the renormalized quantities introduced in Definition \ref{renorr}:
\begin{align*}
\mo_3&:=\mo_2^2(\nab\trch)+\mo_2^2(\nab\trchb)+\mo_2^2(\vkp)+\mo_2^2(\mu)+\mo_2^2(\mub),
\end{align*}
with
\begin{align*}
    \mo_2^2(\Ga)&:=\sup_\kk \left\|r^{2-\de}|u|^\de(r\nab)^2\Ga\right\|_{2,S},\qquad \forall\; \Ga\in\left\{\nab\trch,\nab\trchb,\vkp,\mub\right\},\\
    \mo_2^2(\mu)&:=\sup_\kk \left\|r^{2-\de-3\de_0}|u|^{\de+3\de_0}(r\nab)^2\mu\right\|_{2,S}.
\end{align*}
We also introduce the following auxiliary norm:
\begin{align*}
\mo_a:=\sup_\kk\Big\|r^{3-\de-\de_0}|u|^{\de+\de_0}\nab\bb\Big\|_{2,S}+\sup_\kk\Big\|r^{2-\de_0}|u|^{1+\de_0}\nab\aa\Big\|_{2,S}.
\end{align*}
Moreover, we define
\begin{align*}
\mo_\ga:=\sup_{\kk}\frac{|\log(2\Om)|}{\log\big(\frac{3r}{|u|}\big)}+\sup_\kk\left|1-\frac{\ub-u}{2r}\right|.
\end{align*}
Finally, we denote
\begin{equation*}
    \mo:=\mo_{[2]}+\mo_3+\mo_a+\mo_\ga.
\end{equation*}
\subsubsection{\texorpdfstring{$\II_{(0)}$}{} norm (Initial data)}\label{initialO0}
We introduce the following norm on initial hypersurface $\Si_0$:
\begin{align*}
\II_{(0)}:=\sup_{\Si_0\setminus K}\left\{\sum_{q=0}^3r\|\dk^q(\Gag,\Gab,\Gaw)\|_{2,S_{(0)}(w)}+\sum_{q=0}^2r^2\|\dk^q(\a,\aa)\|_{2,S_{(0)}(w)}+\left|1-\frac{\ub-u}{2r}\right|\right\},
\end{align*}
where
\begin{align*}
    \dk\in\{r\nab_3,r\nab_4,r\nab\}.
\end{align*}
\subsection{Statement of the main theorem}\label{ssec8.3}
The goal of this paper is to prove the following theorem.
\begin{thm}[Main theorem]\label{maintheorem}
Consider an initial data set $(\Si_0,g,k)$ which is $(1,3)$--asymptotically flat in the sense of Definition \ref{defasym}. Assume that we have on $\Si_0\setminus K$:
\begin{equation}
    \II_{(0)}\leq\ep_0,
\end{equation}
where $\ep_0>0$ is a constant small enough. Then, $\Si_0\setminus K$ has a unique development $(\M,\bg)$ in its future domain of dependence with the following properties:
\begin{enumerate}
    \item $(\M,\bg)$ can be foliated by a double null foliation $(u,\ub)$. Moreover, the outgoing cones $C_u$ are complete for all $u\leq u_0$.
    \item The norms $\mo$ and $\mr$ defined in Section \ref{ssec8.1} satisfy
\begin{equation}\label{finalesti}
    \mo\les\ep_0,\qquad\quad\mr\les\ep_0.
\end{equation}
\end{enumerate}
\end{thm}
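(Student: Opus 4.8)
The plan is to prove Theorem \ref{maintheorem} by a continuity (bootstrap) argument on the family of regions $\kk=V(u_0,\ub_*)$ constructed in Section \ref{doublenullfoliation}. By the classical well-posedness results \cite{cb,cbg}, for $\ub_*$ close enough to $w_0$ the spacetime $\kk$ exists, carries the double null foliation $(u,\ub)$, and has finite norms $\mo,\mr$ that are small because they are then essentially comparable to $\II_{(0)}\leq\ep_0$. I would fix auxiliary constants $\ep_0\ll\ep\ll 1$ and let $\ub_*$ be the supremum of all $\ub'>w_0$ for which the foliation extends over $V(u_0,\ub')$ and the bootstrap assumption $\mo+\mr\leq\ep$ holds there. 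The task is then to (i) improve this to $\mo+\mr\les\ep_0$ and (ii) conclude $\ub_*=+\infty$; part (i) is the content of the intermediate results of Sections \ref{secboot}--\ref{sec10}, which I outline below.

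The first main step bounds the curvature flux $\mr$. In the spirit of the $r^p$--weighted estimates of Dafermos and Rodnianski \cite{Da-Ro}, and following \cite{Kl-Ni,Shen22,Shen23}, I would apply weighted energy identities to the Bianchi system of Proposition \ref{Bianchiequations} (after commuting with up to two angular derivatives), grouping the equations in null pairs so as to produce coercive flux terms on $\cuv$ and $\ucuv$ carrying exactly the weights built into the definition of $\mr$ in Section \ref{secRnorms}: the weight $r^{\frac12-\de}$ with a $|u|^\de$ gain for the good components $(\a,\b,\rhoc,\sic)$, and the degenerate weights $|u|^{\frac12+2\de}r^{-2\de}$ for the anomalous components $\bb$ and $\aa$. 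The boundary terms produced at the spheres $S(u,|u|)=C_u\cap\Si_0$ are controlled by $\II_{(0)}\leq\ep_0$ via the constraint and structure equations along $\Si_0\setminus K$, and all spacetime error integrals are at least cubic in $\ep$ because of the null structure. This yields $\mr\les\ep_0+\ep^{\frac32}$.

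Using the curvature flux bounds, the second step (Section \ref{sec10}) propagates the $L^4(S)$ and $H^2(S)$ bounds encoded in $\mo$. I would integrate the null structure equations of Proposition \ref{nulles} along the $e_3$ and $e_4$ directions, invoke the Codazzi and Gauss equations and the Hodge systems of Section \ref{ssec7.2} to recover the top-order angular derivatives and to estimate the renormalized quantities of Definition \ref{renorr} (the mass aspects $\mu,\mub$, the quantity $\vkp$, and $\nab\trch,\nab\trchb$), and feed in the flux bounds wherever a curvature component appears as a source; the relation $\ub-u\approx 2r$ and the control of $\Om$ contained in $\mo_\ga$ are propagated alongside. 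The schematic split $\Gag,\Gab,\Gaw$ with the weights of Definition \ref{gammag} is exactly what makes each of these transport and elliptic estimates close, giving $\mo\les\ep_0+\ep^{\frac32}$. Combining the two bounds and choosing $\ep$ small strictly improves the bootstrap assumption to $\mo+\mr\les\ep_0$, which is conclusion \eqref{finalesti}.

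Finally, the strict improvement together with a local existence statement near the last sphere $S(u_0,\ub_*)$ --- available because the $\mo$ bound controls $\Om$, $\ga$, $\bbb$ and their derivatives in the coordinates \eqref{metricg} --- shows $\ub_*$ cannot be finite, so $\ub_*=+\infty$, and the outgoing cones $C_u$, $u\leq u_0$, are complete (the bound $\mo_{[2]}\les\ep_0$ keeps $\trch$ close to $\frac{1}{r}$ along $C_u$, so the affine parameter of its null generators diverges); uniqueness follows from geometric uniqueness of the maximal globally hyperbolic development \cite{cb,cbg}. The genuine obstacle throughout is the borderline exponent $s=1$: the decay of $\Gaw=\hchb$ and of $\bb,\aa$ is critically weak, so the admissible range of $r$--weights in the Bianchi estimates collapses and the missing decay must be recovered from the $|u|^\de$ and $|u|^{\de_0}$ weights via the fine hierarchy $0<\de_0\ll\de\ll 1$; the delicate point is to make all these weights close \emph{simultaneously} in $\mr$ and in every constituent of $\mo$, in the presence of the worst interaction terms --- those hidden in $\rhoc,\sic$ through $\hch\c\hchb$ and $\hch\wedge\hchb$, and the terms $2\hch\c\bb$, $\etab\c\aa$ and $3(\hchb\rho-{^*\hchb}\si)$ in the Bianchi equations for $\b,\bb,\aa$. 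Everything else is lengthy but routine.
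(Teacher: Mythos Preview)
Your proposal is correct and follows essentially the same approach as the paper: a bootstrap on $\ub_*$, with the improvement step split into first recovering $\mr\les\ep_0$ from the $r^p$--weighted energy estimates on the Bianchi pairs (the paper's Theorem \ref{M1}) and then recovering $\mo\les\ep_0$ from transport along $e_3,e_4$ combined with the $2D$ Hodge/elliptic systems (the paper's Theorem \ref{M3}), after which local existence rules out a finite $\ub_*$. Your identification of the borderline difficulties and of the role of the hierarchy $0<\de_0\ll\de\ll1$ matches the paper's structure.
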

The proof of Theorem \ref{maintheorem} is given in Section \ref{ssec8.6}. It hinges on two basic theorems stated in Section \ref{ssec8.4}, concerning estimates for $\mo$ and $\mr$ norms. \\ \\
We choose $\ep_0$ and $\ep$ small enough such that
\begin{equation*}
    \ep\ll\de_0\ll \de\ll 1, \qquad \ep:=\ep_0^{\frac{2}{3}}.
\end{equation*}
Here, $A\ll B$ means that $CA<B$ where $C$ is the largest universal constant among all the constants involved in the proof via $\lesssim$.
\subsection{Main intermediate results}\label{ssec8.4}
\begin{thm}\label{M1}
Assume that
\begin{equation}
    \II_{(0)}\leq\ep_0,\qquad \mo\leq\ep,\qquad \RR\leq\ep.
\end{equation}
Then, we have
\begin{equation}
    \mr\les \ep_0.
\end{equation}
\end{thm}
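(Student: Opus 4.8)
## Proof proposal for Theorem \ref{M1}

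The plan is to propagate the flux bounds from the initial hypersurface into the bootstrap region $\kk$ by means of the $r^p$-weighted estimates of Dafermos--Rodnianski applied to the Bianchi equations of Proposition \ref{Bianchiequations}, treating the quadratic terms on the right-hand sides as errors controlled by the assumed $\mo\leq\ep$ and $\RR\leq\ep$. First I would set up the weighted energy identities: for a Bianchi pair $(F,\underline{F})$ satisfying a schematic system $\nab_4 F + \tfrac{p_4}{2}\trch\,F = \dkb \underline{F} + \text{(error)}$, $\nab_3 \underline{F} + \tfrac{p_3}{2}\trchb\,\underline{F} = -\dkb F + \text{(error)}$, one multiplies by the appropriate power $r^{1-2\de}$ (respectively $|u|^{2\de}$-type weights measured against $|u|$), integrates over $V(u,\ub)$, and uses the divergence theorem together with $e_4(r)=\tfrac{\ov{\trch}}{2}r$, $e_3(r)=\tfrac{\ov{\Om^2\trchb}}{2\Om^2}r$ from Lemma \ref{dint} and the asymptotic behavior $\trch\sim \tfrac{2}{r}$, $\trchb\sim-\tfrac{4}{r}$ (encoded in $\Gag$). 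The weight exponents are precisely chosen so that the bulk terms arising from the weight derivatives have a favorable sign, leaving a boundary flux on $\cuv$ and $\ucuv$ bounded by the flux on $\Si_0\setminus K$ (which is $\les\II_{(0)}\leq\ep_0$ via the $\mo_2^2$ and $\II_{(0)}$ norms) plus the spacetime integral of error terms.

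The key steps, in order: (1) establish the $r^p$-weighted estimate for the outermost pair $(\a,\b)$ and its conjugate hierarchy, closing at the top level where $\a$ has the weakest decay and $\aa$ the weakest; (2) descend the Bianchi hierarchy — from $(\a,\b)$ to $(\b,\rhoc,\sic)$ to $(\rhoc,\sic,\bb)$ to $(\bb,\aa)$ — at each stage using the flux of the previously controlled component as a source on the right-hand side, keeping careful track of the $|u|^\de$ and $|u|^{\frac12+2\de}$ weights appearing in the definitions of $\mr_q$ and $\ur_q$; (3) commute the Bianchi system with $(r\nab)^q$ for $q=0,1,2$ using the commutation formulas (the commutator $[\nab_4,\nab]$ produces terms in $\Gag\cdot(\text{curvature})$ which are again absorbed), obtaining the full $\mr$ with its three derivatives' worth of control; (4) handle the error integrals: each error term is schematically $\Ga\cdot(\text{curvature or }\nab\Ga)$, and by Cauchy--Schwarz in $V$ together with the $L^4(S)$ decay of $\Ga$ from $\mo\leq\ep$ and the flux bounds $\RR\leq\ep$, every such integral is $\les\ep\cdot\RR\les\ep^2\ll\ep_0$ provided the $r$- and $u$-weights in the $\mo$ norms match the deficit in the energy estimate — this matching is exactly why Definition \ref{gammag} assigns the specific exponents $\de_0,\de$ to the different components.

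The main obstacle will be step (4) in the borderline regime $s=1$: because the decay is borderline, the naive pairing of weights leaves logarithmically divergent or critically non-decaying error integrals, and one must exploit the structure of the nonlinearities — in particular the renormalizations $\rhoc,\sic,\vkp,\mum,\mumb$ of Definition \ref{renorr} which are designed so that the worst quadratic terms ($\hch\cdot\hchb$, $\hch\wedge\hchb$) are subtracted off — and the fact that the "bad" component $\hchb\in\Gaw$ appears only in pairs with a good component or with an extra $r$-decay. Concretely, I expect the delicate point to be the $\bb$-flux estimate, where $\bb$ decays only like $|u|^{-\frac12-2\de}r^{2\de-1}$ (slower than the Christodoulou--Klainerman rate), so its $r^p$-estimate must be run with $p$ very close to the threshold and the error terms $\hchb\cdot\b$, $\etab\cdot\aa$ controlled only by the auxiliary norm $\mo_a$ and the anomalous flux $\ur_q[\aa]$; getting the $|u|$-powers to close there without losing a factor of $|u|^{\de_0}$ is the crux, and it is where the hypothesis $\de_0\ll\de$ is used. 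Everything else is a matter of bookkeeping the weights carefully through the hierarchy.
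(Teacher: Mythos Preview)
Your proposal is essentially the paper's approach: $r^p$--weighted identities for the four Bianchi pairs $(\a,\b)$, $(\b,(\rhoc,\sic))$, $((\rhoc,\sic),\bb)$, $(\bb,\aa)$ in that order, commuted with $\dkb^q$ for $q\le 2$, with the initial flux supplied by $\II_{(0)}\le\ep_0$ and the nonlinear errors controlled by the bootstrap. A few points need correction, one of them genuinely important for the argument to close.

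\textbf{The $\ep$--counting in step (4).} You write that each error integral is $\les \ep\cdot\RR\les\ep^2\ll\ep_0$. But the error integral $\int_V r^p\,\psi_{(1)}\cdot(\Ga\cdot\psi_{(2)})$ is \emph{trilinear}: one factor of $\Ga$ (controlled by $\mo\le\ep$) and two factors of curvature (each controlled by $\RR\le\ep$), so the bound is $\ep^3$, not $\ep^2$. This matters: the identity bounds the \emph{squared} flux, and to conclude $\mr^2\les\ep_0^2$ you need the error to be $\les\ep_0^2$. With $\ep=\ep_0^{2/3}$ one has $\ep^3=\ep_0^2$ (closes exactly), whereas $\ep^2=\ep_0^{4/3}\gg\ep_0^2$ would not suffice. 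The paper packages this trilinear estimate once and for all in Theorem \ref{wonderfulrp} (via the schematic classes $\F_q$, $\Fb_q$, $\O_\ell$), which is cleaner than redoing Cauchy--Schwarz with weights at each step.

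\textbf{Minor inaccuracies.} In this frame $\trch\sim r^{-1}$ (not $2r^{-1}$), since $\trch-\frac{1}{r}\in\Gag$; the unusual normalization comes from \eqref{choiceLL}. The bulk terms do \emph{not} all have a favorable sign: for the three inner pairs one is in case \eqref{casethree} of Proposition \ref{keyintegral}, where the bad bulk term $\int_V r^{p-1}|\psi_{(1)}|^2$ must be fed in from the \emph{bulk} (not just the flux) of the previous pair --- this is the precise form of your ``descend the hierarchy'' step. Finally, the paper does not single out the $\bb$--flux as the crux: the pair $(\bb,\aa)$ is run with $p=-4\de$ and handled exactly as the others via Theorem \ref{wonderfulrp}; the hypothesis $\de_0\ll\de$ is not actually needed to close the inequalities in Section \ref{sec9} (it is used in the proof of Theorem \ref{M3}).
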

Theorem \ref{M1} is proved in Section \ref{sec9}. The proof is based on the $r^p$--weighted method introduced by Dafermos and Rodnianski in \cite{Da-Ro}.
\begin{thm}\label{M3}
Assume that
\begin{align}
    \II_{(0)}\leq\ep_0,\qquad \mo\leq\ep,\qquad \RR\les\ep_0.
\end{align}
Then, we have
\begin{equation}
    \mo\les\ep_0.
\end{equation}
\end{thm}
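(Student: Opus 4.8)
\textbf{Proof proposal for Theorem \ref{M3}.}

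The plan is to run a bootstrap on the $\mo$ norm itself. Under the hypotheses $\II_{(0)}\leq\ep_0$, $\mo\leq\ep$, $\RR\lesssim\ep_0$, I would close each constituent of $\mo=\mo_{[2]}+\mo_3+\mo_a+\mo_\ga$ in turn, exploiting the hierarchy $\ep_0\ll\ep\ll\de_0\ll\de\ll1$ so that every quadratic error term lands a factor $\ep\cdot\ep$ or better below the linear curvature flux term, which is already $\lesssim\ep_0$ by hypothesis. The backbone of every estimate is the transport structure of the null structure equations of Proposition \ref{nulles}: each Ricci coefficient $\Ga\in\{\Gag,\Gab,\Gaw\}$ satisfies a $\nab_4$ (or $\nab_3$) equation whose right-hand side is schematically curvature plus $\Ga\cdot\Ga$. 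Integrating along the $e_4$--direction from the initial data (for the $\Gag$ and $\Gab$ quantities, which we propagate outward from $\Si_0\setminus K$) and along the $e_3$--direction back to $\Si_0$ for $\Gaw=\hchb$ and $\aa$, and using Lemma \ref{dint} to commute $\int_S$ with $e_3,e_4$, one converts each transport equation into an $L^p(S)$ estimate with the prescribed $r$-- and $|u|$--weights. The weights in Definition \ref{gammag} and in the $\mo$--norms are exactly calibrated so that the weighted transport operators are monotone and the curvature source, after a Sobolev embedding $L^2(S)\hookrightarrow L^4(S)$ on the $2$--spheres and a Cauchy--Schwarz in the $\ub$ or $u$ variable, is bounded by the flux norms $\mr_q$, $\ur_q$ — hence by $\ep_0$.

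The steps, in order: (1) First control $\mo_\ga$, i.e.\ the lapse $\log(2\Om)$ and the ratio $\tfrac{\ub-u}{2r}$. The equation $\om=-\nab_4\log\Om$ from \eqref{6.6}, together with $\om\in\Gag$, gives a transport equation for $\log(2\Om)$ that integrates with the logarithmic weight $\log(3r/|u|)$; the ratio estimate follows from $e_4(r)=\tfrac12\ov{\trch}\,r$ (Lemma \ref{dint}) and $\trch-\tfrac1r\in\Gag$. (2) Next close $\mo_{[2]}$: propagate $\trch-\tfrac1r$, $\trchb+\tfrac4r$, $\hch$, $\om$, $\etab$ (the $\Gag$ block) and $\eta$, $\ze$, $r^{-1}\log(2\Om)$ (the $\Gab$ block) via their $\nab_4$ equations, and $\hchb$ (the $\Gaw$ block) via its $\nab_4$ equation $\nab_4\hchb+\tfrac12\trch\,\hchb=\nab\hot\etab-\tfrac12\trchb\,\hch+\etab\hot\etab$, reading curvature inputs $\a,\b,\rhoc,\sic,\bb$ from $\RR$. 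Do this at the level $q=0$, then commute with $r\nab$ once and twice (using the elliptic Hodge/commutation machinery of Section \ref{ssec7.2}, in particular \eqref{dddd}) to get $q=1,2$; the commutator terms are again $\Ga\cdot\Ga^{(q)}$--type and absorbed. (3) Then $\mo_3$: the $H^2(S)$ control of $\nab\trch$, $\nab\trchb$, $\vkp$, $\mu$, $\mub$ follows from the corresponding null-structure / Bianchi-type transport equations (the $\nab_3\trch$, $\nab_4\trchb$ equations of Proposition \ref{nulles}, the definition \eqref{renorslu} of $\vkp$ with the $\nab_3\b$ Bianchi equation, and the mass-aspect equations) together with the already-closed $\mo_{[2]}$. (4) Finally $\mo_a$: the auxiliary weighted $L^2$ bounds on $\nab\bb$ and $\nab\aa$ come from the $\nab_4\bb$ and $\nab_4\aa$ Bianchi equations of Proposition \ref{Bianchiequations}, integrated in $e_4$, with curvature source controlled by $\RR$.

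The main obstacle is the borderline decay at $s=1$: the weights on the worst quantities — $\Gaw=\hchb$, whose flux carries only $|u|^{1/2+2\de}$ rather than $|u|^{1/2+\de}$, and the $\Gab$ block, which is allowed to grow like $r^{-1+\de_0}|u|^{-\de_0}$ — leave essentially no room, so the monotonicity of the weighted transport operators must be used sharply and the quadratic error terms must be estimated with the precise split $\Gag\cdot\Gaw$, $\Gab\cdot\Gab$, etc., never crudely. In particular the terms $\nab\hot\etab$ and $\etab\hot\etab$ in the $\nab_4\hchb$ equation, and the coupling $-\tfrac12\hch\cdot(\nab\hot\etab+\etab\hot\etab)$ in the $\nab_4\rhoc$ Bianchi equation, sit exactly at the threshold; closing them requires that the $\de_0$--loss in $\Gab$ be strictly smaller than the $\de$--gain available from the curvature flux, which is precisely why the hierarchy $\ep\ll\de_0\ll\de$ is imposed. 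A secondary difficulty is that $\trchb+\tfrac4r$ rather than $\trchb+\tfrac2r$ appears (reflecting the non-trivial mass/Hawking-mass normalization on the initial cone), so the $r$--weights in the $\nab_4$ equations must be tracked with this shifted principal value in mind; getting the area-radius transport $e_4(r)$, $e_3(r)$ consistent with the assumed $\mo_\ga$ smallness is the technical heart of making the whole scheme self-consistent.
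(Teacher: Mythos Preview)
Your proposal has a genuine gap: it treats all Ricci coefficients by transport alone, and this cannot close at the top order of $\mo_{[2]}$. Two concrete failures. First, in the gauge of the paper ($\omb=0$, $\xi=\xib=0$, see \eqref{6.6}) there simply are no $\nab_4$--transport equations for $\om$ or $\etab$; Proposition \ref{nulles} lists only $\nab_3\om$ and $\nab_3\etab$, so ``propagate $\om,\etab$ via their $\nab_4$ equations'' has no content. Second, even where a $\nab_4$ equation exists, it loses one angular derivative. For $\hchb$ you quote $\nab_4\hchb+\tfrac12\trch\,\hchb=\nab\hot\etab+\dots$; estimating $\hchb^{(2)}$ in $L^4(S)$ then forces $\etab^{(3)}$ in $L^4(S)$, one order beyond anything carried by $\mo$. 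Likewise $\nab_4\hch+\trch\,\hch=-\a$ would require $\a^{(2)}$ in $L^4(S)$, whereas $\RR$ supplies only $L^2(C_u)$ flux control of $\a^{(2)}$.

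The idea you are missing, and which the paper actually uses, is to alternate transport with \emph{elliptic Hodge estimates} on the spheres (Proposition \ref{elliptic2d}). One first propagates the \emph{renormalized} quantities $\vkp$, $\mumc$, $\mumbc$ (which obey the favourable transport equations \eqref{renorschematic}) together with $\trch,\trchb$; their right-hand sides contain only curvature and lower-order $\Ga\cdot\Ga$. Then $\om$, $\eta$, $\etab$, $\hch$, $\hchb$ are recovered from the div--curl systems
\[
d_1^*(-\om,\om^\dagger)=\vkp+\b,\qquad d_1\eta=(-\mu-\rhoc,\sic),\qquad d_1\etab=(-\mub-\rhoc,-\sic),
\]
together with the Codazzi equations \eqref{codazzi} for $\hch,\hchb$, each of which \emph{gains} one angular derivative. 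This is precisely why the $\mo_3$ norms of $\vkp,\mu,\mub,\nab\trch,\nab\trchb$ are placed at $H^2(S)$, one order above $\mo_{[2]}$: they feed the elliptic step. Your proposal relegates $\vkp,\mu,\mub$ to an afterthought in step (3), but they are the spine of the whole argument. A secondary issue is that your ordering is circular: step (1) integrates $\nab_4\log(2\Om)=-\om$, yet $\om\in\Gag$ is only produced in your step (2). In the paper the order is curvature $L^4(S)$ from flux via Sobolev (this already yields $\mo_a$), then $\vkp$ and $\om$ via $\nab_3$ transport plus elliptic, then $\log(2\Om)$, then $\trch,\trchb$, then $\mu,\mub$, then $\eta,\etab,\hch,\hchb$ via elliptic, with $\mo_\ga$ last.
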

Theorem \ref{M3} is proved in Section \ref{sec10}. The proof is done by integrating the transport equations along the outgoing and incoming null cones and applying elliptic estimate on $2$--spheres of the double null foliation of the spacetime $\kk$.
\subsection{Proof of the main theorem}\label{ssec8.6}
We now use Theorems \ref{M1} and \ref{M3} to prove Theorem \ref{maintheorem}.
\begin{df}\label{bootstrap}
Let $\aleph(\ub_*)$ the set of spacetimes $\kk=V(u_0,\ub_*)$ associated with a double null foliation $(u,\ub)$ in which we have the following bounds:
\begin{align}
    \mo\leq\varepsilon,\qquad\quad\mr\leq\ep.\label{B2}
\end{align}
\end{df}
\begin{df}\label{defboot}
We denote $\mathcal{U}$ the set of values $\ub_*$ such that $\aleph(\ub_*)\ne\emptyset$.
\end{df}
The assumption $\II_{(0)}\leq\ep_0$ and the local existence theorem imply that \eqref{B2} holds if $\ub_*$ is sufficiently close to $|u_0|$. So, we have $\mathcal{U}\ne\emptyset$.\\ \\
Define $\ub_*$ to be the supremum of the set $\mathcal{U}$. We want to prove $\ub_*=+\infty$. We assume by contradiction that $\ub_*$ is finite. In particular, we may assume $\ub_*\in\mathcal{U}$. We consider the region $\kk=V(u_0,\ub_*)$. Recall that we have
\begin{equation*}
    \II_{(0)}\leq\ep_0,
\end{equation*}
according to the assumption of Theorem \ref{maintheorem}. Applying Theorem \ref{M1}, we obtain
\begin{equation*}
    \mr\les\ep_0.
\end{equation*}
Then, we apply Theorem \ref{M3} to obtain
\begin{equation*}
    \mo\les\ep_0.
\end{equation*}
Applying local existence results in Theorem 1 of \cite{luk}, we can extend $\kk$ to $\widetilde{\kk}:={V}(u_0,\ub_*+\nu)$ for a $\nu$ sufficiently small. We denote $\widetilde{\mo}$ and $\widetilde{\mr}$ the norms in the extended region $\widetilde{\kk}$. We have
\begin{equation*}
    \widetilde{\mo}\les\ep_0,\qquad\quad \widetilde{\mr}\les\ep_0,
\end{equation*}
as a consequence of continuity. We deduce that $V(u_0,\ub_*+\nu)$ satisfies all the properties in Definition \ref{bootstrap}, and so $\aleph(\ub_*+\nu)\ne\emptyset$, which is a contradiction. Thus, we have $\ub_*=+\infty$, which implies property 1 of Theorem \ref{maintheorem}. Moreover, we have
\begin{equation*}
    \mo\les \ep_0,\qquad\quad \mr\les\ep_0,
\end{equation*}
in the whole exterior region, which implies property 2 of Theorem \ref{maintheorem}. This concludes the proof of Theorem \ref{maintheorem}.
\begin{rk}\label{nontrapping}
    We have from \eqref{finalesti}
    \begin{align*}
        \left\|\trch-\frac{1}{r}\right\|_{\infty,S}\les\frac{\ep_0}{r},\qquad\forall S\subseteq V(u_0,+\infty),
    \end{align*}
    which implies for $\ep_0$ small enough that $\trch>0$. Hence, we deduce that there is no trapped surface in the exterior region $D^+(\Si_0\setminus K)=V(u_0,+\infty)$.
\end{rk}
\section{Bootstrap assumptions and first consequences}\label{secboot}
In the rest of the paper, we always make the following bootstrap assumptions:
\begin{align}
    \mo\leq\ep,\quad\qquad\mr\leq\ep.\label{B1}
\end{align}
\subsection{First consequences of bootstrap assumptions}
In this section, we derive first consequences of \eqref{B1} in the region $\kk=V(u_0,\ub_*)$. In the sequel, the results of this section will be used frequently without explicitly mentioning them.
\begin{rk}
According to $\mo_\ga\leq\ep$, we have in the bootstrap region $\kk$
\begin{equation}
 \left|r-\frac{\ub-u}{2}\right|\les\ep\ub.
\end{equation}
Combining with $|u|\leq \ub$ and $u<0$, this yields
\begin{equation}
   |u|\leq \ub\simeq r \quad \mbox{ in }\;\kk.
\end{equation}
\end{rk}
\begin{lem}\label{decayGagGabGaa}
Under the assumption \eqref{B1}, we have the following bounds:
\begin{align*}
\|\Gag\|_{4,S}\les\frac{\ep}{r},\qquad\qquad\qquad\|\Gag^{(1)}\|_{4,S}&\les\frac{\ep}{r}, \qquad\qquad\quad\;\;\; \|\Gag^{(2)}\|_{4,S}\les\frac{\ep}{r^{1-\de}|u|^\de},\\
\|\Gab\|_{4,S}\les\frac{\ep}{r^{1-\de_0}|u|^{\de_0}},\qquad\;\,\|\Gab^{(1)}\|_{4,S}&\les\frac{\ep}{r^{1-3\de_0}|u|^{3\de_0}},\quad\;\;\|\Gab^{(2)}\|_{4,S}\les\frac{\ep}{r^{1-\de-2\de_0}|u|^{\de+2\de_0}},\\
\|\Gaw\|_{4,S}\les\frac{\ep}{r^{1-\de_0}|u|^{\de_0}},\;\,\qquad \|\Gaw^{(1)}\|_{4,S}&\les\frac{\ep}{r^{1-\de}|u|^{\de}},\qquad\quad\|\Gaw^{(2)}\|_{4,S}\les\frac{\ep}{r^{\frac{1}{2}-\de-\de_0}|u|^{\frac{1}{2}+\de+\de_0}},\\
\|\a\|_{4,S}\les\frac{\ep}{r^2},\qquad\qquad\quad\;\,\, \|\a^{(1)}\|_{2,S}&\les\frac{\ep}{r^{2-\de}|u|^\de},\qquad\quad \|\aa^{(1)}\|_{2,S}\les\frac{\ep}{r^{1-\de_0}|u|^{1+\de_0}}.
\end{align*}
\end{lem}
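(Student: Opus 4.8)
The plan is to unpack the definitions of the norms $\mo_{[2]}$ and $\mo_3$ and simply read off the claimed pointwise bounds from the bootstrap assumption $\mo \leq \ep$. Recall from Section \ref{sssec8.1.3} that $\mo_{[2]} = \sum_{q=0}^2 \sup_\kk(\mo_q(\Gag) + \mo_q(\Gab) + \mo_q(\Gaw))$, and each $\mo_q$ is a weighted $L^4(S)$--norm with explicit powers of $r$ and $|u|$. Thus the first three rows are essentially immediate: for instance $\mo_0(\Gag) = \|r\Gag\|_{4,S} \leq \mo \leq \ep$ gives $\|\Gag\|_{4,S} \les \ep/r$; similarly $\mo_2(\Gag) = \|r^{1-\de}|u|^\de\Gag^{(2)}\|_{4,S} \leq \ep$ gives $\|\Gag^{(2)}\|_{4,S} \les \ep/(r^{1-\de}|u|^\de)$. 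The rows for $\Gab$ and $\Gaw$ follow identically from the definitions of $\mo_q(\Gab)$ and $\mo_q(\Gaw)$. One small subtlety: since $\Gag^{(1)}$ is \emph{defined} to include the first angular derivatives $(r\nab)^{\leq 1}\Gag$ together with the curvature terms $\{r\b, r\rhoc, r\sic, r\mub, r\vkp\}$, the bound on $\|\Gag^{(1)}\|_{4,S}$ simultaneously packages the derivative estimates and the curvature decay, and one should note this is consistent with $\mo_1(\Gag) = \|r\Gag^{(1)}\|_{4,S}$ carrying no $|u|$--weight, matching the stated $\ep/r$.

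For the fourth row, the bound $\|\a\|_{4,S} \les \ep/r^2$ comes from observing that $\a \in \Gag^{(1)}$--type data only after one derivative; more directly, one uses the flux norm $\mr$. Actually $\a$ itself does not appear in the $\mo$ norms, so here I would instead invoke a Sobolev-type embedding on the $2$--sphere $S$: from the $\mr$ norm we control $\|r^{\frac12-\de}(r\nab)^q(\a,\b,\rhoc,\sic)\|_{2,\cuv}$ for $q \leq 2$, and integrating the transport equation $\nab_4\hch + \trch\,\hch = -\a$ (Proposition \ref{nulles}) — or rather using it the other way, $\nab_4 \a + \cdots$ from Proposition \ref{Bianchiequations} is not available, so one uses $\a = -\nab_4\hch - \trch\hch$ together with $\mo_1(\Gag)$ bounds on $\nab_4\hch$ — gives $\|\a\|_{4,S} \les \ep/r^2$. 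More precisely: $\nab_4\hch = -\a - \trch\hch$, hence $\a = -\nab_4\hch - \trch\hch$; since $\hch, \trch - \tfrac1r \in \Gag$ and $(r\nab_4)\hch$ is controlled (via the $\dk$ derivatives appearing in $\II_{(0)}$ and propagated), we get the pointwise estimate. For $\|\a^{(1)}\|_{2,S} \les \ep/(r^{2-\de}|u|^\de)$ and $\|\aa^{(1)}\|_{2,S} \les \ep/(r^{1-\de_0}|u|^{1+\de_0})$, the same scheme with one more derivative, or a trace estimate from the $\mr_q[\a,\cdots]$ and $\ur_q[\aa]$ flux norms (which carry exactly the weights $|u|^\de r^{\frac12-\de}$ and $|u|^{\frac12+2\de}r^{-2\de}$ respectively), combined with the relation between scale-invariant $L^2(S)$ and the flux, produces the stated powers.

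The main obstacle — modest here, since this is a "first consequences" lemma — is the $\a$--row, because $\a$ is not among the quantities directly normed by $\mo$ or easily read from a single flux weight; one must combine a transport/elliptic relation with the definitions of $\mo_1(\Gag)$ and the curvature flux norms and carefully track that the resulting $r$-- and $|u|$--powers match. Everything else is a bookkeeping exercise of matching exponents. I would present the first three rows in one line each citing Section \ref{sssec8.1.3}, then devote a short paragraph to deriving the $\a, \a^{(1)}, \aa^{(1)}$ bounds from the Bianchi/null-structure equations and the $\mr$ norm, using $|u| \les \ub \simeq r$ in $\kk$ to simplify weights where needed.
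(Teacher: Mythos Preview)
Your treatment of the first three rows is correct and coincides with the paper's one-line argument: each bound is nothing but a reformulation of $\mo_q(\Gag)$, $\mo_q(\Gab)$, $\mo_q(\Gaw) \leq \ep$ for $q=0,1,2$, together with the fact that the renormalized curvature quantities are built into $\Gag^{(1)}$, $\Gab^{(1)}$, $\Gaw^{(1)}$. The paper's proof says exactly this and nothing more.

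Your proposed derivation of the $\a$--row, however, has a genuine gap. You write $\a = -\nab_4\hch - \trch\,\hch$ and claim that $(r\nab_4)\hch$ is ``controlled via the $\dk$ derivatives appearing in $\II_{(0)}$ and propagated''. This is false: the derivatives $r\nab_3, r\nab_4$ occur only in the initial norm $\II_{(0)}$ on $\Si_0$; in the interior of $\kk$ the norm $\mo$ (see the definitions of $\Gag^{(q)}$, $\mo_{[2]}$, $\mo_3$, $\mo_a$) involves \emph{only} angular derivatives $(r\nab)$, so there is no pointwise bound on $\nab_4\hch$ to invoke. The same obstruction kills the Sobolev-trace idea, since Proposition~\ref{sobolev} requires $\|r\nab_4 F\|_{2,\cuv}$ and there is no $\nab_4\a$ flux in $\mr$. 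The bound $\|\aa^{(1)}\|_{2,S}$ \emph{is} essentially direct: $\mo_a \leq \ep$ controls $\|\nab\aa\|_{2,S}$, and the elliptic estimate $\|\aa\|_{2,S} \les r\|d_2\aa\|_{2,S}$ of Proposition~\ref{elliptic2d} supplies the zero-order part. By contrast, no bootstrap norm directly controls $\a$ in $L^p(S)$; the paper is terse here, but the way these bounds are actually produced (with the sharper constant $\ep_0$) is by integrating the $\nab_3$--Bianchi equation for $\a$ along $\Cb_\ub$ from $\Si_0$ data, as in Proposition~\ref{R0estima}. If you want a complete argument at the level of this lemma, that transport route---not the null structure equation for $\nab_4\hch$---is the correct one.
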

\begin{proof}
It follows directly from Definition \ref{gammag} and \eqref{B1}.
\end{proof}
\begin{df}\label{betterdecay}
For two quantities $X_{(1)}$ and $X_{(2)}$ satisfying
\begin{equation*}
    \|X_{(1)}\|_{4,S}\les \frac{\ep}{r^{a_1}|u|^{b_1}},\qquad\quad \|X_{(2)}\|_{4,S}\les\frac{\ep}{r^{a_2}|u|^{b_2}},
\end{equation*}
we say that $X_{(1)}$ decays better than $X_{(2)}$ if
\begin{equation*}
    a_1\geq a_2,\qquad\quad a_1+b_1\geq a_2+b_2.
\end{equation*}
Moreover, for two quantities $X_{(1)}$ and $X_{(2)}$, we denote
\begin{align*}
X_{(1)}\preceq X_{(2)}
\end{align*}
if $X_{(1)}^{(q)}$ decays better than $X_{(2)}^{(q)}$ for $q=0,1,2,3$.
\end{df}
\begin{lem}\label{better}
Under the assumption \eqref{B1}, the following holds:
\begin{enumerate}
\item We have
\begin{align*}
    \b\preceq(\rhoc,\sic)\preceq\bb\preceq\aa,\qquad \quad\Gag\preceq\Gab\preceq\Gaw.
\end{align*}
\item We also have
\begin{align*}
\Gag\c\Gag\preceq r^{-1}\Gag,\qquad \Gag\c\Gaw\preceq r^{-1}\Gaw,\qquad \Gaw\preceq \Gab^{(1)},\qquad r^{-1}\Gaw^{(1)}\preceq\aa^{(0)}.
\end{align*}
\end{enumerate}
\end{lem}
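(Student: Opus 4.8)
The statement of Lemma~\ref{better} is essentially a bookkeeping lemma: it records the hierarchy of decay rates that were built into Definition~\ref{gammag} and the norm definitions of Section~\ref{ssec8.1}, and then asserts that this hierarchy is preserved under the relevant nonlinear interactions. The plan is therefore to reduce everything to comparing the explicit weights that appear in Lemma~\ref{decayGagGabGaa}, together with the corresponding flux weights in the $\mr$--norm of Section~\ref{secRnorms}. I would first fix, once and for all, the meaning of $X_{(1)}\preceq X_{(2)}$ in terms of these weights: for each $q=0,1,2,3$, the power of $r$ and of $|u|$ controlling $X_{(1)}^{(q)}$ should dominate (i.e.\ give faster decay in $r$ at fixed $r\simeq|u|$, using $|u|\le\ub\simeq r$) than the one controlling $X_{(2)}^{(q)}$. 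With this understood, claim~(1) is immediate from the definitions: comparing the weights $r^{\frac12-\de}|u|^\de$ for $(\a,\b,\rhoc,\sic)$, the weight $r^{-2\de}|u|^{\frac12+2\de}$ for $\bb$, and $r^{-2\de}|u|^{\frac12+2\de}$ for $\aa$ in the $\ur$--flux, together with the $L^4(S)$ weights $r$ for $\Gag$, $r^{1-\de_0}|u|^{\de_0}$ for $\Gab$, $r^{1-\de_0}|u|^{\de_0}$ for $\Gaw$ at order $0$ and the analogous ones at orders $1,2$; in every case the asserted chain of $\preceq$'s is just the statement that these exponents are monotone along the chain once one uses $|u|\lesssim r$ and $\de_0\ll\de\ll 1$. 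The slightly delicate point in~(1) is that $\b$ appears both in $\Gag^{(1)}$ (weight $r\cdot r = r^2$, i.e.\ $r\b$ is an $r$--weighted quantity) and in the $\mr$--flux; one must check the ordering simultaneously for the $L^4(S)$ hierarchy of Lemma~\ref{decayGagGabGaa} and for the flux hierarchy, but both give the same conclusion.

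For claim~(2), the first two relations $\Gag\c\Gag\preceq r^{-1}\Gag$ and $\Gag\c\Gaw\preceq r^{-1}\Gaw$ are product estimates: I would bound $|\Gag\c\Gag|_{4,S}$ (and all its $\dkb$--derivatives up to order $3$) by $\|\Gag\|_{\infty,S}\|\Gag^{(q)}\|_{4,S}$ after distributing derivatives by Leibniz, then use the Sobolev embedding on $S$ (which costs one derivative, so $\|\Gag\|_{\infty,S}\lesssim \|\Gag^{(1)}\|_{4,S}\lesssim \ep r^{-1}$ from Lemma~\ref{decayGagGabGaa}), giving an extra factor $r^{-1}$ relative to $\Gag$ itself; the point is precisely that multiplying by $\Gag$ improves decay by $r^{-1}$ (and similarly $\Gaw$ in place of one factor). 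The relation $\Gaw\preceq\Gab^{(1)}$ is again a direct weight comparison: $\Gaw$ at order $q$ carries the weights listed for $\hchb$, while $\Gab^{(1)}$ at order $q$ carries the weights of $\Gab^{(q+1)}$, i.e.\ of $\eta,\ze,r^{-1}\log(2\Om)$ with one extra derivative, and the exponents $1-3\de_0, 1-\de-2\de_0,\ldots$ for $\Gab$ are arranged to be no better than $1-\de_0, 1-\de,\ldots$ for $\Gaw$ — one checks this inequality order by order. Finally $r^{-1}\Gaw^{(1)}\preceq\aa^{(0)}$ compares $r^{-1}\|\Gaw^{(1)}\|_{4,S}\lesssim \ep\, r^{-2}$ (at order $0$), resp.\ the order-$2$ weight $r^{-1}\cdot (r^{\frac12-\de-\de_0}|u|^{\frac12+\de+\de_0})^{-1}$, against the decay of $\aa$ read off from $\ur_q[\aa]$, namely $\|\aa\|_{2,S}$--type control with weight $r^{-2\de}|u|^{\frac12+2\de}$; using $|u|\lesssim r$ one sees that $r^{-1}\Gaw^{(1)}$ decays at least as fast as $\aa$ in every order.

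The main obstacle is not any single estimate but the bookkeeping: one must verify, uniformly over $q\in\{0,1,2,3\}$, that the Leibniz rule applied to $\dkb^q(\Gag\c\Gag)$ never forces \emph{both} factors to carry derivatives in a way that breaks the claimed gain — i.e.\ one of the two factors always keeps enough regularity to be placed in $L^\infty(S)$ via Sobolev, and the remaining factor, carrying up to $q$ derivatives, is controlled by $\|\Gag^{(q)}\|_{4,S}$. Since $\mo_{[2]}$ only controls up to two $r\nab$--derivatives of the $\Gag,\Gab,\Gaw$ quantities in $L^4(S)$, the order-$3$ case ($q=3$) is the one to watch; here one uses that $\dkb^3 h$ for $h\in\{\Gag,\Gab,\Gaw\}$ appears inside $h^{(3)}$, and the product structure means at least two derivatives fall on the \emph{other} factor, which then sits at order $\le 1$ and embeds into $L^\infty(S)$. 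So the proof is: (i) translate $\preceq$ into explicit $(r,|u|)$--exponent inequalities; (ii) verify claim~(1) by inspection of Definitions~\ref{gammag} and the $\mr$--norms; (iii) for claim~(2), run Leibniz $+$ Sobolev $+$ Lemma~\ref{decayGagGabGaa}, checking the order-$3$ case carefully; (iv) conclude.
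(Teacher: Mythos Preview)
Your proposal is correct and takes essentially the same approach as the paper: the paper's own proof is the single line ``It follows directly from Definition~\ref{gammag}, \eqref{B1} and Lemma~\ref{decayGagGabGaa},'' and your proposal is simply a careful unpacking of what that sentence entails. The extra bookkeeping you describe (Leibniz, Sobolev on $S$, the order-$3$ case) is fine but more than the paper asks for here, since the lemma is intended as a direct weight comparison rather than a genuine estimate.
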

\begin{proof}
This follows readily from Definitions \ref{gammag} and \ref{betterdecay} and Lemma \ref{decayGagGabGaa}.
\end{proof}
\begin{rk}\label{Gawrk}
    In the sequel, we choose the following conventions:
\begin{itemize}
    \item Let $q\in\{0,1,2,3\}$, for a quantity $h$ satisfying the same or even better decay and regularity as $\Ga_{i}^{(q)}$, for $i=g,b,w$ , we write
    \begin{equation*}
        h\in\Ga_i^{(q)},\qquad i=g,b,w.
    \end{equation*}
    \item For a sum of schematic notations, we write
    \begin{align*}
        X_{(1)}+X_{(2)}=X_{(2)},
    \end{align*}
    if $X_{(1)}\preceq X_{(2)}$. For example, we write
    \begin{align*}
        \Gag+\Gab&=\Gab,\qquad\quad r^{-1}\Gab\c\Gaw^{(1)}+\Gab\c\aa=\Gab\c\aa,\qquad\quad \Gag\c\Gaw=\Gag\c\Gab^{(1)},\\
\b^{(0)}+\rhoc^{(0)}&=\rhoc^{(0)}=\Gag^{(1)},\qquad\qquad\quad\quad\, \b^{(1)}=\Gag^{(2)},
    \end{align*}
    where we recall that for any tensor $h$, $h^{(q)}$ is defined in Definition \ref{dfdkb}.
\end{itemize}
\end{rk}
\subsection{Commutation identities}
\begin{prop}\label{commcor}
For any scalar function $f$, we have:
\begin{align*}
[\nab_4,r\nab]f&=\Gag\cdot r\nab f,\\
[\Om^2\nab_3,r\nab]f&=\Om^2\Gaw\cdot r\nab f.
\end{align*}
\end{prop}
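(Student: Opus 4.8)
The plan is to verify both identities by expanding the commutators in the null frame $(e_1,e_2,e_3,e_4)$ and reading off the coefficients from the Ricci formulae \eqref{ricciformulae}, the gauge relations \eqref{6.6}, and Lemma \ref{dint}.

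First I would record the underlying commutator on scalars. If $f$ is a scalar, $\nab f$ is the $S$--tangent $1$--form with $(\nab f)_A=e_Af$, and for any $S$--tangent $1$--form $\xi$ one has $(\nab_4\xi)_A=e_4(\xi_A)-\xi(\nab_4e_A)$. Taking $\xi=\nab f$, writing $e_4(e_Af)-e_A(e_4f)=[e_4,e_A]f$, and using the torsion-free identity $[e_4,e_A]=\D_{e_4}e_A-\D_{e_A}e_4$ together with \eqref{ricciformulae} and $\xi=\xib=0$ from \eqref{6.6}, the contributions of $\nab_4e_A$ cancel and one obtains
\begin{equation*}
[\nab_4,\nab]f=-\chi\c\nab f+(\etab+\ze)\,\nab_4f,\qquad\qquad [\nab_3,\nab]f=-\chib\c\nab f+(\eta-\ze)\,\nab_3f,
\end{equation*}
where, again by \eqref{6.6}, both $\etab+\ze$ and $\eta-\ze$ equal $\nab\log\Om$.

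For the first identity I would multiply by $r$, insert $\nab_4r=\tfrac{\ov{\trch}}2\,r$ from Lemma \ref{dint}, and split $\chi$ into its trace and traceless parts, which gives
\begin{equation*}
[\nab_4,r\nab]f=-\tfrac12\,\trchc\,(r\nab f)-\hch\c(r\nab f)+r\,\nab\log\Om\;\nab_4f.
\end{equation*}
The crucial point is that the singular term $\tfrac1{2r}$ of $\trch$ has cancelled against $\nab_4r$; the surviving coefficients $\trchc$ and $\hch$ lie in $\Gag$ by Definition \ref{gammag}, while the last term — using $\nab\log\Om=(r\nab)\!\big(r^{-1}\log(2\Om)\big)$ with $r^{-1}\log(2\Om)\in\Gab$ — is absorbed into $\Gag\c r\nab f$ in the schematic sense of Remark \ref{Gawrk} after invoking the decay bounds of Lemmas \ref{decayGagGabGaa} and \ref{better}. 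The second identity follows the same pattern with $e_3$ replacing $e_4$: one uses $\Om^2\nab_3r=\tfrac12\,\ov{\Om^2\trchb}\,r$ together with the fact that $\Om^2\nab_3$ does not commute with multiplication by scalars (which contributes an extra $\nab_3f$--term with coefficient $\propto\nab\log\Om$), and the trace part $\tfrac12\trchb$ of $\chib$, which equals $-\tfrac2r$ up to a $\Gag$ term, cancels against $\Om^2\nab_3r$ modulo $-\tfrac12\,\widecheck{(\Om^2\trchb)}\,(r\nab f)$; since $\trchb+\tfrac4r\in\Gag$ and $\widecheck{(\Om^2)}$ are at worst of class $\Gab$, Lemma \ref{better} shows that these, and the $\nab_3f$--term, are all dominated by $\Gaw=\{\hchb\}$, leaving $-\Om^2\hchb\c(r\nab f)=\Om^2\Gaw\c r\nab f$ as the leading term.

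The computations themselves are elementary, so the real content — and the step most prone to error — is the schematic bookkeeping at the end: one must check that \emph{every} coefficient generated (in particular the $\nab\log\Om$ contributions, and in the $e_3$ case the average-free conformal-factor term) is no worse than $\Gag$, resp. $\Om^2\Gaw$, which is precisely where Definition \ref{gammag} and the ordering in Lemma \ref{better} enter.
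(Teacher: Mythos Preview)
Your direct computation is the standard route and, up to the last step, is carried out correctly: for any scalar $f$ one obtains
\[
[\nab_4,r\nab]_A f=-\tfrac12\,\widecheck{\trch}\,(r\nab_A f)-\hch_A{}^B(r\nab_B f)+(\etab_A+\ze_A)\,r\nab_4 f,
\]
with $\etab+\ze=\nab\log\Om$ by \eqref{6.6}, and analogously in the $e_3$--direction. The paper itself does not give this computation but merely cites \cite{Kl-Ni} and \cite{Shen22}, so you are supplying more than the paper does.

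The gap is your handling of the last term. That term carries the \emph{transversal} derivative $\nab_4 f$, not the angular derivative $r\nab f$, and for a general scalar $f$ there is no inequality relating $|\nab_4 f|$ to $|r\nab f|$. Your sentence ``the last term \dots\ is absorbed into $\Gag\c r\nab f$'' therefore does not follow from any decay property of the coefficient: rewriting $\nab\log\Om$ as $(r\nab)(r^{-1}\log(2\Om))$ only addresses the size of the coefficient (which, incidentally, lies in $\Gab$ and hence decays \emph{worse} than $\Gag$ by Lemma \ref{better}, so the schematic ordering runs the wrong way), not the fact that it multiplies $\nab_4 f$ rather than $r\nab f$. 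The same objection applies verbatim to the $\nab_3 f$ contribution in the second identity.

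What is actually happening --- and what the references implicitly encode --- is that Proposition \ref{commcor} is only ever invoked on scalars that already satisfy a transport equation $\nab_4 f=F$ (respectively $\Om^2\nab_3 f=F$); the offending term then becomes $r\nab\log\Om\cdot F$, a genuine lower-order error that lands in the schematic right-hand side of the commuted equation (compare the passage from \eqref{4trchi} to \eqref{ovchi}). If you want a self-contained statement, either keep the $(\etab+\ze)\,r\nab_4 f$ and $-\nab\log\Om\cdot r\Om^2\nab_3 f$ terms explicitly on the right, or restrict $f$ from the outset to solutions of a fixed transport equation and substitute for $\nab_4 f$, $\nab_3 f$ accordingly.
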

\begin{proof}
See Proposition 4.8.1 in \cite{Kl-Ni} and Corollary 2.22 in \cite{Shen22}.
\end{proof}
\begin{prop}\label{commutation}
We have the following schematic commutator identities
\begin{align*}
[\nab_4,r\nab]&=\Gag\cdot r\nab+\Gag^{(1)},\\
[\Om^2\nab_3,r\nab]&=\Om^2\Gaw\cdot r\nab+\Om^2\Gaw^{(1)}.
\end{align*}
\end{prop}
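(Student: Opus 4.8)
The plan is to derive these schematic commutator identities by differentiating the scalar identities of Proposition \ref{commcor}. For a general horizontal tensor field $h$ of rank $k$, the commutators $[\nab_4,r\nab]h$ and $[\Om^2\nab_3,r\nab]h$ are expressed — via the Ricci formulae \eqref{ricciformulae}, the identities \eqref{6.6}, and the null structure and Bianchi equations of Propositions \ref{nulles} and \ref{Bianchiequations} — in terms of the Ricci coefficients contracted with $h$ and with $r\nab h$, together with the curvature components $\b$ (resp.\ $\bb$, $\aa$) contracted with $h$. The key observation is that in the schematic formalism of Definition \ref{gammag}, once we package these contractions using the conventions of Remark \ref{Gawrk}, the curvature terms $r\b$, $r\rhoc$, $r\sic$ land inside $\Gag^{(1)}$ and $r\bb$, together with $r\nab\Gaw$, lands inside $\Gaw^{(1)}$, while the lower-order Ricci coefficients themselves reproduce $\Gag\cdot r\nab h$ and $\Om^2\Gaw\cdot r\nab h$ respectively. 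Thus it suffices to track which schematic class each structure coefficient belongs to and to verify that the curvature contributions are absorbed into the correct $\Gag^{(1)}$ or $\Gaw^{(1)}$ slot.

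First I would write out, for a $1$-form $\xi$ and a symmetric traceless $2$-tensor $U$, the precise commutators $[\nab_4,\nab_A]\xi_B$ and $[\nab_4,\nab_A]U_{BC}$ (and the $\nab_3$ analogues) using \eqref{ricciformulae}. These take the form $\chi\cdot\nab\xi + (\nab\chi + \chi\cdot\chi + \a\text{-type curvature})\cdot\xi$ for the $e_4$ direction, and the mirror expression with $\chib$, $\aa$ for the $e_3$ direction. Next I would incorporate the $r$-weights: since $[\nab_4,r\nab] = r[\nab_4,\nab] + (\nab_4 r)\nab$ and $\nab_4 r = \tfrac{\ov{\trch}}{2}r$ by Lemma \ref{dint}, the extra term $(\nab_4 r)\nab$ contributes $\tfrac{\ov{\trch}}{2}\,r\nab$, whose coefficient $\ov{\trch}$ differs from $\tfrac1r$ by an element of $\Gag$ (indeed $\trch - \tfrac1r \in \Gag$), so modulo the harmless main term this is $\Gag\cdot r\nab$ plus $\Gag$-type lower order; the constant-coefficient piece $\tfrac{1}{2}\cdot r\nab$ cancels against a corresponding $\tfrac{1}{2}\trch$-type term arising inside $r[\nab_4,\nab]$, which is exactly why the schematic identity closes with only $\Gag$ coefficients. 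Then I would invoke Proposition \ref{commcor} to handle the purely scalar part and combine.

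The main obstacle is bookkeeping the curvature terms to confirm they sit in $\Gag^{(1)}$ and $\Gaw^{(1)}$ and not in a worse class: for the $\nab_4$ commutator one gets a $\b$ contribution (from $\nab\chi$ via Codazzi \eqref{codazzi}, and from the Bianchi-type terms), and one must check that $r\b$, once multiplied against $h$ and any $r$-weights, is no worse than $\Gag^{(1)}$ — which holds because $r\b \in \Gag^{(1)}$ by Definition \ref{gammag} and $\b \preceq (\rhoc,\sic)$ by Lemma \ref{better}. For the $\nab_3$ commutator the analogous contribution involves $\bb$ and $\aa$; here one uses that $r\bb \in \Gaw^{(1)}$ and that the $\nab\chib = \nab\hchb + \tfrac12\de\,\nab\trchb$ term produces $r\nab\Gaw$-type quantities, again $\in \Gaw^{(1)}$, while the factor $\Om^2$ is carried along because $\Om^2\nab_3$ is the geometrically natural (coordinate) operator $\pr_u + \bbb$. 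One also checks that the $\ze$, $\eta$, $\etab$, $\log\Om$ coefficients appearing in the $e_3$-commutator — which are a priori only $\Gab$, worse than $\Gaw$ — actually enter only through combinations that are $\Om^2\Gaw$ or, after using \eqref{6.6} and $\omb=0$, drop out; this is the delicate point that makes the $\Om^2\nab_3$ identity nontrivial and is where most of the care is needed. Once these absorptions are verified, the two identities follow, and I would note that they extend verbatim from scalars to horizontal tensors of all ranks $0,1,2$ by the same computation.
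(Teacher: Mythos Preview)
Your approach---compute the commutators directly from the Ricci formulae \eqref{ricciformulae} and then absorb the resulting Ricci-coefficient and curvature contributions into the schematic classes of Definition \ref{gammag}---is precisely what the references the paper cites (Lemma 7.3.3 of \cite{Ch-Kl}, Proposition 2.24 of \cite{Shen22}) carry out, so the strategy is correct and matches the paper.

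Two factual corrections to your description of the commutator structure, however. First, the curvature terms produced by $[\nab_4,\nab_A]$ and $[\nab_3,\nab_A]$ on horizontal tensors come from the components $R_{4ABC}$ and $R_{3ABC}$, which in the null decomposition are $\b$-type and $\bb$-type respectively---not $\a$ or $\aa$. This is not innocuous on the $e_3$ side: $r\aa\notin\Gaw^{(1)}$ by Definition \ref{gammag}, so if $\aa$ genuinely appeared your absorption argument would fail; fortunately it does not. Second, the zeroth-order coefficients are of the schematic form $\chi\cdot(\etab,\ze)+\b$ (resp.\ $\chib\cdot(\eta,\etab)+\bb$), not $\nab\chi+\chi\cdot\chi+\a$; no angular derivative of $\chi$ appears in the raw commutator. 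With these corrections your outline goes through, and your observation that the $\Gab$-type coefficients multiplying $\nab_4\phi$ or $\nab_3\phi$ are the delicate point is well taken---in applications these are handled by substituting the transport equation being commuted.
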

\begin{proof}
    See Lemma 7.3.3 in \cite{Ch-Kl} and Proposition 2.24 in \cite{Shen22}.
\end{proof}
\subsection{Main equations in schematic form}
\begin{prop}\label{null}
We have the following schematic null structure equations:
\begin{align}
\begin{split}\label{nullschematic}
\nab_4\trch+\frac{1}{2}(\trch)^2&=-2\om\trch+\Gag\c\Gag,\\
\nab_3\trchb+\frac{1}{2}(\trchb)^2&=\Gaw\c\Gaw,\\ 
\nab_4\trchb+\frac{1}{2}\trch\trchb&=2\om\trchb-2\mub+\Gag\c\Gag,\\
\nab_3\trch+\frac{1}{2}\trchb\trch&=-2\mu+\Gab\c\Gab,\\
\sdiv\hch&=\frac{1}{2}\nab\trch-\frac{1}{2}r^{-1}\etab-\b+\Gag\c\Gag,\\
\sdiv\hchb&=\frac{1}{2}\nab\trchb-\frac{1}{2}r^{-1}\etab+\bb+\Gag\c\Gaw,\\
d_1\eta&=(-\mu-\rhoc,\sic),\\
d_1\etab&=(-\mub-\rhoc,-\sic),\\
d_1^*(-\om,\om^\dagger)&=\vkp+\b,\\
\nab_3\om&=\rhoc+\Gab\c\Gab+\Gag\c\Gaw.
\end{split}
\end{align}
We also have the following transport equations for the quantities defined in Definition \ref{renorr}:
\begin{align}
\begin{split}\label{renorschematic}
\nab_3\vkp+\frac{1}{2}\trchb\,\vkp&=-r^{-1}\b+r^{-1}(\Gab\c\Gab)^{(1)},\\
\nab_4\mumc+\trch\,\mumc&=r^{-2}\,\widecheck{\trch}+r^{-1}\widecheck{\rhoc}+r^{-1}\Gab\c\Gab+r^{-1}(\Gag\c\Gab)^{(1)},\\
\nab_3\mumbc+\trchb\,\mumbc&=r^{-2}\,\widecheck{\trchb}-r^{-1}\widecheck{\rhoc}+r^{-1}(\Gaw\c\Gaw)^{(1)}.
\end{split}
\end{align}
\end{prop}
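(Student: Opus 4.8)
### Proof proposal for Proposition \ref{null}

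The plan is to derive each schematic equation directly from the exact equations in Propositions \ref{nulles} and \ref{Bianchiequations}, together with the identities \eqref{6.6}, by systematically absorbing all lower-order quadratic and curvature terms into the schematic notation $\Gag$, $\Gab$, $\Gaw$ (and their derived quantities) according to Definition \ref{gammag} and the decay hierarchy established in Lemma \ref{better} and Remark \ref{Gawrk}. First I would dispose of the null structure equations in \eqref{nullschematic}. For $\nab_4\trch$, start from $\nab_4\trch+\frac12(\trch)^2=-|\hch|^2-2\om\trch$ in Proposition \ref{nulles}; since $\om\in\Gag$ and $\trch=r^{-1}+(\trch-r^{-1})$ with $\trch-r^{-1}\in\Gag$, the term $-2\om\trch$ splits as $-2r^{-1}\om+\Gag\c\Gag$, and $-|\hch|^2\in\Gag\c\Gag$, giving the claimed form. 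The equations for $\nab_3\trchb$, $\nab_4\trchb$, $\nab_3\trch$ follow identically using $\trchb=-4r^{-1}+\Gaw$-type splittings; note for $\nab_3\trch$ the relevant quadratic term is $2|\eta|^2\in\Gab\c\Gab$ and $\frac12\trchb\trch$ generates $\Gab\c\Gab$ once $r^{-1}\c r^{-1}$ pieces are reorganised. The two Codazzi equations come from \eqref{codazzi} by writing $\frac12\trch\etab=\frac12 r^{-1}\etab+\frac12(\trch-r^{-1})\etab$ and absorbing $(\trch-r^{-1})\etab$ and $\etab\c\hch$ into $\Gag\c\Gag$ (respectively $\Gag\c\Gaw$ for $\hchb$, using $\etab\c\hchb\in\Gag\c\Gaw$). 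The Hodge systems $d_1\eta$, $d_1\etab$ are just \eqref{murenor} and \eqref{torsion} repackaged, and $d_1^*(-\om,\om^\dagger)=\vkp+\b$ is the definition \eqref{renorslu} together with $\nab_3\om^\dagger=\si$ and the relation between $\si$, $\sic$, $\hch\wedge\hchb$; the schematic $\si$ vs $\sic$ discrepancy is $\frac12\hch\wedge\hchb\in\Gag\c\Gaw$, which is of lower order. Finally $\nab_3\om$ follows from the last equation in Proposition \ref{nulles} by replacing $\rho$ with $\rhoc$ (the difference $\frac12\hch\c\hchb\in\Gag\c\Gaw$) and recognizing the remaining $|\eta-\etab|^2$, $(\eta-\etab)\c(\eta+\etab)$, $|\eta+\etab|^2$ combinations: $\eta\in\Gab$, $\etab\in\Gag$, so the mixed terms land in $\Gab\c\Gab+\Gag\c\Gaw$ after invoking $\Gag\preceq\Gab$.

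For the renormalized transport equations \eqref{renorschematic}, the derivation is more delicate and I expect this to be the main obstacle. The equation $\nab_3\vkp+\frac12\trchb\,\vkp=-r^{-1}\b+r^{-1}(\Gab\c\Gab)^{(1)}$ must be obtained by applying $\nab_3$ to the definition $\vkp=d_1^*(-\om,\om^\dagger)-\b$, commuting $\nab_3$ past $d_1^*$ using Proposition \ref{commutation} (which produces $\Om^2\Gaw\c r\nab+\Om^2\Gaw^{(1)}$ error terms acting on $(\om,\om^\dagger)$, hence contributions of type $\Gaw\c\Gag^{(1)}$ which by Lemma \ref{better} sit inside $(\Gab\c\Gab)^{(1)}$ after using $\Gaw\preceq\Gab^{(1)}$), then using the $\nab_3\b$ Bianchi equation together with $\nab_3\om$ and $\nab_3\om^\dagger=\si$. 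The curvature terms $\nab\rhoc$, $\nab\sic$ appearing in $\nab_3\b$ must be seen to cancel against the $\nab$-derivatives of $d_1^*(-\om,\om^\dagger)$ produced by the $\nab_3$-$\nab$ commutation and the structure of $d_1^*d_1$ versus the Laplacian — this is the Klainerman–Nicolò renormalization mechanism, and tracking it carefully is where the real work lies. The leading term $-r^{-1}\b$ emerges from $\frac12\trchb\,\b=-2r^{-1}\b+\frac12(\trchb+4r^{-1})\b$ in the $\nab_3\b$ equation after matching against $\frac12\trchb\vkp$ on the left. The $\mumc$ and $\mumbc$ equations are derived analogously: differentiate the definitions \eqref{murenor}, \eqref{mumrenor} with $\nab_4$ (resp. $\nab_3$), use the $\nab_4\rhoc$ (resp. $\nab_3\rhoc$) Bianchi equation and the $\nab_4\sdiv\eta$ (resp. $\nab_3\sdiv\etab$) equations obtained by commuting $\nab_4$ with $\sdiv$ and using $\nab_4\eta$ (resp. $\nab_3\etab$), plus the $\nab_4\trch\trchb$ and $\nab_3\trch\trchb$ products from the structure equations. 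The appearance of $r^{-2}\widecheck{\trch}$ and $r^{-1}\widecheck{\rhoc}$ as the \emph{checked} (average-free) quantities is the crucial point: the averages cancel by the integral identities of Lemma \ref{dint} combined with the definitions \eqref{mumrenor}, so one must carefully separate average and average-free parts throughout.

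Throughout, the bookkeeping principle is: every curvature component other than $\b,\rhoc,\sic,\bb$ that survives must be $\a$ or $\aa$ (which do not appear here) or be absorbable; every product of two Ricci coefficients is classified by the worst factor via $\Gag\preceq\Gab\preceq\Gaw$ and the rules in Lemma \ref{better} such as $\Gag\c\Gag\preceq r^{-1}\Gag$ and $r^{-1}\Gaw^{(1)}\preceq\aa^{(0)}$; and the $(\cdot)^{(1)}$ superscript appears precisely when a commutator of a transport derivative with $r\nab$ has been invoked (Proposition \ref{commutation}), which costs one derivative but no decay. The hard part will be the $\vkp$ equation and verifying the precise cancellation of the $\nab\rhoc+{}^*\nab\sic$ terms against the commutator and $d_1^*d_1$ contributions; once that algebraic identity is in hand, the remaining terms are routine to classify. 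I would present the null structure equations in \eqref{nullschematic} as a short paragraph of "immediate from Proposition \ref{nulles} and \eqref{6.6} after absorbing quadratic terms," and devote the bulk of the written proof to the three renormalized equations, citing the analogous computations in \cite{Kl-Ni,Ch-Kl,Shen22} where the same renormalizations were carried out.
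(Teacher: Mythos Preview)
Your proposal is correct and matches the paper's approach exactly: the paper's own proof is two lines, stating that \eqref{nullschematic} follows directly from Proposition \ref{nulles} and Definition \ref{gammag}, and referring to (6.19) in \cite{kr} and Lemma 4.3.2 in \cite{Kl-Ni} for the explicit formulae behind \eqref{renorschematic}. Your detailed plan simply unpacks what those citations contain; two minor remarks are that the identity $d_1^*(-\om,\om^\dagger)=\vkp+\b$ is literally the definition \eqref{renorslu} rearranged (no appeal to $\nab_3\om^\dagger=\si$ is needed there), and in the $\nab_3\trch$ equation the product $\frac12\trchb\trch$ stays on the left-hand side as written, so no reorganisation is required.
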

\begin{proof}
Notice that \eqref{nullschematic} follows directly from Proposition \ref{nulles} and Definition \ref{gammag}. See (6.19) in \cite{kr} and Lemma 4.3.2 in \cite{Kl-Ni} for explicit formulae of \eqref{renorschematic}. 
\end{proof}
\begin{prop}\label{bianchi}
We have the following schematic Bianchi equations:
\begin{align*}
\nab_3\a+\frac{1}{2}\trchb\,\a&=-2d_2^*\b+r^{-1}(\Gag\c\Gab)^{(1)},\\
\nab_4\b+2\trch\,\b&=d_2\a+r^{-1}(\Gag\c\Gag)^{(1)}+\Gab\c\a,\\
\nab_3\b+\trchb\,\b&=-d_1^*(\rhoc,-\sic)+r^{-1}(\Gab\c\Gaw)^{(1)},\\
\nab_4(\rhoc,-\sic)+\frac{3}{2}\trch(\rhoc,-\sic)&=d_1\b+r^{-1}(\Gag\c\Gag)^{(1)},\\
\nab_3(\rhoc,\sic)+\frac{3}{2}\trchb(\rhoc,\sic)&=-d_1\bb+r^{-1}(\Gab\c\Gaw)^{(1)},\\
\nab_4\bb+\trch\,\bb&=d_1^*(\rhoc,\sic)+r^{-1}(\Gag\c\Gaw)^{(1)},\\
\nab_3\bb+2\trchb\,\bb&=-d_2\aa+\Gab\c\aa,\\
\nab_4\aa+\frac{1}{2}\trch\,\aa&=2d_2^*\bb+\Gag\c\aa+r^{-1}(\Gab\c\Gaw)^{(1)}.
\end{align*}
\end{prop}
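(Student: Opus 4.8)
The plan is to derive each schematic identity directly from the corresponding explicit Bianchi equation in Proposition \ref{Bianchiequations}, in three steps: (i) rewrite the principal angular derivatives in terms of the Hodge operators $d_1,d_1^*,d_2,d_2^*$ of Definition \ref{hodgeop}; (ii) replace $\rho$ and $\si$ by the renormalized components $\rhoc,\sic$ of Definition \ref{renorr} wherever they occur undifferentiated; and (iii) sort every remaining quadratic (and higher order) term into the schematic class written on the right-hand side, using the $\preceq$--calculus of Lemma \ref{better} and the absorption conventions of Remark \ref{Gawrk}. No estimates beyond those already recorded in Section \ref{secboot} are needed.

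For step (i) I would use the identities $d_2^*\xi=-\tfrac12\nab\hot\xi$, $(d_2U)_A=\nab^BU_{AB}$, $d_1\xi=(\sdiv\xi,\curl\xi)$, $d_1^*(f,f_*)=-\nab f+{^*\nab}f_*$, together with $\sdiv{^*\xi}=\curl\xi$ for a $1$--form. These convert the principal parts directly: $\nab\hot\b\mapsto-2d_2^*\b$ in the $\a$--equation, $\sdiv\a\mapsto d_2\a$ in the $\b$--equation, $(\sdiv\b,-\sdiv{^*\b})\mapsto d_1\b$ in the equation for $(\rhoc,-\sic)$, $\nab\rhoc+{^*\nab\sic}\mapsto-d_1^*(\rhoc,-\sic)$ in the $\nab_3\b$--equation, $-\nab\rhoc+{^*\nab\sic}\mapsto d_1^*(\rhoc,\sic)$ in the $\nab_4\bb$--equation, $-\sdiv\bb$ and $-\sdiv{^*\bb}$ into the components of $-d_1(\cdot)$ in the equations for $(\rhoc,\sic)$ on the $e_3$--side, $-\sdiv\aa\mapsto-d_2\aa$ in the $\nab_3\bb$--equation, and $-\nab\hot\bb\mapsto2d_2^*\bb$ in the $\nab_4\aa$--equation. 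Nothing here is analytic; one only has to be consistent with the sign conventions for $\curl$ and for the Hodge dual so that the scalar/vector components assemble into the correct operator.

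For step (ii), substituting $\rho=\rhoc+\tfrac12\hch\c\hchb$ and $\si=\sic+\tfrac12\hch\wedge\hchb$ from \eqref{renorq} into the terms $3(\hch\rho\pm{^*\hch}\si)$ of the $\a$-- and $\aa$--equations yields $\hch\c(\rhoc,\sic)$, which lies in $r^{-1}(\Gag\c\Gag)^{(1)}$ since $r\rhoc,r\sic\in\Gag^{(1)}$, plus a cubic remainder $\hch\c(\hch\c\hchb)$ absorbed using $\Gag\c\Gaw\preceq r^{-1}\Gaw$. In the equations for $\b$ and $\bb$ the same substitution applied to $3(\eta\rho\pm{^*\eta}\si)$ produces $\eta\c(\rhoc,\sic)$ plus $\hch\c\hchb$ pieces that combine with the explicit terms $\tfrac12 d_1^*(\mp\hch\c\hchb,\hch\wedge\hchb)$; after commuting the single derivative through the product with Proposition \ref{commutation}, one has $\nab(\hch\c\hchb)\in r^{-1}(\Gag^{(1)}\c\Gaw+\Gag\c\Gaw^{(1)})$, so these fall into $r^{-1}(\Gag\c\Gaw)^{(1)}$, while $\eta\c(\rhoc,\sic)$, with $\eta\in\Gab$, goes into $r^{-1}(\Gab\c\Gaw)^{(1)}$ after using $(\rhoc,\sic)\preceq\bb$.

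Step (iii) is the only delicate one: one must verify that \emph{every} error term in each of the eight schematic equations really does fall into the displayed slot. This is done by repeatedly invoking the orderings $\b\preceq(\rhoc,\sic)\preceq\bb\preceq\aa$ and $\Gag\preceq\Gab\preceq\Gaw$, the product rules $\Gag\c\Gag\preceq r^{-1}\Gag$, $\Gag\c\Gaw\preceq r^{-1}\Gaw$, $\Gaw\preceq\Gab^{(1)}$, $r^{-1}\Gaw^{(1)}\preceq\aa^{(0)}$ of Lemma \ref{better}, the commutator bounds of Proposition \ref{commutation} to move the derivative in the $(\cdot)^{(1)}$ notation past products, and the convention $X_{(1)}+X_{(2)}=X_{(2)}$ when $X_{(1)}\preceq X_{(2)}$; cubic and higher order terms are automatically negligible by the smallness of $\ep$, and the curvature--times--Ricci terms built on the rapidly decaying $\a$ are collected simply as $\Gag\c\a$. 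Concretely this amounts to checking that, for instance, $(\ze+4\eta)\hot\b$, $\hch\c\bb$, $(\ze\pm2\eta)\c\bb$ and $(\ze-4\etab)\hot\bb$ sit in the appropriate $r^{-1}(\Gag\c\Gaw)^{(1)}$ or $r^{-1}(\Gab\c\Gaw)^{(1)}$ slots, that $\om\b$ and $\trchb|\hch|^2$ sit in $r^{-1}(\Gag\c\Gag)^{(1)}$, and that $\hch\c(\nab\hot\etab+\etab\hot\etab)$ and $\hchb\c(\nab\hot\eta+\eta\hot\eta)$ reduce to $r^{-1}(\Gag\c\Gag)^{(1)}$ and $r^{-1}(\Gab\c\Gaw)^{(1)}$ respectively. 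I expect the bookkeeping of the $r$-- and $|u|$--weights, together with tracking which member of each Bianchi pair ($e_3$-- versus $e_4$--direction) forces a $\Gag$-- rather than a $\Gab$-- or $\Gaw$--type coefficient, to be the one point requiring genuine care; otherwise the computation is entirely mechanical and mirrors the analogous schematic reductions in \cite{Kl-Ni,Shen22}.
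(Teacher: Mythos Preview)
Your proposal is correct and is precisely what the paper's one-line proof (``It follows directly from Proposition \ref{Bianchiequations}, Definition \ref{gammag} and Remark \ref{Gawrk}'') amounts to when unpacked; the three steps you describe---identifying the principal parts with the Hodge operators, substituting $\rho=\rhoc+\tfrac12\hch\c\hchb$, $\si=\sic+\tfrac12\hch\wedge\hchb$, and sorting the quadratic remainders via Lemma \ref{better} and Remark \ref{Gawrk}---are exactly the bookkeeping the paper leaves implicit. One small remark: when you say you ``commute the single derivative through the product with Proposition \ref{commutation}'', what you actually need there is just the Leibniz rule for $\nab$ on products (Proposition \ref{commutation} concerns $[\nab_4,r\nab]$ and $[\Om^2\nab_3,r\nab]$), but this does not affect the argument.
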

\begin{proof}
It follows directly from Proposition \ref{Bianchiequations}, Definition \ref{gammag} and Remark \ref{Gawrk}.
\end{proof}
\subsection{Sobolev inequalities and \texorpdfstring{$L^p$}{}--estimates}
\begin{prop}\label{sobolev}
Let $F$ be a tensor field on $S$. Then, we have
\begin{align*}
\|r^\frac{3}{2}F\|_{4,S}&\les\|r^\frac{3}{2}F\|_{4,S(u,|u|)}+\|F\|_{2,\cuv}+\|r\nab F\|_{2,\cuv}+\|r\nab_4F\|_{2,\cuv},\\
\|r|u|^\frac{1}{2}F\|_{4,S}&\les\|r^\frac{3}{2}F\|_{4,S(-\ub,\ub)}+\|\Om^{-1}F\|_{2,\ucuv}+\|\Om^{-1}r\nab F\|_{2,\ucuv}+|u|\|\Om\nab_3F\|_{2,\ucuv},
\end{align*}
where we recall that the $L^2$--flux on $\cuv$ and $\ucuv$ has been defined in \eqref{dfflux}.
\end{prop}
\begin{proof}
See Corollary 3.2.1.1 in \cite{Ch-Kl} and Corollary 4.1.1 in \cite{Kl-Ni}.
\end{proof}
\begin{prop}\label{standardsobolev}
Let $F$ be a tensor field on $S$. Then, we have
\begin{align*}
\|F\|_{\infty,S}\les\|F\|_{4,S}^\frac{1}{2}\|r\nab F\|_{4,S}^\frac{1}{2}+\|F\|_{4,S}.
\end{align*}
\end{prop}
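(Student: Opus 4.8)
The final statement is Proposition~\ref{standardsobolev}, the standard Sobolev embedding $\|F\|_{\infty,S}\les\|F\|_{4,S}^{1/2}\|r\nab F\|_{4,S}^{1/2}+\|F\|_{4,S}$ on a $2$-sphere $S$. This is the Gagliardo–Nirenberg/Sobolev inequality $W^{1,4}\hookrightarrow L^\infty$ in two dimensions, and the only subtlety is keeping track of the scale-invariant normalization so that the power of $r$ comes out right.

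The plan is to reduce to a unit-area sphere by rescaling, prove the inequality there, and then restore the powers of $r$. Concretely, let $\hat\ga:=r^{-2}\ga$ be the rescaled metric on $S$, so that $(S,\hat\ga)$ has area $4\pi$. First I would recall that on any compact Riemannian $2$-manifold with a fixed bound on its area and on its isoperimetric constant (both of which hold uniformly here thanks to the bootstrap control of $\mathbf{K}$ via the Gauss equation \eqref{gauss} and Lemma~\ref{decayGagGabGaa}, which keeps $S$ close to a round sphere of radius $r$), one has the Sobolev inequality
\begin{equation*}
\|F\|_{L^\infty(\hat\ga)}\les\|F\|_{L^4(\hat\ga)}^{1/2}\|\hat\nab F\|_{L^4(\hat\ga)}^{1/2}+\|F\|_{L^4(\hat\ga)}.
\end{equation*}
This is standard (see e.g. the analogous statements in \cite{Ch-Kl,Kl-Ni}): cover $S$ by finitely many coordinate patches with controlled transition maps, localize $F$ by a partition of unity, and in each patch invoke the Euclidean $W^{1,4}(\mathbb{R}^2)\hookrightarrow L^\infty$ embedding together with interpolation. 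Alternatively one can argue directly via the fundamental theorem of calculus along geodesics emanating from a point combined with Hölder, which is how the $1/2$--$1/2$ split of exponents naturally appears in two dimensions.

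Next I would translate the rescaled inequality back to the physical metric $\ga$. Since $|F|_{\ga}=|F|_{\hat\ga}$ pointwise (norms of tensors are metric-independent as pointwise quantities once we agree on the index positions, or more carefully one tracks the homogeneity weight of $F$), and since $d\ga=r^2\,d\hat\ga$ while $\nab=\hat\nab$ acting on the tensor components in the rescaled frame differs from the physical covariant derivative only by the rescaling, one computes $\|F\|_{L^4(\hat\ga)}=r^{-1/2}\|F\|_{4,S}$ and $\|\hat\nab F\|_{L^4(\hat\ga)}\simeq r^{-1/2}\|r\nab F\|_{4,S}$, with the implicit constants controlled because the difference between $\hat\nab$ and the rescaled $\nab$ is governed by $r\,|\nab\log\ga|$-type terms that are absorbed into $\|r\nab F\|_{4,S}+\|F\|_{4,S}$. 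Substituting these into the rescaled inequality, the factors $r^{-1/2}$ combine as $(r^{-1/2})^{1/2}(r^{-1/2})^{1/2}=r^{-1/2}$ on the product term and the inequality becomes scale-invariant, yielding exactly the claimed bound.

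The main obstacle is not the analysis itself — the two-dimensional Sobolev embedding is classical — but rather ensuring that all constants are uniform over the entire bootstrap region $\kk$. This requires knowing that every sphere $S(u,\ub)$ is uniformly comparable to a round sphere of radius $r$: one needs uniform control of the isoperimetric constant and of the metric $\ga$ in good coordinates, which in this framework follows from the bootstrap assumptions \eqref{B1} through the smallness of $\Gag$ and of $\mathbf{K}+r^{-2}$ (Lemma~\ref{decayGagGabGaa} and the Gauss equation). Since this uniformity has effectively already been established in the preceding setup, the proof amounts to citing the standard inequality on a fixed round sphere and performing the scaling bookkeeping above. This is exactly what \cite[Corollary~3.2.1.1]{Ch-Kl} and \cite[Corollary~4.1.1]{Kl-Ni} do in the closely related setting of Proposition~\ref{sobolev}.
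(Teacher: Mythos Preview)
Your approach is correct and is essentially what lies behind the paper's proof, which consists of a single citation to Lemma~4.1.3 in \cite{Kl-Ni}. That lemma is proved exactly by the mechanism you describe: reduce to a fixed model sphere via the conformal rescaling $\hat\ga=r^{-2}\ga$, invoke the standard two-dimensional Gagliardo--Nirenberg inequality there, and use the bootstrap control on the geometry (the smallness of $\mathbf{K}-r^{-2}$, or equivalently a uniform isoperimetric constant) to make the constants uniform over $\kk$. One minor bookkeeping point: with the paper's scale-invariant convention $\|F\|_{p,S}=r^{-2/p}|F|_{p,S}$ from Definition~\ref{Lpnorms}, you actually get $\|F\|_{L^4(\hat\ga)}=\|F\|_{4,S}$ for scalars (not $r^{-1/2}\|F\|_{4,S}$), and for tensors the pointwise norm $|F|_{\hat\ga}$ picks up a power of $r$ per index; these corrections cancel in the end and the inequality is indeed scale-invariant, so your conclusion stands.
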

\begin{proof}
    This is the standard Gagliardo-Nirenberg inequality.
\end{proof}
\begin{prop}\label{elliptic2d}
The following holds for all $p\in(1,+\infty)$:
\begin{enumerate}
\item Let $\phi\in\sfr_0$ be a solution of $\De\phi=f$. Then we have
\begin{align*}
    \|\nab^2 \phi\|_{p,S}+r^{-1}\|\nab\phi\|_{p,S}+r^{-2} \|\widecheck{\phi}\|_{p,S}\les \|f\|_{p,S}.
\end{align*}
\item Let $\xi\in\sfr_1$ be a solution of $d_1\xi=(f,f_*)$. Then we have
\begin{align*}
    \|\nab\xi\|_{p,S}+r^{-1}\|\xi\|_{p,S}\les\|f\|_{p,S}+\|f_*\|_{p,S}.
\end{align*}
\item Let $U\in\sfr_2$ be a solution of $d_2U=f$. Then we have
\begin{align*}
    \|\nab U\|_{p,S}+r^{-1}\|U\|_{p,S}\les\|f\|_{p,S}.
\end{align*}
\end{enumerate}
\end{prop}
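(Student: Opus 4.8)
The plan is to treat the three Hodge systems in a uniform way by reducing each of them to a single scalar Poisson equation on the $2$--sphere $S$ and then invoking the standard $L^p$ Calder\'on--Zygmund theory together with the Bochner identity on a surface whose Gauss curvature is close to that of the round sphere of radius $r$. First I would record the scaling: after passing to the rescaled metric $r^{-2}\ga$, the sphere $S$ becomes a metric $2$--sphere of unit area whose Gauss curvature is $r^2\mathbf{K}$; by the Gauss equation \eqref{gauss} and the bootstrap bounds of Lemma \ref{decayGagGabGaa} one has $\|r^2\mathbf{K}-1\|_{\infty,S}\lesssim\ep$, so the rescaled surface is a small perturbation of the standard round sphere. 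All the claimed estimates are scale invariant in $r$ (this is exactly why the factors $r^{-1}$ and $r^{-2}$ appear), so it suffices to prove them on a fixed surface $(S,r^{-2}\ga)$ with curvature within $\ep$ of $1$, where the constants are universal.

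For part (1), given $\De\phi=f$ I would first note $\int_S f\,d\ga=0$, so $\widecheck\phi$ solves the same equation and we may assume $\overline\phi=0$. Then the Poincar\'e inequality gives $r^{-2}\|\phi\|_{2,S}+r^{-1}\|\nab\phi\|_{2,S}\lesssim\|f\|_{2,S}$, and the $L^p$ version of this (obtained from the $L^p$ boundedness of the inverse Laplacian on the perturbed sphere, which follows from the round-sphere case by a Neumann-series / continuity argument since $\|r^2\mathbf{K}-1\|_\infty$ is small) yields $r^{-2}\|\widecheck\phi\|_{p,S}+r^{-1}\|\nab\phi\|_{p,S}\lesssim\|f\|_{p,S}$. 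To control $\|\nab^2\phi\|_{p,S}$ I would use the Bochner formula $\int_S|\nab^2\phi|^2 = \int_S (\De\phi)^2 - \int_S \mathbf{K}|\nab\phi|^2$ for $p=2$, and for general $p\in(1,\infty)$ the Calder\'on--Zygmund estimate $\|\nab^2\phi\|_{p,S}\lesssim\|\De\phi\|_{p,S}+r^{-2}\|\nab\phi\|_{p,S}\cdot r$ on the perturbed sphere, again by perturbation off the explicit round-sphere kernel; combining with the gradient bound already obtained closes part (1).

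Parts (2) and (3) reduce to part (1) via the identities \eqref{dddd}. For part (2), apply $d_1^*$ to $d_1\xi=(f,f_*)$ and use $d_1^*d_1=-\De_1+\mathbf{K}$; since $\|\mathbf{K}\|_{\infty,S}\lesssim r^{-2}$ this is a small zeroth-order perturbation of $-\De_1$, and the vector Laplacian obeys the same elliptic estimate as the scalar one after commuting derivatives (the commutator $[\De,\nab]$ on a $1$--form produces curvature terms of size $r^{-2}$, which are absorbed). One gets $\|\nab^2\xi\|_{p,S}+r^{-1}\|\nab\xi\|_{p,S}+r^{-2}\|\xi\|_{p,S}\lesssim\|\nab(f,f_*)\|_{p,S}+r^{-1}\|(f,f_*)\|_{p,S}$, and then the stated first-order estimate $\|\nab\xi\|_{p,S}+r^{-1}\|\xi\|_{p,S}\lesssim\|f\|_{p,S}+\|f_*\|_{p,S}$ follows directly from the $L^2$-type pairing $\int_S|\nab\xi|^2+\mathbf{K}|\xi|^2=\int_S(|\sdiv\xi|^2+|\curl\xi|^2)$ combined with its $L^p$ analogue. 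Part (3) is identical, using $d_2^*d_2=-\tfrac12\De_2+\mathbf{K}$ and the corresponding integration-by-parts identity for symmetric traceless $2$--tensors. The main obstacle is making the perturbation argument off the round sphere fully rigorous in $L^p$ for all $p\in(1,\infty)$ — i.e.\ verifying that the small-curvature condition $\|r^2\mathbf{K}-1\|_{\infty,S}\lesssim\ep$ (which is all the bootstrap supplies) is enough to keep the $L^p\to W^{2,p}$ operator norm of the inverse Laplacian uniformly bounded; this is where one must be careful that the implicit constants do not degenerate as $p\to1$ or $p\to\infty$, and it is the step I would expect to require the most care, though it is by now standard in this literature (cf.\ the elliptic theory in \cite{Ch-Kl,Kl-Ni}).
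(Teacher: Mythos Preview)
The paper does not give a self-contained proof here; it simply cites Corollary~2.3.1.1 of \cite{Ch-Kl}. Your sketch is essentially the argument one finds in that reference: rescale to a unit-area sphere, establish the $L^2$ estimates from the integration-by-parts identities $\int_S|\nab\xi|^2+\mathbf{K}|\xi|^2=\int_S|d_1\xi|^2$ (and the analogous identities for $\sfr_0$ and $\sfr_2$), then upgrade to general $p\in(1,\infty)$ by Calder\'on--Zygmund theory on a small perturbation of the round sphere. So the route you describe is the standard one rather than a genuinely different approach.

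There is, however, one concrete inaccuracy in your setup that matters in the borderline-decay regime of this paper. You assert that the bootstrap supplies $\|r^2\mathbf{K}-1\|_{\infty,S}\lesssim\ep$ and rest the perturbation argument on this. From the Gauss equation \eqref{gauss} one has schematically $r^2\mathbf{K}-1=-r^2\rhoc+r\Gag+r^2\Gag\cdot\Gag$; the $r\Gag$ contributions are indeed $O(\ep)$ in $L^\infty$ via Proposition~\ref{standardsobolev} and $\mo_1(\Gag)\leq\ep$, but the term $r^2\rhoc$ is not. The bootstrap only gives $\|\rhoc\|_{4,S}\les\ep r^{-2}$ and $\|r\nab\rhoc\|_{4,S}\les\ep r^{-2+\de}|u|^{-\de}$, so Proposition~\ref{standardsobolev} yields at best $\|r^2\rhoc\|_{\infty,S}\les\ep(r/|u|)^{\de/2}$, which is unbounded along $C_u$ as $\ub\to\infty$. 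This is exactly the content of the Remark following Proposition~\ref{roundsphere}: in this paper the spheres are almost round only in $L^4$, not in $L^\infty$. The Christodoulou--Klainerman elliptic theory being invoked does not in fact require pointwise smallness of $r^2\mathbf{K}-1$; it rests on weaker hypotheses (integral bounds on $\mathbf{K}-r^{-2}$, control of the isoperimetric constant, or direct $C^0$-closeness of the metric coefficients to the round metric in transported coordinates), all of which do follow from the bootstrap here. So your strategy survives, but the specific hypothesis you single out as ``all the bootstrap supplies'' is precisely the one that fails in this setting, and the perturbation step must be phrased with the weaker control that is actually available.
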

\begin{proof}
See Corollary 2.3.1.1 in \cite{Ch-Kl}.
\end{proof}
\section{Hyperbolic estimates (proof of Theorem \ref{M1})}\label{sec9}
In this section, we prove Theorem \ref{M1} by the $r^p$--weighted estimate method introduced in \cite{Da-Ro} and applied to Bianchi equations in \cite{holzegel,Shen22,Shen23}.
\subsection{Schematic notation \texorpdfstring{$\F_q$}{}, \texorpdfstring{$\Fb_q$}{} and \texorpdfstring{$\O_\ell$}{}}
We introduce the following schematic notations as in \cite{Shen23}.
\begin{df}\label{wonderfuldf}
    For a quantity $X$, we denote
    \begin{equation*}
        X\in \F_q,
    \end{equation*}
    if $X$ satisfies the following estimates:
    \begin{equation*}
        \|r^\frac{q}{2}X^{(2)}\|_{2,\cuv}\leq\frac{\ep}{|u|^{\frac{1-q}{2}}},\qquad \|X^{(1)}\|_{4,S}\leq\frac{\ep}{r^\frac{q+3}{2}|u|^{\frac{1-q}{2}}}.
    \end{equation*}
    Similarly, we denote
    \begin{equation*}
        X\in\Fb_q,
    \end{equation*}
    if $X$ satisfies the following estimates:
    \begin{equation*}
        \|r^\frac{q}{2}X^{(2)}\|_{2,\ucuv}\leq\frac{\ep}{|u|^{\frac{1-q}{2}}},\qquad\|\Om X^{(1)}\|_{4,S}\leq\frac{\ep}{r^\frac{q+2}{2}|u|^{\frac{2-q}{2}}}.
    \end{equation*}
    Moreover, we denote
    \begin{align*}
        X\in \O_{\ell},
    \end{align*}
    if $X$ satisfies the following estimate:
    \begin{align*}
        \|X^{(2)}\|_{4,S}\leq\frac{\ep}{r^{\frac{\ell+1}{2}}|u|^\frac{1-\ell}{2}}.
    \end{align*}
    For $\mathbb{X}\in\{\F,\Fb,\O\}$, we also denote $X\in\mathbb{X}_q^{(i)}$, for $i=0,1,2$, if $X$ satisfies the same or even better decay and regularity properties as $\mathbb{X}_q^{(i)}$.
\end{df}
\begin{lem}\label{wonderfulrk}
Under the assumption \eqref{B1}, the following hold:
\begin{enumerate}
    \item We have
    \begin{align}
    \begin{split}\label{GagGabGawomb}
    \b,\,\rhoc,\,\sic&\in\F_{1-2\de},\qquad\qquad\qquad\qquad\qquad\;\;\;\;\;\,\bb\in\Fb_{1-2\de},\\
    \Gag&\in\O_{1-2\de},\qquad\Gab\in\O_{1-2\de-4\de_0}, \qquad\Gaw\in\O_{-2\de-2\de_0}\cap\O_{1-2\de}^{(1)}.
    \end{split}
    \end{align}
    \item We have\footnote{We have neither $\a\in\F_{1-2\de}$ nor $\aa\in\Fb_{-4\de}$ since \eqref{B1} does not control their $W^{1,4}(S)$ norms.}
    \begin{align}\label{aaa}
     \a\in\F_{1-2\de}^{(1)},\qquad\a^{(2)}\in\F_{1-2\de}^{(2)},\qquad\aa\in\Fb_{-2\de_0}^{(1)},\qquad\aa^{(2)}\in\Fb_{-4\de}^{(2)}.
    \end{align}
    \item We have
    \begin{align}\label{1GagGabGaw}
    r^{-1}\Gag^{(1)}\in\F_{1-2\de-\de_0},\qquad r^{-1}\Gab^{(1)}\in\F_{1-2\de-7\de_0},\qquad r^{-1}\Gaw^{(1)}\in\F_{-4\de}\cap\Fb_{1-2\de}.
    \end{align}
\end{enumerate}
\end{lem}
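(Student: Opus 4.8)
The plan is to verify Lemma \ref{wonderfulrk} by unpacking Definition \ref{wonderfuldf} and matching the weights against the definitions of the fundamental norms in Section \ref{ssec8.1}, together with the consequences already recorded in Lemma \ref{decayGagGabGaa} and Lemma \ref{better}. There are no new estimates to prove here: every claimed membership is a bookkeeping exercise comparing exponents. I would organize the proof in three blocks, one per item, and within each block treat the flux-type bound ($L^2$ on $\cuv$ or $\ucuv$) and the $L^4(S)$ bound separately.

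For item (1), I would first treat the curvature components. To show $\b,\rhoc,\sic\in\F_{1-2\de}$, the flux condition requires $\|r^{\frac{1-2\de}{2}}(r\nab)^{\leq 2}(\b,\rhoc,\sic)\|_{2,\cuv}\lesssim \ep|u|^{-\de}$, which is exactly $\mr_q[\b,\rhoc,\sic]\lesssim\ep$ for $q\leq 2$ after noting $r^{\frac12-\de}=r^{\frac{1-2\de}{2}}$; the companion $L^4(S)$ condition $\|(\b,\rhoc,\sic)^{(1)}\|_{4,S}\lesssim \ep r^{-2+\de}|u|^{-\de}$ follows from Lemma \ref{decayGagGabGaa} via $\{r\b,r\rhoc,r\sic\}\subseteq\Gag^{(1)}$, so these quantities decay like $r^{-1}\Gag^{(1)}$, hence like $\ep r^{-2+\de}|u|^{-\de}$ (using $\Gag^{(2)}$ decay is even better). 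The statement $\bb\in\Fb_{1-2\de}$ is read off the same way from $\ur_q[\b,\rhoc,\sic,\bb]\lesssim\ep$ and from $r\bb\in\Gaw^{(1)}$. For the Ricci coefficients, $\Gag\in\O_{1-2\de}$ asks for $\|\Gag^{(2)}\|_{4,S}\lesssim \ep r^{-1+\de}|u|^{-\de}$, which is precisely the $\mo_2(\Gag)$ bound in Lemma \ref{decayGagGabGaa}; for $q=0,1$ the bounds $\|\Gag^{(\leq1)}\|_{4,S}\lesssim\ep/r$ are even stronger. Similarly $\Gab\in\O_{1-2\de-4\de_0}$ matches the $\mo_2(\Gab)$ weight $r^{1-\de-2\de_0}|u|^{\de+2\de_0}$ once one checks $\frac{(1-2\de-4\de_0)+1}{2}=1-\de-2\de_0$ and $\frac{1-(1-2\de-4\de_0)}{2}=\de+2\de_0$. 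The double statement $\Gaw\in\O_{-2\de-2\de_0}\cap\O_{1-2\de}^{(1)}$ likewise comes from comparing the $\mo_2(\Gaw)$ weight $r^{\frac12-\de-\de_0}|u|^{\frac12+\de+\de_0}$ with $\ell=-2\de-2\de_0$, and from the $\mo_1(\Gaw)$ weight $r^{1-\de}|u|^\de$ with $\ell=1-2\de$ for the first two components $\Gaw,\dkb\Gaw$.

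For item (2), the membership $\a\in\F_{1-2\de}^{(1)}$ means $\a$ and $\dkb\a$ satisfy the $\F_{1-2\de}$ estimates at the level of one fewer derivative: the flux $\|r^{\frac{1-2\de}{2}}(r\nab)^{\leq 2}\a\|_{2,\cuv}\lesssim\ep|u|^{-\de}$ is $\mr_q[\a,\b,\rhoc,\sic]\lesssim\ep$, and the $L^4(S)$ bound on $\a$ comes from Lemma \ref{decayGagGabGaa}; the superscript $(1)$ (rather than $(2)$) reflects, as the footnote notes, that \eqref{B1} does not give the $W^{1,4}(S)$ norm of $\a$ itself, only of $\dkb\a$ via the $\mo_2$-type control of $\a^{(1)}$, consistent with the $\|\a^{(1)}\|_{2,S}$ entry. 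The statement $\aa\in\Fb_{-2\de_0}^{(1)}$ and $\aa^{(2)}\in\Fb_{-4\de}^{(2)}$ is matched against $\ur_q[\aa]\lesssim\ep$, i.e. $\||u|^{\frac12+2\de}r^{-2\de}(r\nab)^q\aa\|_{2,\ucuv}\lesssim\ep$: checking $r^{-2\de}=r^{\frac{-4\de}{2}}$ and $|u|^{\frac12+2\de}=|u|^{-\frac{1-(-4\de)}{2}\cdot(-1)}$ gives the $\Fb_{-4\de}$ flux at the two-derivative level, while at fewer derivatives the $\mo_a$ norm (controlling $r^{2-\de_0}|u|^{1+\de_0}\nab\aa$) and Lemma \ref{decayGagGabGaa} give the sharper $\Fb_{-2\de_0}$ behaviour, together with the $L^4(S)$ conditions on $\aa^{(1)}$.

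For item (3), $r^{-1}\Gag^{(1)}\in\F_{1-2\de-\de_0}$ is obtained by applying Proposition \ref{commutation} to move one $r\nab$ past the weights and then invoking the $\Gag^{(2)}$ decay: the flux $\|r^{\frac{1-2\de-\de_0}{2}}(r\nab)^{\leq 2}(r^{-1}\Gag^{(1)})\|_{2,\cuv}$ reduces to controlling $r^{-1}\Gag^{(3)}$ in $L^2(\cuv)$, which follows from the $\mo_2(\Gag)$-type bounds combined with the flux structure (one integrates the pointwise $L^4(S)$ bound over $\cuv$, losing the extra $r^{\de_0}$); the $L^4(S)$ half is a direct consequence of $\|\Gag^{(2)}\|_{4,S}\lesssim \ep r^{-1+\de}|u|^{-\de}$. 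The same mechanism with $\Gab^{(2)}$ decay (weights $r^{1-\de-2\de_0}|u|^{\de+2\de_0}$, and $3\de_0$ lost in the commutation/interpolation) yields $r^{-1}\Gab^{(1)}\in\F_{1-2\de-7\de_0}$, and the two-sided statement $r^{-1}\Gaw^{(1)}\in\F_{-4\de}\cap\Fb_{1-2\de}$ follows from the $\Gaw^{(2)}$ bound on outgoing cones (weight $r^{\frac12-\de-\de_0}|u|^{\frac12+\de+\de_0}$, giving $\F_{-4\de}$ after the $r^{-1}$ shift) and from $r^{-1}\Gaw^{(1)}\preceq\aa^{(0)}$ in Lemma \ref{better} together with $\ur_q[\aa]$ on incoming cones (giving $\Fb_{1-2\de}$). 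The only mildly delicate point, and the one I would write out most carefully, is the passage from a pointwise $L^4(S)$ bound with a sharp power of $|u|$ to the corresponding $L^2$-flux bound: one integrates $\int_{|u|}^{\ub}\|\cdot\|_{4,S(u,\ub')}^2\,d\ub'$ (or the $S(u',\ub)$ analogue) and must check the $r$-power is integrable and that the resulting $|u|$-power is at least as good as $\ep|u|^{-(1-q)/2}$ — which is where the definitions of $\de_0\ll\de$ are used to absorb the small losses. Everything else is exponent arithmetic, and I expect no genuine obstacle.
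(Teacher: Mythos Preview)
Your overall approach is correct and matches the paper's: the lemma is pure bookkeeping, and the paper's own proof is a one-liner pointing to the relevant pieces of the bootstrap assumption \eqref{B1}. Your treatment of items (1) and (2) is accurate.

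There is, however, a correctable error in your item (3). You claim that the flux bound for $r^{-1}\Gag^{(1)}$ (and similarly for $\Gab,\Gaw$) ``follows from the $\mo_2(\Gag)$-type bounds combined with the flux structure (one integrates the pointwise $L^4(S)$ bound over $\cuv$\ldots)''. This is not right at top order: the quantity $(r^{-1}\Gag^{(1)})^{(2)}=r^{-1}\Gag^{(3)}$ involves \emph{three} angular derivatives of the Ricci coefficients, which are \emph{not} controlled by $\mo_{[2]}$ in $L^4(S)$. The paper's proof explicitly invokes $\mo_3\leq\ep$ here, which supplies the $L^2(S)$ bounds $\|r^{2-\de}|u|^\de(r\nab)^2(\nab\trch,\nab\trchb,\vkp,\mub)\|_{2,S}\leq\ep$ and $\|r^{2-\de-3\de_0}|u|^{\de+3\de_0}(r\nab)^2\mu\|_{2,S}\leq\ep$. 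These $L^2(S)$ bounds, together with $\mr\leq\ep$ for the curvature components sitting inside $\Gag^{(3)},\Gab^{(3)},\Gaw^{(3)}$, are what you integrate in $\ub$ (resp.\ $u$) to obtain the flux; in particular it is the $3\de_0$ in $\mo_2^2(\mu)$ plus one $\de_0$ from the $r$-integration that produces the $7\de_0$ in $\F_{1-2\de-7\de_0}$. Your invocation of Proposition \ref{commutation} is a red herring: no commutation is needed, only the identification $(r^{-1}\Ga_i^{(1)})^{(q)}=r^{-1}\Ga_i^{(q+1)}$ and a direct check of exponents. Once you replace ``$\mo_2(\Gag)$-type $L^4(S)$ bounds'' by ``$\mo_3$-type $L^2(S)$ bounds (and $\mr$ for the curvature parts)'', your argument goes through.
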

\begin{proof}
   Note that \eqref{GagGabGawomb} follows from $\mr\leq\ep$ and $\mo_{[2]}\leq\ep$. \eqref{aaa} follows directly from $\mr\leq\ep$ and $\mo_a\leq\ep$. Finally, \eqref{1GagGabGaw} follows from $\mr\leq\ep$ and $\mo_3\leq\ep$.
\end{proof}
The following theorem provides a unified treatment of all the nonlinear error terms in curvature estimates.
\begin{thm}\label{wonderfulrp}
Let $p_1,p_2,\ell\leq 1$, $p<1$ and $q\in\mathbb{N}$. We define
\begin{equation}
\begin{split}\label{deferr}
\ee_p^q\left[\psi,h\right]&:=\int_V r^p \left|\psi^{(q)}\c h^{(q)}\right|,\\
    \ee_p^q\left[\psi_{(1)},\Ga\c\psi_{(2)}\right]&:=\int_V r^p \left|\psi_{(1)}^{(q)}\c(\Ga\c\psi_{(2)})^{(q)}\right|.
\end{split}
\end{equation}
Then, we have the following properties:
\begin{enumerate}
    \item In the case $2p<p_1+p_2+\ell+1$, we have
\begin{equation}\label{wonderfulll}
    \ee^1_p\left[\F_{p_1}^{(1)},\O_\ell^{(1)}\c\F_{p_2}^{(1)}\right]\les\frac{\ep^3}{|u|^{1-p}}.
\end{equation}
    \item In the case $2p<p_1+p_2+\ell-1$, we have
\begin{equation}\label{wonderfulrr}
\ee^1_p\left[\Fb_{p_1}^{(1)},\O_\ell^{(1)}\c\Fb_{p_2}^{(1)}\right]\les\frac{\ep^3}{|u|^{1-p}}.
\end{equation}
\end{enumerate}
\end{thm}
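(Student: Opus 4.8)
The plan is to estimate the weighted spacetime integral $\ee_p^q[\psi_{(1)},\Ga\c\psi_{(2)}]$ by distributing the weights $r^p$ among the three factors according to the decay exponents carried by membership in the classes $\F_{p_i}$ and $\O_\ell$, and then performing the integration in $(u,\ub)$ in the appropriate order on the cones. More precisely, for the outgoing case I would foliate $V=V(u,\ub)$ by the cones $\cuv$ and bound $\ee_p^1[\F_{p_1}^{(1)},\O_\ell^{(1)}\c\F_{p_2}^{(1)}]$ by first applying pointwise Cauchy--Schwarz on each $2$--sphere $S(u',\ub')$ (using the Leibniz rule for $(r\nab)^{\leq 1}$ acting on a product, so that the top-order term $\psi_{(1)}^{(1)}\c(\O_\ell\c\psi_{(2)})^{(1)}$ decomposes into terms with at most one derivative on each curvature factor and at most one on $\Ga$), then estimating the $L^\infty(S)$ norm of the $\O_\ell^{(1)}$ factor via the scale-invariant Sobolev inequality in Proposition \ref{standardsobolev} together with the defining bound $\|(\O_\ell)^{(2)}\|_{4,S}\les\ep r^{-(\ell+1)/2}|u|^{-(1-\ell)/2}$, and finally pairing the two remaining $\F$ factors in $\|\cdot\|_{2,\cuv}$ using the definition of $\F_{p_i}$.

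The bookkeeping then reduces to a weight count. On a sphere $S(u',\ub')\subset\cuv$ one has $r\simeq\ub'$ and $|u'|=|u|$ fixed along the flux integral, so after extracting the $L^\infty$ bound $\ep\, r^{-(\ell+1)/2}|u|^{-(1-\ell)/2}$ for the $\O_\ell$ factor, what remains is $\int_V r^{p-(\ell+1)/2}|u|^{-(1-\ell)/2}\,|\psi_{(1)}^{(\leq 1)}|\,|\psi_{(2)}^{(\leq 1)}|$. I would split the weight $r^{p-(\ell+1)/2}$ as $r^{p_1/2}\cdot r^{p_2/2}\cdot r^{p-(\ell+1)/2-(p_1+p_2)/2}$, apply Cauchy--Schwarz in $L^2(\cuv)$ to the two $\F$ factors (picking up $\ep^2 |u|^{-(1-p_1)/2}|u|^{-(1-p_2)/2}$ from the definition of $\F_{p_1},\F_{p_2}$), and absorb the leftover $r$-power by integrating in $\ub'$: the exponent $p-(\ell+1)/2-(p_1+p_2)/2$ is strictly less than $-1$ precisely when $2p<p_1+p_2+\ell+1$, which is the hypothesis of part (1), so the $\ub'$-integral converges with value controlled by a power of $|u|$. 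Collecting the $|u|$-powers — $-(1-\ell)/2$ from the $\O$ factor, $-(1-p_1)/2-(1-p_2)/2$ from the two $\F$ factors, and the boundary contribution from the $\ub'$-integration which scales like $|u|^{p-(\ell+1)/2-(p_1+p_2)/2+1}$ evaluated at $r\simeq|u|$ — gives a total $|u|$-exponent of exactly $-(1-p)$, yielding the claimed $\ep^3|u|^{-(1-p)}$. The incoming case (2) is handled identically, using the $\Fb$-part of Proposition \ref{sobolev}, the extra factor of $\Om$ in the definition of $\|\cdot\|_{2,\ucuv}$ and of $\Fb_q$ (hence $\Om^{-1}\Om=1$ in the pairing), and integrating in $u'$ along $\ucuv$; the shift from $+1$ to $-1$ in the convergence threshold $2p<p_1+p_2+\ell-1$ reflects exactly the different $r$-weight $r^{(q+2)/2}|u|^{(2-q)/2}$ in the definition of $\Fb_q$ compared to $r^{(q+3)/2}|u|^{(1-q)/2}$ for $\F_q$.

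The main obstacle I anticipate is the careful treatment of the derivative distribution at top order: when $(r\nab)$ hits the product $\Ga\c\psi_{(2)}$, one must ensure that the term $\Ga^{(1)}\c\psi_{(2)}$ (derivative on $\Ga$, none on $\psi_{(2)}$) is still controlled, since here the $\F$-class flux bound is available for $\psi_{(2)}^{(1)}$ but the $\O$-class only gives an $L^4(S)$ bound on $\Ga^{(2)}$, not an $L^\infty$ flux bound. The resolution is to keep $\psi_{(2)}$ (no derivatives) in $L^2(\cuv)$, put $\psi_{(1)}^{(1)}$ in $L^2(\cuv)$, and estimate $\Ga^{(1)}$ in $L^\infty(S)$ via Sobolev from $\Ga^{(2)}\in L^4(S)$ — the same weight count goes through because differentiating $\Ga$ once costs an $r^{-1}$ which is compensated by not differentiating $\psi_{(2)}$, and the $L^\infty$-versus-$L^4$ Sobolev loss is harmless since $\ell,p_i\leq 1$ leaves strict room in the inequality $2p<p_1+p_2+\ell\pm 1$. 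I would also need to check that the boundary terms produced when the $\ub'$- (resp. $u'$-) integral is performed at the initial sphere $S(u,|u|)$ (resp. $S(-\ub,\ub)$) are dominated by the same power of $|u|$, which follows from the second (pointwise $L^4(S)$) bound in the definition of $\F_q$ and $\Fb_q$; this is exactly why Definition \ref{wonderfuldf} packages both a flux bound and an $L^4(S)$ bound.
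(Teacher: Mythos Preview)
The paper does not prove this theorem in-text; it simply cites Theorem~5.8 and Lemma~A.1 of \cite{Shen23}. Your overall strategy --- extract the $\O_\ell$ factor in a pointwise sphere norm, pair the two $\F$-factors via Cauchy--Schwarz in the flux norm, and perform the weight count --- is the standard one, and your arithmetic for the principal term $\psi_{(1)}^{(1)}\cdot\Ga\cdot\psi_{(2)}^{(1)}$ (no derivative on $\Ga$) is correct.

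There is, however, a real gap in your treatment of the cross term $\psi_{(1)}^{(1)}\cdot(r\nab\Ga)\cdot\psi_{(2)}$. You propose to place $\Ga^{(1)}=r\nab\Ga$ in $L^\infty(S)$ via Sobolev from $\Ga^{(2)}\in L^4(S)$, but the hypothesis is $\Ga\in\O_\ell^{(1)}$, not $\Ga\in\O_\ell$: by the convention stated just after Definition~\ref{wonderfuldf} (and the footnote identifying $\O_\ell^{(1)}$ here with the $\O_\ell$ of \cite{Shen23}), this only grants $\|\Ga^{(1)}\|_{4,S}\les\ep\, r^{-(\ell+1)/2}|u|^{-(1-\ell)/2}$, with \emph{no} control on $\Ga^{(2)}$. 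So the Sobolev step for $r\nab\Ga$ is unavailable as written. The correct fix is to use H\"older on each sphere with exponents $(2,4,4)$: put $\psi_{(1)}^{(1)}$ in $L^2(S)$ (then in flux $L^2(C_{u'}^V)$), $r\nab\Ga$ in $L^4(S)$, and $\psi_{(2)}$ in $L^4(S)$ via the \emph{pointwise} bound $\|\psi_{(2)}\|_{4,S}\les\ep\, r^{-(p_2+3)/2}|u'|^{-(1-p_2)/2}$ that is packaged into $\F_{p_2}^{(1)}$. The resulting weight count gives $\ep^3|u'|^{p-2}$ on each cone $C_{u'}^V$ and hence $\ep^3|u|^{p-1}$ after integrating in $u'$. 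Your closing intuition that Definition~\ref{wonderfuldf} deliberately carries both a flux and an $L^4(S)$ bound is exactly right --- but the place that $L^4(S)$ bound is actually spent is this cross term, not a boundary contribution at $S(u,|u|)$.
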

\begin{proof}
Note that for $\mathbb{X}\in\{\F,\Fb,\O\}$, $\mathbb{X}_q^{(1)}$ has exactly the same decay and regularity properties as $\mathbb{X}_q$ in \cite{Shen23}. The proof then follows directly from Theorem 5.8 and Lemma A.1 in \cite{Shen23}.
\end{proof}
\subsection{Estimates for general Bianchi pairs}
The following lemma provides the general structure of Bianchi pairs.
\begin{lem}\label{keypoint}
Let $k=1,2$ and $a_{(1)}$, $a_{(2)}$ real numbers. Then, we have the following properties.
\begin{enumerate}
    \item If $\psi_{(1)},h_{(1)}\in\sk_k$ and $\psi_{(2)},h_{(2)}\in \sk_{k-1}$ satisfying
    \begin{align}
    \begin{split}\label{bianchi1}
    \nab_3(\psi_{(1)})+a_{(1)}\trchb\,\psi_{(1)}&=-kd_k^*(\psi_{(2)})+h_{(1)},\\
    \nab_4(\psi_{(2)})+a_{(2)}\trch\,\psi_{(2)}&=d_k(\psi_{(1)})+h_{(2)}.
    \end{split}
    \end{align}
Then, the pair $(\psi_{(1)},\psi_{(2)})$ satisfies for any real number $p$
\begin{align}
\begin{split}\label{div}
       &\bdiv(r^p |\psi_{(1)}|^2e_3)+k\bdiv(r^p|\psi_{(2)}|^2e_4)\\
       +&\left(2a_{(1)}-1-\frac{p}{2}\right)r^{p}\trchb|\psi_{(1)}|^2+k\left(2a_{(2)}-1-\frac{p}{2}\right)r^{p}\trch|\psi_{(2)}|^2\\
       =&2kr^p\sdiv(\psi_{(1)}\cdot\psi_{(2)})
       +2r^p\psi_{(1)}\cdot h_{(1)}+2kr^p\psi_{(2)}\cdot h_{(2)}+r^p\Gab|\psi_{(1)}|^2+r^p\Gag|\psi_{(2)}|^2.
\end{split}
\end{align}
    \item If $\psi_{(1)},h_{(1)}\in\sk_{k-1}$ and $\psi_{(2)},h_{(2)}\in\sk_k$ satisfying
\begin{align}
\begin{split}\label{bianchi2}
\nab_3(\psi_{(1)})+a_{(1)}\trchb\,\psi_{(1)}&=d_k(\psi_{(2)})+h_{(1)},\\
\nab_4(\psi_{(2)})+a_{(2)}\trch\,\psi_{(2)}&=-kd_k^*(\psi_{(1)})+h_{(2)}.
\end{split}
\end{align}
Then, the pair $(\psi_{(1)},\psi_{(2)})$ satisfies for any real number $p$
\begin{align}
\begin{split}\label{div2}
    &k\bdiv(r^p |\psi_{(1)}|^2e_3)+\bdiv(r^p|\psi_{(2)}|^2e_4)\\
    +&k\left(2a_{(1)}-1-\frac{p}{2}\right)r^{p}\trchb|\psi_{(1)}|^2+\left(2a_{(2)}-1-\frac{p}{2}\right)r^{p}\trch|\psi_{(2)}|^2\\
    =&2r^p\sdiv(\psi_{(1)}\cdot\psi_{(2)})
    +2kr^p\psi_{(1)}\cdot h_{(1)}+2r^p\psi_{(2)}\cdot h_{(2)}+r^p\Gab|\psi_{(1)}|^2+r^p\Gag|\psi_{(2)}|^2.
\end{split}
\end{align}
\end{enumerate}
\end{lem}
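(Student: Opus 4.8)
The plan is to derive the divergence identities \eqref{div} and \eqref{div2} by a direct computation, starting from the two transport equations and combining them with the null structure equations from Proposition \ref{nulles}, specifically the evolution equations for $\trch$ and $\trchb$ and the Ricci formulae \eqref{ricciformulae}. First I would recall the basic fact that for any $S$-tangent tensor $\psi$ and any scalar $p$, the spacetime divergence of $r^p|\psi|^2 e_3$ decomposes as
\begin{equation*}
    \bdiv(r^p|\psi|^2 e_3)=\nab_3(r^p|\psi|^2)+r^p|\psi|^2\,\bdiv(e_3),
\end{equation*}
and similarly for $e_4$. Using $e_3(r)=\frac{\overline{\Om^2\trchb}}{2\Om^2}r$ and $e_4(r)=\frac{\overline{\trch}}{2}r$ from Lemma \ref{dint}, together with $\bdiv(e_3)=\trchb+(\text{lapse terms})$ and $\bdiv(e_4)=\trch+(\text{lapse terms})$ — where the correction terms involving $\om,\etab,\eta$ are absorbed into the schematic $\Gab,\Gag$ factors on the right-hand side — one obtains
\begin{equation*}
    \bdiv(r^p|\psi_{(1)}|^2 e_3)=2r^p\psi_{(1)}\cdot\nab_3\psi_{(1)}+\left(\tfrac{p}{2}+1\right)r^p\trchb|\psi_{(1)}|^2+r^p\Gab|\psi_{(1)}|^2,
\end{equation*}
modulo replacing $\overline{\Om^2\trchb}/\Om^2$ by $\trchb$ at the cost of another $\Gab$-type error, and analogously for the $e_4$-term with $\Gag$.

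Next I would substitute the transport equations \eqref{bianchi1} into the two expressions above. For the first one, $\nab_3\psi_{(1)}=-a_{(1)}\trchb\,\psi_{(1)}-kd_k^*\psi_{(2)}+h_{(1)}$, so $2r^p\psi_{(1)}\cdot\nab_3\psi_{(1)}$ produces the term $-2a_{(1)}r^p\trchb|\psi_{(1)}|^2$, the term $-2kr^p\psi_{(1)}\cdot d_k^*\psi_{(2)}$, and $2r^p\psi_{(1)}\cdot h_{(1)}$. Likewise $2kr^p\psi_{(2)}\cdot\nab_4\psi_{(2)}$ yields $-2ka_{(2)}r^p\trch|\psi_{(2)}|^2$, $2kr^p\psi_{(2)}\cdot d_k\psi_{(1)}$, and $2kr^p\psi_{(2)}\cdot h_{(2)}$. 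The crucial cancellation is the integration-by-parts identity relating $d_k$ and $d_k^*$: by the definition of the adjoint Hodge operators in Definition \ref{hodgeop}, one has the pointwise divergence identity
\begin{equation*}
    \psi_{(2)}\cdot d_k\psi_{(1)}-\psi_{(1)}\cdot d_k^*\psi_{(2)}=\sdiv(\psi_{(1)}\cdot\psi_{(2)})
\end{equation*}
for $k=1,2$ (this is the standard fact that $d_k^*$ is the formal $L^2$-adjoint of $d_k$ on $S$, up to a total divergence). Multiplying the $\psi_{(1)}\cdot d_k^*\psi_{(2)}$ contribution by $2kr^p$ and combining with the $2kr^p\psi_{(2)}\cdot d_k\psi_{(1)}$ term thus leaves exactly $2kr^p\sdiv(\psi_{(1)}\cdot\psi_{(2)})$ on the right, which is precisely the first term in \eqref{div}.

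Collecting all terms then gives \eqref{div}: the $\trchb|\psi_{(1)}|^2$ coefficient is $(\frac{p}{2}+1)-2a_{(1)}$ from moving it to the left it becomes $(2a_{(1)}-1-\frac{p}{2})$, and similarly for the $\trch|\psi_{(2)}|^2$ term with factor $k$, matching the stated identity. The proof of the second case \eqref{div2} is entirely parallel, with the roles of $d_k$ and $d_k^*$ (equivalently, of which slot carries the $\sk_k$ tensor) interchanged; one uses the same adjointness identity but now with $\psi_{(1)}\in\sk_{k-1}$, $\psi_{(2)}\in\sk_k$, and the factor $k$ migrating to the $|\psi_{(1)}|^2$ terms as written. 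The main obstacle — really the only delicate point — is bookkeeping the lapse-dependent and $\overline{\trch}$-vs-$\trch$ discrepancy terms carefully enough to see that every one of them has the schematic form $\Gab|\psi_{(1)}|^2$ or $\Gag|\psi_{(2)}|^2$ (using \eqref{6.6}, which gives $\om\in\Gag$, $\eta\in\Gab$, $\etab\in\Gag$, and $\nab\log\Om\in\Gab$, together with the fact that $\trch-\frac1r\in\Gag$ and $\trchb+\frac4r\in\Gag$ so that $r^p\trch|\psi|^2$ and $r^p\trchb|\psi|^2$ absorb their own fluctuations into $\Gag$); once the schematic classification of these error terms is in place, the identity follows by straightforward algebra.
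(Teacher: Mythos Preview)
Your proposal is correct and is exactly the standard computation: expand $\bdiv(r^p|\psi|^2 e_3)$ and $\bdiv(r^p|\psi|^2 e_4)$ using $\bdiv(e_3)=\trchb$, $\bdiv(e_4)=\trch-2\om$ and Lemma~\ref{dint}, substitute the transport equations, and combine the principal terms via the pointwise adjoint identity $\psi_{(2)}\cdot d_k\psi_{(1)}-\psi_{(1)}\cdot d_k^*\psi_{(2)}=\sdiv(\psi_{(1)}\cdot\psi_{(2)})$. The paper itself does not give a proof but simply cites Lemma~4.2 of \cite{Shen22}, where precisely this computation is carried out; your bookkeeping of the error terms (the $\overline{\Om^2\trchb}/\Om^2-\trchb$ and $\overline{\trch}-\trch$ discrepancies, and the $\om$ contribution) into the $\Gab$ and $\Gag$ schematic classes is accurate.
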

\begin{proof}
See Lemma 4.2 in \cite{Shen22}.
\end{proof}
\begin{rk}
Note that the Bianchi equations can be written as systems of equations of the type \eqref{bianchi1} and \eqref{bianchi2}. In particular
    \begin{itemize}
    \item the Bianchi pair $(\a,\b)$ satisfies \eqref{bianchi1} with $k=2$, $a_{(1)}=\frac{1}{2}$, $a_{(2)}=2$,
    \item the Bianchi pair $(\b,(\rhoc,-\sic))$ satisfies \eqref{bianchi1} with $k=1$, $a_{(1)}=1$, $a_{(2)}=\frac{3}{2}$,
    \item the Bianchi pair $((\rhoc,\sic),\bb)$ satisfies \eqref{bianchi2} with $k=1$, $a_{(1)}=\frac{3}{2}$, $a_{(2)}=1$,
    \item the Bianchi pair $(-\bb,\aa)$ satisfies \eqref{bianchi2} with $k=2$, $a_{(1)}=2$, $a_{(2)}=\frac{1}{2}$.
    \end{itemize}
\end{rk}
The following lemma allows us to obtain $|u|$--decay of curvature flux along $\Si_0\cap V(u,\ub)$.
\begin{lem}\label{gainu}
For any $R\in \{\a,\b,\rhoc,\sic,\bb,\aa\}$, we have the following estimate for $p<1$:
\begin{align*}
    \int_{\Si_0\cap V(u,\ub)}r^p|R^{(2)}|^2\les\frac{\ep_0^2}{|u|^{1-p}}.
\end{align*}
\end{lem}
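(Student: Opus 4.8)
The plan is to reduce the bound to a statement about the initial hypersurface $\Si_0\setminus K$, where it should follow directly from the definition of the initial data norm $\II_{(0)}$ and the asymptotic flatness assumption. The key observation is that $\Si_0\cap V(u,\ub)$ is a portion of $\Si_0\setminus K$ between two spheres $S_{(0)}(w)$, and on $\Si_0$ we have control of $r\|\dk^{\leq 3}(\Gag,\Gab,\Gaw)\|_{2,S_{(0)}}$ and $r^2\|\dk^{\leq 2}(\a,\aa)\|_{2,S_{(0)}}$ with $\dk\in\{r\nab_3,r\nab_4,r\nab\}$. So the first step is to translate these into pointwise-in-$S$ flux bounds: on $\Si_0$, we have $r\simeq\ub\simeq|u|$ (using $|1-\frac{\ub-u}{2r}|\les\ep_0$ from the definition of $\II_{(0)}$), and the six curvature components $\a,\b,\rhoc,\sic,\bb,\aa$ together with their angular derivatives up to order two can be estimated on each sphere by a combination of the controlled quantities. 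For the ``good'' component $\a$ we directly get $\|r^2(r\nab)^{\leq 2}\a\|_{2,S_{(0)}}\les\ep_0$; for $\aa$ we get the same; for the intermediate components $\b,\rhoc,\sic,\bb$, one uses the null structure / Codazzi / Bianchi relations on $\Si_0$ to express them (schematically) in terms of $\nab\Gag$, $\nab\Gaw$, and $\a,\aa$, which again gives $\|r^2(r\nab)^{\leq 2}R\|_{2,S_{(0)}}\les\ep_0$ for each such $R$, possibly with a slightly weaker $r$-power that is still $\geq 3/2$.

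Next I would set up the flux integral. Parametrizing $\Si_0\cap V(u,\ub)$ by the sphere foliation, and using that $d\mu_{\Si_0}\simeq dw\,d\ga$ with $r\simeq w$ along $\Si_0$, we get
\begin{align*}
    \int_{\Si_0\cap V(u,\ub)}r^p|R^{(2)}|^2\les\int_{|u|}^{\ub}r^{p-4}\,\big\|r^2 R^{(2)}\big\|_{2,S_{(0)}(w)}^2\,\frac{dw}{?}
\end{align*}
— more precisely one keeps track of the exact $r$-weights so that the sphere integral is exactly $\|r^2 R^{(2)}\|_{2,S_{(0)}}^2\les\ep_0^2$, leaving a one-dimensional integral $\int_{|u|}^{\ub}r^{p-4}\,dr$ times $\ep_0^2$. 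Wait — this needs the radial weight to come out as $r^{p-1}$ not $r^{p-4}$; the point is that $|R^{(2)}|^2$ already carries an $r^{-4}$ relative to $|r^2 R^{(2)}|^2$, and the spacetime measure contributes one more $dr$, so the remaining power is $r^{p-4}\cdot r^{?}$. One must be careful: for the \emph{bad} component $\aa$ the $r^2$ weight is not available with full strength, so there one uses instead $r^2\|\dk^{\leq 2}\aa\|_{2,S_{(0)}}\les\ep_0$ directly. In all cases the radial integral $\int_{|u|}^{\infty}r^{p-2}\,dr\simeq|u|^{p-1}$ converges precisely because $p<1$, and it is dominated by its lower endpoint $|u|$, yielding the claimed $\frac{\ep_0^2}{|u|^{1-p}}$.

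The main obstacle, and the step requiring the most care, is the translation of the $\II_{(0)}$ control into bounds for the renormalized curvature components $\rhoc=\rho-\frac12\hch\c\hchb$ and $\sic$, and for $\b,\bb$, since $\II_{(0)}$ only directly bounds the Ricci coefficients $\Gag,\Gab,\Gaw$ (with their $\dk^{\leq 3}$ derivatives) and the extreme curvature components $\a,\aa$ (with $\dk^{\leq 2}$). One resolves this using the constraint/structure equations restricted to $\Si_0$: the Codazzi equations give $\b$ and $\bb$ in terms of $\nab\trch,\nab\trchb,\nab\hch,\nab\hchb$ and lower-order $\Ga\c\Ga$ terms; the Gauss equation gives $\rhoc$ in terms of $\mathbf{K}$ and $\trch\trchb$, and $\mathbf{K}$ itself is controlled through the elliptic/Gauss-equation relation together with $\nab^{\leq 1}\Ga$; and the torsion equation gives $\sic$ from $\curl\eta$. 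Since $\II_{(0)}$ controls one extra angular derivative of the Ricci coefficients ($\dk^{\leq 3}$) compared to what is needed for $R^{(2)}$ ($\nab^{\leq 2}$ of curvature, i.e.\ $\nab^{\leq 3}$ of Ricci coefficients via Codazzi), this closes with room to spare, and the quadratic terms are harmless since $\II_{(0)}\leq\ep_0\ll 1$. Once this dictionary is in place the remaining computation is the elementary radial integration sketched above.
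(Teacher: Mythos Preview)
Your approach is correct and essentially the same as the paper's: establish the sphere-wise bound $|R^{(2)}|_{2,S}\lesssim\ep_0/r$ on $\Si_0$ from $\II_{(0)}\leq\ep_0$, then integrate radially using $\int_{|u|}^\infty r^{p-2}\,dr\lesssim|u|^{p-1}$ for $p<1$. You are in fact more careful than the paper, which asserts the sphere bound for all six components in one line; your explicit reduction of $\b,\bb,\rhoc,\sic$ to Ricci-coefficient data via the Codazzi, Gauss, and torsion equations (using the extra angular derivative available in $\II_{(0)}$) is exactly what is implicitly being used there, and your weight bookkeeping---despite the mid-paragraph hesitation---lands on the correct integrand $r^{p-2}$.
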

\begin{proof}
We have from $\II_{(0)}\leq\ep_0$
\begin{align*}
    |R^{(2)}|_{2,S}\les\frac{\ep_0}{r}.
\end{align*}
Hence, we obtain for $p<1$
\begin{align*}
\int_{\Si_0\cap V(u,\ub)}r^p|R^{(2)}|^2\les\int_{|u|}^\infty r^p|R^{(2)}|^2_{2,S}dr\les\int_{|u|}^\infty \frac{\ep_0^2}{r^{2-p}} dr\les \frac{\ep_0^2}{|u|^{1-p}}.
\end{align*}
This concludes the proof of Lemma \ref{gainu}.
\end{proof}
\begin{figure}
  \centering
  \includegraphics[width=0.95\textwidth]{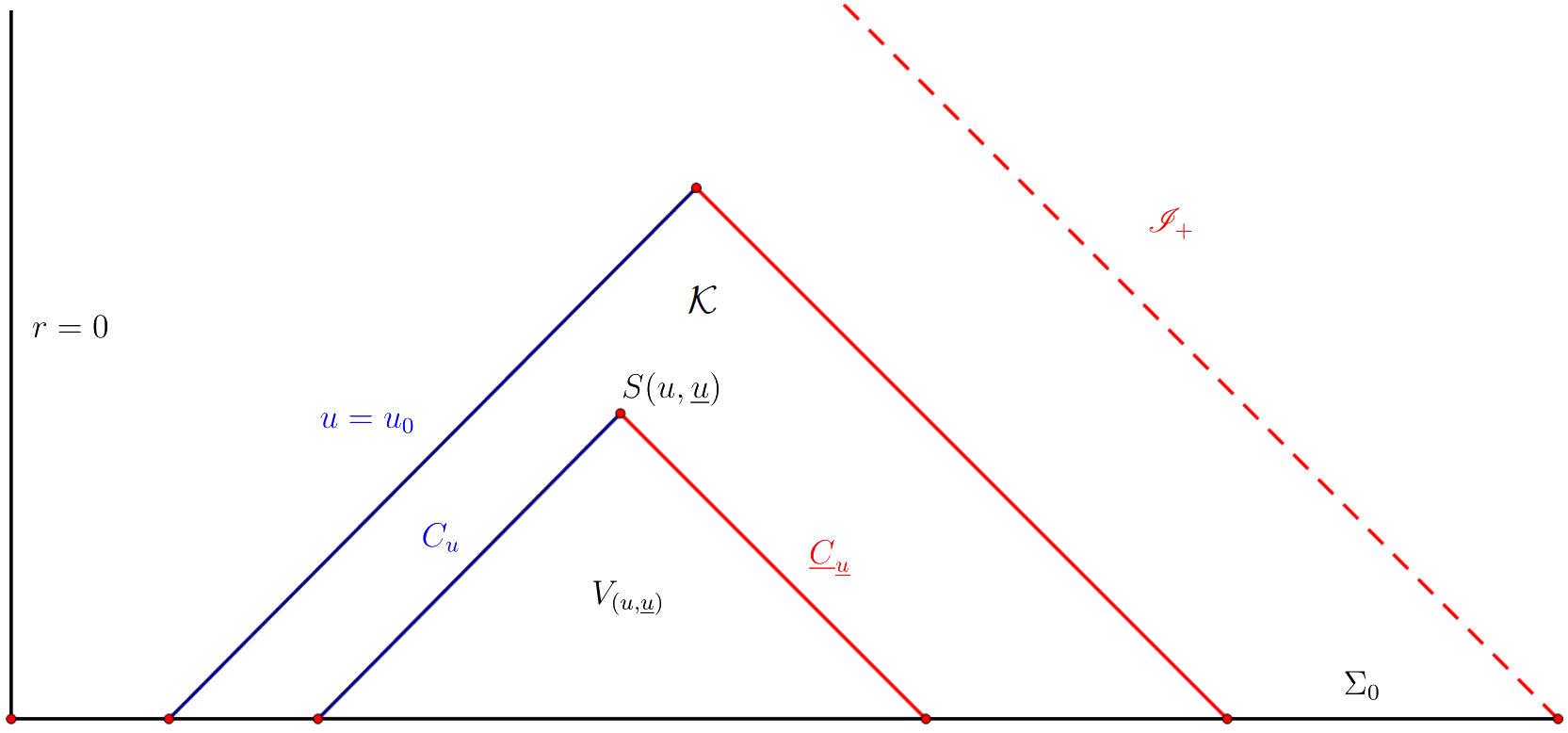}
  \caption{Domain of integration $V(u,\ub)$}\label{fig4}
\end{figure}
\begin{prop}\label{keyintegral}
Let $p<1$ and let $(\psi_{(1)},\psi_{(2)})$ be a general Bianchi pair satisfying \eqref{bianchi1} or \eqref{bianchi2} and $\psi_{(1)}\in\F_p^{(2)}$. Then, we have the following properties:
\begin{itemize}
\item In the case $2+p-4a_{(1)}>0$ and $4a_{(2)}-2-p>0$, we have
\begin{align}
\begin{split}\label{caseone}
&\int_\cuv r^p |\psi_{(1)}|^2+\int_\ucuv r^p|\psi_{(2)}|^2+\int_Vr^{p-1}|\psi_{(1)}|^2+r^{p-1}|\psi_{(2)}|^2\\ 
\les &\frac{\ep_0^2}{|u|^{1-p}}+\ee_p^0\left[\psi_{(1)},h_{(1)}\right]+\ee_p^0\left[\psi_{(2)},h_{(2)}\right].
\end{split}
\end{align}
\item In the case $2+p-4a_{(1)}\leq 0$ and $4a_{(2)}-2-p>0$, we have
\begin{align}
\begin{split}\label{casethree}
&\int_\cuv r^p |\psi_{(1)}|^2+\int_\ucuv r^p|\psi_{(2)}|^2+\int_V r^{p-1}|\psi_{(2)}|^2\\ 
\les&\frac{\ep_0^2}{|u|^{1-p}}+\int_V r^{p-1}|\psi_{(1)}|^2+\ee_p^0\left[\psi_{(1)},h_{(1)}\right]+\ee_p^0\left[\psi_{(2)},h_{(2)}\right].
\end{split}
\end{align}
\end{itemize}
\end{prop}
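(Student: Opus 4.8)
The plan is to integrate the divergence identity of Lemma~\ref{keypoint} over $V=V(u,\ub)$ and to apply the divergence theorem. It suffices to treat a pair satisfying \eqref{bianchi1}, so that \eqref{div} holds; the case \eqref{bianchi2}/\eqref{div2} is symmetric under exchanging $\psi_{(1)}\leftrightarrow\psi_{(2)}$ and $e_3\leftrightarrow e_4$. Recall that $\partial V$ is the union of the outgoing piece $\cuv$, the incoming piece $\ucuv$ and the initial piece $\Si_0\cap V$, that $e_4$ is a null generator of $C_u$ and $e_3$ one of $\Cb_\ub$, and that $\g(e_3,e_3)=\g(e_4,e_4)=0$, $\g(e_3,e_4)=-2$. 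Hence integrating $\bdiv(r^p|\psi_{(1)}|^2e_3)$ and $k\,\bdiv(r^p|\psi_{(2)}|^2e_4)$ over $V$ produces precisely the outgoing flux $\int_{\cuv}r^p|\psi_{(1)}|^2$ and the incoming flux $k\int_{\ucuv}r^p|\psi_{(2)}|^2$, plus terms on $\Si_0\cap V$ carrying harmless powers of $\Om$ (controlled by $\mo_\ga\leq\ep$). The spherical-divergence term $2kr^p\sdiv(\psi_{(1)}\cdot\psi_{(2)})$ is, $r$ being constant on the leaves, an (essentially) exact $S$--divergence and contributes only an absorbable quadratic error, while the terms $2r^p\psi_{(1)}\cdot h_{(1)}$ and $2kr^p\psi_{(2)}\cdot h_{(2)}$ are, by \eqref{deferr}, bounded by $\ee_p^0[\psi_{(1)},h_{(1)}]+\ee_p^0[\psi_{(2)},h_{(2)}]$ and are kept on the right.

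For the bulk terms on the left I would substitute $\trch=\tfrac1r+\Gag$, $\trchb=-\tfrac4r+\Gag$ (a consequence of Definition~\ref{gammag} and \eqref{B1}), which gives
\begin{align*}
\Big(2a_{(1)}-1-\tfrac{p}{2}\Big)r^p\,\trchb\,|\psi_{(1)}|^2&=2\big(2+p-4a_{(1)}\big)\,r^{p-1}|\psi_{(1)}|^2+r^p\,\Gag\,|\psi_{(1)}|^2,\\
k\Big(2a_{(2)}-1-\tfrac{p}{2}\Big)r^p\,\trch\,|\psi_{(2)}|^2&=\tfrac{k}{2}\big(4a_{(2)}-2-p\big)\,r^{p-1}|\psi_{(2)}|^2+r^p\,\Gag\,|\psi_{(2)}|^2.
\end{align*}
In the case $2+p-4a_{(1)}>0$ and $4a_{(2)}-2-p>0$, both $\int_Vr^{p-1}|\psi_{(1)}|^2$ and $\int_Vr^{p-1}|\psi_{(2)}|^2$ then sit on the left with a strictly positive coefficient; I would absorb into them the bulk errors $r^p\Gag|\psi|^2$ above together with $r^p\Gab|\psi_{(1)}|^2+r^p\Gag|\psi_{(2)}|^2$ coming from \eqref{div}, and bound the $\Si_0\cap V$ contributions by $\ep_0^2/|u|^{1-p}$ via Lemma~\ref{gainu}, which yields \eqref{caseone}. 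In the case $2+p-4a_{(1)}\leq 0$ and $4a_{(2)}-2-p>0$, only the $\psi_{(2)}$ bulk term keeps a good sign; the $\psi_{(1)}$ bulk term is moved to the right, where it is bounded by $\les\int_Vr^{p-1}|\psi_{(1)}|^2$, and one absorbs there the errors proportional to $|\psi_{(1)}|^2$ while keeping the $\psi_{(2)}$ bulk term coercive on the left, giving \eqref{casethree}.

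Two things need care. First, these manipulations presuppose that the fluxes $\int_{\cuv}r^p|\psi_{(1)}|^2$, $\int_{\ucuv}r^p|\psi_{(2)}|^2$ and the bulk integrals are a priori finite, which is exactly what the hypothesis $\psi_{(1)}\in\F_p^{(2)}$ supplies (together with \eqref{B1} and the transport equation controlling $\psi_{(2)}$ in terms of $\psi_{(1)}$), and this is why it is imposed. Second, and this is the main obstacle, is the absorption of the quadratic bulk errors: the $\Gag$--errors carry a genuine $r^{-1}$ and are absorbed crudely for $\ep$ small, but the $\Gab$--error $r^p\Gab|\psi_{(1)}|^2$ is delicate, since by Lemma~\ref{decayGagGabGaa} $\Gab$ decays only like $\ep\,r^{-1}(r/|u|)^{\de_0}$, so a naive absorption loses a power $(r/|u|)^{\de_0}\geq1$; this is precisely where the borderline decay of the data is felt, and closing it requires exploiting the extra decay of the particular $\psi_{(1)}$ in each concrete Bianchi pair, or running the estimate with a small loss in the weight $p$. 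Throughout, one must also track the signs of the flux and corner contributions at $S(u,|u|)=\cuv\cap\Si_0$ so that they remain compatible with the positivity coming from the asymptotics $\trch\sim1/r$, $\trchb\sim-4/r$, which is exactly what dictates the dichotomy between \eqref{caseone} and \eqref{casethree} according to the sign of $2+p-4a_{(1)}$.
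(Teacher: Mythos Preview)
Your overall architecture is correct and matches the paper: integrate the divergence identity \eqref{div} (or \eqref{div2}) over $V$, use Stokes to produce the flux terms on $\cuv$, $\ucuv$ and $\Si_0\cap V$, bound the initial contribution by Lemma~\ref{gainu}, split $\trch$ and $\trchb$ into the $1/r$ and $-4/r$ pieces plus $\Gag$, and read off the sign conditions that distinguish \eqref{caseone} from \eqref{casethree}. The $\Gag$--errors are indeed absorbed for $\ep$ small exactly as you say.

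The gap is in your treatment of the term $\int_V r^p\,\Gab\,|\psi_{(1)}|^2$. You flag it as the ``main obstacle'' and propose either handling it case by case for each concrete Bianchi pair or accepting a loss in the weight $p$. Neither is needed, and neither is what the paper does. The point you are missing is that this term is not to be \emph{absorbed} at all: it is to be \emph{bounded outright} by Theorem~\ref{wonderfulrp}. Since $\Gab\in\O_{1-2\de-4\de_0}$ by Lemma~\ref{wonderfulrk} and, by hypothesis, $\psi_{(1)}\in\F_p^{(2)}$, one writes
\[
\ee_p^0\big[\psi_{(1)},\Gab\cdot\psi_{(1)}\big]\ \les\ \ee_p^1\big[\F_p^{(1)},\O_{1-2\de-4\de_0}^{(1)}\cdot\F_p^{(1)}\big]\ \les\ \frac{\ep^3}{|u|^{1-p}},
\]
the last inequality being \eqref{wonderfulll} with $p_1=p_2=p$ and $\ell=1-2\de-4\de_0$, for which the required condition $2p<p_1+p_2+\ell+1$ reduces to $0<2-2\de-4\de_0$ and holds uniformly in $p$. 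This estimate then simply joins the $\ep_0^2/|u|^{1-p}$ term on the right, with no absorption and no loss. In particular, the hypothesis $\psi_{(1)}\in\F_p^{(2)}$ is not there primarily for a priori finiteness, as you suggest; its real role is to make Theorem~\ref{wonderfulrp} applicable to the $\Gab$--error, and once you use it that way the ``borderline'' difficulty you describe disappears at the level of this proposition.
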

\begin{proof}
We have from Stokes' theorem
\begin{align*}
\int_V \bdiv\left(r^p|\psi_{(1)}|^2e_3+r^p|\psi_{(2)}|^2e_4\right)&=\int_{|u|}^\ub r^p|\psi_{(1)}|^2 \g\big(e_3,-e_4\big)d\ub'+\int_{-\ub}^ur^p|\psi_{(2)}|^2\g\big(e_4,-\Om^2 e_3\big)du'\\
&+\int_{\Si_0\cap V}r^p|\psi_{(1)}|^2\g(e_3,T)+r^p|\psi_{(2)}|^2\g(e_4,T)\\
&=2\int_\cuv r^p|\psi_{(1)}|^2+2\int_\ucuv r^p|\psi_{(2)}|^2-\int_{\Si_0\cap V}r^p\left(|\psi_{(1)}|^2+|\psi_{(2)}|^2\right).
\end{align*}
Thus, integrating \eqref{div} or \eqref{div2} in $V=V(u,\ub)$, we obtain
\begin{align*}
&\int_\cuv r^p|\psi_{(1)}|^2+\int_\ucuv r^p|\psi_{(2)}|^2-\left(2a_{(1)}-1-\frac{p}{2}\right)\int_V r^{p-1}|\psi_{(1)}|^2+\left(2a_{(2)}-1-\frac{p}{2}\right)\int_V r^{p-1}|\psi_{(2)}|^2\\
\les&\int_{\Si_0\cap V}r^p\left(|\psi_{(1)}|^2+|\psi_{(2)}|^2\right)+\ee_p^0\left[\psi_{(1)},h_{(1)}+\Gab\c\psi_{(1)}\right]+\ee_p^0\left[\psi_{(2)},h_{(2)}+\Gag\c\psi_{(2)}\right].
\end{align*}
Applying Lemma \ref{gainu}, we deduce
\begin{align*}
\int_{\Si_0\cap V}r^p\left(|\psi_{(1)}|^2+|\psi_{(2)}|^2\right)\les\frac{\ep_0^2}{|u|^{1-p}}.
\end{align*}
Recalling from Lemma \ref{wonderfulrk} that $\Gab\in\O_{1-2\de-4\de_0}$, we obtain from \eqref{wonderfulll} in Theorem \ref{wonderfulrp}
\begin{align*}
    \ee_p^0[\psi_{(1)},\Gab\c\psi_{(1)}]=\ee_p^0[\F_p^{(2)},\O_{1-2\de-4\de_0}\c\F_p^{(2)}]\les\ee_p^2[\F_p,\O_{1-2\de-4\de_0}\c\F_p]\les\frac{\ep^3}{|u|^{1-p}}.
\end{align*}
Recalling from Lemma \ref{decayGagGabGaa} that $r|\Gag|\les\ep$, we deduce
\begin{align*}
    \ee_p^0[\psi_{(2)},\Gag\c\psi_{(2)}]\les\ep\int_Vr^{p-1}|\psi_{(2)}|^2.
\end{align*}
Combining the above estimates, we infer
\begin{align*}
&\int_\cuv r^p|\psi_{(1)}|^2+\int_\ucuv r^p|\psi_{(2)}|^2-\left(2a_{(1)}-1-\frac{p}{2}\right)\int_V r^{p-1}|\psi_{(1)}|^2+\left(2a_{(2)}-1-\frac{p}{2}\right)\int_V r^{p-1}|\psi_{(2)}|^2\\
\les&\frac{\ep_0^2}{|u|^{1-p}}+\ee_p^0\left[\psi_{(1)},h_{(1)}\right]+\ee_p^0\left[\psi_{(2)},h_{(2)}\right]+\ep\int_Vr^{p-1}|\psi_{(2)}|^2.
\end{align*}
Recalling that $2a_{(2)}-1-\frac{p}{2}>0$, we deduce that for $\ep$ small enough, \eqref{caseone} and \eqref{casethree} hold in the corresponding cases. This concludes the proof of Proposition \ref{keyintegral}.
\end{proof}
\subsection{Estimates for the Bianchi pair \texorpdfstring{$(\a,\b)$}{}}\label{ssec9.1}
\begin{prop}\label{estab}
We have the following estimate:
\begin{equation}\label{abs}
\int_\cuv r^{1-2\de}|\a^{(2)}|^2+\int_\ucuv r^{1-2\de}|\b^{(2)}|^2+\int_V r^{-2\de}\left(|\a^{(2)}|^2+|\b^{(2)}|^2\right)\les\frac{\ep_0^2}{|u|^{2\de}}.
\end{equation}
\end{prop}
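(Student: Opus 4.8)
The plan is to apply Proposition \ref{keyintegral} to the Bianchi pair $(\a,\b)$ with the weight exponent $p=1-2\de$, and then control the resulting nonlinear error terms using Theorem \ref{wonderfulrp}. First I would record that $(\a,\b)$ satisfies \eqref{bianchi1} with $k=2$, $a_{(1)}=\tfrac12$, $a_{(2)}=2$ (as noted in the remark following Lemma \ref{keypoint}), so the relevant coefficients are $2+p-4a_{(1)}=2+(1-2\de)-2=1-2\de>0$ and $4a_{(2)}-2-p=8-2-(1-2\de)=5+2\de>0$. Hence we are in the first case of Proposition \ref{keyintegral}, namely \eqref{caseone}, which yields
\begin{align*}
\int_\cuv r^{1-2\de}|\a^{(2)}|^2+\int_\ucuv r^{1-2\de}|\b^{(2)}|^2+\int_V r^{-2\de}\left(|\a^{(2)}|^2+|\b^{(2)}|^2\right)\les\frac{\ep_0^2}{|u|^{2\de}}+\ee_{1-2\de}^0[\a,h_{(1)}]+\ee_{1-2\de}^0[\b,h_{(2)}],
\end{align*}
provided we first verify the hypothesis $\a\in\F_{1-2\de}^{(2)}$, which is exactly \eqref{aaa} in Lemma \ref{wonderfulrk}. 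Since $|u|\gtrsim 1$ in $\kk$, the first term on the right is $\les\ep_0^2$, so it remains only to bound the two error integrals by $\ep_0^2$ (or by $\ep^3\les\ep_0^2$).

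Next I would identify the error terms from the schematic Bianchi equations in Proposition \ref{bianchi}: for the pair $(\a,\b)$ we have $h_{(1)}=r^{-1}(\Gag\c\Gab)^{(1)}$ and $h_{(2)}=r^{-1}(\Gag\c\Gag)^{(1)}+\Gag\c\a$. The error integrals then decompose into terms of the form $\ee_{1-2\de}^1[\F_{p_1}^{(1)},\O_\ell^{(1)}\c\F_{p_2}^{(1)}]$ after writing $\psi^{(0)}$ inside $\psi^{(q)}$ for $q\le 2$ and using that $r^{-1}(\Gag\c\Gag)^{(1)}$, $r^{-1}(\Gag\c\Gab)^{(1)}$ both factor as (an $\O_\ell$ quantity) times (an $\F_{p}$ quantity). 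Concretely, using \eqref{1GagGabGaw}, $r^{-1}\Gag^{(1)}\in\F_{1-2\de-\de_0}$ and $r^{-1}\Gab^{(1)}\in\F_{1-2\de-7\de_0}$, together with $\Gag\in\O_{1-2\de}$ from \eqref{GagGabGawomb}, the term $r^{-1}(\Gag\c\Gag)^{(1)}$ is of type $\O_{1-2\de}^{(1)}\c\F_{1-2\de-\de_0}^{(1)}$ (schematically — one factor keeps the $r\nab$ and the other does not), and $r^{-1}(\Gag\c\Gab)^{(1)}$ is of type $\O_{1-2\de}^{(1)}\c\F_{1-2\de-7\de_0}^{(1)}$; the genuinely cubic term $\Gag\c\a$ pairs $\a\in\F_{1-2\de}^{(1)}$ with $\Gag\in\O_{1-2\de}^{(1)}$. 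For each such term I would check the arithmetic condition $2p<p_1+p_2+\ell+1$ of part (1) of Theorem \ref{wonderfulrp}: with $p=1-2\de$ this reads $2-4\de<p_1+p_2+\ell+1$, and in all cases above $p_1,p_2,\ell$ are of the form $1-2\de-O(\de_0)$, so $p_1+p_2+\ell+1\ge 4-6\de-O(\de_0)>2-4\de$ since $\de_0\ll\de\ll 1$. Theorem \ref{wonderfulrp} then gives each error integral $\les\ep^3/|u|^{1-p}=\ep^3/|u|^{2\de}\les\ep^3\les\ep_0^2$.

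The only subtlety — and the step I expect to require the most care — is bookkeeping the schematic factorizations: one must make sure that when $h_{(1)}$ or $h_{(2)}$ is differentiated (via the $(q)$--superscript in $\ee_p^0$ versus $\ee_p^1$ and via $\ee_p^q$ in \eqref{deferr}) the product rule distributes the $r\nab$ derivatives so that at most one factor carries the top-order derivative, matching the shape $\F_{p_1}^{(1)}\c\O_\ell^{(1)}\c\F_{p_2}^{(1)}$ that Theorem \ref{wonderfulrp} is built to absorb; this is where the superscript conventions $\mathbb{X}_q^{(i)}$ of Definition \ref{wonderfuldf} and Remark \ref{Gawrk} do the work, and the commutator identities of Proposition \ref{commcor} and \ref{commutation} are used to move $r\nab$ past $\nab_3,\nab_4$ without generating worse terms. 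Once the factorizations are verified, collecting all contributions gives
\begin{align*}
\ee_{1-2\de}^0[\a,h_{(1)}]+\ee_{1-2\de}^0[\b,h_{(2)}]\les\frac{\ep^3}{|u|^{2\de}}\les\ep_0^2,
\end{align*}
and plugging back into \eqref{caseone} yields \eqref{abs}. This concludes the proof of Proposition \ref{estab}.
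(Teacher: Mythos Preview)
Your overall strategy matches the paper's, but there are two concrete gaps in the execution.

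First, Proposition~\ref{keyintegral} only controls $|\psi_{(1)}|^2$ and $|\psi_{(2)}|^2$ for the specific pair you feed it; to reach $|\a^{(2)}|^2$ and $|\b^{(2)}|^2$ you must apply it to $(\dkb^q\a,\dkb^q\b)$ for each $q=0,1,2$, after commuting $\dkb^q$ through the equations. When you do this, Proposition~\ref{commutation} gives $[\Om^2\nab_3,r\nab]=\Om^2\Gaw\c r\nab+\Om^2\Gaw^{(1)}$ --- note $\Gaw$, not $\Gag$. This produces a new error term $(\Gaw\c\a)^{(2)}$ in the $\nab_3(\dkb^q\a)$ equation that is \emph{not} subsumed by your $h_{(1)}=r^{-1}(\Gag\c\Gab)^{(1)}$, and your claim that the commutators ``move $r\nab$ past $\nab_3,\nab_4$ without generating worse terms'' is false here. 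Since $\Gaw\in\O_{-2\de-2\de_0}$ rather than $\O_{1-2\de}$, your assertion that ``in all cases $p_1,p_2,\ell$ are of the form $1-2\de-O(\de_0)$'' also fails: for this term $\ell=-2\de-2\de_0$. The arithmetic condition of Theorem~\ref{wonderfulrp} does still hold --- one gets $p_1+p_2+\ell+1=3-6\de-2\de_0>2-4\de=2p$ --- but with far less margin than your ``$4-6\de-O(\de_0)$'', and this $\Gaw\c\a$ contribution is in fact the term that governs the estimate in the paper's proof.

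Second, commuting $\dkb^q$ through the principal term $-2d_2^*$ does not exactly reproduce the Bianchi-pair structure \eqref{bianchi1}: since $-2d_2d_2^*=-d_1^*d_1+2\K$ by \eqref{dddd}, the commuted system acquires an additional linear term $r^{-1}\b^{(q-1)}$ on the right-hand side. The paper absorbs it by Cauchy--Schwarz and induction on $q$; you need to account for it as well.
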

\begin{proof}
We recall from Proposition \ref{bianchi} that
\begin{align}
\begin{split}\label{Bianchiequationab}
\nab_3\a+\frac{1}{2}\trchb\a&=-2d_2^*\b+r^{-1}(\Gag\c\Gab)^{(1)},\\
\nab_4\b+2\trch\b&=d_2\a+r^{-1}\Gag\c\Gag^{(1)}+\Gab\c\a.
\end{split}
\end{align}
Differentiating \eqref{Bianchiequationab} by $\dkb^q$ for $q=0,1,2$ and applying Proposition \ref{commutation}, we deduce
\begin{align*}
\nab_3(\dkb^q\a)+\frac{1}{2}\trchb(\dkb^q\a)&=-\dkb_{(q)}^*(\dkb^q\b)+r^{-1}\b^{(q-1)}+r^{-1}(\Gag\c\Gab)^{(3)}+(\Gaw\c\a)^{(2)},\\
\nab_4(\dkb^q\b)+2\trch(\dkb^q\b)&=\dkb(\dkb^q\a)+r^{-1}(\Gag\c\Gag)^{(3)}+(\Gab\c\a)^{(2)},
\end{align*}
where $\dkb_{(q)}^*$ is defined by
\begin{align*}
    \dkb_{(0)}^*:=2d_2^*,\qquad \dkb_{(1)}^*:=d_1^*,\qquad \dkb_{(2)}^*:=d_1,
\end{align*}
and the term $r^{-1}\b^{(q-1)}$ comes from \eqref{dddd} and the convention $\b^{(-1)}=0$.
Note that we have from Lemma \ref{wonderfulrk}
\begin{align*}
    (\Gaw\c\a)^{(2)}&=(\Gaw^{(1)}\c\a)^{(1)}+\Gaw\c\a^{(2)}=(\O_{-2\de-2\de_0}^{(1)}\c\F_{1-2\de}^{(1)})^{(1)},\\
    (\Gab\c\a)^{(2)}&=(\Gab^{(1)}\c\a)^{(1)}+\Gab\c\a^{(2)}=(\O_{1-2\de-4\de_0}^{(1)}\c\F_{1-2\de}^{(1)})^{(1)},\\
    r^{-1}(\Gag\c\Gab)^{(3)}&=(\O_{1-2\de}\c\F_{1-2\de-7\de_0})^{(2)},\\
    r^{-1}(\Gag\c\Gag)^{(3)}&=(\O_{1-2\de}\c\F_{1-2\de-\de_0})^{(2)}.
\end{align*}
Hence, we obtain by ignoring the terms which decay better
\begin{align*}
\nab_3(\dkb^q\a)+\frac{1}{2}\trchb(\dkb^q\a)&=-\dkb_{(q)}^*(\dkb^q\b)+r^{-1}\b^{(q-1)}+(\O_{-2\de-2\de_0}^{(1)}\c\F_{1-2\de}^{(1)})^{(1)},\\
\nab_4(\dkb^q\b)+2\trch(\dkb^q\b)&=\dkb(\dkb^q\a)+(\O_{1-2\de}^{(1)}\c\F_{1-2\de-7\de_0}^{(1)})^{(1)}.
\end{align*}
Applying \eqref{caseone} with $\psi_{(1)}=\dkb^q\a$, $\psi_{(2)}=\dkb^q\b$, $a_{(1)}=\frac{1}{2}$, $a_{(2)}=2$, $p=1-2\de$ and noticing that
\begin{equation}\label{pless6}
    2+p-4a_{(1)}=1-2\de>0,\qquad 4a_{(2)}-2-p=5+2\de>0,
\end{equation}
we obtain from Lemma \ref{wonderfulrk}
\begin{align}
\begin{split}\label{abestim0}
&\int_{\cuv} r^{1-2\de} |\dkb^q\a|^2+\int_\ucuv r^{1-2\de}|\dkb^q\b|^2 +\int_V r^{-2\de}\left(|\dkb^q\a|^2+|\dkb^q\b|^2\right)\\
\les&\frac{\ep_0^2}{|u|^{2\de}}+\int_V r^{-2\de}|\dkb^q\b||\dkb^{q-1}\b|+\ee_{1-2\de}^1\left[\F_{1-2\de}^{(1)},\O_{-2\de-2\de_0}^{(1)}\c\F_{1-2\de}^{(1)}\right].
\end{split}
\end{align}
The linear term on the R.H.S of \eqref{abestim0} can be easily absorbed by Cauchy-Schwarz and induction. Moreover, applying Theorem \ref{wonderfulrp} in the case $p=1-2\de$, $p_1=1-2\de$, $p_2=1-2\de$ and $\ell=-2\de-2\de_0$, and noticing that
\begin{align*}
    p_1+p_2+\ell=2-6\de-2\de_0>1-4\de=2p-1,
\end{align*}
we obtain from \eqref{wonderfulll}
\begin{align*}
\ee_{1-2\de}^1\left[\F_{1-2\de}^{(1)},\O_{-2\de-2\de_0}^{(1)}\c\F_{1-2\de}^{(1)}\right]\les\frac{\ep^3}{|u|^{2\de}}.
\end{align*}
Injecting it into \eqref{abestim0}, we obtain \eqref{abs}. This concludes the proof of Proposition \ref{estab}.
\end{proof}
\subsection{Estimates for the Bianchi pair \texorpdfstring{$(\b,(\rhoc,-\sic))$}{}}\label{ssec9.2}
\begin{prop}\label{estbr}
We have the following estimate:
\begin{align}
\begin{split}
\int_\cuv r^{1-2\de}|\b^{(2)}|^2+\int_\ucuv r^{1-2\de}|(\rhoc,\sic)^{(2)}|^2+\int_V r^{-2\de}|(\rhoc,\sic)^{(2)}|^2\les\frac{\ep_0^2}{|u|^{2\de}}.\label{brs}
\end{split}
\end{align}
\end{prop}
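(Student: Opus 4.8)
The plan is to rerun the argument of Proposition \ref{estab}, the only structural changes being that a different branch of Proposition \ref{keyintegral} is triggered and that the bulk estimate for $\b$ contained in Proposition \ref{estab} is now used as an input. By Proposition \ref{bianchi} the pair $(\b,(\rhoc,-\sic))$ satisfies \eqref{bianchi1} with $k=1$, $a_{(1)}=1$, $a_{(2)}=\frac{3}{2}$, and inhomogeneities $h_{(1)}=r^{-1}(\Gab\c\Gaw)^{(1)}$, $h_{(2)}=r^{-1}(\Gag\c\Gag)^{(1)}$. First I would commute with $\dkb^q$, $q=0,1,2$, using Proposition \ref{commutation}: by the Hodge identities \eqref{dddd} the operators $d_1^*,d_1$ turn into the Hodge operators of the appropriate order, so the commuted system is again a Bianchi pair in the sense of Lemma \ref{keypoint} with the \emph{same} $a_{(1)},a_{(2)}$, up to a lower-order linear term of the schematic type $r^{-1}(\rhoc,\sic)^{(q-1)}$, $r^{-1}\b^{(q-1)}$, and up to nonlinear errors produced by Leibniz-expanding $h_{(1)},h_{(2)}$ to order $q$ and by the commutators $\Gaw\c\nab(\cdot)$, $\Gag\c\nab(\cdot)$. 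Using Lemma \ref{wonderfulrk} and \eqref{1GagGabGaw} and discarding the pieces with strictly faster decay, these nonlinear errors are of the type $(\O_\ell^{(1)}\c\F_{p_2}^{(1)})^{(1)}$ with $\ell$ and $p_2$ equal to $1$ up to $\de$- and $\de_0$-losses; concretely $r^{-1}(\Gab\c\Gaw)^{(3)}$ and $r^{-1}(\Gag\c\Gag)^{(3)}$ are handled exactly as $r^{-1}(\Gag\c\Gab)^{(3)}$ and $r^{-1}(\Gag\c\Gag)^{(3)}$ are in the proof of Proposition \ref{estab}.

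Next I would check which case of Proposition \ref{keyintegral} applies at the borderline weight $p=1-2\de$: since $a_{(1)}=1$ we have $2+p-4a_{(1)}=-1-2\de\leq 0$ while $4a_{(2)}-2-p=3+2\de>0$, so we are in case \eqref{casethree} (and not in \eqref{caseone}, which was the one used for $(\a,\b)$). Applying it with $\psi_{(1)}=\dkb^q\b$, $\psi_{(2)}=\dkb^q(\rhoc,-\sic)$ and using the nonlinear bounds above gives, for each $q=0,1,2$,
\begin{align*}
&\int_\cuv r^{1-2\de}|\dkb^q\b|^2+\int_\ucuv r^{1-2\de}|\dkb^q(\rhoc,\sic)|^2+\int_V r^{-2\de}|\dkb^q(\rhoc,\sic)|^2\\
\les&\frac{\ep_0^2}{|u|^{2\de}}+\int_V r^{-2\de}|\dkb^q\b|^2+\ee_{1-2\de}^0[\dkb^q\b,h_{(1)}]+\ee_{1-2\de}^0[\dkb^q(\rhoc,-\sic),h_{(2)}]+(\text{lower-order linear}).
\end{align*}
The bulk term $\int_V r^{-2\de}|\dkb^q\b|^2$, which case \eqref{casethree} cannot reabsorb, is exactly the $\b$-bulk already bounded in Proposition \ref{estab}; I would use it in the sharp form $\int_V r^{-2\de}|\b^{(2)}|^2\les\frac{\ep_0^2}{|u|^{2\de}}$, which is what the proof of Proposition \ref{estab} actually delivers --- its last step combines \eqref{abestim0} with $\ep^3=\ep_0^2$, so the $|u|^{-2\de}$ decay is there even though the statement \eqref{abs} only records the weaker $\les\ep_0^2$. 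The lower-order linear terms are absorbed by Cauchy--Schwarz and induction on $q$, and each error integral is rewritten as an $\ee_{1-2\de}^2$ of the corresponding $\F/\O$ classes and estimated by \eqref{wonderfulll} of Theorem \ref{wonderfulrp}; its hypothesis $2p<p_1+p_2+\ell+1$ holds with room to spare since $2p=2-4\de$ whereas $p_1,p_2,\ell$ are each $1$ up to $\de$- and $\de_0$-losses, which gives $\les\frac{\ep^3}{|u|^{2\de}}$. Summing over $q=0,1,2$ then yields \eqref{brs}.

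I expect the main obstacle to be bookkeeping rather than analysis. One must verify that after the $\dkb$-commutations the most dangerous nonlinear error --- the one carrying a factor of $\Gaw=\{\hchb\}$, whose decay is the weakest among all Ricci coefficients --- still leaves a strictly positive gap in $2p<p_1+p_2+\ell+1$ at the borderline weight $p=1-2\de$; the hierarchy $\de_0\ll\de\ll 1$ is tuned precisely so that this gap survives. The second delicate point is that the imported $\b$-bulk must be used with its genuine $|u|^{-2\de}$ decay: with only the $\ep_0^2$ recorded in \eqref{abs} one would get \eqref{brs} with $\ep_0^2$ on the right instead of $\frac{\ep_0^2}{|u|^{2\de}}$, and that $|u|^{-2\de}$ gain is exactly what is needed when \eqref{brs} is fed forward into the estimates for the subsequent Bianchi pairs $((\rhoc,\sic),\bb)$ and $(\bb,\aa)$.
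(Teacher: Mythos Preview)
Your proposal is correct and follows the paper's own proof essentially line for line: commute by $\dkb^q$, apply case \eqref{casethree} of Proposition \ref{keyintegral} with $p=1-2\de$, feed in the $\b$--bulk from the proof of Proposition \ref{estab} (indeed with its genuine $|u|^{-2\de}$ decay, as you correctly note), and close the nonlinear errors via Theorem \ref{wonderfulrp}. One small correction to your bookkeeping: for the dangerous error $r^{-1}(\Gab\c\Gaw)^{(3)}$ the paper places the $\Gaw$ factor in $\F_{-4\de}$ (via $r^{-1}\Gaw^{(1)}\in\F_{-4\de}$ from \eqref{1GagGabGaw}), so the relevant $p_2$ is $-4\de$, not ``$1$ up to $\de$--losses'' as you write; the hypothesis $2p<p_1+p_2+\ell+1$ then reads $2-4\de<3-8\de-4\de_0$, which still holds, so your conclusion is unaffected.
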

\begin{proof}
We recall from Proposition \ref{bianchi}
\begin{align*}
\nab_3\b+\trchb\b&=-d_1^*(\rhoc,-\sic)+r^{-1}(\Gab\c\Gaw)^{(1)},\\
\nab_4(\rhoc,-\sic)+\frac{3}{2}\trch(\rhoc,-\sic)&=d_1\b+r^{-1}(\Gag\c\Gag)^{(1)}.
\end{align*}
Differentiating it by $\dkb^2$ and applying Proposition \ref{commutation}, we deduce
\begin{align*}
\nab_3(\dkb^2\b)+\trchb(\dkb^2\b)&=-d_1^*\big(\dkb^2(\rhoc,-\sic)\big)+r^{-1}(\Gab\c\Gaw)^{(3)}\\
\nab_4\big(\dkb^2(\rhoc,-\sic)\big)+\frac{3}{2}\trch\big(\dkb^2(\rhoc,-\sic)\big)&=d_1(\dkb^2\b)+r^{-1}(\Gag\c\Gag)^{(3)}.
\end{align*}
Applying Lemma \ref{wonderfulrk}, we obtain
\begin{align*}
\nab_3(\dkb^2\b)+\trchb(\dkb^2\b)&=-d_1^*\big(\dkb^2(\rhoc,-\sic)\big)+(\O_{1-2\de-4\de_0}\c\F_{-4\de})^{(2)},\\
\nab_4\big(\dkb^2(\rhoc,-\sic)\big)+\frac{3}{2}\trch\big(\dkb^2(\rhoc,-\sic)\big)&=d_1(\dkb^2\b)+(\O_{1-2\de}\c\F_{1-2\de-3\de_0})^{(2)}.
\end{align*}
Applying \eqref{casethree} with $\psi_{(1)}=\dkb^2\b$, $\psi_{(2)}=\dkb^2(\rhoc,-\sic)$, $a_{(1)}=1$, $a_{(2)}=\frac{3}{2}$ and $p=1-2\de$, and noticing that
\begin{equation}\label{2p4}
    2+p-4a_{(1)}=-1-2\de\leq 0,\qquad 4a_{(2)}-2-p=3+2\de>0,
\end{equation}
we infer from Lemma \ref{wonderfulrk}
\begin{align*}
&\int_{\cuv} r^{1-2\de}|\dkb^2\b|^2+\int_\ucuv r^{1-2\de}|\dkb^2(\rhoc,-\sic)|^2+\int_V r^{-2\de}|\dkb^2(\rhoc,-\sic)|^2\\
\les&\frac{\ep_0^2}{|u|^{2\de}}+\int_V r^{-2\de}|\dkb^2\b|^2+\ee_{1-2\de}^2\left[\F_{1-2\de},\O_{1-2\de-4\de_0}\c\F_{-4\de}\right].
\end{align*}
Applying \eqref{wonderfulll} in Theorem \ref{wonderfulrp}, we easily deduce
\begin{align*}
\ee_{1-2\de}^2\left[\F_{1-2\de},\O_{1-2\de-4\de_0}\c\F_{-4\de}\right]\les\frac{\ep^3}{|u|^{2\de}}.
\end{align*}
Combining with \eqref{abs}, we obtain
\begin{align*}
\int_{\cuv} r^{1-2\de} |\dkb^2\b|^2+\int_\ucuv r^{1-2\de}|\dkb^2(\rhoc,-\sic)|^2+\int_V r^{-2\de}|\dkb^2(\rhoc,-\sic)|^2\les\frac{\ep_0^2}{|u|^{2\de}}.
\end{align*}
The estimates for $\dkb^{\leq 1}\b$ and $\dkb^{\leq 1}(\rhoc,-\sic)$ can be deduced similarly. This concludes the proof of Proposition \ref{estbr}.
\end{proof}
\subsection{Estimates for the Bianchi pair \texorpdfstring{$((\rhoc,\sic),\bb)$}{}}\label{ssec9.3}
\begin{prop}\label{estrb}
We have the following estimate:
\begin{equation}
\int_\cuv r^{1-2\de}|(\rhoc,\sic)^{(2)}|^2+\int_\ucuv r^{1-2\de}|\bb^{(2)}|^2+\int_V r^{-2\de}|\bb^{(2)}|^2\les\frac{\ep_0^2}{|u|^{2\de}}.\label{rbs}
\end{equation}
\end{prop}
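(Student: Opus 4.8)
The plan is to treat the Bianchi pair $((\rhoc,\sic),\bb)$ exactly as the previous two pairs were treated, namely by combining the general transport structure of Lemma \ref{keypoint} (specifically \eqref{bianchi2} with $k=1$, $a_{(1)}=\frac{3}{2}$, $a_{(2)}=1$) with the $r^p$--weighted identity and the nonlinear error estimates of Theorem \ref{wonderfulrp}. First I would recall from Proposition \ref{bianchi} the schematic equations
\begin{align*}
\nab_3(\rhoc,\sic)+\frac{3}{2}\trchb(\rhoc,\sic)&=-d_1\bb+r^{-1}(\Gab\c\Gaw)^{(1)},\\
\nab_4\bb+\trch\,\bb&=d_1^*(\rhoc,\sic)+r^{-1}(\Gag\c\Gaw)^{(1)},
\end{align*}
differentiate by $\dkb^q$ for $q=0,1,2$, and apply the commutator Proposition \ref{commutation} to produce a system of the form \eqref{bianchi2} with the inhomogeneous terms controlled, via Lemma \ref{wonderfulrk}, by $(\O_{1-2\de-4\de_0}\c\F_{-4\de})^{(2)}$ and $(\O_{1-2\de}\c\F_{-4\de})^{(2)}$ respectively (plus lower-order linear terms $r^{-1}\bb^{(q-1)}$ arising from $-d_1 d_1^* = -\De_0$, handled by Cauchy--Schwarz and induction as in Proposition \ref{estab}).

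Next I would check which regime of Proposition \ref{keyintegral} applies for the weight $p=1-2\de$. Here $2+p-4a_{(1)}=3-2\de-6=-3-2\de\leq 0$, while $4a_{(2)}-2-p=4-2-1+2\de=1+2\de>0$, so we are in the situation of \eqref{casethree}: we gain the fluxes $\int_\cuv r^{1-2\de}|(\rhoc,\sic)^{(2)}|^2$ and $\int_\ucuv r^{1-2\de}|\bb^{(2)}|^2$ together with the spacetime integral $\int_V r^{-2\de}|\bb^{(2)}|^2$, at the cost of the spacetime integral $\int_V r^{-2\de}|(\rhoc,\sic)^{(2)}|^2$ on the right-hand side. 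This is precisely the bulk term that was estimated in Proposition \ref{estbr}, so I would invoke \eqref{brs} to absorb it, obtaining a bound of $\ep_0^2|u|^{-2\de}$. The remaining nonlinear error terms $\ee_{1-2\de}^2[\F_{1-2\de},\O_{1-2\de-4\de_0}\c\F_{-4\de}]$ and the analogous one with $\O_{1-2\de}$ are controlled by \eqref{wonderfulll} of Theorem \ref{wonderfulrp}, since the relevant index inequality $p_1+p_2+\ell=2-6\de-2\de_0>1-4\de=2p-1$ holds (and similarly for the other term); these contribute $\ep^3|u|^{-2\de}\ll\ep_0^2|u|^{-2\de}$.

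Finally, the lower-order cases $q=0,1$ follow by the same argument together with the already-established bound for $\dkb^{\leq 1}(\rhoc,\sic)$ from Proposition \ref{estbr}, which feeds the initial data and bulk terms at those orders; summing over $q=0,1,2$ yields \eqref{rbs}. I expect the only genuinely delicate point to be the bookkeeping of the $\de$, $\de_0$ weights: one must verify that every nonlinear error term genuinely satisfies the strict inequality $2p<p_1+p_2+\ell\pm 1$ required by Theorem \ref{wonderfulrp} with room to spare, and that the $\Fb$ versus $\F$ distinction for $\bb$ (which lives on the incoming cones) is respected when applying \eqref{wonderfulrr} rather than \eqref{wonderfulll} where appropriate; the analytic content is otherwise identical to Propositions \ref{estab} and \ref{estbr}.
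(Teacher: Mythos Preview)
Your overall strategy is correct and matches the paper's: apply \eqref{casethree} with $p=1-2\de$, feed the bulk term $\int_V r^{-2\de}|(\rhoc,\sic)^{(2)}|^2$ from \eqref{brs}, and control the nonlinear errors via Theorem \ref{wonderfulrp}. However, there is a genuine gap in your treatment of the error $\ee_{1-2\de}^0[\dkb^2\bb,\,h_{(2)}]$ coming from $h_{(2)}=r^{-1}(\Gag\c\Gaw)^{(3)}$.

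You assert that this error falls under $\ee_{1-2\de}^2[\F_{1-2\de},\O_{1-2\de}\c\F_{-4\de}]$, but $\bb$ does not belong to $\F_{1-2\de}$: the available fluxes are $\bb\in\F_{-4\de}$ (outgoing, from the bootstrap $\mr_q[\bb]$) and $\bb\in\Fb_{1-2\de}$ (incoming). With the correct $p_1=-4\de$, the index check for \eqref{wonderfulll} reads $2p=2-4\de$ versus $p_1+p_2+\ell+1=(-4\de)+(-4\de)+(1-2\de)+1=2-10\de$, and $2-4\de<2-10\de$ is \emph{false}. Switching to \eqref{wonderfulrr} with $\bb\in\Fb_{1-2\de}$ and $r^{-1}\Gaw^{(1)}\in\Fb_{1-2\de}$ gives $p_1+p_2+\ell-1=2-6\de$, and again $2-4\de<2-6\de$ fails. (Your displayed check $p_1+p_2+\ell=2-6\de-2\de_0$ is the computation from the $(\a,\b)$ pair and does not correspond to any of the error terms here.)

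The paper resolves this by expanding $r^{-1}(\Gag\c\Gaw)^{(3)}$ via Leibniz and isolating the top-order piece $\Gag\c\bb^{(2)}$ (i.e.\ the term where all three angular derivatives land on $\Gaw$). This borderline contribution is not fed to Theorem \ref{wonderfulrp} at all; instead one uses $r|\Gag|\les\ep$ to bound
\[
\int_V r^{1-2\de}\,|\Gag|\,|\bb^{(2)}|^2\;\les\;\ep\int_V r^{-2\de}|\bb^{(2)}|^2,
\]
and absorbs this into the left-hand side bulk term for $\ep$ small. The remaining pieces of $h_{(2)}$, where at least one derivative hits $\Gag$, are genuinely better: they fit into $(\O_{1-2\de}^{(1)}\c\F_{1-2\de-7\de_0}^{(1)})^{(1)}$, and the corresponding $\ee_{1-2\de}^1[\F_{-4\de}^{(1)},\O_{1-2\de}^{(1)}\c\F_{1-2\de-7\de_0}^{(1)}]$ does pass the index test. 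This extraction-and-absorption step is the missing idea in your proposal.
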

\begin{proof}
We recall from Proposition \ref{bianchi}
\begin{align*}
\nab_3(\rhoc,\sic)+\frac{3}{2}\trchb(\rhoc,\sic)&=-d_1\bb+r^{-1}(\Gab\c\Gaw)^{(1)},\\
\nab_4\bb+\trch\bb&=d_1^*(\rhoc,\sic)+r^{-1}(\Gag\c\Gaw)^{(1)}.
\end{align*}
Differentiating it by $\dkb^2$ and applying Proposition \ref{commutation}, we deduce
\begin{align*}
\nab_3\big(\dkb^2(\rhoc,\sic)\big)+\frac{3}{2}\trchb\big(\dkb^2(\rhoc,\sic)\big)&=-d_1(\dkb^2\bb)+r^{-1}(\Gab\c\Gaw)^{(3)},\\
\nab_4(\dkb^2\bb)+\trch(\dkb^2\bb)&=d_1^*(\dkb^2(\rhoc,\sic))+r^{-1}(\Gag\c\Gaw)^{(3)}.
\end{align*}
Noticing that we have from Lemma \ref{wonderfulrk} and \eqref{codazzi}
\begin{align*}
r^{-1}(\Gag\c\Gaw)^{(3)}&=\Gag\c\bb^{(2)}+r^{-1}\Gag^{(3)}\c\Gaw+r^{-1}\Gag^{(2)}\c\Gaw^{(1)}+r^{-1}\Gag^{(1)}\c\Gaw^{(2)}\\
&=\Gag\c\bb^{(2)}+\F_{1-2\de-7\de_0}^{(2)}\c\O_{1-2\de}^{(1)}+\F_{1-2\de-7\de_0}^{(1)}\c\O_{1-2\de}^{(2)}+\O_{1-2\de}^{(1)}\c\F_{1-2\de-7\de_0}^{(2)}\\
&=\Gag\c\bb^{(2)}+(\O_{1-2\de}^{(1)}\c\F_{1-2\de-7\de_0}^{(1)})^{(1)}.
\end{align*}
Thus, we have 
\begin{align*}
\nab_3\big(\dkb^2(\rhoc,\sic)\big)+\frac{3}{2}\trchb\big(\dkb^2(\rhoc,\sic)\big)&=-d_1(\dkb^2\bb)+(\O_{-2\de-2\de_0}\c\F_{1-2\de-7\de_0})^{(2)},\\
\nab_4(\dkb^2\bb)+\trch(\dkb^2\bb)&=d_1^*\big(\dkb^2(\rhoc,\sic)\big)+\Gag\c\bb^{(2)}+(\O_{1-2\de}^{(1)}\c\F_{1-2\de-7\de_0}^{(1)})^{(1)}.
\end{align*}
Applying \eqref{casethree} with $\psi_{(1)}=\dkb^2(\rhoc,\sic)$, $\psi_{(2)}=\dkb^2\bb$, $a_{(1)}=1$, $a_{(2)}=\frac{3}{2}$ and $p=1-2\de$, we obtain
\begin{align*}
&\int_{\cuv}r^{1-2\de}|\dkb^2(\rhoc,\sic)|^2+\int_{\ucuv}r^{1-2\de}|\dkb^2\bb|^2+\int_V r^{-2\de}|\dkb^2\bb|^2\\
\les\; &\frac{\ep_0^2}{|u|^{2\de}}+\int_V\left(r^{-2\de}|\dkb^2(\rhoc,\sic)|^2+r^{1-2\de}|\Gag||\bb^{(2)}|^2\right)+\ee_{1-2\de}^2\left[\F_{1-2\de},\O_{-2\de-2\de_0}\c\F_{1-2\de-7\de_0}\right]\\
&+\ee_{1-2\de}^1\left[\F_{-4\de}^{(1)},\O_{1-2\de}^{(1)}\c\F_{1-2\de-7\de_0}^{(1)}\right]\\
\les\; &\frac{\ep_0^2}{|u|^{2\de}}+\ep\int_V r^{-2\de}|\dkb^2\bb|^2+\ee_{1-2\de}^2\left[\F_{1-2\de},\O_{-2\de-2\de_0}\c\F_{1-2\de-7\de_0}\right]+\ee_{1-2\de}^1\left[\F_{-4\de}^{(1)},\O_{1-2\de}^{(1)}\c\F_{1-2\de-7\de_0}^{(1)}\right],
\end{align*}
where we used \eqref{brs}, the fact that $r|\Gag|\les\ep$ and Proposition \ref{elliptic2d} in the last step. Thus, we deduce for $\ep$ small enough
\begin{align*}
&\int_{\cuv}r^{1-2\de}|\dkb^2(\rhoc,\sic)|^2+\int_{\ucuv}r^{1-2\de}|\dkb^2\bb|^2+\int_V r^{-2\de}|\dkb^2\bb|^2\\
\les &\frac{\ep_0^2}{|u|^{2\de}}+\ee_{1-2\de}^2\left[\F_{1-2\de},\O_{-2\de-2\de_0}\c\F_{1-2\de-7\de_0}\right]+\ee_{1-2\de}^1\left[\F_{-4\de}^{(1)},\O_{1-2\de}^{(1)}\c\F_{1-2\de-7\de_0}^{(1)}\right].
\end{align*}
Applying \eqref{wonderfulll} in Theorem \ref{wonderfulrp}, we easily deduce
\begin{align*}
\ee_{1-2\de}^2\left[\F_{1-2\de},\O_{-2\de-2\de_0}\c\F_{1-2\de-7\de_0}\right]+\ee_{1-2\de}^1\left[\F_{-4\de}^{(1)},\O_{1-2\de}^{(1)}\c\F_{1-2\de-7\de_0}^{(1)}\right]\les\frac{\ep^3}{|u|^{2\de}}.
\end{align*}
Combining the above estimates, this concludes the proof of Proposition \ref{estrb}.
\end{proof}
\subsection{Estimates for the Bianchi pair \texorpdfstring{$(\bb,\aa)$}{}}\label{ssec9.4}
\begin{prop}\label{estba}
We have the following estimate:
\begin{equation}\label{bas}
\int_\cuv r^{-4\de}|\bb^{(2)}|^2+\int_\ucuv r^{-4\de}|\aa^{(2)}|^2+\int_V r^{-1-4\de}|\aa^{(2)}|^2\les\frac{\ep_0^2}{|u|^{1+4\de}}.
\end{equation}
\end{prop}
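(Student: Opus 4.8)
The plan is to treat $(\bb,\aa)$ as a Bianchi pair of type \eqref{bianchi2} and run it through case \eqref{casethree} of Proposition \ref{keyintegral}, following the template of the proofs of Propositions \ref{estab}, \ref{estbr} and \ref{estrb}. Recall from Proposition \ref{bianchi} that
\begin{align*}
\nab_3\bb+2\trchb\,\bb&=-d_2\aa+\Gab\c\aa,\\
\nab_4\aa+\frac{1}{2}\trch\,\aa&=2d_2^*\bb+\Gag\c\aa+r^{-1}(\Gab\c\Gaw)^{(1)},
\end{align*}
which is of the form \eqref{bianchi2} with $k=2$, $a_{(1)}=2$, $a_{(2)}=\frac{1}{2}$. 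First I would differentiate both equations by $\dkb^q$ for $q=0,1,2$ and commute with Proposition \ref{commutation}: the commutator with $\nab_3$ contributes $(\Gaw\c\bb)^{(q)}$, the commutator with $\nab_4$ contributes $(\Gag\c\aa)^{(q)}$, and the $d_2,d_2^*$ terms produce linear lower-order contributions $\sim r^{-2}\aa^{(q-1)}$, $\sim r^{-2}\bb^{(q-1)}$ that are absorbed by Cauchy--Schwarz and induction exactly as in Proposition \ref{estab}. I would then use the Codazzi equation \eqref{codazzi} to rewrite the differentiated source $r^{-1}(\Gab\c\Gaw)^{(q+1)}$ as a $\Gab\c\bb^{(q)}$ contribution plus a product of the shape $r^{-1}(\Gab\c\Gag)^{(q+1)}$, precisely as $r^{-1}(\Gag\c\Gaw)^{(3)}$ is handled in the proof of Proposition \ref{estrb}. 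Reading off the decay classes from Lemma \ref{wonderfulrk} and Remark \ref{Gawrk}, the right-hand side errors are then: (i) the source $\Gag\c\aa$ acting on $\psi_{(2)}=\aa$; (ii) the terms $\Gaw\c\bb$ and $\Gab\c\bb^{(2)}$ acting on $\psi_{(1)}=\bb$; and (iii) genuine quadratic products of the shape $\O_\ell\c\Fb_{p_2}$ and $\O_\ell\c\F_{p_2}$.

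Next I would check that $p=-4\de$ lies in the regime of \eqref{casethree}, namely $2+p-4a_{(1)}=-6-4\de\le0$ and $4a_{(2)}-2-p=4\de>0$, and that the hypothesis $\psi_{(1)}\in\F^{(2)}_{-4\de}$ of Proposition \ref{keyintegral} holds for $\bb$: this follows since $\mr_q[\bb]\le\ep$ supplies the outgoing flux bound while the $\mo_q(\Gaw)$ bounds supply the $L^4(S)$ bound. Applying \eqref{casethree} then yields
\begin{align*}
&\int_\cuv r^{-4\de}|\bb^{(2)}|^2+\int_\ucuv r^{-4\de}|\aa^{(2)}|^2+\int_V r^{-1-4\de}|\aa^{(2)}|^2\\
\les\;&\frac{\ep_0^2}{|u|^{1+4\de}}+\int_V r^{-1-4\de}|\bb^{(2)}|^2+\mbox{(errors)}.
\end{align*}
The bulk term $\int_V r^{-1-4\de}|\bb^{(2)}|^2$ — together with the errors it generates from $\Gaw\c\bb$ and $\Gab\c\bb^{(2)}$, which carry additional $r$-decay — is controlled by Proposition \ref{estrb}: since $r\ge|u|\gg1$ one has $r^{-1-4\de}\le r^{-2\de}$, hence $\int_V r^{-1-4\de}|\bb^{(2)}|^2\le\int_V r^{-2\de}|\bb^{(2)}|^2\les\ep_0^2/|u|^{2\de}$. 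The error generated by the source $\Gag\c\aa$ is $\les\ep\int_V r^{-1-4\de}|\aa^{(2)}|^2$ thanks to $r|\Gag|\les\ep$, and is absorbed into the left-hand side for $\ep$ small. The remaining quadratic errors are bounded by Theorem \ref{wonderfulrp}: those pairing two outgoing-flux quantities via \eqref{wonderfulll}, and those involving $\aa$ — which carries only incoming flux — via \eqref{wonderfulrr}, with $\bb$ placed in $\Fb_{1-2\de}$ and $\aa$ in $\Fb_{-2\de_0}$ (or $\Fb^{(2)}_{-4\de}$ at top order); in each case the required strict inequality on the weight exponents holds, giving a contribution $\les\ep^3/|u|^{1+4\de}$. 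Collecting these bounds and recalling $\ep=\ep_0^{\frac{2}{3}}$ and $|u|\ge w_0\gg1$ yields \eqref{bas}.

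The step I expect to be the main obstacle is this error bookkeeping, because $(\bb,\aa)$ sits at the very bottom of the Bianchi hierarchy: $\aa$ carries the slowest decay of all curvature components, only $\aa^{(2)}\in\Fb^{(2)}_{-4\de}$ rather than $\Fb_{-2\de_0}$ at top order, and the source $r^{-1}(\Gab\c\Gaw)^{(1)}$ couples $\Gab$ with the worst Ricci coefficient $\hchb$. One must therefore invoke Codazzi to extract $\bb$ from $\nab\hchb$ and then verify that, with the outer weight $p$ necessarily negative here, every exponent still lands inside the admissible window of Theorem \ref{wonderfulrp}. A secondary care point is the mixed term $\Gab\c\aa$, which couples $\bb$ (outgoing flux) with $\aa$ (incoming flux): this is what forces one to route the corresponding error through the incoming bound $\bb\in\Fb_{1-2\de}$ rather than through $\bb\in\F_{-4\de}$.
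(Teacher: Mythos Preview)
Your proposal is correct and follows essentially the same route as the paper: commute the $(\bb,\aa)$ Bianchi pair, apply case \eqref{casethree} of Proposition \ref{keyintegral} with $p=-4\de$, control the bulk $\int_V r^{-1-4\de}|\bb^{(2)}|^2$ via Proposition \ref{estrb}, absorb $\Gag\c\aa^{(2)}$ using $r|\Gag|\les\ep$, and route the remaining quadratic errors through \eqref{wonderfulrr} with $\bb\in\Fb_{1-2\de}$. One minor bookkeeping slip: the term $\Gab\c\bb^{(2)}$ you extract via Codazzi from $r^{-1}(\Gab\c\Gaw)^{(q+1)}$ sits in the $\nab_4\aa$ equation, so it pairs with $\psi_{(2)}=\aa$, not with $\psi_{(1)}=\bb$ as you wrote in item (ii); the paper avoids this detour by directly invoking $r^{-1}\Gaw^{(1)}\in\Fb_{1-2\de}$ from Lemma \ref{wonderfulrk} (which is the Codazzi information packaged into the schematic classes), but either way the resulting error is handled by \eqref{wonderfulrr} and the argument closes.
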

\begin{proof}
We have from Proposition \ref{bianchi}
\begin{align*}
\nab_3\bb+2\trchb\,\bb&=-d_2\aa+\Gab\c\aa,\\
\nab_4\aa+\frac{1}{2}\trch\,\aa&=2d_2^*\bb+\Gag\c\aa+r^{-1}(\Gab\c\Gaw)^{(1)}.
\end{align*}
Differentiating it by $\dkb^2$ and applying Proposition \ref{commutation}, we infer
\begin{align*}
\nab_3(\dkb^2\bb)+2\trchb(\dkb^2\bb)&=-d_1^*(\dkb^2\aa)+r^{-1}(\Gaw\c\Gaw)^{(3)}+(\Gab\c\aa)^{(2)},\\
\nab_4(\dkb^2\aa)+\frac{1}{2}\trch(\dkb^2\aa))&=d_1\bb+r^{-1}\bb^{(1)}+(\Gag\c\aa)^{(2)}+r^{-1}(\Gab\c\Gaw)^{(3)}.
\end{align*}
Note that we have from Lemma \ref{wonderfulrk}
\begin{align*}
(\Gab\c\aa)^{(2)}&=\Gab^{(2)}\c\aa+\Gab^{(1)}\c\aa^{(1)}+\Gab\c\aa^{(2)}=\O_{1-2\de}^{(2)}\c\Fb_{-2\de_0}^{(1)}+\O_{1-2\de}^{(1)}\c\Fb_{-4\de}^{(2)},\\
(\Gag\c\aa)^{(2)}&=\Gag^{(2)}\c\aa+\Gag^{(1)}\c\aa^{(1)}+\Gag\c\aa^{(2)}=\O_{1-2\de}^{(2)}\c\Fb_{-2\de_0}^{(1)}+\O_{1-2\de}^{(1)}\c\Fb_{-2\de_0}^{(2)}+\Gag\c\aa^{(2)}.
\end{align*}
We also have from Lemma \ref{wonderfulrk}
\begin{align*}
    r^{-1}(\Gaw\c\Gaw)^{(3)}=(\O_{-2\de-2\de_0}\c\Fb_{1-2\de})^{(2)},\qquad r^{-1}(\Gab\c\Gaw)^{(3)}=(\O_{1-2\de-7\de_0}\c\Fb_{1-2\de})^{(2)}.
\end{align*}
Hence, we deduce
\begin{align*}
\nab_3(\dkb^2\bb)+2\trchb(\dkb^2\bb)&=-d_1^*(\dkb^2\aa)+(\O_{1-2\de}^{(1)}\c\Fb_{-4\de}^{(1)})^{(1)},\\
\nab_4(\dkb^2\aa)+\frac{1}{2}\trch(\dkb^2\aa)&=d_1(\dkb^2\bb)+r^{-1}\bb^{(1)}+\Gag\c\aa^{(2)}+(\O_{1-2\de}^{(1)}\c\Fb_{-2\de_0}^{(1)})^{(1)}.
\end{align*}
Applying \eqref{casethree} with $\psi_{(1)}=\dkb^2\bb$, $\psi_{(2)}=\dkb^2\aa$, $a_{(1)}=2$, $a_{(2)}=\frac{1}{2}$ and $p=-4\de$, we obtain
\begin{align*}
&\int_{\cuv}r^{-4\de}|\dkb^2\b|^2+\int_{\ucuv} r^{-4\de}|\dkb^2\aa|^2+\int_Vr^{-1-4\de}|\dkb^2\aa|^2\\
\les&\;\frac{\ep_0^2}{|u|^{1+4\de}}+\int_V r^{-1-4\de}|\dkb^2\bb|^2+\int_V r^{-1-4\de}|\dkb^2\bb||\bb^{(1)}|+\ee_{-4\de}^1\left[\Fb_{1-2\de}^{(1)},\O_{1-2\de}^{(1)}\c\Fb_{-4\de}^{(1)}\right]\\
&+\ee_{-4\de}^1\left[\Fb_{-4\de}^{(1)},\O_{1-2\de}^{(1)}\c\Fb_{-2\de_0}^{(1)}\right]+\ee_{-4\de}^0\left[\aa^{(2)},\Gag\c\aa^{(2)}\right]\\
\les&\;\frac{\ep_0^2}{|u|^{1+4\de}}+\ee_{-4\de}^1\left[\Fb_{1-2\de}^{(1)},\O_{1-2\de}^{(1)}\c\Fb_{-4\de}^{(1)}\right]+\ee_{-4\de}^1\left[\Fb_{-4\de}^{(1)},\O_{1-2\de}^{(1)}\c\Fb_{-2\de_0}^{(1)}\right]+\ee_{-4\de}^0\left[\aa^{(2)},\Gag\c\aa^{(2)}\right],
\end{align*}
where we used \eqref{rbs} at the last step. Applying \eqref{wonderfulrr} in Theorem \ref{wonderfulrp}, we easily deduce
\begin{align*}
\ee_{-4\de}^1\left[\Fb_{1-2\de}^{(1)},\O_{1-2\de}^{(1)}\c\Fb_{-4\de}^{(1)}\right]+\ee_{-4\de}^1\left[\Fb_{-4\de}^{(1)},\O_{1-2\de}^{(1)}\c\Fb_{-2\de_0}^{(1)}\right]\les\frac{\ep^3}{|u|^{1+4\de}}.
\end{align*}
Moreover, we have from $r|\Gag|\les\ep$ and Proposition \ref{elliptic2d}
\begin{align*}
\ee_{-4\de}^0\left[\aa^{(2)},\Gag\c\aa^{(2)}\right]=\int_V r^{-4\de}|\Gag||\aa^{(2)}|^2\les\ep \int_V r^{-1-4\de}|\dkb^2\aa|^2.
\end{align*}
Combining the above estimates, we obtain for $\ep$ small enough
\begin{align*}
\int_\cuv r^{-4\de}|\dkb^2\bb|^2+\int_\ucuv r^{-4\de}|\dkb^2\aa|^2+\int_Vr^{-1-4\de}|\dkb^2\aa|^2\les\frac{\ep_0^2}{|u|^{1+4\de}}.
\end{align*}
Combining with Proposition \ref{elliptic2d}, this concludes the proof of Proposition \ref{estba}.
\end{proof}
Finally, we deduce from Propositions \ref{estab}--\ref{estba} that
\begin{equation*}
    \mr \les \ep_0,
\end{equation*}
this concludes the proof of Theorem \ref{M1}.
\section{Transport and 2D--elliptic estimates (proof of Theorem \ref{M3})}\label{sec10}
The goal of this section is to prove Theorem \ref{M3}. Throughout this section, we use the following shorthand notation:
\begin{align*}
    \int_{|u|}^\ub h(u,\ub):=\int_{|u|}^\ub h(u,\ub')d\ub',\qquad\quad \int_{-\ub}^u h(u,\ub):=\int_{-\ub}^u h(u',\ub) du'. 
\end{align*}
\subsection{Preliminaries}
We recall the evolution lemma, which will be used repeatedly throughout this section.
\begin{lem}\label{evolutionlemma}
Under the assumption $\mo\leq\ep$, the following holds:
\begin{enumerate}
\item Let $U,F$ be $k$--covariant $S$--tangent tensor fields satisfying the outgoing evolution equation
\begin{equation*}
    \nab_4 U+\frac{\la}{2}\,\ov{\trch}\, U= F.
\end{equation*}
We have for $p\in[1,+\infty]$
\begin{equation*}
\|r^{\la}U\|_{p,S}\les \|r^{\la}U\|_{p,S(u,|u|)}+\int_{|u|}^{\ub} \|r^{\la}F\|_{p,S}.
\end{equation*}
\item Let $V,\uf$ be $k$--covariant $S$--tangent tensor fields satisfying the incoming evolution equation
\begin{equation*}
\Om^2\nab_3 V+\frac{\la}{2}\,\ov{\Om^2\trchb}\, V=\uf.
\end{equation*}
We have for $p\in[1,+\infty]$
\begin{equation*}
    \|r^{\la}V\|_{p,S} \les \|r^{\la}V\|_{p,S(-\ub,\ub)}+\int_{-\ub}^{u}\|r^{\la}\underline{F}\|_{p,S}.
\end{equation*}
\end{enumerate}
\end{lem}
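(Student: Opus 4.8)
The plan is to reduce each of the two statements to a one-dimensional differential inequality in the evolution parameter and then integrate it. I will carry out the outgoing case; the incoming one is completely parallel, with $\Om^2\nab_3$, the parameter $u$, the weight $\ov{\Om^2\trchb}$ and the second identity of Lemma \ref{dint} replacing $\nab_4$, $\ub$, $\ov{\trch}$ and the first identity, and using in addition that $\D_3 e_3=0$ (a consequence of \eqref{6.6} and \eqref{ricciformulae}). The statement is a variant of the evolution lemmas of \cite{Ch-Kl,Kl-Ni}.

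Consider first $p=\infty$. Since $\D_L L=0$ by \eqref{geo} and $e_4=2\Om^2 L$, the integral curves of $e_4$ are reparametrised null geodesics joining $S(u,|u|)$ to $S(u,\ub)$. Using $e_4(r)=\tfrac12\ov{\trch}\,r$ from Lemma \ref{dint} and that $\nab_4$ is a derivation on $S$-tangent fields, the transport equation rewrites as
\begin{equation*}
\nab_4\big(r^{\la}U\big)=e_4(r^{\la})\,U+r^{\la}\nab_4 U=r^{\la}\Big(\nab_4 U+\frac{\la}{2}\,\ov{\trch}\,U\Big)=r^{\la}F .
\end{equation*}
As $\nab_4$ is compatible with the induced metric $\ga$, this gives $e_4|r^{\la}U|^2=2\langle r^{\la}F,r^{\la}U\rangle$, hence $e_4|r^{\la}U|\leq|r^{\la}F|$ away from $\{U=0\}$ (the exceptional set being handled by replacing $|\cdot|$ with $(|\cdot|^2+\eta^2)^{1/2}$ and letting $\eta\to0$). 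Integrating along each $e_4$-curve and taking the supremum over $S$ yields the estimate, with constant $1$.

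For $p\in[1,\infty)$ I would instead track $\mu(\ub):=\|r^{\la}U\|_{p,S(u,\ub)}^p=r^{\la p-2}\int_{S(u,\ub)}|U|^p\,d\ga$. Since $\nab_4\ga=0$ — the $e_3,e_4$-components of $\D_4 X$ drop out when paired with an $S$-tangent field — one has $e_4(|U|^p)=p|U|^{p-2}\langle\nab_4 U,U\rangle=p|U|^{p-2}\langle F,U\rangle-\tfrac{p\la}{2}\ov{\trch}\,|U|^p$. Inserting this into the first identity of Lemma \ref{dint} and differentiating the weight $r^{\la p-2}$ by means of $e_4(r)=\tfrac12\ov{\trch}\,r$, the terms proportional to $\ov{\trch}$ cancel \emph{exactly} — this is precisely the purpose of the coefficient $\tfrac{\la}{2}\ov{\trch}$ in the evolution equation — and one is left with
\begin{equation*}
\frac{d\mu}{d\ub}=p\,r^{\la p-2}\int_S|U|^{p-2}\langle F,U\rangle\,d\ga+r^{\la p-2}\int_S\trchc\,|U|^p\,d\ga .
\end{equation*}
By Hölder the first term is bounded by $p\,\mu^{1-1/p}\,\|r^{\la}F\|_{p,S}$; and, $\trchc$ being a component of $\Gag$, Proposition \ref{standardsobolev} together with Lemma \ref{decayGagGabGaa} gives $\|\trchc\|_{\infty,S}\les\ep/r$, so the second term is $\les\tfrac{\ep}{r}\mu$. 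Therefore $m:=\mu^{1/p}=\|r^{\la}U\|_{p,S}$ obeys $\tfrac{dm}{d\ub}\leq\|r^{\la}F\|_{p,S}+C\tfrac{\ep}{r}\,m$ (again regularising near $\{m=0\}$), and Grönwall produces the claimed inequality up to a factor $\exp\!\big(C\int_{|u|}^{\ub}\ep/r\big)=(\ub/|u|)^{O(\ep)}$, which is $\les 1$ on any fixed range and in all applications is harmless since $\ep\ll\de_0$.

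The only point that requires genuine care is this last $\trchc$ error term: enforcing the exact $\ov{\trch}$-cancellation forces $\trch$, rather than $\ov{\trch}$, to appear through Lemma \ref{dint}, and the resulting remainder does not vanish; it is precisely at this step that the bootstrap bound $\mo\leq\ep$ enters. The remaining ingredients — the reduction to the scalar identities above, the pointwise argument for $p=\infty$, and the incoming counterpart — are routine.
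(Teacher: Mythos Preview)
Your argument is correct and is precisely the content of the references the paper cites in lieu of a proof (Lemma~4.1.5 in \cite{Kl-Ni} and Lemma~7 in \cite{Bieri}): differentiate the weighted $L^p$-norm using Lemma~\ref{dint}, observe the exact cancellation of the $\ov{\trch}$ contributions, and Gr\"onwall the residual $\trchc$-term using $\mo\leq\ep$. The paper gives no independent argument, so there is nothing further to compare.
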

\begin{proof}
The proof is largely analogous to Lemma 4.1.5 in \cite{Kl-Ni}. See also Propositions 10.1 and 10.2 in \cite{Taylor}.
\end{proof}
The following lemma will be useful to estimate the $L^2(S)$--norms.
\begin{lem}\label{useful}
    We have for $2\la>q+1$
    \begin{align*}
        \int_{|u|}^\ub r^{\la}\|\F_q^{(2)}\|_{2,S}\les\frac{\ep}{|u|^\frac{1-q}{2}}r^\frac{2\la-1-q}{2},
    \end{align*}
    and for $2\la<q+1$
    \begin{align*}
        \int_{|u|}^\ub r^{\la}\|\F_q^{(2)}\|_{2,S}\les\ep|u|^{\la-1}.
    \end{align*}
    We also have for $2\la>-1$
    \begin{align*}
        \int_{-\ub}^u |u|^{\la}\|\Om\Fb_q^{(2)}\|_{2,S}\les\frac{\ep}{|u|^\frac{1-q}{2}}r^\frac{2\la-1-q}{2}.
    \end{align*}
\end{lem}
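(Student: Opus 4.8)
The statement is a purely computational consequence of the definition of the classes $\F_q$, $\Fb_q$ (Definition \ref{wonderfuldf}) together with the relation $|u|\simeq r$ on $\kk$. The plan is to unpack the $L^2$--flux bound $\|r^{q/2}X^{(2)}\|_{2,\cuv}\le \ep/|u|^{(1-q)/2}$ into a pointwise-in-$\ub'$ statement about $\|X^{(2)}\|_{2,S(u,\ub')}$, and then integrate.

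First I would note that the flux bound, read from \eqref{dfflux}, says precisely
\begin{align*}
    \int_{|u|}^{\ub} r^q \|X^{(2)}\|_{2,S(u,\ub')}^2 \, d\ub' \le \frac{\ep^2}{|u|^{1-q}},
\end{align*}
for any $X\in\F_q$ (here I am identifying $r=r(u,\ub')$ along $\cuv$, and using $\ub'\simeq r$). To estimate $\int_{|u|}^\ub r^\la \|\F_q^{(2)}\|_{2,S}$, I would write $r^\la = r^{(2\la-q)/2}\cdot r^{q/2}$ and apply Cauchy--Schwarz in $\ub'$ against the weight $r^q\|X^{(2)}\|_{2,S}^2$:
\begin{align*}
    \int_{|u|}^\ub r^\la \|X^{(2)}\|_{2,S}\, d\ub' \le \left(\int_{|u|}^\ub r^{2\la-q}\, d\ub'\right)^{1/2}\left(\int_{|u|}^\ub r^q\|X^{(2)}\|_{2,S}^2\, d\ub'\right)^{1/2} \les \left(\int_{|u|}^\ub r^{2\la-q}\, d\ub'\right)^{1/2}\frac{\ep}{|u|^{(1-q)/2}}.
\end{align*}
The remaining one-dimensional integral $\int_{|u|}^\ub r^{2\la-q}\,dr$ (using $d\ub'\simeq dr$) splits into two cases according to the sign of $2\la-q+1$: if $2\la-q-1>-2$, i.e. $2\la>q+1$ wait — one must be careful, since $2\la-q$ could be positive so the integral is dominated by the upper endpoint $r\simeq\ub\simeq r$, giving $r^{2\la-q+1}$, hence after the square root $r^{(2\la-q+1)/2}=r^{(2\la-1-q)/2}\cdot r^{1/2}$... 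I would re-examine: actually the cleanest bookkeeping is to observe that $\int_{|u|}^\ub r^{2\la-q}dr \les r^{2\la-q+1}$ when $2\la-q+1>0$ and $\les |u|^{2\la-q+1}$ when $2\la-q+1<0$; combined with the $\ep/|u|^{(1-q)/2}$ factor and the identification $|u|\simeq r$ where needed, this reproduces the two stated bounds $\ep|u|^{-(1-q)/2}r^{(2\la-1-q)/2}$ and $\ep|u|^{\la-1}$ respectively. The threshold as stated, $2\la \gtrless q+1$, should be matched by tracking exactly which powers of $|u|$ versus $r$ one keeps versus trades using $|u|\les r$; I would be careful here not to lose the sharp exponent.

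For the incoming estimate I would repeat the same Cauchy--Schwarz argument using the $\Fb_q$ flux bound $\int_{-\ub}^u \Om^2 r^q\|X^{(2)}\|_{2,S(u',\ub)}^2\,du' \le \ep^2/|u|^{1-q}$ from \eqref{dfflux}, writing $|u|^\la \Om\|X^{(2)}\|_{2,S} = |u|^\la \Om r^{-q/2}\cdot \Om r^{q/2}\|X^{(2)}\|_{2,S}$ wait more carefully: pair $|u'|^\la$ against the flux density, Cauchy--Schwarz in $u'$, and compute $\int_{-\ub}^u |u'|^{2\la - q}\,du'$, which under $2\la>-1$ is again controlled by the endpoint behavior and, after using $|u|\simeq r$ suitably, gives $\ep|u|^{-(1-q)/2}r^{(2\la-1-q)/2}$. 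The only subtlety beyond routine calculus is the consistent use of $|u|\le\ub\simeq r$ to convert between $|u|$--powers and $r$--powers at the right moments so that the exponents come out exactly as claimed; no deeper input (no PDE, no geometry beyond the comparison $|u|\simeq r$) is needed. The main "obstacle", such as it is, is purely bookkeeping: choosing in each regime whether to estimate the one-dimensional integral by its value at $r\simeq|u|$ or at $r\simeq\ub$, and then trading factors of $r$ for factors of $|u|$ to land on the stated right-hand sides.
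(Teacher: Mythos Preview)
Your approach (Cauchy--Schwarz in the null direction against the flux bound from Definition~\ref{wonderfuldf}) is exactly the paper's, but there is a genuine bookkeeping error that explains the mismatch you noticed and then hand-waved away. The lemma is stated with the scale-invariant norm $\|\cdot\|_{2,S}=r^{-1}|\cdot|_{2,S}$ (Definition~\ref{Lpnorms}), whereas the flux in \eqref{dfflux} is built from the unnormalized $|\cdot|_{2,S}$. Hence the $\F_q$--bound reads
\[
\int_{|u|}^{\ub} r^q\,|X^{(2)}|_{2,S}^2\,d\ub' \;=\;\int_{|u|}^{\ub} r^{q+2}\,\|X^{(2)}\|_{2,S}^2\,d\ub'\;\le\;\frac{\ep^2}{|u|^{1-q}},
\]
not $\int r^q\|X^{(2)}\|_{2,S}^2$. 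With this correction the Cauchy--Schwarz weight becomes $r^{2\la-2-q}$ instead of your $r^{2\la-q}$, the one-dimensional integral has threshold $2\la-q-1>0$ (i.e.\ exactly $2\la>q+1$), and the answer comes out with the stated exponent $r^{(2\la-1-q)/2}$ without any extra factor of $r$. Your proposed fix via ``$|u|\simeq r$'' cannot work here: in $\kk$ one only has $|u|\les r$, so an excess power of $r$ in the numerator cannot be traded down to $|u|$.

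The same correction repairs the incoming estimate. Along $\Cb_\ub$ the area radius satisfies $r\simeq\ub$ uniformly in $u'$, so after writing $\|\Om X^{(2)}\|_{2,S}=r^{-1}|\Om X^{(2)}|_{2,S}$ one may pull $r^{-(q+2)/2}$ outside, Cauchy--Schwarz against the $\Fb_q$--flux, and the remaining integral is $\int_{-\ub}^u |u'|^{2\la}\,du'\les r^{2\la+1}$ under $2\la>-1$; this is what produces the stated threshold and exponent, not an integral of $|u'|^{2\la-q}$.
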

\begin{proof}
    We have for $2\la>q+1$
    \begin{align*}
    \int_{|u|}^\ub r^{\la}\|\F_q^{(2)}\|_{2,S}\les\left(\int_{|u|}^\ub r^{2\la-2-q}\right)^\frac{1}{2}\left(\int_{|u|}^\ub r^q|\F_q^{(2)}|_{2,S}^2\right)^\frac{1}{2}\les \frac{\ep}{|u|^\frac{1-q}{2}}r^\frac{2\la-1-q}{2},
    \end{align*}
    and for $2\la<q+1$
     \begin{align*}
    \int_{|u|}^\ub r^{\la}\|\F_q^{(2)}\|_{2,S}\les\frac{\ep}{|u|^\frac{1-q}{2}}|u|^\frac{2\la-1-q}{2}\les\ep|u|^{\la-1}.
    \end{align*}
    We also have for $2\la>-1$
    \begin{align*}
    \int_{-\ub}^u |u|^{\la}\|\Om\Fb_q^{(2)}\|_{2,S}\les r^{-\frac{q+2}{2}}\left(\int_{-\ub}^u|u|^{2\la}\right)^\frac{1}{2}\left(\int_{-\ub}^ur^q|\Om\Fb_q^{(2)}|_{2,S}^2\right)^\frac{1}{2}\les \frac{\ep}{|u|^\frac{1-q}{2}}r^\frac{2\la-1-q}{2}.
    \end{align*}
    This concludes the proof of Lemma \ref{useful}.
\end{proof}
\subsection{Estimates for curvature}
\begin{prop}\label{R1estima}
We have the following estimates:
\begin{align*}
    \|(\b,\rhoc,\sic)^{(1)}\|_{4,S}\les\frac{\ep_0}{r^{2-\de}|u|^\de},\qquad\qquad\|\Om\bb^{(1)}\|_{4,S}\les\frac{\ep_0}{r^{\frac{3}{2}-\de}|u|^{\frac{1}{2}+\de}}.
\end{align*}
\end{prop}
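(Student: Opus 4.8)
The plan is to derive both bounds from the Sobolev inequalities of Proposition \ref{sobolev}, applied to suitably $r$- and $|u|$-weighted multiples of the curvature components, using the hypotheses $\mo\leq\ep$ and $\mr\les\ep_0$ of Theorem \ref{M3} together with the facts that $r$, $u$, $\ub$ are constant on each sphere $S(u,\ub)$ and that $e_4(r)=\tfrac{\ov{\trch}}{2}r\simeq\tfrac12$, $e_4(u)=0$ (from the coordinate form of the metric and Lemma \ref{dint}).

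For the first estimate I would take $F:=r^{\frac12-\de}|u|^\de(\b,\rhoc,\sic)^{(1)}$, so that in the scale invariant normalization of Definition \ref{Lpnorms} the left-hand side $\|r^{\frac32}F\|_{4,S}$ of the outgoing Sobolev inequality is precisely $\|r^{2-\de}|u|^\de(\b,\rhoc,\sic)^{(1)}\|_{4,S}$, which we want $\les\ep_0$. Since $r$ is constant on spheres and $|u|$ is constant along $C_u$, the flux terms $\|F\|_{2,\cuv}$ and $\|r\nab F\|_{2,\cuv}$ reduce to $|u|^\de\|r^{\frac12-\de}(\b,\rhoc,\sic)^{(2)}\|_{2,\cuv}$, controlled by $\sum_{q\leq2}\mr_q[\b,\rhoc,\sic]\les\ep_0$. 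The term $\|r\nab_4F\|_{2,\cuv}$ I would treat with the Bianchi equations of Proposition \ref{bianchi} — schematically $\nab_4\b\sim r^{-1}\b+d_2\a+r^{-1}(\Gag\c\Gag)^{(1)}+\Gag\c\a$ and $\nab_4(\rhoc,-\sic)\sim r^{-1}(\rhoc,\sic)+d_1\b+r^{-1}(\Gag\c\Gag)^{(1)}$ — together with the commutator identity $[\nab_4,r\nab]=\Gag\c r\nab+\Gag^{(1)}$ of Proposition \ref{commutation} for the $\dkb$-derivatives; this replaces $\nab_4$ of $(\b,\rhoc,\sic)^{(1)}$ by $(\b,\rhoc,\sic)^{(2)}$ and $\a^{(2)}$, whose weighted $\cuv$-fluxes are again part of $\mr\les\ep_0$, plus quadratic errors which by $r|\Gag|\les\ep$ and Lemma \ref{decayGagGabGaa} carry a factor $\ep$ and are absorbed. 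Finally $S(u,|u|)$ lies on $\Si_0\setminus K$ (where $u+\ub=0$ and $r\simeq|u|$), so the initial-data term $\|r^{\frac32}F\|_{4,S(u,|u|)}$ equals, up to constants, $|u|^{\frac32}|(\b,\rhoc,\sic)^{(1)}|_{4,S(u,|u|)}$; I would express $(\b,\rhoc,\sic)$ and their $\dkb$-derivatives on $\Si_0$ through the derivatives $r\nab_3,r\nab_4,r\nab$ of $(\Gag,\Gab,\Gaw)$ via the Codazzi equations \eqref{codazzi}, the torsion equation \eqref{torsion}, and the equation $\nab_3\om=\rhoc+\ldots$ of Proposition \ref{null}, bound these in $L^2(S(u,|u|))$ by $\II_{(0)}\leq\ep_0$, and upgrade to $L^4$ by the two-dimensional Sobolev inequality on the sphere (the relevant quantities being controlled by one more derivative of $(\Gag,\Gab,\Gaw)$, which $\II_{(0)}$ provides).

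For the second estimate I would argue symmetrically, taking $F:=r^{\frac12-\de}|u|^\de\Om\bb^{(1)}$ and applying the incoming Sobolev inequality, so that $\|r|u|^{\frac12}F\|_{4,S}=\|r^{\frac32-\de}|u|^{\frac12+\de}\Om\bb^{(1)}\|_{4,S}$. The terms $\|\Om^{-1}F\|_{2,\ucuv}$ and $\|\Om^{-1}r\nab F\|_{2,\ucuv}$ are controlled by the incoming curvature fluxes $\sum_{q\leq2}\ur_q[\b,\rhoc,\sic,\bb]\les\ep_0$, the factors $\Om^{\pm1}$ being harmless since $\mo_\ga\leq\ep$ forces $|\log(2\Om)|\les\ep\log(\tfrac{3r}{|u|})$, and the commutator $[\Om^2\nab_3,r\nab]=\Om^2\Gaw\c r\nab+\Om^2\Gaw^{(1)}$ being lower order. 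For $|u|\|\Om\nab_3F\|_{2,\ucuv}$ I would use $\nab_3\bb+2\trchb\,\bb=-d_2\aa+\Gab\c\aa$, so that after commuting with $\dkb$ the derivative $\nab_3$ of $\bb^{(1)}$ is expressed through $\bb^{(1)}$ and $\aa^{(2)}$; keeping track of $r\simeq\ub$ along $\ucuv$ and of $|u|\leq\ub\simeq r$, the associated weighted $\ucuv$-fluxes are exactly $\ur_q[\bb]$ and $\ur_q[\aa]$, all $\les\ep_0$, the quadratic errors being again $\ep$-small. The sphere $S(-\ub,\ub)$ lies on $\Si_0$ with $r\simeq\ub$, and $\bb^{(1)}$ there is controlled in $L^2(S(-\ub,\ub))$ by derivatives of $(\Gag,\Gab,\Gaw)$ via the Codazzi equation \eqref{codazzi}, hence by $\II_{(0)}\leq\ep_0$; upgrading to $L^4$ as above handles the initial-data term.

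I expect the main obstacle to be the careful treatment of the $\nab_4F$ (respectively $\nab_3F$) term: one must commute angular derivatives through the null frame derivative, organize the resulting curvature contributions so that their $r$- and $|u|$-weighted fluxes fall \emph{exactly} inside $\mr$ (respectively $\ur$), and verify that each remaining nonlinear term is a product of an $\mo\les\ep$-controlled Ricci coefficient with an already-bounded curvature flux, so it can be absorbed for $\ep$ small. A secondary, more routine point is the initial-data term, where the two-dimensional Sobolev embedding is needed to pass from the $L^2(\Si_0)$ information encoded in $\II_{(0)}$ to the $L^4(S)$-control required by Proposition \ref{sobolev}; this closes because $\II_{(0)}$ controls three derivatives of $(\Gag,\Gab,\Gaw)$, one more than is needed to bound $(\b,\rhoc,\sic)^{(1)}$ and $\bb^{(1)}$ in $L^2$ on the relevant initial sphere.
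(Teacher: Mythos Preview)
Your plan is correct and is exactly the paper's approach: the paper's entire proof is the single line ``It follows directly from Theorem \ref{M1} and Proposition \ref{sobolev},'' and what you have written is precisely a careful unpacking of that sentence---apply the null-cone Sobolev inequalities with $F$ an $r$- and $|u|$-weighted curvature component, feed the flux terms with the $\mr$ and $\ur$ norms recovered in Theorem \ref{M1}, control the transversal derivative term through the Bianchi equations (commuted with $\dkb$), and handle the initial-sphere term via $\II_{(0)}$ and $2$D Sobolev. One small refinement: in the incoming estimate it is cleaner to treat the factor $|u|^\de$ in $F$ as the \emph{constant} value at the target sphere $S(u,\ub)$ (so it pulls out of all $\ucuv$-norms and matches $\ur_q[\bb]$ exactly), rather than as the coordinate function $|u'|$, which would otherwise introduce a spurious $\log(\ub/|u|)$.
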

\begin{proof}
    It follows directly from Theorem \ref{M1} and Proposition \ref{sobolev}.
\end{proof}
\begin{prop}\label{R0estima}
We have the following estimates:
\begin{align*}
\|(\a,\b,\rhoc,\sic)\|_{4,S}\les\frac{\ep_0}{r^{2}},\qquad \|\bb\|_{4,S}\les\frac{\ep_0}{r^{2-\de}|u|^\de}.
\end{align*}
\end{prop}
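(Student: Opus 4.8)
The plan is to upgrade the first-order $L^4(S)$ bounds of Proposition~\ref{R1estima} — which carry a loss of a factor $(r/|u|)^\de$ inherited from the $r^p$--weighted estimates — to the sharp $r$--decay by integrating the Bianchi transport equations of Proposition~\ref{bianchi} with the evolution lemma (Lemma~\ref{evolutionlemma}). The decisive point is the \emph{direction} of integration. For $\a,\b,\rhoc,\sic$ one integrates along the incoming cones $\Cb_\ub$ starting from $\Si_0$: along $\Cb_\ub$ one has $r\simeq\ub$ throughout $\kk$ (a consequence of $\left|1-\frac{\ub-u}{2r}\right|\les\ep$), so the variable of integration is $|u'|$, and any error integral of the shape $\int_{|u|}^{\ub}s^{-a}\,ds\les\ub^{1-a}$ with $a<1$ produces \emph{no} negative power of $|u|$ — this is exactly what removes the $(r/|u|)^\de$ loss. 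Integrating the corresponding outgoing equations from $S(u,|u|)$ would instead keep $|u|$ as a fixed constant factor and reproduce the loss.

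\textbf{Routing.} One must be careful which Bianchi equation to use for each component. For $\bb$, whose target bound $\ep_0(r^{2-\de}|u|^\de)^{-1}$ already tolerates the loss, we are in any case forced to use the outgoing equation $\nab_4\bb+\trch\,\bb=d_1^*(\rhoc,\sic)+r^{-1}(\Gag\c\Gaw)^{(1)}$, since its incoming equation $\nab_3\bb+2\trchb\,\bb=-d_2\aa+\Gab\c\aa$ has the source $\aa^{(1)}$, which is not controlled in $L^4(S)$ by the bootstrap norms. For $\a$, which has no $\nab_4$ Bianchi equation, and for $\b$, whose $\nab_4$ equation has the unavailable source $d_2\a$, we must use the $\nab_3$ equations $\nab_3\a+\frac12\trchb\,\a=-2d_2^*\b+r^{-1}(\Gag\c\Gab)^{(1)}$ and $\nab_3\b+\trchb\,\b=-d_1^*(\rhoc,-\sic)+r^{-1}(\Gab\c\Gaw)^{(1)}$, whose sources $d_2^*\b$ and $d_1^*(\rhoc,\sic)$ are first-order quantities of $\b$ and $(\rhoc,\sic)$ controlled by Proposition~\ref{R1estima} (passing from $d$--operators to the full $\nab$ by Proposition~\ref{elliptic2d}). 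For $(\rhoc,\sic)$ both equations are available, and we deliberately pick the incoming one $\nab_3(\rhoc,\sic)+\frac32\trchb\,(\rhoc,\sic)=-d_1\bb+r^{-1}(\Gab\c\Gaw)^{(1)}$, with source $d_1\bb$ again supplied by Proposition~\ref{R1estima}. All of these sources come from the \emph{first-order} bounds of Proposition~\ref{R1estima}, so the four estimates decouple and there is no circularity.

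\textbf{Execution.} On the relevant initial spheres — $S(-\ub,\ub)\subset\Si_0$ for the incoming transports, $S(u,|u|)\subset\Si_0$ for $\bb$ — the hypothesis $\II_{(0)}\le\ep_0$ together with Sobolev embedding on $S$ gives $\|R\|_{4,S_{(0)}(w)}\les\ep_0 r^{-2}$ for $R\in\{\a,\b,\rhoc,\sic,\bb\}$, exactly as in the initial-data step of Lemma~\ref{gainu} (using $r\simeq\ub$ on $\Si_0$). We then apply Lemma~\ref{evolutionlemma} with the weight $\la$ fixed by the $\trch,\trchb$ coefficient, namely $\la=1$ for $\a$, $\la=2$ for $\b$ and $\bb$, $\la=3$ for $(\rhoc,\sic)$; the mismatch between $\Om^2\trchb$ (resp.\ $\trch$) and its average, and the lapse factors (comparable to $1$ up to small powers of $r/|u|$ controlled by $\mo_\ga$), contribute only terms of schematic type $\Gag\c\psi$ on the right-hand side. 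For the incoming cases the source contributes $\int_{-\ub}^u\|r^{\la}\,r^{-1}(\,\cdot\,)^{(1)}\|_{4,S}$; inserting Proposition~\ref{R1estima}, using $r\simeq\ub$ on $\Cb_\ub$ and $\int_{|u|}^{\ub}s^{-a}\,ds\les\ub^{1-a}$ for $a<1$, this is $\les\ep_0 r^{\la-2}$, which with the initial term yields $\|(\a,\b,\rhoc,\sic)\|_{4,S}\les\ep_0 r^{-2}$. For $\bb$ the outgoing source gives $\int_{|u|}^{\ub}\|r^2 d_1^*(\rhoc,\sic)\|_{4,S}\les\ep_0|u|^{-\de}\int_{|u|}^{\ub}r^{\de-1}\les\ep_0(r/|u|)^\de$, hence $\|\bb\|_{4,S}\les\ep_0(r^{2-\de}|u|^\de)^{-1}$. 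The quadratic errors $r^{-1}(\Gag\c\Gab)^{(1)}$, $r^{-1}(\Gab\c\Gaw)^{(1)}$, $r^{-1}(\Gag\c\Gaw)^{(1)}$ and $\Gag\c\psi$ are estimated by Lemma~\ref{decayGagGabGaa} and Proposition~\ref{standardsobolev}; carrying a factor $\ep^2=\ep_0^{4/3}\ll\ep_0$ (up to innocuous powers of $r/|u|$ of size $O(\de_0)$ that the integration reabsorbs), they are negligible.

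\textbf{Main difficulty.} The heart of the matter is the bookkeeping that keeps the scheme closed rather than any individual estimate: one has to recognise that $\a$ and $\b$ cannot be propagated in the outgoing direction (no admissible $\nab_4$ source), that $(\rhoc,\sic)$ \emph{can} and must therefore be routed along $\Cb_\ub$ to escape the loss, and then to verify that \emph{every} error term — the $d$--sources, the quadratic terms, the $\trch/\trchb$--average and lapse mismatches — contributes only a power $s^{-a}$ with $a<1$ under the $du'$--integration, so that integrating over $|u'|\in[|u|,\ub]$ genuinely absorbs the $|u|$--loss rather than merely relocating it. Once the routing and the weights $\la\in\{1,2,3\}$ are in place, everything else is a routine application of Lemma~\ref{evolutionlemma}, Proposition~\ref{R1estima}, Proposition~\ref{elliptic2d}, Lemma~\ref{decayGagGabGaa} and the Sobolev inequalities on $S$.
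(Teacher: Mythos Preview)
Your proposal is correct and follows essentially the same approach as the paper: integrate the $\nab_3$ Bianchi equations for $\a,\b,(\rhoc,\sic)$ along $\Cb_\ub$ from $\Si_0$ (where $r\simeq\ub$ kills the $|u|^{-\de}$ loss), and the $\nab_4$ equation for $\bb$ along $C_u$, feeding in the first-order $L^4(S)$ bounds of Proposition~\ref{R1estima}. The only cosmetic difference is that the paper writes the schematic equation $\nab_3(\a,\b,\rhoc,\sic)=r^{-1}(\a,\b,\rhoc,\sic,\bb)^{(1)}+r^{-1}(\Gab\c\Gaw)^{(1)}$ and records only the worst source $\bb^{(1)}$, whereas you track each component with its own weight $\la\in\{1,2,3\}$; since $r\simeq\ub$ is essentially constant on $\Cb_\ub$, the two presentations are equivalent.
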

\begin{proof}
We have from Proposition \ref{bianchi}
\begin{align*}
    \nab_3(\a,\b,\rhoc,\sic)=r^{-1}(\a,\b,\rhoc,\sic,\bb)^{(1)}+r^{-1}(\Gab\c\Gaw)^{(1)},
\end{align*}
which implies
\begin{equation}
    \Om^2\nab_3(\a,\b,\rhoc,\sic)=r^{-1}\Om^2\bb^{(1)}+r^{-1}\Om^2(\Gab\c\Gaw)^{(1)}.
\end{equation}
Applying Lemma \ref{evolutionlemma} and Proposition \ref{R1estima}, we obtain
\begin{align*}
    \|(\a,\b,\rhoc,\sic)\|_{4,S}&\les\|(\a,\b,\rhoc,\sic)\|_{4,S(-\ub,\ub)}+\int_{-\ub}^u\Om^2\left(\|\bb^{(1)}\|_{4,S}+\|(\Gab\c\Gaw)^{(1)}\|_{4,S}\right)\\
    &\les\frac{\ep_0}{r^2}+\int_{-\ub}^u\frac{r^{\de_0}}{|u|^{\de_0}}\left(\frac{\ep_0}{r^{\frac{5}{2}-\de}|u|^{\frac{1}{2}+\de}}+\frac{\ep^2}{r^{3-\de-2\de_0}|u|^{\de+2\de_0}}\right)\\
    &\les\frac{\ep_0}{r^2}.
\end{align*}
Next, we have from Proposition \ref{bianchi}
\begin{align*}
    \nab_4\bb+\trch\,\bb=r^{-1}(\rhoc,\sic)^{(1)}+r^{-1}(\Gag\c\Gaw)^{(1)}.
\end{align*}
Applying Lemma \ref{evolutionlemma} and Proposition \ref{R1estima}, we infer
\begin{align*}
    \|r^2\bb\|_{4,S}&\les\|r^2\bb\|_{4,S(u,|u|)}+\int_{|u|}^\ub \|r(\rhoc,\sic)^{(1)}\|_{4,S}+\|r(\Gag\c\Gaw)^{(1)}\|_{4,S}\\
    &\les\ep_0+\int_{|u|}^\ub \frac{\ep_0}{r^{1-\de}|u|^\de}+\frac{\ep^2}{r^{1-\de}|u|^\de}\\
    &\les\frac{\ep_0}{|u|^\de}r^\de.
\end{align*}
This concludes the proof of Proposition \ref{R0estima}.
\end{proof}
\begin{prop}\label{bbaarecover}
We have the following estimates:
\begin{align*}
    \|\bb^{(1)}\|_{2,S}\les\frac{\ep_0}{r^{2-\de-\de_0}|u|^{\de+\de_0}},\qquad \|\aa^{(1)}\|_{2,S}\les\frac{\ep_0}{r^{1-\de_0}|u|^{1+\de_0}}.
\end{align*}
\end{prop}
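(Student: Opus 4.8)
The quantities $\bb$ and $\aa$ without angular derivatives are already controlled in $\|\cdot\|_{2,S}$ by Proposition~\ref{R0estima} (indeed $\|\bb\|_{2,S}\le\|\bb\|_{4,S}\lesssim\ep_0\,r^{-(2-\de)}|u|^{-\de}$, which is stronger than the first claimed bound), so the real content lies in the estimates for $\dkb\bb$ and $\dkb\aa$ in $L^2(S)$; once these are obtained, Proposition~\ref{elliptic2d} converts them into the full $\nab\bb$ and $\nab\aa$. The plan is to commute the \emph{outgoing} Bianchi equations for $\bb$ and for $\aa$ with $\dkb$, integrate the resulting transport equations along $e_4$ from $\Si_0$ using the evolution Lemma~\ref{evolutionlemma} (note that $\Si_0=\{u=-\ub\}$ inside $\kk$, so the reference sphere $S(u,|u|)$ of that lemma lies on $\Si_0$ and the boundary term is controlled by $\II_{(0)}\le\ep_0$), and to feed in the top-order curvature appearing on the right-hand side from the flux bound $\mr\lesssim\ep_0$ of Theorem~\ref{M1}.

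\emph{Estimate for $\bb^{(1)}$.} Commuting $\nab_4\bb+\trch\,\bb=d_1^*(\rhoc,\sic)+r^{-1}(\Gag\cdot\Gaw)^{(1)}$ with $\dkb$ and using Proposition~\ref{commutation} together with $d_1d_1^*=-\De_0$ from~\eqref{dddd}, one obtains a transport equation
\[
\nab_4(\dkb\bb)+\trch\,(\dkb\bb)=-r\De_0(\rhoc,\sic)+\Gag\cdot\dkb\bb+(\text{faster-decaying terms}).
\]
I would apply Lemma~\ref{evolutionlemma} with the \emph{fractional} weight $r^{2-\de-\de_0}$: since $\trch=r^{-1}+O(\ep r^{-1})$ and likewise for $\ov{\trch}$, after absorbing this weight the net transport coefficient is $-\tfrac{\de+\de_0}{2r}+O(\ep r^{-1})$, which is genuinely damping because $\ep\ll\de_0$, so no Gr\"onwall factor is generated. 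The initial term $\|r^{2-\de-\de_0}\dkb\bb\|_{2,S(u,|u|)}\lesssim\ep_0|u|^{-(\de+\de_0)}$ follows from $\II_{(0)}\le\ep_0$ via the Codazzi relation~\eqref{codazzi}. The curvature term is handled by Cauchy--Schwarz in $\ub'$ against the flux:
\[
\int_{|u|}^{\ub}\big\|r^{2-\de-\de_0}\,r\De_0(\rhoc,\sic)\big\|_{2,S}\,d\ub'\ \lesssim\ \Big(\int_{|u|}^{\ub}r^{-1-2\de_0}\,d\ub'\Big)^{1/2}\Big(\int_{|u|}^{\ub}r^{1-2\de}\big|(r\nab)^2(\rhoc,\sic)\big|_{2,S}^2\,d\ub'\Big)^{1/2}\ \lesssim\ \frac{\ep_0}{|u|^{\de+\de_0}},
\]
the first factor converging since $-1-2\de_0<-1$ and the second being bounded by $\mr\lesssim\ep_0$. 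The remaining nonlinear contributions, built from $(\Gag\cdot\Gaw)^{(1)}$ and the commutator, are $O(\ep^2)=O(\ep_0^{4/3})\ll\ep_0$ by Lemmas~\ref{decayGagGabGaa} and~\ref{wonderfulrk}. This yields $\|\dkb\bb\|_{2,S}\lesssim\ep_0\,r^{-(2-\de-\de_0)}|u|^{-(\de+\de_0)}$.

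\emph{Estimate for $\aa^{(1)}$.} With the previous bound in hand I would repeat the argument on the last outgoing Bianchi equation $\nab_4\aa+\tfrac12\trch\,\aa=2d_2^*\bb+\Gag\cdot\aa+r^{-1}(\Gab\cdot\Gaw)^{(1)}$, commuted with $\dkb$. Using $d_2d_2^*=-\tfrac12(\De_1+\K)$ from~\eqref{dddd}, the dangerous term becomes $\dkb(2d_2^*\bb)=-r(\De_1+\K)\bb$, i.e.\ (modulo quadratic curvature terms) $-r\De_1\bb$ plus a zeroth-order multiple $r\K\bb$ of $\bb$; the first is two angular derivatives of $\bb$, controlled now by the \emph{outgoing} flux $\mr_2[\bb]\lesssim\ep_0$, and the second by Proposition~\ref{R0estima}. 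Applying Lemma~\ref{evolutionlemma} with the weight $r^{1-\de_0}$ (again producing a damping $-\tfrac{\de_0}{2r}$ dominating the $O(\ep r^{-1})$ errors from the commutator and from $\Gag\cdot\dkb\aa$), integrating from $\Si_0$ where $\|r^{1-\de_0}\dkb\aa\|_{2,S(u,|u|)}\lesssim\ep_0|u|^{-(1+\de_0)}$ by $\II_{(0)}\le\ep_0$, and Cauchy--Schwarz in $\ub'$ against $\mr_2[\bb]$ gives $\|\dkb\aa\|_{2,S}\lesssim\ep_0\,r^{-(1-\de_0)}|u|^{-(1+\de_0)}$; the undifferentiated equation (or Proposition~\ref{elliptic2d}\,(3) applied to $\dkb\aa=r\,d_2\aa$) yields the matching bound for $\|\aa\|_{2,S}$. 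Together with Proposition~\ref{elliptic2d} this is exactly the claimed pair of estimates.

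\emph{Main obstacle.} The core difficulty is that the top-order curvature on the right-hand sides---the second angular derivatives $(r\nab)^2(\rhoc,\sic)$ and $(r\nab)^2\bb$---is available only as spacetime flux on outgoing cones, never pointwise on $S$. One is therefore forced to Cauchy--Schwarz the $\ub'$-integral produced by the transport lemma against precisely these flux norms, and---because the decay here is borderline---it is essential to use the improved bound $\mr\lesssim\ep_0$ from Theorem~\ref{M1} rather than the bootstrap level $\ep=\ep_0^{2/3}$, which would only yield $\mo\lesssim\ep$. The second subtle point, used repeatedly above, is the choice of the fractional weights $r^{2-\de-\de_0}$ and $r^{1-\de_0}$: they must be tuned so that the transport operator picks up a genuine damping of strength $\gtrsim\de_0/r$, larger than every $O(\ep/r)$ error coefficient since $\ep\ll\de_0$, so that the weights suffer no logarithmic or power-law Gr\"onwall loss.
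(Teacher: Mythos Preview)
Your proposal is correct and follows essentially the same approach as the paper: commute the outgoing Bianchi equations for $\bb$ and $\aa$ with $\dkb$, integrate along $e_4$ from $\Si_0$ via Lemma~\ref{evolutionlemma}, and feed in the improved curvature flux $\mr\lesssim\ep_0$ (i.e.\ $(\rhoc,\sic)\in\frac{\ep_0}{\ep}\F_{1-2\de}$ and $\bb\in\frac{\ep_0}{\ep}\F_{-4\de}$) through Cauchy--Schwarz in $\ub'$. The only cosmetic difference is that the paper applies the evolution lemma with the natural weights $r^2$ and $r$ (matching the transport coefficients $\trch$ and $\tfrac12\trch$) and packages the Cauchy--Schwarz step as Lemma~\ref{useful}, letting the output grow like $r^{\de+\de_0}$ and $r^{\de_0}$, whereas you choose the slightly reduced weights $r^{2-\de-\de_0}$ and $r^{1-\de_0}$ to build the damping in from the start; the two are equivalent.
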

\begin{proof}
We have from Proposition \ref{bianchi}
\begin{align*}
    \nab_4\bb+\trch\,\bb=r^{-1}(\rhoc,\sic)^{(1)}+r^{-1}(\Gag\c\Gaw)^{(1)}.
\end{align*}
Differentiating it by $r\nab$ and applying Proposition \ref{commutation}, we obtain
\begin{align*}
    \nab_4(r\nab\bb)+\trch(r\nab\bb)=r^{-1}(\rhoc,\sic)^{(2)}+r^{-1}(\Gag\c\Gaw)^{(2)}.
\end{align*}
Note that we have from Proposition \ref{estrb}
\begin{align}\label{rhocE}
    (\rhoc,\sic)\in\frac{\ep_0}{\ep}\F_{1-2\de}.
\end{align}
Applying Proposition \ref{elliptic2d} and Lemmas \ref{evolutionlemma} and \ref{useful}, we infer
\begin{align*}
    \|r^2\bb^{(1)}\|_{2,S}&\les \|r^2\bb^{(1)}\|_{2,S(u,|u|)}+\int_{|u|}^\ub\|r(\rhoc,\sic)^{(2)}\|_{2,S}+\frac{\ep^2}{r^{1-\de-\de_0}|u|^{\de+\de_0}}\\
    &\les\ep_0+\frac{\ep^2}{|u|^{\de+\de_0}}r^{\de+\de_0}+\frac{\ep_0}{\ep}\int_{|u|}^\ub r\|\F_{1-2\de}^{(2)}\|_{2,S},\\
    &\les \frac{\ep_0}{|u|^{\de+\de_0}}r^{\de+\de_0}.
\end{align*}
Next, we have from Proposition \ref{bianchi}
\begin{align*}
\nab_4\aa+\frac{1}{2}\trch\,\aa=r^{-1}\bb^{(1)}+r^{-1}(\Gab\c\Gaw)^{(1)}.
\end{align*}
Differentiating it by $r\nab$ and applying Proposition \ref{commutation}, we deduce
\begin{align*}
\nab_4(r\nab\aa)+\frac{1}{2}\trch(r\nab\aa)=r^{-1}\bb^{(2)}+(\Gag\c\aa)^{(1)}+r^{-1}(\Gab\c\Gaw)^{(2)}.
\end{align*}
Note that we have from Proposition \ref{estba}
\begin{align*}
    \bb\in\frac{\ep_0}{\ep}\F_{-4\de}.
\end{align*}
We also have from Lemma \ref{decayGagGabGaa} and Proposition \ref{standardsobolev}
\begin{align*}
    \|(\Gag\c\aa)^{(1)}\|_{2,S}\les\|\Gag^{(1)}\|_{4,S}\|\aa\|_{4,S}+\|\Gag\|_{\infty,S}\|\aa^{(1)}\|_{2,S}\les\frac{\ep^2}{r^{1-\de_0}|u|^{1+\de_0}}.
\end{align*}
Then, applying Lemmas \ref{evolutionlemma} and \ref{useful}, we infer
\begin{align*}
    \|r\aa^{(1)}\|_{2,S}&\les\|r\aa^{(1)}\|_{2,S(u,|u|)}+\int_{|u|}^\ub \|\bb^{(2)}\|_{2,S}+\frac{\ep^2}{r^{1-\de_0}|u|^{1+\de_0}}+\frac{\ep^2}{r^{\frac{3}{2}-\de-\de_0}|u|^{\frac{1}{2}+\de+\de_0}}\\
    &\les\frac{\ep_0}{|u|}+\frac{\ep^2}{|u|^{1+\de_0}}r^{\de_0}+\frac{\ep^2}{|u|}+\frac{\ep_0}{\ep}\int_{|u|}^\ub\|\F_{-4\de}^{(2)}\|_{2,S}\\
    &\les\frac{\ep_0 r^{\de_0}}{|u|^{1+\de_0}}.
\end{align*}
This concludes the proof of Proposition \ref{bbaarecover}.
\end{proof}
\subsection{Estimates for \texorpdfstring{$\vkp$}{}, \texorpdfstring{$\om$}{} and \texorpdfstring{$\log(2\Om)$}{}}\label{ssec10.4}
\begin{prop}\label{prop10.4}
We have the following estimates:
\begin{align}
\begin{split}\label{estombom}
\|\vkp^{(1)}\|_{4,S}\les\frac{\ep_0}{r^2},\qquad\quad \|\vkp^{(2)}\|_{2,S}\les\frac{\ep_0}{r^{2-\de}|u|^\de}.
\end{split}
\end{align}
\end{prop}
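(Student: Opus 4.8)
The plan is to estimate $\vkp^{(1)}$ and $\vkp^{(2)}$ by integrating the transport equation for $\vkp$ from Proposition \ref{null} along the incoming null cones, using the evolution Lemma \ref{evolutionlemma} and the curvature flux bounds already established in Theorem \ref{M1}. Recall from \eqref{renorschematic} that
\begin{equation*}
    \nab_3\vkp+\frac{1}{2}\trchb\,\vkp=-r^{-1}\b+r^{-1}(\Gab\c\Gab)^{(1)},
\end{equation*}
which after multiplying by $\Om^2$ becomes an incoming evolution equation with weight $\la=1$ (note $\omb=0$ so $\nab_3=\Om^2\nab_3$ up to the lapse normalization built into $e_3=2\unl$; more precisely $\Om^2\nab_3\vkp+\frac{1}{2}\ov{\Om^2\trchb}\vkp$ plus lower-order average-free corrections from $\trchb+4/r\in\Gag$, which are absorbed into the error). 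The strategy is to first bound $\|\vkp^{(1)}\|_{4,S}$ by differentiating once with $r\nab$, commuting via Proposition \ref{commutation}, and then to bound $\|\vkp^{(2)}\|_{2,S}$ by differentiating twice.

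\medskip

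\noindent\textbf{Step 1: the $L^4(S)$ estimate.} First I would differentiate the transport equation by $\dkb^{\leq 1}$, apply Proposition \ref{commutation} to handle the commutator $[\Om^2\nab_3,r\nab]=\Om^2\Gaw\c r\nab+\Om^2\Gaw^{(1)}$, and obtain a transport equation of the form $\nab_4$-type — no, incoming — for $r\vkp^{(1)}$ (up to the $r^\la$ weight). Then apply part 2 of Lemma \ref{evolutionlemma} with $\la=2$ (since $\vkp$ should decay like $r^{-2}$), writing
\begin{equation*}
    \|r^2\vkp^{(1)}\|_{4,S}\les\|r^2\vkp^{(1)}\|_{4,S(-\ub,\ub)}+\int_{-\ub}^u\Om^2\Big(\|r\b^{(1)}\|_{4,S}+\|r(\Gab\c\Gab)^{(2)}\|_{4,S}+\|\Gaw\c r\nab\,r\vkp\|_{4,S}\Big).
\end{equation*}
The initial term on $\Si_0$ is controlled by $\II_{(0)}\leq\ep_0$; the curvature term $\|r\b^{(1)}\|_{4,S}$ is bounded using Proposition \ref{R1estima} (which gives $\ep_0 r^{-1+\de}|u|^{-\de}$), and integrating $\Om^2\les r^{\de_0}|u|^{-\de_0}$ times this against $du'$ over $[-\ub,u]$ converges and yields $\ep_0 r^2$ after restoring weights; the quadratic terms $(\Gab\c\Gab)^{(2)}$ and the commutator terms are handled by Lemma \ref{decayGagGabGaa} and are $O(\ep^2)$, hence subleading since $\ep\ll 1$. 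This gives $\|\vkp^{(1)}\|_{4,S}\les\ep_0 r^{-2}$.

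\medskip

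\noindent\textbf{Step 2: the $H^2(S)$ estimate.} For $\|\vkp^{(2)}\|_{2,S}$, I would differentiate the transport equation by $\dkb^2$, again commuting with Proposition \ref{commutation}, producing the source $r^{-1}\b^{(2)}$ plus quadratic errors. Applying part 2 of Lemma \ref{evolutionlemma} at the $L^2(S)$ level with weight $\la=2-\de$ (matching the expected decay $r^{2-\de}|u|^{\de}$ of $\mo_2^2(\vkp)$), the key point is the curvature integral $\int_{-\ub}^u\Om^2\|r^{1-\de}|u|^{\de}\cdot r^{-1}\b^{(2)}\|_{2,S}$: here one invokes $\b\in\F_{1-2\de}$ from Lemma \ref{wonderfulrk}, or more directly the flux bound $\ur_q[\b,\rhoc,\sic,\bb]\les\ep_0$ from Theorem \ref{M1}, and applies the third inequality in Lemma \ref{useful} (with the appropriate $\la$) to close with an $\ep_0$ bound. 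The quadratic terms $r^{-1}(\Gab\c\Gab)^{(3)}$ are estimated using $\mo_{[2]}\leq\ep$ and the product rules of Remark \ref{Gawrk}, giving $O(\ep^2)$ contributions that are absorbed.

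\medskip

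\noindent\textbf{Main obstacle.} The delicate point I expect is the error term $r^{-1}(\Gab\c\Gab)^{(1)}$ after two angular derivatives: it decomposes (via the Leibniz rule of Definition \ref{gammag} and Lemma \ref{better}) into pieces like $r^{-1}\Gab^{(2)}\c\Gab$, and one must check that these actually decay fast enough in $|u|$ — this is where the finely-tuned exponents $\de,\de_0$ and the hierarchy $\ep\ll\de_0\ll\de$ matter, since $\Gab$ has the weakest decay in $|u|$ among the good/bad/worst families and $r^{-1}\Gab^{(1)}\in\F_{1-2\de-7\de_0}$ only. The accounting must ensure that after integrating $\Om^2\sim(r/|u|)^{\de_0}$ along the incoming cone, the resulting bound still beats $\ep_0 r^{2-\de}|u|^{-\de}$; this is exactly the kind of borderline-decay bookkeeping that the $\F_q,\O_\ell$ notation of Definition \ref{wonderfuldf} and Lemma \ref{useful} are designed to automate, so the proof should reduce to a careful application of those.
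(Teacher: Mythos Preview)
Your approach is essentially the paper's: integrate the $\nab_3$--transport equation for $\vkp$ along incoming cones via Lemma \ref{evolutionlemma}, commute with $\dkb^{\leq 2}$, and feed in the flux bounds for $\b$ from Theorem \ref{M1} together with Lemma \ref{useful} for the $L^2(S)$ step.

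Two bookkeeping points deserve correction. First, the weight $\la$ in Lemma \ref{evolutionlemma} is determined by the coefficient of $\trchb$ in the equation, not by the target decay; here the coefficient is $\frac{1}{2}\trchb$, so $\la=1$ (you correctly say this at the outset but then switch to $\la=2$ in Step 1 and $\la=2-\de$ in Step 2). The paper accordingly proves $\|r\,\dkb^q\vkp\|\les\ep_0 r^{-1}$ and $\|r\,\dkb^2\vkp\|_{2,S}\les\ep_0 r^{-1+\de}|u|^{-\de}$. Second, since you integrate in $u$ you need the \emph{incoming} flux of $\b$: the relevant statement is $(\Om\b)^{(2)}\in\frac{\ep_0}{\ep}\Fb_{1-2\de}^{(2)}$ from Proposition \ref{estab}, not $\b\in\F_{1-2\de}$ from Lemma \ref{wonderfulrk}; your fallback reference to $\ur_q[\b,\ldots]\les\ep_0$ is the right one. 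Finally, the commutator $[\Om^2\nab_3,r\nab]$ injects $\Gaw$-factors, so after differentiation the nonlinear error is $r^{-1}(\Gab\c\Gaw)^{(q+1)}$ rather than $r^{-1}(\Gab\c\Gab)^{(q+1)}$; this is harmless (the estimate still closes via Lemma \ref{useful} using $r^{-1}\Gaw^{(3)}\in\Fb_{1-2\de}^{(2)}$), but your ``main obstacle'' paragraph should track $\Gaw$ rather than $\Gab$ in the worst factor.
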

\begin{proof}
We have from Proposition \ref{null}
\begin{align}
\begin{split}\label{transvkp}
\nab_3{\varkappa}+\frac{1}{2}\trchb\,\varkappa=-r^{-1}\b+r^{-1}(\Gab\cdot\Gaw)^{(1)}.
\end{split}
\end{align}
Differentiating \eqref{transvkp} by $\dkb$ and applying Proposition \ref{commutation}, we obtain
\begin{align}\label{dkbvkp}
    \nab_3(\dkb\vkp)+\frac{1}{2}\trchb(\dkb\vkp)=r^{-1}\b^{(1)}+r^{-1}(\Gab\cdot\Gaw)^{(2)}.
\end{align}
Applying Lemma \ref{evolutionlemma}, we deduce
\begin{align*}
    \|r(\dkb\vkp)\|_{4,S}&\les\|r(\dkb{\vkp})\|_{4,S(-\ub,\ub)}+\int_{-\ub}^{u}\|\Om^2\b^{(1)}\|_{4,S}+\|\Om^2(\Gab\c\Gaw)^{(2)}\|_{4,S}\\
    &\les\frac{\ep_0}{r}+\int_{-\ub}^{u}\frac{\ep_0}{r^{2-\de-\de_0}|u|^{\de+\de_0}}+\frac{\ep^2}{r^{\frac{3}{2}-\de-2\de_0}|u|^{\frac{1}{2}+\de+2\de_0}}\\
    &\les\frac{\ep_0}{r}.
\end{align*}
Next, differentiating \eqref{dkbvkp} by $\dkb$ and applying Proposition \ref{commutation}, we infer
\begin{align*}
\nab_3(\dkb^2\vkp)+\frac{1}{2}\trchb(\dkb^2\vkp)=r^{-1}\b^{(2)}+r^{-1}(\Gab\cdot\Gaw)^{(3)}.
\end{align*}
Note that we have from Proposition \ref{estab} and Lemma \ref{wonderfulrk}
\begin{align}\label{Gaw3f}
    (\Om\b)^{(2)}\in\frac{\ep_0}{\ep}\Fb_{1-2\de}^{(2)},\qquad r^{-1}\Gab^{(3)},r^{-1}\Gaw^{(3)}\in\Fb_{1-2\de}^{(2)}.
\end{align}
Applying Lemmas \ref{evolutionlemma} and \ref{useful}, we have
\begin{align*}
    \|r\dkb^2\vkp\|_{2,S}&\les\|r\dkb^2\vkp\|_{2,S(-\ub,\ub)}+\int_{-\ub}^{u}\|\Om^2(\Om\b)^{(2)}\|_{2,S}+\|\Om^2(\Gab\c\Gaw)^{(3)}\|_{2,S}\\
    &\les\frac{\ep_0}{r}+\frac{\ep_0}{\ep}r^{\de_0}\int_{-\ub}^u |u|^{-\de_0}\|\Fb_{1-2\de}^{(2)}\|_{2,S}+\ep r^{\frac{\de_0+\de}{2}}\int_{-\ub}^u |u|^{-\frac{\de+\de_0}{2}}\|\Fb_{1-2\de}^{(2)}\|_{2,S},\\
    &\les\frac{\ep_0}{r}+\frac{\ep_0}{r^{1-\de}|u|^\de}+\frac{\ep^2}{r^{1-\de}|u|^\de}\\
    &\les\frac{\ep_0}{r^{1-\de}|u|^\de}.
\end{align*}
This concludes the proof of Proposition \ref{prop10.4}.
\end{proof}
\begin{prop}\label{prop10.3}
We have the following estimates:
\begin{align}
\begin{split}
    \|\om^{(1)}\|_{4,S}&\les\frac{\ep_0}{r},\qquad\quad\|\om^{(2)}\|_{4,S}\les\frac{\ep_0}{r^{1-\de}|u|^\de}.
\end{split}
\end{align}
\end{prop}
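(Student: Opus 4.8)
The plan is to control $\om^{(1)}$ by integrating a transport equation along the incoming cones, and then to recover the top-order quantity $\dkb^2\om$ from the Hodge identity underlying the definition of $\vkp$ rather than from the transport equation. A direct commutation of the transport equation of $\om$ to second order would cost one angular derivative of $\rhoc$, i.e. it would require controlling $\rhoc^{(2)}$ in $L^4(S)$, which is not available --- Theorem \ref{M1} only provides the $L^2$--flux of $\rhoc^{(2)}$ --- so the second-order step must be done differently.

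First I would treat $\om^{(1)}$. Recall from Proposition \ref{null} that $\nab_3\om=\rhoc+\Gab\c\Gab+\Gag\c\Gaw$. Commuting with $\dkb$ and using Proposition \ref{commcor}, this yields a transport equation for $\dkb^{\leq1}\om$ with right-hand side schematically $\rhoc^{(1)}+\Gaw\c\dkb\om+(\Gab\c\Gab)^{(1)}+(\Gag\c\Gaw)^{(1)}$. I would apply Lemma \ref{evolutionlemma} with $\la=0$, integrating from $\Si_0$: the boundary term $\|\om^{(1)}\|_{4,S(-\ub,\ub)}\les\ep_0/r$ follows from $\II_{(0)}\leq\ep_0$, the curvature source $\rhoc^{(\leq1)}$ is bounded by Propositions \ref{R0estima} and \ref{R1estima}, and the quadratic terms by Lemma \ref{decayGagGabGaa} after placing one factor of each product in $L^\infty(S)$ via Proposition \ref{standardsobolev}. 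Since $r\simeq\ub$ along $\Cb_\ub\cap\kk$, the $r^{-2}$--type decay of the source integrated in $du'$ over an interval of length $\les\ub$ reproduces precisely $\ep_0/r$, with no $|u|$--weight (consistent with $\om\in\Gag$), while the commutator term $\Gaw\c\dkb\om$ is absorbed because $\int_{-\ub}^u\Om^2\|\Gaw\|_{\infty,S}\les\ep$. This gives $\|\om^{(1)}\|_{4,S}\les\ep_0/r$.

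Next, for $\dkb^2\om$, the key is the renormalization $\vkp=d_1^*(-\om,\om^\dagger)-\b$ of Definition \ref{renorr}, which gives $d_1^*(-\om,\om^\dagger)=\vkp+\b$. Applying $d_1$ and using $d_1d_1^*=-\De_0$ from \eqref{dddd}, the $\om^\dagger$--contribution drops out and one obtains $\De\om=\sdiv(\vkp+\b)$. Since the area radius $r$ is constant on each sphere $S$, one has $|\dkb^2\om|=r^2|\De\om|$ pointwise on $S$, hence
\[
\|\dkb^2\om\|_{4,S}=r^2\|\sdiv(\vkp+\b)\|_{4,S}\les r\,\big\|(\vkp+\b)^{(1)}\big\|_{4,S}.
\]
Inserting $\|\vkp^{(1)}\|_{4,S}\les\ep_0/r^2$ from Proposition \ref{prop10.4} and $\|\b^{(1)}\|_{4,S}\les\ep_0/(r^{2-\de}|u|^\de)$ from Proposition \ref{R1estima} yields $\|\dkb^2\om\|_{4,S}\les\ep_0/(r^{1-\de}|u|^\de)$, and combining with the bound on $\om^{(1)}$ (which is stronger, since $r^{1-\de}|u|^\de\les r$) gives the stated estimate on $\om^{(2)}$.

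The hard part is this top-order step: the transport equation for $\om$ simply cannot reach $\dkb^2\om$ in $L^4(S)$ without a derivative of $\rhoc$ beyond the available flux bounds, so the argument has to be rerouted through $\De\om=\sdiv(\vkp+\b)$, exploiting that $\vkp$ --- unlike $\om$ itself --- already enjoys the improved $r^{-2}$ decay of Proposition \ref{prop10.4}. The only other thing to watch in the transport step is that neither the $\Si_0$--boundary data nor the lower-order commutator errors introduce spurious $|u|$--weights, which they do not, precisely because they are integrated along $\Cb_\ub$ where $r\simeq\ub$.
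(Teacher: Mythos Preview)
Your proposal is correct and follows the same overall strategy as the paper: transport for the low orders, then the Hodge identity $d_1^*(-\om,\om^\dagger)=\vkp+\b$ at top order. The only difference is where the switch happens. The paper integrates $\nab_3\om=\rhoc+\Gab\c\Gab+\Gag\c\Gaw$ once to get $\|\om\|_{4,S}\les\ep_0/r$, and then uses the Hodge identity already for $\dkb\om$ as well as $\dkb^2\om$, obtaining $\|\dkb\om\|_{4,S}\les\|r(\vkp,\b)\|_{4,S}$ and $\|\dkb^2\om\|_{4,S}\les\|r(\vkp,\b)^{(1)}\|_{4,S}$ directly from Propositions \ref{elliptic2d} and \ref{prop10.4}. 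You instead commute the transport equation once and integrate $\rhoc^{(1)}$ along $\Cb_\ub$, which works (Proposition \ref{R1estima} supplies exactly the needed $L^4(S)$ bound on $\rhoc^{(1)}$), but costs a commutation and an extra integration that the paper's route avoids. Your top-order step via $\De\om=\sdiv(\vkp+\b)$ is exactly the content of the paper's elliptic step, just written out explicitly through $d_1d_1^*=-\De_0$.
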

\begin{proof}
We have from Proposition \ref{null}
\begin{align}
\nab_3\om=\rhoc+\Gab\c\Gab+\Gag\c\Gaw.\label{34omega}
\end{align}
Applying Lemma \ref{evolutionlemma}, we obtain
\begin{align*}
\|\om\|_{4,S}&\les\|\om\|_{4,S(-\ub,\ub)}+\int_{-\ub}^u\|\Om^2\rhoc\|_{4,S}+\|\Om^2\Gab\c\Gab\|_{4,S}+\|\Om^2\Gag\c\Gaw\|_{4,S}\\
&\les\frac{\ep_0}{r}+\int_{-\ub}^u\frac{\ep_0}{r^{2}}+\frac{\ep^2}{r^{1-3\de_0}|u|^{3\de_0}}+\frac{\ep^2}{r^{2-\de}|u|^{\de}}\\
&\les\frac{\ep_0}{r}.
\end{align*}
Applying Propositions \ref{null}, \ref{elliptic2d} and \ref{prop10.4}, we deduce
\begin{align*}
    \|\dkb\om\|_{4,S}\les\|r(\vkp,\b)\|_{4,S}\les\frac{\ep_0}{r},\qquad\|\dkb^2\om\|_{4,S}\les\|r(\vkp,\b)^{(1)}\|_{4,S}\les\frac{\ep_0}{r^{1-\de}|u|^\de}.
\end{align*}
This concludes the proof of Proposition \ref{prop10.3}.
\end{proof}
\begin{prop}\label{propOmc}
We have the following estimates:
\begin{align}
\begin{split}\label{estOmc}
\|\log(2\Om)\|_{\infty,S} &\les\ep_0\log\left(\frac{3r}{|u|}\right),\qquad\quad \|\dkb\log(2\Om)\|_{4,S}\les\ep_0\frac{r^{\de_0}}{|u|^{\de_0}},\\
\|\dkb^2\log(2\Om)\|_{4,S}&\les\ep_0\frac{r^{\de}}{|u|^{\de}},\qquad\qquad\quad\;
\|\dkb^3\log(2\Om)\|_{2,S}\les\ep_0\frac{r^{\de+\de_0}}{|u|^{\de+\de_0}}.
\end{split}
\end{align}
\end{prop}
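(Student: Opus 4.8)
The engine of the proof is the transport equation $\nab_4\log(2\Om)=-\om$, which is nothing but $\om=-\nab_4(\log\Om)$ from \eqref{6.6} (the constant $\log 2$ being annihilated by $\nab_4$). The plan is to integrate this equation along the outgoing null generators of $C_u$, starting from $S(u,|u|)\subset\Si_0$, where $\log(2\Om)$ is controlled by $\II_{(0)}$, and to feed in the estimates for $\om$ from Proposition \ref{prop10.3}. For the $L^\infty$ bound, integrating along $C_u$ gives at a point of $S(u,\ub)$
\[
    |\log(2\Om)|\les\|\log(2\Om)\|_{\infty,S(u,|u|)}+\int_{|u|}^{\ub}\|\om\|_{\infty,S(u,\ub')}\,d\ub'.
\]
The first term is $\les\ep_0$ by $\II_{(0)}$ and Proposition \ref{standardsobolev}, while $\|\om\|_{\infty,S}\les\ep_0/r$ by Proposition \ref{prop10.3} combined with Proposition \ref{standardsobolev}. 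Since $r\simeq\ub'$ along $C_u$ and $\ub\les r$, the integral is $\les\ep_0\log(3r/|u|)$; on $\Si_0$ one has $r\simeq|u|$, so the initial term is likewise $\les\ep_0\simeq\ep_0\log(3r/|u|)$. This logarithmic growth is exactly the borderline feature encoded in the normalisation of $\mo_\ga$.

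For the angular derivatives I would commute $\nab_4\log(2\Om)=-\om$ with $(r\nab)^k$, $k=1,2,3$, using Proposition \ref{commutation}, obtaining transport equations of the schematic form
\[
    \nab_4\big((r\nab)^k\log(2\Om)\big)=-(r\nab)^k\om+\Gag\cdot(r\nab)^k\log(2\Om)+(\text{lower order in }\log(2\Om)),
\]
which carry no $\trch$--term, so Lemma \ref{evolutionlemma} applies with $\la=0$. The term $\Gag\cdot(r\nab)^k\log(2\Om)$ is absorbed by Grönwall (using $r|\Gag|\les\ep$), the lower-order commutator terms are handled inductively from the already-established lower-derivative bounds, and the main term $\int_{|u|}^{\ub}\|(r\nab)^k\om\|_{p,S}$ is bounded through Proposition \ref{prop10.3} for $k=1,2$. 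Performing the $\ub'$--integrals with $r\simeq\ub'$ converts the decay of $(r\nab)^k\om$ into the claimed powers $r^{\de_0}/|u|^{\de_0}$ and $r^{\de}/|u|^{\de}$; the slack relations $3\de_0<\de$, $|u|\le r$ and $\log x\les x^{\de_0}$ absorb the logarithms and reconcile the exponents with the initial data.

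The main obstacle is the top estimate $\|\dkb^3\log(2\Om)\|_{2,S}$, which needs $(r\nab)^3\om$ in $L^2(S)$ — not provided directly by Proposition \ref{prop10.3}. I would treat it via the Hodge relation $d_1^*(-\om,\om^\dagger)=\vkp+\b$ coming from \eqref{renorslu}, i.e. $\nab\om=\vkp+\b-{}^*\nab\om^\dagger$, which trades one derivative of $\om$ for $\vkp$, $\b$ and $\om^\dagger$: the bound on $\vkp^{(2)}$ from Proposition \ref{prop10.4}, the $L^2(S)$--bound for $\b^{(2)}$ recovered from $\mr$ by the transport method of this section, and the estimates for $\om^\dagger$ and its derivatives obtained by integrating $\nab_3\om^\dagger=\si$, then yield $(r\nab)^3\om$ in $L^2(S)$ with the correct decay. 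As an alternative one may exploit the elliptic identity $\De\log(2\Om)=-\tfrac12(\mu+\mub)-\rhoc$, which follows from $\nab\log\Om=\tfrac12(\eta+\etab)$, the relations $\sdiv\eta=-\mu-\rhoc$ and $\sdiv\etab=-\mub-\rhoc$ of Proposition \ref{null}, and $d_1 d_1^*=-\De_0$; it gives $\dkb^2\log(2\Om)=\tfrac{r^2}{2}(\mu+\mub)+r^2\rhoc$, reducing the higher-derivative bounds to the already-recovered estimates on $\mu$, $\mub$ and $\rhoc$.
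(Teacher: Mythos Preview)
Your approach is essentially the paper's: integrate $\nab_4\log(2\Om)=-\om$ along $C_u$, commute with $\dkb^k$, and at top order replace $\dkb^3\om$ by $r(\vkp,\b)^{(2)}$ via the Hodge relation $d_1^*(-\om,\om^\dagger)=\vkp+\b$. Two minor simplifications in the paper are worth noting. First, the commutator terms are written directly as sources $r\Gag\cdot\Gab$ (using $\nab\log\Om=\tfrac12(\eta+\etab)\in\Gab$ and the bootstrap decay of $\Gab$) rather than absorbed by Gr\"onwall; this sidesteps the logarithmic growth of $\int|\Gag|\,d\ub'$. Second, no separate $\om^\dagger$ estimates are needed: the elliptic estimate for $d_1^*$ controls $\nab(\om,\om^\dagger)$ jointly by $(\vkp,\b)$, so $\|\dkb^3\om\|_{2,S}\les r\|(\vkp,\b)^{(2)}\|_{2,S}$ directly, and the $\b^{(2)}$ contribution is handled through the flux membership $\b\in\frac{\ep_0}{\ep}\F_{1-2\de}$ and Lemma \ref{useful} rather than a pointwise $L^2(S)$ bound. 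Your alternative via $\De\log(2\Om)=-\tfrac12(\mu+\mub)-\rhoc$ is a correct identity but would be circular in the paper's ordering: the improved estimates for $\mu,\mub$ (Propositions \ref{propmu}, \ref{propmub}) come after Proposition \ref{propOmc}, and the $\trchbc$ estimate feeding into $\mub$ (Proposition \ref{prop10.6}) itself invokes Proposition \ref{propOmc}.
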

\begin{proof}
We recall from \eqref{6.6} that
\begin{equation}\label{logOmtrans}
    \nab_4(\log(2\Om))=-\om.
\end{equation}
Applying Lemma \ref{evolutionlemma} and Propositions \ref{prop10.3} and \ref{standardsobolev}, we infer
\begin{align*}
    \|\log(2\Om)\|_{\infty,S}\les\|\log(2\Om)\|_{\infty,S(u,|u|)}+\int_{|u|}^\ub \|\om\|_{\infty,S} d\ub\les\ep_0+\int_{|u|}^\ub\frac{\ep_0}{r}d\ub \les\ep_0\log\left(\frac{3r}{|u|}\right).
\end{align*}
Next, differentiating \eqref{logOmtrans} by $\dkb$ and applying Proposition \ref{commcor}, we obtain
\begin{align*}
    \nab_4(\dkb\log(2\Om))=\om^{(1)}+r\Gag\c\Gab.
\end{align*}
Applying Lemma \ref{evolutionlemma} and Proposition \ref{prop10.3}, we deduce
\begin{align*}
    \|\dkb\log(2\Om)\|_{4,S}\les \ep_0+\int_{|u|}^\ub\frac{\ep_0}{r}+\frac{\ep^2}{r^{1-\de_0}|u|^{\de_0}}\les\ep_0 \frac{r^{\de_0}}{|u|^{\de_0}}.
\end{align*}
Then, differentiating \eqref{logOmtrans} by $\dkb^2$ and applying Proposition \ref{commutation}, we have
\begin{align}\label{lapOm}
    \nab_4(\dkb^2\log(2\Om))=\om^{(2)}+r(\Gag\c\Gab)^{(1)}.
\end{align}
Applying Lemma \ref{evolutionlemma} and Proposition \ref{prop10.3}, we obtain
\begin{align*}
    \|\dkb^2\log(2\Om)\|_{4,S}\les\ep_0+\int_{|u|}^\ub\frac{\ep_0}{r^{1-\de}|u|^\de}+\frac{\ep^2}{r^{1-\de}|u|^\de}\les\ep_0\frac{r^\de}{|u|^\de}.
\end{align*}
Finally, differentiating \eqref{lapOm} by $\dkb$ and applying Propositions \ref{commcor} and \ref{null}, we have
\begin{align*}
    \nab_4(\dkb^3\log(2\Om))=\om^{(3)}+r(\Gag\c\Gab)^{(2)}=(\vkp,\b)^{(2)}+r(\Gag\c\Gab)^{(2)}.
\end{align*}
Notice that we have from Proposition \ref{estbr}
\begin{align*}
    \b\in\frac{\ep_0}{\ep}\F_{1-2\de}.
\end{align*}
Applying Lemma \ref{evolutionlemma} and Proposition \ref{prop10.4}, we deduce
\begin{align*}
\|\dkb^3\log(2\Om)\|_{2,S}&\les\ep_0+\int_{|u|}^\ub \|r(\vkp,\b)^{(2)}\|_{2,S}+\|r(\Gag\c\Gab)^{(2)}\|_{2,S}\\
&\les\ep_0+\frac{\ep_0}{\ep}\int_{|u|}^\ub r\|\F_{1-2\de}^{(2)}\|_{2,S}+\int_{|u|}^\ub\frac{\ep_0}{r^{1-\de}|u|^\de}+\frac{\ep^2}{r^{1-\de-\de_0}|u|^{\de+\de_0}}\\
&\les\ep_0+\frac{\ep_0}{\ep}\frac{\ep}{|u|^\de}r^\de+\frac{\ep_0}{|u|^\de}r^\de+\frac{\ep^2}{|u|^{\de+\de_0}}r^{\de+\de_0},\\
&\les\frac{\ep_0}{|u|^{\de+\de_0}}r^{\de+\de_0}.
\end{align*}
This concludes the proof of Proposition \ref{propOmc}.
\end{proof}
\subsection{Estimates for \texorpdfstring{$\widecheck{\trch}$}{} and \texorpdfstring{$\widecheck{\trchb}$}{}}\label{ssec10.5}
\begin{prop}\label{prop10.5}
We have the following estimates:
\begin{align}
    \begin{split}\label{omtrch}
    \|\trchc^{(1)}\|_{4,S}\les\frac{\ep_0}{r},\qquad\|\trchc^{(2)}\|_{4,S}\les\frac{\ep_0}{r^{1-\de}|u|^\de},\qquad\|\trchc^{(3)}\|_{2,S}\les\frac{\ep_0}{r^{1-\de}|u|^\de}.
    \end{split}
\end{align}
\end{prop}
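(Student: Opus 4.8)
The plan is to control $\widecheck{\trch}$ by a transport equation along the outgoing cones, obtained from the Raychaudhuri equation by subtracting the spherical average, which I then integrate with Lemma \ref{evolutionlemma} using the curvature flux of Theorem \ref{M1} and the estimates for $\om$ and $\vkp$ from Propositions \ref{prop10.3} and \ref{prop10.4}. Starting from the equation $\nab_4\trch+\frac12(\trch)^2=-2r^{-1}\om+\Gag\c\Gag$ of \eqref{nullschematic} and subtracting its average (via Lemma \ref{dint} and $e_4(r)=\frac{\ov{\trch}}{2}r$), writing $\trch=\ov{\trch}+\trchc$ gives
\begin{equation*}
\nab_4\trchc+\ov{\trch}\,\trchc=-2r^{-1}\omc+\Gag\c\Gag ,
\end{equation*}
where $\omc:=\om-\ov{\om}$, and the quadratic right-hand side also absorbs the $\trchc^2$ and average-correction contributions — all of schematic type $\Gag\c\Gag$, since $\trchc$ is the average-free part of $\trch-r^{-1}\in\Gag$. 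Differentiating by $\dkb^q$ for $q=1,2,3$ and commuting with $\nab_4$ through Propositions \ref{commcor} and \ref{commutation} (using $\dkb\ov{\trch}=\dkb(r^{-1})=0$ and $\dkb^q\omc=\dkb^q\om$ for $q\ge1$) yields, up to lower-order commutator terms,
\begin{equation*}
\nab_4(\dkb^q\trchc)+\ov{\trch}\,\dkb^q\trchc=-2r^{-1}\dkb^q\om+\Gag\c\dkb^{\le q}\trchc+(\Gag\c\Gag)^{(q)} .
\end{equation*}

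I would then apply Lemma \ref{evolutionlemma} with $\la=2$ along $C_u$, integrating from $\Si_0$. The data term is bounded using $\II_{(0)}\le\ep_0$ and the Sobolev inequality on $S_{(0)}(w)$; the quadratic terms are subdominant because $\ep^2\ll\ep_0$, and the commutator terms are absorbed by induction on $q$. For $q\le2$ the source $r^{-1}\dkb^q\om$ is estimated directly by Proposition \ref{prop10.3}. For $q=3$, as in the proof of Proposition \ref{prop10.3}, I would rewrite $\nab\om$ via $d_1^*(-\om,\om^\dagger)=\vkp+\b$ (Definition \ref{renorr} and \eqref{nullschematic}) to reduce $r^{-1}\dkb^3\om$ to $(\vkp,\b)^{(2)}$; the $\vkp$-contribution is handled by Proposition \ref{prop10.4}, while the $\b$-contribution — for which Theorem \ref{M1} provides only the spacetime flux and not a pointwise $L^2(S)$ bound — is estimated by keeping the $\ub$-integral and applying Lemma \ref{useful} to $\b\in\frac{\ep_0}{\ep}\F_{1-2\de}$ (Proposition \ref{estbr}). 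Tracking the $r$- and $|u|$-weights through these integrals produces exactly $\frac{\ep_0}{r}$ for $q\le1$ and $\frac{\ep_0}{r^{1-\de}|u|^\de}$ for $q=2,3$.

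I expect the top-order case $q=3$ to be the main obstacle: since no pointwise $L^2(S)$ bound on $\b^{(2)}$ is available under the hypotheses of Theorem \ref{M3}, this estimate cannot be carried out termwise and must be closed by Cauchy--Schwarz against the curvature flux through Lemma \ref{useful}, which is also why the bound for $\trchc^{(3)}$ is stated only in $L^2(S)$. A secondary, more routine, difficulty is the bookkeeping of the $r$- and $|u|$-weights, together with the scale-invariant normalisation of the $L^p(S)$-norms, needed to verify that the borderline powers of $r$ and $|u|$ combine correctly and that the nonlinear and initial-data contributions are genuinely lower order.
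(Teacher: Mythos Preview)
Your proposal is correct and follows essentially the same route as the paper: a transport argument along $C_u$ via Lemma~\ref{evolutionlemma}, with the $\om$-source controlled by Propositions~\ref{prop10.3}--\ref{prop10.4}, and at top order the substitution $\dkb^3\om\to(\vkp,\b)^{(2)}$ together with Lemma~\ref{useful} applied to $\b\in\frac{\ep_0}{\ep}\F_{1-2\de}$.

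The one difference is organizational rather than mathematical. You first subtract the spherical average (computing $e_4(\ov{\trch})$ via Lemma~\ref{dint} and tracking the resulting correction terms) to obtain a transport equation for $\trchc$, then apply $\dkb^q$. The paper skips this step: it applies $\dkb$ directly to the Raychaudhuri equation for $\trch$, obtaining
\[
\nab_4(\dkb\trch)+\trch(\dkb\trch)=r^{-1}\om^{(1)}+\Gag\c\Gag^{(1)},
\]
and uses that $\dkb^q\trch=\dkb^q\trchc$ for $q\ge 1$ since $\ov{\trch}$ is constant on $S$. This avoids the average-subtraction bookkeeping entirely; the $q=0$ estimate for $\trchc$ itself then follows from the $q=1$ estimate by Poincar\'e (implicit in Proposition~\ref{elliptic2d}). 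Your approach has the mild advantage of producing the $q=0$ estimate directly from transport, at the cost of the extra averaging computation. Either way the argument closes identically, and your identification of the top-order difficulty is exactly right.
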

\begin{proof}
We recall the following equation from Proposition \ref{null}:
\begin{equation}\label{4trchi}
\nab_4(\trch)+\frac{1}{2}(\trch)^2=r^{-1}\om^{(0)}+\Gag\c\Gag. 
\end{equation}
Differentiating \eqref{4trchi} by $\dkb$ and applying Proposition \ref{commcor}, we obtain
\begin{align}
\begin{split}
\nab_4(\dkb\trch)+\trch(\dkb\trch)=r^{-1}\om^{(1)}+\Gag\c\Gag^{(1)}.\label{ovchi}
\end{split}
\end{align}
Applying Lemma \ref{evolutionlemma} and Proposition \ref{prop10.4}, we obtain
\begin{align*}
\|r^2\dkb\trch\|_{4,S}&\les \|r^2\dkb\trch\|_{4,S(u,|u|)}+\int_{|u|}^{\ub}\|r\om^{(1)}\|_{4,S}+\|r^2\Gag\cdot\Gag^{(1)}\|_{4,S}\\
&\les\ep_0 |u|+\int_{|u|}^{\ub}\ep_0+\ep^2\\
&\les\ep_0r.
\end{align*}
Next, differentiating \eqref{ovchi} by $\dkb$ and applying Proposition \ref{commutation}, we infer
\begin{align}\label{trch2}
\nab_4(\dkb^2\trch)+\trch(\dkb^2\trch)=r^{-1}\om^{(2)}+(\Gag\c\Gag)^{(2)}.
\end{align}
Applying Lemma \ref{evolutionlemma} and Proposition \ref{prop10.4}, we easily obtain
\begin{align*}
\|r^2\dkb^2\trch\|_{4,S}\les \|r^2\dkb^2\trch\|_{4,S(u,|u|)}+\int_{|u|}^{\ub}\frac{\ep_0}{|u|^\de}r^\de+\frac{\ep^2}{|u|^\de}r^\de\les\frac{\ep_0}{|u|^\de}r^{1+\de}.
\end{align*}
Finally, differentiating \eqref{trch2} and applying Proposition \ref{commcor} and Proposition \ref{elliptic2d}, we deduce
\begin{align*}
\nab_4(\dkb^3\trch)+\trch(\dkb^3\trch)=(\vkp,\b)^{(2)}+(\Gag\c\Gag)^{(3)}.
\end{align*}
Notice that we have from Proposition \ref{estbr}
\begin{align*}
    \b^{(2)}\in \frac{\ep_0}{\ep}\F_{1-2\de}^{(2)},\qquad r^{-1}\Gag^{(3)}\in \F_{1-2\de}^{(2)}+\mathbb{G},\qquad \|\mathbb{G}\|_{2,S}\les\frac{\ep}{r^{2-\de}|u|^\de}.
\end{align*}
Applying Lemmas \ref{evolutionlemma} and \ref{useful}, we infer
\begin{align*}
    \|r^2\dkb^3\trch\|_{2,S}&\les\|r\dkb^3\trch\|_{2,S(u,|u|)}+\int_{|u|}^\ub r^2\|\b^{(2)}\|_{2,S}+\ep r\|\Gag^{(3)}\|_{2,S}+\frac{\ep_0}{|u|^\de}r^\de\\
    &\les\ep_0 |u|+\frac{\ep_0}{|u|^\de}r^{1+\de}+\frac{\ep_0}{\ep}\int_{|u|}^\ub r^2\|\F_{1-2\de}^{(2)}\|_{2,S}\\
    &\les\frac{\ep_0}{|u|^\de}r^{1+\de}.
\end{align*}
This concludes the proof of Proposition \ref{prop10.5}.
\end{proof}
\begin{prop}\label{prop10.6}
We have the following estimates:
\begin{align}
\begin{split}\label{esttrchbc}
\|\trchbc^{(2)}\|_{4,S}\les\frac{\ep_0}{r},\qquad\quad\|\trchbc^{(3)}\|_{2,S}\les\frac{\ep_0}{r^{1-\de}|u|^\de}.
\end{split}
\end{align}
\end{prop}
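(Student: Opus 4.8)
The plan is to estimate $\trchbc$ by transport along the incoming cones, mirroring the treatment of $\trchc$ in Proposition \ref{prop10.5} with the roles of $e_3$ and $e_4$ exchanged, so that the transport is integrated \emph{from} the initial hypersurface, where $S(-\ub,\ub)\subset\Si_0$ and $\II_{(0)}\leq\ep_0$ controls $\dkb^{\leq 3}\trchb$ (since $\trchb+\tfrac{4}{r}\in\Gag$). The starting point is the incoming null structure equation $\nab_3\trchb+\tfrac12(\trchb)^2=-|\hchb|^2$ from Proposition \ref{nulles}, i.e. $\nab_3\trchb+\tfrac12(\trchb)^2=\Gaw\c\Gaw$ as in \eqref{nullschematic}. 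Multiplying by $\Om^2$ and splitting $(\trchb)^2$ into its $S$-average and average-free parts puts this in the form required by Lemma \ref{evolutionlemma}(2), the average-free correction being absorbed into the source using the already-proven bounds on $\trchc$, $\om$ and $\log(2\Om)$ (Propositions \ref{prop10.3}, \ref{prop10.5}, \ref{propOmc}).

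First I would commute with $\dkb$ and $\dkb^2$ via Proposition \ref{commutation}. As $\dkb^q$ distributes over the product and the leading part of $\dkb^q(\tfrac12(\trchb)^2)$ is $\trchb\,\dkb^q\trchb$, the commuted equations read $\Om^2\nab_3(\dkb^q\trchb)+\overline{\Om^2\trchb}\,\dkb^q\trchb=\uf_q$, with $\uf_q$ schematically $(\Gaw\c\Gaw)^{(q)}$ plus commutator terms decaying at least as fast, and the top-order commutator term $\Gaw\c\dkb^q\trchb$ absorbed by the smallness of $\Gaw$. Applying Lemma \ref{evolutionlemma}(2) with weight $r^2$, the initial contribution at $S(-\ub,\ub)$ is $\les\ep_0$ by $\II_{(0)}$ and Sobolev on $\Si_0$, and it remains to estimate $\int_{-\ub}^{u}\|r^2\uf_q\|_{4,S}$. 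Since this source is purely \emph{quadratic} in the geometric quantities, Lemma \ref{decayGagGabGaa}, Lemma \ref{wonderfulrk}, Proposition \ref{standardsobolev} and the curvature estimates of Propositions \ref{R1estima}--\ref{bbaarecover} bound these integrals by $\les\ep^2 r\ll\ep_0 r$. This yields $\|\dkb^{\leq 2}\trchb\|_{4,S}\les\ep_0/r$, and $\trchbc$ itself is recovered from $\dkb\trchbc$ by the Poincaré-type estimate of Proposition \ref{elliptic2d}, giving the first bound in \eqref{esttrchbc}.

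For the $L^2$ bound I would commute once more, $\Om^2\nab_3(\dkb^3\trchb)+\overline{\Om^2\trchb}\,\dkb^3\trchb=\uf_3$. The delicate point is that $\uf_3$ now contains $\dkb^3(|\hchb|^2)$, hence a term $\hchb\c\hchb^{(3)}$ with $\hchb^{(3)}$ not controlled by $\mo$. Here I would invoke the Codazzi equation \eqref{codazzi} to trade the top derivative of $\hchb$ for $\dkb^2\bb$ (plus $\dkb^3\trchb$, whose coefficient is the small factor $\hchb$ and is absorbed by Grönwall), together with $r\bb\in\Gaw^{(1)}$ and $r^{-1}\Gaw^{(1)}\in\Fb_{1-2\de}$ from Lemma \ref{wonderfulrk}, so that the relevant part of $r^{-1}(\Gaw\c\Gaw)^{(3)}$ is of the type $(\O_{-2\de-2\de_0}\c\Fb_{1-2\de})^{(2)}$, exactly as in the proof of Proposition \ref{estba}. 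The curvature flux estimates of Section \ref{sec9} provide $\bb$ (and the remaining curvature) in the corresponding $\Fb$-classes with the improved constant $\ep_0$ in place of $\ep$ (Propositions \ref{estrb}, \ref{estba}), and combining Lemma \ref{evolutionlemma}(2) with Lemma \ref{useful} for the $u'$-integration gives $\|\dkb^3\trchb\|_{2,S}\les\ep_0/(r^{1-\de}|u|^\de)$, the second bound in \eqref{esttrchbc}.

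The hard part is this last step. In contrast to the $\nab_4\trch$ equation, whose source involves $\om$, the right-hand side of the $\nab_3\trchb$ equation is quadratic in the \emph{slowest-decaying} Ricci coefficient $\hchb\in\Gaw$; at three angular derivatives one is pushed outside the reach of the $\mo$ norms and must exchange derivatives of $\hchb$ for curvature through Codazzi, then close using the Section \ref{sec9} flux bounds. Carrying the $\de$, $\de_0$ weights through this exchange and checking that the $u'$-integral furnished by Lemma \ref{useful} reproduces exactly the borderline $r^{-(1-\de)}|u|^{-\de}$ decay is the tight point of the argument; the $L^4$ estimates are, in comparison, a routine adaptation of Proposition \ref{prop10.5}.
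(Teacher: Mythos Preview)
Your approach is the paper's approach: integrate the $\nab_3$--transport equation for $\trchb$ from $S(-\ub,\ub)\subset\Si_0$, commute with $\dkb^q$, and close with Lemma~\ref{evolutionlemma}(2); at top order the factor $\hchb\cdot\hchb^{(3)}$ is handled exactly as you say, via $r^{-1}\Gaw^{(3)}\in\Fb_{1-2\de}^{(2)}$ (this is \eqref{Gaw3f}, which packages the Codazzi substitution you describe) together with Lemma~\ref{useful}.

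One point needs correcting. After commutation the source is \emph{not} purely quadratic: the commuted equation reads
\[
\nab_3(\dkb^q\trchb)+\trchb\,\dkb^q\trchb=r^{-1}(\nab\log\Om)^{(q-1)}+(\Gaw\cdot\Gaw)^{(q)},
\]
the linear piece $r^{-1}(\nab\log\Om)^{(q-1)}$ arising because the commutation identities of Propositions~\ref{commcor}--\ref{commutation} are stated for $\Om^2\nab_3$, so that undoing the $\Om^2$ (equivalently, differentiating the average--free correction $\widecheck{\Om^2\trchb}$ that you move to the right) produces $\dkb\Om^2=2\Om^2\,r\nab\log\Om$ multiplied by the large factor $\trchb\sim r^{-1}$. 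This linear term \emph{dominates} the estimate: with the $r^2$ weight it contributes $\int_{-\ub}^{u}\|\dkb^{\leq q}\log(2\Om)\|\,du'$, which is of size $\ep_0 r$ (respectively $\ep_0 r^{1+\de}/|u|^\de$ at third order), not $\ep^2 r$. The bound therefore relies essentially on the already--improved estimates of Proposition~\ref{propOmc}, which you do cite; once you replace the phrase ``purely quadratic'' by this, your argument matches the paper's proof exactly.
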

\begin{proof}
We have from Proposition \ref{null}:
\begin{equation}\label{nab3Om-1}
    \nab_3\trchb+\frac{1}{2}(\trchb)^2=\Gaw\c\Gaw.
\end{equation}
Differentiating \eqref{nab3Om-1} by $\dkb^2$ and applying Proposition \ref{commutation}, we obtain
\begin{align}
\begin{split}
\nab_3(\dkb^2\trchb)+\trchb(\dkb^2\trchb)=r^{-1}(\nab\log\Om)^{(1)}+(\Gaw\cdot\Gaw)^{(2)}.\label{ovchib}
\end{split}
\end{align}
Applying Lemmas \ref{evolutionlemma} and Proposition \ref{propOmc}, we infer
\begin{align*}
\|r^{2}\dkb^2\trchb\|_{4,S}&\les\|r^{2}\dkb^2\trchb\|_{4,S(-\ub,\ub)}+\int_{-\ub}^u\|r\Om^2(\nab\log\Om)^{(1)}\|_{4,S}+\|r^{2}\Om^2(\Gaw\cdot\Gaw)^{(2)}\|_{4,S}\\
&\les\ep_0r+\int_{-\ub}^u\frac{\ep_0}{|u|^{2\de_0}}r^{2\de_0}+r^2\frac{\ep^2}{r^{\frac{3}{2}-2\de-2\de_0}|u|^{\frac{1}{2}+2\de+2\de_0}}\\
&\les\ep_0r.
\end{align*}
Next, differentiating \eqref{ovchib} by $\dkb$ and applying Proposition \ref{commcor}, we deduce
\begin{align*}
\nab_3\big(\dkb^3\trchb\big)+\trchb(\dkb^3\trchb)=r^{-1}(\nab\log\Om)^{(2)}+(\Gaw\cdot\Gaw)^{(3)}.
\end{align*}
Applying Lemmas \ref{evolutionlemma} and \ref{useful}, \eqref{Gaw3f} and Proposition \ref{propOmc}, we deduce
\begin{align*}
    \|r^2\dkb^3\trchb\|_{2,S}&\les\|r^2\dkb^3\trchb\|_{2,S(-\ub,\ub)}+\int_{-\ub}^u\|r\Om^2(\nab\log\Om)^{(2)}\|_{2,S}+\|r^2\Om^2(\Gaw\cdot\Gaw)^{(3)}\|_{2,S}\\
    &\les\ep_0r+\int_{-\ub}^u\frac{\ep_0 r^{\de+\de_0}}{|u|^{\de+\de_0}}+\frac{\ep^2r^2}{r^{\frac{3}{2}-2\de-2\de_0}|u|^{\frac{1}{2}+2\de+2\de_0}}+r^{3+2\de_0}\int_{-\ub}^u |u|^{-2\de_0}\|\Om\Fb_{1-2\de}^{(2)}\|_{2,S}\\
    &\les\ep_0\frac{r^{1+\de}}{|u|^\de}.
\end{align*}
This concludes the proof of Proposition \ref{prop10.6}.
\end{proof}
\subsection{Estimates for \texorpdfstring{$\mu$}{} and \texorpdfstring{$\mub$}{}}\label{ssecmu}
\begin{prop}\label{propmu}
We have the following estimates:
\begin{align*}
    \|\mu\|_{4,S}&\les\frac{\ep_0}{r^{2-3\de_0}|u|^{3\de_0}},\qquad \|\mu^{(1)}\|_{4,S}\les\frac{\ep_0}{r^{2-\de-2\de_0}|u|^{\de+2\de_0}},\qquad\|\mu^{(2)}\|_{2,S}\les\frac{\ep_0}{r^{2-\de-3\de_0}|u|^{\de+3\de_0}}.
\end{align*}
\end{prop}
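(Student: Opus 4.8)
The plan is to control $\mu$ and $\mub$ by integrating transport equations for the \emph{modified} mass aspect functions $\mumc$ and $\mumbc$ of Definition~\ref{renorr} — it is these, and not $\mu,\mub$ themselves, that obey clean evolution equations — and then to recover $\mu,\mub$ from them. Concretely, I would start from the last two lines of \eqref{renorschematic} in Proposition~\ref{null},
\begin{align*}
\nab_4\mumc+\trch\,\mumc&=r^{-2}\,\trchc+r^{-1}\widecheck{\rhoc}+r^{-1}\Gab\c\Gab+r^{-1}(\Gag\c\Gab)^{(1)},\\
\nab_3\mumbc+\trchb\,\mumbc&=r^{-2}\,\trchbc-r^{-1}\widecheck{\rhoc}+r^{-1}(\Gaw\c\Gaw)^{(1)},
\end{align*}
differentiate each by $\dkb^q$ for $q=0,1,2$, commute the angular derivative with $\nab_4$ (resp.\ $\nab_3$) using Propositions~\ref{commcor}--\ref{commutation} (the commutator terms are $\Gag\c\dkb^q\mumc$, resp.\ $\Gaw\c\dkb^q\mumbc$, plus strictly better pieces, and are absorbed for $\ep$ small since $r|\Gag|+r|\Gaw|\les\ep$), and apply the evolution lemma (Lemma~\ref{evolutionlemma}) with $\la=2$: the $\mumc$ family is integrated along the outgoing cones from $S(u,|u|)\subset\Si_0$, the $\mumbc$ family along the incoming cones from $S(-\ub,\ub)\subset\Si_0$. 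The data terms are $\les\ep_0$, which follows from $\II_{(0)}\leq\ep_0$ together with the null structure and Bianchi equations on $\Si_0$.

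The source terms split into quadratic and linear parts. The quadratic ones, $r^{-1}(\Gag\c\Gab)^{(q)}$, $r^{-1}\Gab\c\Gab$ and $r^{-1}(\Gaw\c\Gaw)^{(q)}$, are of size $\ep^2$; by Lemma~\ref{decayGagGabGaa} they already carry the correct weights, so after integration along the cone they contribute $\les\ep^2\les\ep_0$ to the target norms (recall $\ep=\ep_0^{2/3}$). The genuinely linear sources $r^{-2}\,\trchc^{(q)}$, $r^{-2}\,\trchbc^{(q)}$ and $r^{-1}\widecheck{\rhoc}^{(q)}$ must be controlled by the decay already proved for $\trchc$, $\trchbc$ and $\widecheck{\rhoc}$ — Propositions~\ref{prop10.5}, \ref{prop10.6}, \ref{R0estima}, \ref{R1estima} and \ref{estbr}, together with $(\rhoc,\sic)\in\frac{\ep_0}{\ep}\F_{1-2\de}$ from \eqref{rhocE} and Lemma~\ref{useful} whenever the bound is taken in $L^2(S)$. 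At order $q=0$ the dominant such term decays only like $\ep_0 r^{-1}$ along the \emph{outgoing} cone, so $\int_{|u|}^{\ub}$ of it produces a logarithm $\ep_0\log(r/|u|)$, reabsorbed through $\log x\les x^{3\de_0}$ for $x\geq1$ into the loss $r^{3\de_0}|u|^{-3\de_0}$ that $r\mu\in\Gab^{(1)}$ is precisely allotted in Definition~\ref{gammag}; the same term causes no such loss for $\mub$, which is propagated along incoming cones, along which $r$ stays comparable to $\ub$. At higher orders the extra $r^\de|u|^{-\de}$ room in the $\mu$--norms (and in the $\mo_3$--norm at top order) is supplied by the sharper decay of $\trchc^{(1)},\trchc^{(2)}$ and of $\widecheck{\rhoc}^{(q)}$.

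Once $\mumc$ and $\mumbc$ are bounded, I would recover $\mu$ from $\mu=\mumc+\ov{\mu}-\tfrac14\widecheck{\trch\trchb}$, where $\ov{\mu}=-\ov{\rhoc}$ because $\int_S\sdiv\eta\,d\ga=0$; here $\ov{\rhoc}$ is controlled by Proposition~\ref{R0estima} and $\widecheck{\trch\trchb}=\ov{\trchb}\,\trchc+\ov{\trch}\,\trchbc+\widecheck{\trchc\trchbc}$ by Propositions~\ref{prop10.5}--\ref{prop10.6}, both better than the target. For the $\dkb$--derivatives one uses $(r\nab)^q\mu=(r\nab)^q\mumc-\tfrac14(r\nab)^q(\trch\trchb)$ for $q\geq1$ (averages being killed by $\nab$), the $q=2$ bound being taken in $L^2(S)$ via Lemma~\ref{useful}; the analogous estimates for $\mub$ follow the same scheme with the incoming transport equation.

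The main difficulty lies entirely in the treatment of the linear sources: since $\trchc$, $\trchbc$ and $\widecheck{\rhoc}$ enter the $\mumc$, $\mumbc$ equations \emph{linearly}, no power of $\ep$ can be extracted, and one must verify order by order in $\dkb^q$ that the sharp decay already established for these quantities is just enough for the logarithmic and $r^\de$--type losses accumulated along the transport to fit inside the $\de$-- and $\de_0$--margins reserved for $\mu$ in Definition~\ref{gammag} and for $(r\nab)^2\mu$ in $\mo_3$. Making this bookkeeping close is the crux.
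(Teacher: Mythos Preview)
Your approach is correct and essentially identical to the paper's: integrate the $\nab_4$--transport equation for $\mumc$ via Lemma~\ref{evolutionlemma} with $\la=2$, commute with $\dkb^q$ for $q=0,1,2$, feed in the already-established bounds on $\trchc$ and $\rhoc$ (the $q=0$ linear term indeed produces a logarithm absorbed into $(r/|u|)^{3\de_0}$), and recover $\mu$ from $\mumc$ via $\ov{\mu}=-\ov{\rhoc}$. One small caveat: the statement concerns only $\mu$, so your $\mub$ discussion is extraneous here, and in that part the claim $r|\Gaw|\les\ep$ is not quite right---in the paper the commutator terms are simply absorbed into the schematic sources rather than handled by a smallness/Gr\"onwall argument.
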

\begin{proof}
We recall from Proposition \ref{null}
\begin{align}
\begin{split}\label{mumc4}
\nab_4\mumc+\trch\,\mumc&=r^{-2}\trchc^{(0)}+r^{-1}\rhoc^{(0)}+r^{-1}\Gab\c\Gab+r^{-1}(\Gag\c\Gab)^{(1)}. 
\end{split}
\end{align}
Applying Lemma \ref{evolutionlemma} and Propositions \ref{R0estima}, \ref{prop10.5} and \ref{standardsobolev}, we infer
\begin{align*}
\|r^{2}\mumc\|_{4,S}&\les\|r^{2}\mumc\|_{4,S(u,|u|)}+\int_{|u|}^\ub\|(\trchc,r\rhoc)\|_{4,S}+r\|\Gab\c\Gab\|_{4,S}+\|r(\Gag\c\Gab)^{(1)}\|_{4,S}\\
&\les\ep_0+\int_{|u|}^\ub\frac{\ep_0}{r}+\frac{\ep}{r^{1-\de_0}|u|^{\de_0}}\frac{\ep}{r^{1-\frac{\de_0+3\de_0}{2}}|u|^\frac{3\de_0+\de_0}{2}}+\frac{\ep^2}{r^{1-3\de_0}|u|^{3\de_0}}\\
&\les\ep_0\frac{r^{3\de_0}}{|u|^{3\de_0}}.
\end{align*}
Next, differentiating \eqref{mumc4} by $\dkb$ and applying Proposition \ref{commcor}, we deduce
\begin{align}\label{mumc1}
\nab_4(\dkb\mumc)+\trch(\dkb\mumc)=r^{-2}\trchc^{(1)}+r^{-1}\rhoc^{(1)}+r^{-1}\Gab\c\Gab^{(1)}+r^{-1}(\Gag\c\Gab)^{(2)}.
\end{align}
Applying Lemma \ref{evolutionlemma} and Propositions \ref{R0estima} and \ref{prop10.5}, we obtain
\begin{align*}
\|r^{2}\dkb\mumc\|_{4,S}&\les\|r^{2}\dkb\mumc\|_{4,S(u,|u|)}+\int_{|u|}^\ub \|(\trchc,r\rhoc)^{(1)}\|_{4,S}+\|r(\Gab\c\Gab)^{(1)}\|_{4,S}+\|r(\Gag\c\Gab)^{(2)}\|_{4,S}\\
&\les\ep_0+\int_{|u|}^\ub\frac{\ep_0}{r}+\frac{\ep_0}{r^{1-\de}|u|^{\de}}+\frac{\ep^2}{r^{1-5\de_0}|u|^{5\de_0}}+\frac{\ep^2}{r^{1-\de-2\de_0}|u|^{\de+2\de_0}}\\
&\les\frac{\ep_0}{|u|^{\de+2\de_0}}r^{\de+2\de_0}.
\end{align*}
Finally, differentiating \eqref{mumc1} by $\dkb$ and applying Proposition \ref{commutation}, we obtain
\begin{align*}
\nab_4(\dkb^2\mumc)+\trch(\dkb^2\mumc)=r^{-2}\trchc^{(2)}+r^{-1}\rhoc^{(2)}+r^{-1}(\Gab\c\Gab)^{(2)}+r^{-1}(\Gag\c\Gab)^{(3)}.
\end{align*}
Applying Lemmas \ref{evolutionlemma} and \ref{useful}, \eqref{rhocE} and Propositions \ref{estrb} and \ref{prop10.5}, we obtain
\begin{align*}
\|r^2\dkb^2\mumc\|_{2,S}&\les\|r^2\dkb^2\mumc\|_{2,S(u,|u|)}+\int_{|u|}^\ub \|(\trchc,r\rhoc)^{(2)}\|_{2,S}+\|r(\Gab\c\Gab)^{(2)}\|_{2,S}+r\|(\Gag\c\Gab)^{(3)}\|_{2,S}\\
&\les\ep_0+\int_{|u|}^\ub\frac{\ep_0}{r^{1-\de}|u|^{\de}}+\frac{\ep^2}{r^{1-\de-3\de_0}|u|^{\de+3\de_0}}+\int_{|u|}^\ub r\|\rhoc^{(2)}\|_{2,S}+\|r\Gab\c\Gag^{(3)}\|_{2,S}\\
&\les\frac{\ep_0}{|u|^{\de+3\de_0}}r^{\de+3\de_0}+\frac{\ep_0}{\ep}\frac{\ep}{|u|^\de}r^{\de}+\frac{\ep^2}{|u|^{\de+2\de_0}}r^{\de+2\de_0}\\
&\les\frac{\ep_0}{|u|^{\de+3\de_0}}r^{\de+3\de_0}.
\end{align*}
Recalling from \eqref{murenor} that
\begin{equation*}
\mumc=\mu-\ov{\mu}+\frac{1}{4}\widecheck{\trch\trchb}=\mu+\ov{\rhoc}+\frac{1}{2r}\widecheck{\trchb}-\frac{1}{2r}\trchc+\Gag\c\Gag,
\end{equation*}
we easily deduce from Propositions \ref{prop10.5} and \ref{prop10.6} that
\begin{align*}
    \|\mu\|_{4,S}\les\frac{\ep_0}{r^{1-3\de_0}|u|^{3\de_0}},\qquad \|\dkb\mu\|_{4,S}\les\frac{\ep_0}{r^{2-\de-2\de_0}|u|^{\de+2\de_0}},\qquad \|\dkb^2\mu\|_{2,S}\les\frac{\ep_0}{r^{2-\de-3\de_0}|u|^{\de+3\de_0}}.
\end{align*}
This concludes the proof of Proposition \ref{propmu}.
\end{proof}
\begin{prop}\label{propmub}
We have the following estimates:
\begin{align*}
\|\mub^{(1)}\|_{4,S}\les\frac{\ep_0}{r^{2}},\qquad\quad
\|\mub^{(2)}\|_{2,S}\les\frac{\ep_0}{r^{2-\de}|u|^{\de}}.
\end{align*}
\end{prop}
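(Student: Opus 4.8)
The plan is to reproduce, in the incoming direction, the argument used for $\mu$ in Proposition \ref{propmu}. First I would recall the $\nab_3$-transport equation for the renormalized modified mass aspect $\mumbc$ from \eqref{renorschematic},
\begin{equation*}
\nab_3\mumbc+\trchb\,\mumbc=r^{-2}\,\trchbc^{(0)}-r^{-1}\rhoc^{(0)}+r^{-1}(\Gaw\c\Gaw)^{(1)},
\end{equation*}
and, differentiating by $\dkb$ and by $\dkb^2$ and commuting with Propositions \ref{commcor} and \ref{commutation} (the commutator errors being $\ep$-small relative to the terms retained, hence absorbed),
\begin{align*}
\nab_3(\dkb\mumbc)+\trchb(\dkb\mumbc)&=r^{-2}\trchbc^{(1)}-r^{-1}\rhoc^{(1)}+r^{-1}(\Gaw\c\Gaw)^{(2)},\\
\nab_3(\dkb^2\mumbc)+\trchb(\dkb^2\mumbc)&=r^{-2}\trchbc^{(2)}-r^{-1}\rhoc^{(2)}+r^{-1}(\Gaw\c\Gaw)^{(3)}.
\end{align*}

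I would then apply the incoming evolution estimate of Lemma \ref{evolutionlemma} with $\la=2$ to each equation, the boundary term on $\Si_0$ being $\les\ep_0$ by $\II_{(0)}\leq\ep_0$. For $\mumbc^{(1)}$ in $L^4(S)$ the relevant inputs are $\|\trchbc^{(1)}\|_{4,S}\les\ep_0/r$ (Proposition \ref{prop10.6}), $\|\rhoc\|_{4,S}\les\ep_0/r^2$ and $\|\rhoc^{(1)}\|_{4,S}\les\ep_0/(r^{2-\de}|u|^\de)$ (Propositions \ref{R0estima} and \ref{R1estima}), and the quadratic term $(\Gaw\c\Gaw)^{(2)}$ (controlled via Lemma \ref{decayGagGabGaa} and Proposition \ref{standardsobolev}), which is of size $\ep^2\ll\ep_0$ and negligible; using moreover $\Om^2\les(r/|u|)^{\de_0}$ from $\mo_\ga\leq\ep$ together with $|u|\leq\ub\simeq r$ in $\kk$, all the incoming integrals $\int_{-\ub}^u(\,\cdot\,)\,du'$ come out $\les\ep_0$, so that $\|r^2\mumbc^{(1)}\|_{4,S}\les\ep_0$, i.e. $\|\mumbc^{(1)}\|_{4,S}\les\ep_0/r^2$. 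For $\dkb^2\mumbc$ in $L^2(S)$ I would additionally use $\rhoc\in\frac{\ep_0}{\ep}\Fb_{1-2\de}$ (Proposition \ref{estrb}) and $r^{-1}\Gaw^{(3)}\in\Fb_{1-2\de}^{(2)}$ (Lemma \ref{wonderfulrk}), handling the integrals of $r\rhoc^{(2)}$ and $r(\Gaw\c\Gaw)^{(3)}$ through Lemma \ref{useful} exactly as in the proofs of Propositions \ref{prop10.4} and \ref{prop10.6}; this gives $\|r^2\dkb^2\mumbc\|_{2,S}\les\ep_0\,r^\de/|u|^\de$, i.e. $\|\dkb^2\mumbc\|_{2,S}\les\ep_0/(r^{2-\de}|u|^\de)$.

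It then remains to pass back to $\mub$. As in the last step of the proof of Proposition \ref{propmu}, I would use \eqref{murenor} and \eqref{mumrenor} to write
\begin{equation*}
\mumbc=\mub+\ov{\rhoc}+\tfrac14\widecheck{\trch\trchb}=\mub+\ov{\rhoc}+\tfrac1{4r}\trchbc-\tfrac1{r}\trchc+\Gag\c\Gag,
\end{equation*}
observe that $\ov{\rhoc}$ is constant on $S$ — hence annihilated by $\dkb$ — with $|\ov{\rhoc}|\les\ep_0/r^2$ (Proposition \ref{R0estima}), and feed in the bounds for $\trchc$ and $\trchbc$ from Propositions \ref{prop10.5} and \ref{prop10.6}; this yields $\|\mub^{(1)}\|_{4,S}\les\ep_0/r^2$ and $\|\mub^{(2)}\|_{2,S}\les\ep_0/(r^{2-\de}|u|^\de)$, which is the claim.

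I expect the only genuinely delicate point to be the incoming integral of the linear curvature term $r^{-1}\rhoc^{(\leq2)}$: at zeroth order $\rhoc$ carries no $|u|$-gain, so $\int_{-\ub}^u r^{-1}\,du'$ must be recognised as bounded rather than logarithmically divergent, which forces the use of the exterior-region relation $|u|\leq\ub\simeq r$; and at second order $\rhoc^{(2)}$ is controlled only in the incoming flux class $\Fb_{1-2\de}$, so Lemma \ref{useful} is needed to convert that flux bound into a pointwise-in-$S$ estimate carrying the correct $r^\de|u|^{-\de}$ weight. Everything else is the bookkeeping already carried out for $\mu$.
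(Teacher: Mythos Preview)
Your proposal is correct and follows essentially the same approach as the paper: integrate the $\nab_3$-transport equation for $\mumbc$ (and its $\dkb$, $\dkb^2$ derivatives) via Lemma \ref{evolutionlemma} with $\la=2$, control the linear source terms through Propositions \ref{R0estima}, \ref{R1estima}, \ref{prop10.6} and the flux bound $\rhoc\in\frac{\ep_0}{\ep}\Fb_{1-2\de}$ together with Lemma \ref{useful}, and then convert $\mumbc$ back to $\mub$ using the algebraic relation coming from \eqref{murenor}--\eqref{mumrenor}. The numerical coefficients in your conversion formula differ slightly from the paper's (yours are in fact consistent with $\trch\simeq r^{-1}$, $\trchb\simeq -4r^{-1}$), but this is immaterial for the estimates.
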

\begin{proof}
We recall from Proposition \ref{null}
\begin{align}
\begin{split}\label{mumbc3}
\nab_3\mumbc+\trchb\,\mumbc=r^{-2}\trchbc^{(0)}+r^{-1}\rhoc^{(0)}+r^{-1}(\Gaw\c\Gaw)^{(1)}.
\end{split}
\end{align}
Differentiating \eqref{mumbc3} by $\dkb$ and applying Proposition \ref{commcor}, we deduce
\begin{align}\label{mumbc1}
\nab_3\mumbc^{(1)}+\trchb\,\mumbc^{(1)}=r^{-2}\,\trchbc^{(1)}+r^{-1}\rhoc^{(1)}+r^{-1}(\Gaw\c\Gaw)^{(2)}.
\end{align}
Applying Lemma \ref{evolutionlemma} and Propositions \ref{R1estima} and \ref{prop10.6}, we infer
\begin{align*}
\|r^2\mumbc^{(1)}\|_{4,S}&\les\|r^2\mumbc^{(1)}\|_{4,S(u,|u|)}+\int_{-\ub}^u\Om^2\left(\|(\trchbc,r\rhoc)^{(1)}\|_{4,S}+\|r(\Gaw\c\Gaw)^{(2)}\|_{4,S}\right)\\
&\les\ep_0+\int_{-\ub}^u\frac{\ep_0}{r^{1-\de_0}|u|^{\de_0}}+\frac{\ep_0}{r^{1-\de-\de_0}|u|^{\de+\de_0}}+\frac{\ep^2}{r^{\frac{3}{2}-\de-2\de_0}|u|^{\frac{1}{2}+\de+2\de_0}}\\
&\les\ep_0.
\end{align*}
Next, differentiating \eqref{mumbc1} by $\dkb$ and applying Proposition \ref{commutation}, we have
\begin{align*}
    \nab_3(\dkb^2\mumbc)+\trchb(\dkb^2\mumbc)=r^{-2}\trchbc^{(2)}+r^{-1}\rhoc^{(2)}+r^{-1}(\Gaw\c\Gaw)^{(3)}.
\end{align*}
Applying Lemmas \ref{evolutionlemma} and \ref{useful} and Propositions \ref{estbr} and \ref{prop10.6}, we obtain
\begin{align*}
\|r^2\dkb^2\mumbc\|_{2,S}&\les\|r^2\dkb^2\mumbc\|_{2,S(u,|u|)}+\int_{-\ub}^u \Om^2\left(\|(\trchbc,r\rhoc)^{(2)}\|_{2,S}+\|r(\Gaw\c\Gaw)^{(3)}\|_{2,S}\right)\\
&\les\ep_0+\int_{-\ub}^u\frac{\ep_0}{r^{1-\de-2\de_0}|u|^{\de+2\de_0}}+\Om^2\|r\rhoc^{(2)}\|_{2,S}+\Om^2\|r\Gaw\c\Gaw^{(3)}\|_{2,S}\\
&\les\ep_0+\left(\frac{\ep_0}{\ep}+\ep\right)r^{1+2\de_0}\int_{-\ub}^u|u|^{-2\de_0}\|\Om\Fb_{1-2\de}^{(2)}\|_{2,S}\\
&\les\frac{\ep_0}{|u|^\de}r^\de,
\end{align*}
where we used the facts that $\rhoc\in\frac{\ep_0}{\ep}\Fb_{1-2\de}$ and $r^{-1}\Gaw^{(3)}\in\Fb_{1-2\de}^{(2)}$.
Recalling that
\begin{equation*}
\mumbc=\mub-\ov{\mub}+\frac{1}{4}\widecheck{\trch\trchb}=\mub+\ov{\rhoc}+\frac{1}{2r}\widecheck{\trchb}-\frac{1}{2r}\trchc+\Gag\c\Gag,
\end{equation*}
we have from Propositions \ref{prop10.5} and \ref{prop10.6} that
\begin{align*}
    \|\mub^{(1)}\|_{4,S}\les\frac{\ep_0}{r^2},\qquad \quad\|\dkb^2\mub\|_{2,S}\les\frac{\ep_0}{r^{2-\de}|u|^{\de}}.
\end{align*}
This concludes the proof of Proposition \ref{propmub}.
\end{proof}
\subsection{Estimates for \texorpdfstring{$\eta$}{}, \texorpdfstring{$\etab$}{}, \texorpdfstring{$\hch$}{} and \texorpdfstring{$\hchb$}{}}\label{ssec10.7}
\begin{prop}\label{prop10.7}
We have the following estimates:
\begin{align}
\begin{split}\label{esteta}
\|\eta^{(1)}\|_{4,S}\les\frac{\ep_0}{r^{1-3\de_0}|u|^{3\de_0}},\qquad \|\eta^{(2)}\|_{4,S}\les\frac{\ep_0}{r^{1-\de-2\de_0}|u|^{2\de_0}},\qquad\|\etab^{(2)}\|_{4,S}\les\frac{\ep_0}{r}.
\end{split}
\end{align}
We also have
\begin{equation}\label{etaimproved}
    \|\eta\|_{4,S}\les\frac{\ep_0}{r^{1-\de_0}|u|^{\de_0}}.
\end{equation}
\end{prop}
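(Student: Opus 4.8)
The Ricci coefficients $\eta$ and $\etab$ can be accessed in two complementary ways. First, the null structure equations of Proposition \ref{nulles} provide the transport equations $\nab_4\eta=-\chi\c(\eta-\etab)-\b$ along the outgoing cones and $\nab_3\etab=-\chib\c(\etab-\eta)+\bb$ along the incoming cones; writing $\chi=\hch+\tfrac12\trch$ and $\chib=\hchb+\tfrac12\trchb$, these become $\nab_4\eta+\tfrac12\trch\,\eta=-\b+r^{-1}\etab+\Gag\c\Gab$ and $\nab_3\etab+\tfrac12\trchb\,\etab=\bb+r^{-1}\eta+\Gaw\c\Gab$. Second, the schematic null structure equations of Proposition \ref{null} together with the torsion equation \eqref{torsion} give the div--curl systems $d_1\eta=(-\mu-\rhoc,\sic)$ and $d_1\etab=(-\mub-\rhoc,-\sic)$. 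The plan is to obtain the base estimates for $\eta$, $\etab$ and their first $\dkb$--derivatives by integrating the transport equations from $\Si_0$ via the evolution Lemma \ref{evolutionlemma}, commuting with $\dkb$ through Propositions \ref{commcor}--\ref{commutation}, and to obtain the second--order derivatives by differentiating the div--curl systems once and applying the $2D$ elliptic estimates of Proposition \ref{elliptic2d}.

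Concretely, for $\etab$ I would integrate $\nab_3\etab+\tfrac12\trchb\,\etab=\bb+\ldots$ and its $\dkb$--derivative along $\Cb_\ub$ from $S(-\ub,\ub)\subset\Si_0$: the initial term is controlled by $\II_{(0)}\le\ep_0$ (with the Sobolev inequality of Proposition \ref{standardsobolev} for the $L^4(S)$ statements), the curvature source $\bb$, $\bb^{(1)}$ by Propositions \ref{R0estima}, \ref{R1estima}, \ref{bbaarecover}, and the quadratic errors by Lemma \ref{decayGagGabGaa}; for the top--order bound $\|\etab^{(2)}\|_{4,S}$ I would instead differentiate $d_1\etab=(-\mub-\rhoc,-\sic)$ and apply Proposition \ref{elliptic2d}, inserting $\|\mub^{(1)}\|_{4,S}$ from Proposition \ref{propmub} and $\|(\rhoc,\sic)^{(1)}\|_{4,S}$ from Proposition \ref{R1estima}. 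For $\eta$ I would proceed symmetrically: integrate $\nab_4\eta+\tfrac12\trch\,\eta=-\b+\ldots$ along $C_u$ from $S(u,|u|)\subset\Si_0$, using $\|\b\|_{4,S}$ and $\|\b^{(1)}\|_{4,S}$ from Propositions \ref{R0estima}, \ref{R1estima}, and recover $\eta^{(1)}$, $\eta^{(2)}$ from $d_1\eta=(-\mu-\rhoc,\sic)$ and Proposition \ref{elliptic2d} together with the bounds on $\mu^{(1)}$ from Proposition \ref{propmu}. The improved bound \eqref{etaimproved} comes from the transport equation alone: its source produces only a logarithm, $\int_{|u|}^{\ub}\|r\b\|_{4,S}\les\ep_0\int_{|u|}^{\ub}r^{-1}\les\ep_0\log(r/|u|)$, and $\log(r/|u|)\les(r/|u|)^{\de_0}$ absorbs it into the $r^{\de_0}|u|^{-\de_0}$ weight.

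The main obstacle will be the exact bookkeeping of the $r$-- and $|u|$--weights: every $\dkb$--derivative and every commutation from Propositions \ref{commcor}--\ref{commutation} costs a further $(r/|u|)^{\de_0}$ or $(r/|u|)^{\de}$, and these losses must be threaded so as to reproduce precisely the weights of $\mo_1(\Gab)$, $\mo_2(\Gab)$ and $\mo_2(\Gag)$. In particular one must keep track of the mixed quadratic terms $\Gag\c\Gab$, $\Gaw\c\Gab$, of the commutator errors $\Gag\c(r\nab)\eta$, and one must read the $|u|$--decay of the initial--data terms off their $r$--weight since $r\simeq|u|$ on $\Si_0$. A further subtlety is that the transport equations cannot be used at top order in $L^4(S)$, because $\b^{(2)}$ and $\bb^{(2)}$ are controlled only through the curvature flux norms of Section \ref{sec9}; this is exactly why the second angular derivatives of $\eta$ and $\etab$ are recovered through the elliptic div--curl systems, which involve only the $L^4(S)$ bounds on $\mu^{(1)}$, $\mub^{(1)}$, $\rhoc^{(1)}$, $\sic^{(1)}$ established in Propositions \ref{propmu}, \ref{propmub}, \ref{R1estima}.
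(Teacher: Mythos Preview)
Your proposal is correct in its essential ingredients but is more elaborate than the paper's proof. The paper uses \emph{only} the div--curl systems for the whole of \eqref{esteta}: since Proposition~\ref{elliptic2d} controls $\|\xi\|_{p,S}$ and $\|\nab\xi\|_{p,S}$ simultaneously from $d_1\xi$, applying it to $d_1\eta=(-\mu-\rhoc,\sic)$ and $d_1\etab=(-\mub-\rhoc,-\sic)$ at orders $q=1,2$ gives all of $\eta^{(1)},\eta^{(2)},\etab^{(2)}$ with $\ep_0$-smallness directly from the already improved bounds on $\mu,\mub,\rhoc,\sic$ (Propositions~\ref{R0estima}, \ref{R1estima}, \ref{propmu}, \ref{propmub}). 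The transport equation is invoked \emph{only} for the single improved bound \eqref{etaimproved}, and there the source $r^{-1}\etab$ is fed with the $\ep_0$-bound just obtained from the elliptic step.

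Your additional transport steps for $\etab$ along $\Cb_\ub$ are therefore redundant, and in fact fragile: the $\nab_3\etab$ equation carries the linear source $r^{-1}\eta$, which is not a quadratic error controllable by Lemma~\ref{decayGagGabGaa}. If you feed it with the bootstrap bound on $\eta$ you obtain only $\ep$-control of $\etab$, not $\ep_0$; and your elliptic step, as you describe it (``differentiate $d_1\etab$ once''), recovers $\dkb\etab$ and $\dkb^2\etab$ but not $\etab$ itself. The clean fix is exactly what the paper does: apply the elliptic estimate also at the undifferentiated level to get $\|\etab\|_{4,S}\les r\|(\mub,\rhoc,\sic)\|_{4,S}\les\ep_0 r^{-1}$, and drop the transport argument for $\etab$ altogether.
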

\begin{proof}
We recall from Proposition \ref{null}
\begin{align*}
    d_1\eta&=(-\mu-\rhoc,\sic), \qquad\quad d_1\etab=(-\mub-\rhoc,\sic).
\end{align*}
Applying Propositions \ref{elliptic2d}, we obtain for $q=1,2$
\begin{align*}
\|\eta^{(q)}\|_{4,S}\les r\|(\mu,\rhoc,\sic)^{(q-1)}\|_{4,S},\qquad\quad \|\etab^{(q)}\|_{4,S}\les r\|(\mub,\rhoc,\sic)^{(q-1)}\|_{4,S}.
\end{align*}
Combining with Propositions \ref{R1estima}, \ref{R0estima}, \ref{propmu} and \ref{propmub}, we obtain \eqref{esteta} as stated.\\ \\
Next, we have from Proposition \ref{nulles}
    \begin{align}\label{n4eta}
        \nab_4\eta+\frac{1}{2}\trch\,\eta=r^{-1}\etab-\b+\Gag\c\Gab.
    \end{align}
Applying Lemma \ref{evolutionlemma}, \eqref{esteta} and Proposition \ref{R0estima}, we obtain
\begin{align*}
\|r\eta\|_{4,S}&\les\|r\eta\|_{4,S(u,|u|)}+\int_{|u|}^\ub\|\etab\|_{4,S}+r\|\b\|_{4,S}+r\|\Gag\c\Gab\|_{4,S}\\
&\les\ep_0+\int_{|u|}^\ub \frac{\ep_0}{r}+\frac{\ep^2}{r^{1-\de_0}|u|^{\de_0}} \\
&\les\ep_0\frac{r^{\de_0}}{|u|^{\de_0}},
\end{align*}
which implies \eqref{etaimproved}. This concludes the proof of Proposition \ref{prop10.7}.
\end{proof}
\begin{prop}\label{prop10.1}
We have the following estimates:
\begin{align*}
\|\hch^{(1)}\|_{4,S}&\les\frac{\ep_0}{r},\qquad\qquad\quad\;\,\,\|\hch^{(2)}\|_{4,S} \les\frac{\ep_0}{r^{1-\de}|u|^\de},\\
\|\hchb^{(1)}\|_{4,S}&\les\frac{\ep_0}{r^{1-\de}|u|^{\de}},\qquad\quad\,\|\hchb^{(2)}\|_{4,S}\les\frac{\ep_0}{r^{\frac{1}{2}-\de-\de_0}|u|^{\frac{1}{2}+\de+\de_0}}.
\end{align*}
\end{prop}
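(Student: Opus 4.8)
\emph{Proof plan.} Both pairs of estimates are obtained by using the Codazzi equations of Proposition \ref{null} as \emph{elliptic} systems on the spheres $S(u,\ub)$, fed by the bounds for $\trchc$, $\trchbc$, $\etab$, $\b$ and $\bb$ already established in this section. Since $r$ is constant on each sphere, for $U\in\sk_2$ one has $\dkb U=r\,d_2 U$ and $\dkb^2 U=r^2\,d_1 d_2 U$, so a single angular differentiation of a Codazzi relation already controls the full second angular derivative; only the passage from $\|d_2 U\|_{4,S}$ to $r^{-1}\|U\|_{4,S}$ requires the $2D$ elliptic estimate of Proposition \ref{elliptic2d}(3).

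For $\hch$, recall $d_2\hch=\frac{1}{2}\nab\trch-\frac{1}{2}r^{-1}\etab-\b+\Gag\c\Gag$. Hence $\|\hch^{(1)}\|_{4,S}\les r\|d_2\hch\|_{4,S}$ and $\|\dkb^2\hch\|_{4,S}\les r^2\|\nab(d_2\hch)\|_{4,S}$, and I would estimate the right-hand sides using Proposition \ref{prop10.5} for $\nab\trch$ and $\nab^2\trch$, Proposition \ref{prop10.7} for $\|\etab^{(2)}\|_{4,S}$, Propositions \ref{R0estima} and \ref{R1estima} for $\|\b\|_{4,S}$ and $\|\b^{(1)}\|_{4,S}$, and Lemma \ref{decayGagGabGaa} together with Proposition \ref{standardsobolev} (which gives $\|\Gag\|_{\infty,S}\les\ep/r$) for the quadratic terms $\Gag\c\Gag$ and their angular derivatives. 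Every term is then $\les\ep_0/r^2$ at order zero and $\les\ep_0/(r^{3-\de}|u|^\de)$ after one derivative, the quadratic contributions being harmless since $\ep^2=\ep_0^{4/3}\ll\ep_0$; multiplying by $r$, resp.\ $r^2$, yields $\|\hch^{(1)}\|_{4,S}\les\ep_0/r$ and $\|\hch^{(2)}\|_{4,S}\les\ep_0/(r^{1-\de}|u|^\de)$.

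For $\hchb$ the scheme is identical, starting from $d_2\hchb=\frac{1}{2}\nab\trchb-\frac{1}{2}r^{-1}\etab+\bb+\Gag\c\Gaw$; the difference is that $\bb$ is now the slowest-decaying term on the right-hand side and therefore dictates the loss. I would use Proposition \ref{prop10.6} for $\nab\trchb$ and $\nab^2\trchb$, Proposition \ref{prop10.7} for $\etab$, Lemma \ref{decayGagGabGaa} and Proposition \ref{standardsobolev} for $\Gag\c\Gaw$, Proposition \ref{R0estima} for $\|\bb\|_{4,S}\les\ep_0/(r^{2-\de}|u|^\de)$, and Proposition \ref{R1estima} for $\|\Om\bb^{(1)}\|_{4,S}\les\ep_0/(r^{3/2-\de}|u|^{1/2+\de})$. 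Multiplying the zeroth-order bound by $r$ gives $\|\hchb^{(1)}\|_{4,S}\les\ep_0/(r^{1-\de}|u|^\de)$. The dominant contribution to $\|\dkb^2\hchb\|_{4,S}$ is $r^2\|\nab\bb\|_{4,S}\les\Om^{-1}\ep_0/(r^{1/2-\de}|u|^{1/2+\de})$, and using $\mo_\ga\le\ep$, so that $\Om^{-1}\les(r/|u|)^{\ep}$, together with the hierarchy $\ep\ll\de_0$, I would trade this factor $\Om^{-1}$ against $(|u|/r)^{\de_0}$ to conclude $\|\hchb^{(2)}\|_{4,S}\les\ep_0/(r^{1/2-\de-\de_0}|u|^{1/2+\de+\de_0})$.

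The delicate point is precisely this last weight adjustment for $\hchb^{(2)}$: the only $L^4(S)$ control of $\bb^{(1)}$ at our disposal carries an $\Om$-weight (Proposition \ref{R1estima}), and producing the exact exponents $\tfrac12-\de-\de_0$ and $\tfrac12+\de+\de_0$ relies on the smallness hierarchy $\ep\ll\de_0\ll\de$. One must also check that the quadratic term $r^2 d_1(\Gag\c\Gaw)$ entering $\dkb^2\hchb$ contains no copy of $\hchb^{(2)}$ itself — it involves at most $\dkb\hchb$, through $\Gag\c\nab\hchb$ — so that there is no circularity and it is absorbed by the smallness of $\ep$.
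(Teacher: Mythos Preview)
Your proposal is correct and follows essentially the same route as the paper: both apply the $2D$ elliptic estimate of Proposition \ref{elliptic2d} to the Codazzi equations in Proposition \ref{null}, feeding in the already-established bounds on $\trchc$, $\trchbc$, $\etab$, $\b$, $\bb$ from Propositions \ref{R0estima}, \ref{R1estima}, \ref{prop10.5}, \ref{prop10.6}, \ref{prop10.7}. Your treatment of the delicate point---trading the $\Om^{-1}$ weight on $\bb^{(1)}$ from Proposition \ref{R1estima} against $(r/|u|)^{\de_0}$ via $\mo_\ga\le\ep$ and $\ep\ll\de_0$---is exactly what is needed and is in fact more explicit than the paper's terse argument, which simply writes $r\|\bb^{(1)}\|_{4,S}$ and records the final exponents without spelling out this step.
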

\begin{proof}
We recall the following Codazzi equations from Proposition \ref{null}
\begin{align*}
\sdiv\hch&=\frac{1}{2}\nab\trch-r^{-1}\etab-\b+\Gag\c\Gag,\\ 
\sdiv\hchb&=\frac{1}{2}\nab\trchb-r^{-1}\etab+\bb+\Gag\c\Gaw.
\end{align*}
Applying Propositions \ref{elliptic2d}, \ref{R0estima}, \ref{prop10.5}, \ref{prop10.6} and \ref{esteta}, we obtain
\begin{align*}
\|\hch^{(1)}\|_{4,S}&\les r\|(\nab\trch,r^{-1}\etab,\b)\|_{4,S}+\|r\Gag\c\Gag\|_{4,S}\les\frac{\ep_0}{r},\\
\|\hchb^{(1)}\|_{4,S}&\les r\|(\nab\trchb,r^{-1}\etab,\bb)\|_{4,S}+\|r\Gag\c\Gaw\|_{4,S}\les\frac{\ep_0}{r^{1-\de}|u|^{\de}}.
\end{align*}
We also have from Propositions \ref{elliptic2d}, \ref{R1estima}, \ref{prop10.5}, \ref{prop10.6} and \ref{esteta}
\begin{align*}
    \|\hch^{(2)}\|_{4,S}&\les r\|(\nab\trch,r^{-1}\etab,\b)^{(1)}\|_{4,S}+r\|(\Gag\c\Gag)^{(1)}\|_{4,S}\les\frac{\ep_0}{r^{1-\de}|u|^\de},\\
    \|\hchb^{(2)}\|_{4,S}&\les r\|(\nab\trchb,r^{-1}\etab,\bb)^{(1)}\|_{4,S}+r\|(\Gag\c\Gaw)^{(1)}\|_{4,S}\les\frac{\ep_0}{r^{\frac{1}{2}-\de-\de_0}|u|^{\frac{1}{2}+\de+\de_0}}.
\end{align*}
This concludes the proof of Proposition \ref{prop10.1}.
\end{proof}
\subsection{End of the proof of Theorem \ref{M3}}
\begin{prop}\label{prop10.9}
We have the following estimates:
\begin{align}
\sup_\kk r\left|\trch-\frac{1}{r}\right|\les\ep_0,\qquad \sup_\kk r\left|{\trchb}+\frac{4}{r}\right|\les\ep_0.
\end{align}
\end{prop}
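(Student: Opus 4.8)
The plan is to decompose $\trch-\tfrac1r$ and $\trchb+\tfrac4r$ into their $S$-averages and their average free parts, to estimate the average free parts by a Sobolev embedding from the $L^4(S)$-bounds already established, and to estimate the (scalar) averages by integrating transport equations along the \emph{incoming} cones.

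For the average free parts: since $\tfrac1r$ and $\tfrac4r$ are constant on $S$, the average free part of $\trch-\tfrac1r$ (resp.\ of $\trchb+\tfrac4r$) is exactly $\trchc$ (resp.\ $\trchbc$). Proposition \ref{prop10.5} gives $\|\trchc\|_{4,S}+\|r\nab\trchc\|_{4,S}\les\ep_0/r$ and Proposition \ref{prop10.6} gives $\|\trchbc\|_{4,S}+\|r\nab\trchbc\|_{4,S}\les\ep_0/r$, so the Sobolev inequality of Proposition \ref{standardsobolev} yields $\|\trchc\|_{\infty,S}+\|\trchbc\|_{\infty,S}\les\ep_0/r$.

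For the averages, I would derive incoming transport equations for the scalars $\ov\trch-\tfrac1r$ and $\ov\trchb+\tfrac4r$. Using Lemma \ref{dint} — both to commute $e_3$ past the averaging and to compute $e_3(r)$ — together with the structure equations $\nab_3\trch+\tfrac12\trchb\trch=-2\mu+2|\eta|^2$ and $\nab_3\trchb+\tfrac12(\trchb)^2=-|\hchb|^2$ of Proposition \ref{nulles} — here it is essential that $\omb=0$, so that no $\om$-type term appears — one is led, after the a priori non-integrable $O(r^{-2})$ contributions (notably the would-be source $-\tfrac12\ov\trch\,\ov\trchb\simeq 2r^{-2}$, resp.\ the combination $\tfrac12\ov{\Om^2\trchb^2}+\tfrac2r\ov{\Om^2\trchb}$) have been absorbed into the linear term, to equations of the form
\begin{equation*}
\Om^2\nab_3\Big(\ov\trch-\tfrac1r\Big)+\tfrac12\ov{\Om^2\trchb}\Big(\ov\trch-\tfrac1r\Big)=\uf,\qquad \Om^2\nab_3\Big(\ov\trchb+\tfrac4r\Big)+\tfrac12\ov{\Om^2\trchb}\Big(\ov\trchb+\tfrac4r\Big)=\uf',
\end{equation*}
in which the sources $\uf,\uf'$ consist only of averages of quadratic quantities ($\ov{\Om^2\mu}$, $\ov{\Om^2|\eta|^2}$, $\ov{\Om^2\trchb\trchc}$, resp.\ $r^{-1}\ov{\Om^2\trchbc}$, $\ov{\Om^2\trchbc^2}$, $\ov{\Om^2|\hchb|^2}$) and hence satisfy $|\uf|+|\uf'|\les\ep_0 r^{-2}(3r/|u|)^{C\ep}$ by Propositions \ref{prop10.5}, \ref{prop10.6}, \ref{propmu}, \ref{prop10.7}, \ref{prop10.1}. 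I would then apply the second part of Lemma \ref{evolutionlemma} with $\la=1$: the assumption $\II_{(0)}\leq\ep_0$ controls $r|\ov\trch-\tfrac1r|$ and $r|\ov\trchb+\tfrac4r|$ on $\Si_0$, and since $\int_{-\ub}^u r^{-1}\les 1$ on $\Cb_\ub$ one gets $\int_{-\ub}^u\|r\uf\|_{\infty,S}\les\ep_0$ (and likewise for $\uf'$), whence $r|\ov\trch-\tfrac1r|+r|\ov\trchb+\tfrac4r|\les\ep_0$ in $\kk$. Combining with the bounds for $\trchc,\trchbc$ gives the claim. (Alternatively, once $\ov\trchb+\tfrac4r$ is known, $\ov\trch-\tfrac1r$ can be recovered purely algebraically from the averaged Gauss equation $r^{-2}+\tfrac14\ov{\trch\trchb}=-\ov\rhoc$, using $\ov{\K}=r^{-2}$ and Proposition \ref{R0estima}.)

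The delicate step is the second one. One must carry out the averaging of the transport equations carefully enough to see that every non-integrable $O(r^{-2})$ term is absorbed into the linear term with the precise coefficient $\tfrac12\ov{\Om^2\trchb}$ — i.e.\ that the correct weight in Lemma \ref{evolutionlemma} is $\la=1$ — leaving only a genuinely quadratic, $O(\ep_0 r^{-2})$ remainder; and one must integrate along the incoming cones $\Cb_\ub$ rather than the outgoing ones, since $\int_{-\ub}^u r^{-1}$ is bounded on $\Cb_\ub$ while the corresponding outgoing integral $\int_{|u|}^{\ub}r^{-1}$ along $C_u$ is only $O(\log(3r/|u|))$, which is too weak to close $r|\trch-\tfrac1r|\les\ep_0$.
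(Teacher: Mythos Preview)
Your approach is correct and rests on the same core idea as the paper --- derive an incoming transport equation with weight $\la=1$ and integrate along $\Cb_\ub$ --- but the paper does it more directly. Instead of splitting $\trch-\tfrac1r$ into its average and average-free parts, the paper simply computes, using $e_3(r)=\tfrac{r}{2\Om^2}\ov{\Om^2\trchb}$, that
\[
\Om^2\nab_3\Big(\trch-\tfrac1r\Big)+\tfrac12\Om^2\trchb\Big(\trch-\tfrac1r\Big)=\Om^2(\mu,\,r^{-1}\trchbc)^{(0)}+\Om^2\Gab\cdot\Gab,
\]
and then applies Lemma~\ref{evolutionlemma} directly in $L^\infty(S)$ (i.e.\ $p=\infty$, $\la=1$) to the full quantity $\trch-\tfrac1r$; likewise for $\trchb+\tfrac4r$. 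This bypasses both the Sobolev step for $\trchc,\trchbc$ and the separate averaged equation: the $L^\infty$ evolution lemma already absorbs both parts at once, since the right-hand side is pointwise $O(\ep_0 r^{-2+C\de_0}|u|^{-C\de_0})$ and hence integrable along $\Cb_\ub$ after multiplication by $r$. Your decomposition is harmless but unnecessary; the essential observations you identified --- that $\omb=0$, that the $O(r^{-2})$ contributions organize into the linear term with $\la=1$, and that one must integrate along the incoming cone --- are exactly what the paper uses.
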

\begin{proof}
Applying Lemma \ref{dint}, we obtain
\begin{equation}\label{e3req}
    \Om^2\nab_3\left(\frac{1}{r}\right)=-r^{-2}\Om^2 e_3(r)=-\frac{1}{2r}\ov{\Om^2\trchb}=-\frac{\Om^2\trchb}{2r}+\frac{\widecheck{\Om^2\trchb}}{2r}.
\end{equation}
We recall from Proposition \ref{nulles}
\begin{align*}
    \Om^2\nab_3\trch+\frac{1}{2}\Om^2\trchb\trch=-2\Om^2\mu+2|\Om\eta|^2.
\end{align*}
Hence, we deduce
\begin{align}
\begin{split}\label{dubW}
    \Om^2\nab_3\left(\trch-\frac{1}{r}\right)+\frac{1}{2}\Om^2\trchb\left(\trch-\frac{1}{r}\right)=\Om^2(\mu,r^{-1}\trchbc)^{(0)}+\Om^2\Gab\c\Gab.
\end{split}
\end{align}
Applying Lemma \ref{evolutionlemma}, we have from Propositions \ref{prop10.3} and \ref{prop10.5}
\begin{align*}
    \left\|r\left(\trch-\frac{1}{r}\right)\right\|_{\infty,S}&\les\left\|r\left(\trch-\frac{1}{r}\right)\right\|_{\infty,S(-\ub,\ub)}+\int_{-\ub}^u\frac{\ep_0}{r^{1-\de_0}|u|^{\de_0}}+\|r\Om^2\Gab\c\Gab\|_{\infty,S}\\
    &\les\ep_0+\int_{|u|}^\ub\frac{\ep_0}{r^{1-\de_0}|u|^{\de_0}}+\frac{\ep^2}{r^{1-2\de-4\de_0}|u|^{2\de+4\de_0}} \\
    &\les\ep_0.
\end{align*}
Similarly, we have from Proposition \ref{nulles} and \eqref{e3req}
\begin{align*}
    \nab_3\left(\trchb+\frac{4}{r}\right)+\frac{1}{2}\trchb\left(\trchb+\frac{4}{r}\right)=r^{-1}\,\widecheck{\trchb}^{(0)}+\Gaw\c\Gaw.
\end{align*}
Applying Lemma \ref{evolutionlemma}, we infer
\begin{align*}
    r\left\|\trchb+\frac{4}{r}\right\|_{\infty,S}&\les r\left\|\trchb+\frac{4}{r}\right\|_{\infty,S(-\ub,\ub)}+\int_{-\ub}^u\Om^2\left(\|\trchbc\|_{\infty,S}+\|r\Gaw\c\Gaw\|_{\infty,S}\right)\\
    &\les\ep_0+\int_{-\ub}^u\frac{\ep_0}{r^{1-\de_0}|u|^{\de_0}}+\frac{\ep^2}{r^{1-2\de-\de_0}|u|^{2\de+\de_0}}\\
    &\les\ep_0.
\end{align*}
This concludes the proof of Proposition \ref{prop10.9}.
\end{proof}
\begin{prop}\label{comparison}
We have the following estimate:
\begin{equation*}
 \sup_\kk \left|1-\frac{\ub-u}{2r}\right|\les\ep_0.
\end{equation*}
\end{prop}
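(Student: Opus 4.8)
The plan is to bound the quantity $h := r-\tfrac{\ub-u}{2}$ by a transport argument in the $e_4$ direction, exploiting the algebraic identity $1-\tfrac{\ub-u}{2r}=\tfrac{h}{r}$. First I would compute $e_4(h)$. Since $e_4=\pr_{\ub}$ we have $e_4(\ub)=1$ and $e_4(u)=0$, so $e_4\big(\tfrac{\ub-u}{2}\big)=\tfrac12$; combining this with $e_4(r)=\tfrac{\ov{\trch}}{2}r$ from Lemma \ref{dint} and the fact that $r$ is constant on $S$, one obtains
\begin{equation*}
 e_4(h)=\frac{\ov{\trch}}{2}\,r-\frac12=\frac12\,\ov{r\trch-1}.
\end{equation*}
By Proposition \ref{prop10.9} we have $\|r\trch-1\|_{\infty,S}\les\ep_0$ on every sphere of $\kk$, hence $|e_4(h)|\les\ep_0$ throughout $\kk$.

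Next I would integrate this bound along the outgoing cone $C_u$. The intersection $C_u\cap\Si_0$ is the sphere $S(u,|u|)$ (since $u|_{\Si_0}=-w$ and $\ub|_{\Si_0}=w$), so for any $\ub$,
\begin{equation*}
 h(u,\ub)=h(u,|u|)+\int_{|u|}^{\ub}(e_4h)(u,\ub')\,d\ub',
\end{equation*}
whence $|h(u,\ub)|\leq |h(u,|u|)|+C\ep_0(\ub-|u|)$. The initial term is controlled by $\II_{(0)}\leq\ep_0$, which contains exactly $\sup_{\Si_0\setminus K}\big|1-\tfrac{\ub-u}{2r}\big|$, so $|h(u,|u|)|\leq\ep_0\,r(u,|u|)$; and since $e_4(r)=\tfrac{\ov{\trch}}{2}r>0$ (using $\trch>0$, cf.\ Remark \ref{nontrapping}), $r$ is nondecreasing along $C_u$, so $r(u,|u|)\leq r(u,\ub)$. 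Finally, $\ub-|u|\les r$ in $\kk$: this is the already-established consequence $\ub\simeq r$ of the bootstrap assumption $\mo_\ga\leq\ep$, or alternatively it follows by integrating $e_4(r)\geq\tfrac{1-C\ep_0}{2}$ along $C_u$, which gives $\ub-|u|\les r(u,\ub)-r(u,|u|)\les r$. Combining, $|h(u,\ub)|\les\ep_0\,r$, hence $\big|1-\tfrac{\ub-u}{2r}\big|=|h|/r\les\ep_0$; taking the supremum over $\kk$ concludes.

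I do not expect a genuinely hard step: this proposition is essentially a corollary of Proposition \ref{prop10.9} (the estimate $r\trch\approx 1$), which already packages the analytic work. The only points requiring a little care are the bookkeeping of the initial-data term — in particular verifying $r(u,|u|)\les r(u,\ub)$, which is the monotonicity of $r$ along $C_u$ — and the bound $\ub-|u|\les r$, for which I would simply invoke the consequence $\ub\simeq r$ of the bootstrap (equivalently, the lower bound on $e_4(r)$ coming from Proposition \ref{prop10.9}).
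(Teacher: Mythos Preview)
Your proposal is correct and follows essentially the same approach as the paper: define $h=r-\tfrac{\ub-u}{2}$, compute $e_4(h)=\tfrac{r}{2}\,\ov{\trch-\tfrac{1}{r}}=O(\ep_0)$ via Lemma~\ref{dint} and Proposition~\ref{prop10.9}, then integrate along $C_u$ from $S(u,|u|)$ and use $\II_{(0)}\le\ep_0$ together with $\ub\simeq r$ to conclude. The paper packages the integration step as an application of Lemma~\ref{evolutionlemma} with $\la=0$, but the content is identical; your explicit bookkeeping of $r(u,|u|)\le r(u,\ub)$ and $\ub-|u|\les r$ simply makes visible what the paper leaves implicit.
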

\begin{proof}
    We have from Lemma \ref{dint} and Proposition \ref{prop10.9}
    \begin{align*}
        e_4\left(r-\frac{\ub-u}{2}\right)=\frac{r}{2}\ov{\trch}-\frac{1}{2}=\frac{r}{2}\ov{\left(\trch-\frac{1}{r}\right)}=O(\ep_0).
    \end{align*}
    Applying Lemma \ref{evolutionlemma}, we obtain
    \begin{align*}
        \left\|r-\frac{\ub-u}{2}\right\|_{\infty,S}\les \left\|r-\frac{\ub-u}{2}\right\|_{\infty,S(u,|u|)}+\int_{|u|}^\ub\ep_0du\les\ep_0 r.
    \end{align*}
    This concludes the proof of Proposition \ref{comparison}.
\end{proof}
\begin{prop}\label{roundsphere}
We have the following estimates:
\begin{equation*}
 \left\|\mathbf{K}-\frac{1}{r^2}\right\|_{4,S}\les\frac{\ep_0}{r^2},\qquad\quad\|r\nab\mathbf{K}\|_{4,S}\les\frac{\ep_0}{r^{2-\de}|u|^\de}.
\end{equation*}
\end{prop}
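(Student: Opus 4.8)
The key identity is the Gauss equation \eqref{gauss}, $\mathbf{K}=-\rhoc-\frac14\trch\trchb$, together with the fact that $\trch$ is close to $\frac1r$ and $\trchb$ to $-\frac4r$, so that $-\frac14\trch\trchb$ is close to $\frac1{r^2}$. Concretely, the plan is to rewrite
\begin{equation*}
\mathbf{K}-\frac{1}{r^2}=-\rhoc-\frac14\left(\trch\trchb+\frac{4}{r^2}\right)=-\rhoc-\frac14\trch\left(\trchb+\frac4r\right)+\frac1r\left(\trch-\frac1r\right),
\end{equation*}
which exhibits the difference as a genuine curvature term plus products of the bounded factors $\trch$ and $\frac1r$ with the $\Gag$--quantities $\trchb+\frac4r$ and $\trch-\frac1r$.

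For the first estimate I would use $\|\rhoc\|_{4,S}\les\ep_0 r^{-2}$ from Proposition \ref{R0estima}, the pointwise bounds $|\trch|\les r^{-1}$, $|\trchb|\les r^{-1}$ and $\big|\trch-\frac1r\big|,\big|\trchb+\frac4r\big|\les\ep_0 r^{-1}$ from Proposition \ref{prop10.9} (which, since $|S|\simeq r^2$, also give the scale invariant bounds $\big\|\trch-\frac1r\big\|_{4,S},\big\|\trchb+\frac4r\big\|_{4,S}\les\ep_0 r^{-1}$), and H\"older on $S$ for the products. Collecting the three contributions yields $\big\|\mathbf{K}-\frac1{r^2}\big\|_{4,S}\les\ep_0 r^{-2}$.

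For the derivative estimate, note that $r=r(u,\ub)$ is constant on each sphere $S(u,\ub)$ and the averages $\ov{\trch},\ov{\trchb}$ are $S$--constant, so $\nab\big(\frac1{r^2}\big)=0$, $\nab\trch=\nab\trchc$ and $\nab\trchb=\nab\trchbc$; differentiating the Gauss equation gives
\begin{equation*}
r\nab\mathbf{K}=-r\nab\rhoc-\frac14\trchb\,(r\nab\trchc)-\frac14\trch\,(r\nab\trchbc).
\end{equation*}
I would then bound $\|r\nab\rhoc\|_{4,S}\les\|\rhoc^{(1)}\|_{4,S}\les\ep_0 r^{-2+\de}|u|^{-\de}$ by Proposition \ref{R1estima}, $\|r\nab\trchc\|_{4,S}\les\|\trchc^{(1)}\|_{4,S}\les\ep_0 r^{-1}$ by Proposition \ref{prop10.5}, $\|r\nab\trchbc\|_{4,S}\les\|\trchbc^{(1)}\|_{4,S}\le\|\trchbc^{(2)}\|_{4,S}\les\ep_0 r^{-1}$ by Proposition \ref{prop10.6}, and $|\trch|,|\trchb|\les r^{-1}$ by Proposition \ref{prop10.9}. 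This gives $\|r\nab\mathbf{K}\|_{4,S}\les\ep_0 r^{-2+\de}|u|^{-\de}+\ep_0 r^{-2}$, and since $|u|\les r$ in $\kk$ the last term is dominated by the first, which is the claim.

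The argument is essentially bookkeeping: no transport equation or elliptic estimate is needed beyond what is already available in this section, and the only mild subtleties are passing from the $L^\infty(S)$ bounds of Proposition \ref{prop10.9} on $\trch-\frac1r$ and $\trchb+\frac4r$ to scale invariant $L^4(S)$ bounds (using $|S|\simeq r^2$) and observing that angular derivatives see only the average free parts $\trchc,\trchbc$, so that Propositions \ref{prop10.5}--\ref{prop10.6} suffice. I do not expect any genuine obstacle.
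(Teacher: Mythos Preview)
Your proposal is correct and follows essentially the same approach as the paper's own proof: both use the Gauss equation \eqref{gauss} to express $\mathbf{K}-\frac{1}{r^2}$ in terms of $\rhoc$ and the deviations $\trch-\frac1r$, $\trchb+\frac4r$, and then invoke Propositions \ref{R0estima}/\ref{R1estima}, \ref{prop10.5}, \ref{prop10.6} and \ref{prop10.9}. Your write-up is in fact slightly more explicit about the algebraic decomposition and the fact that $\nab\trch=\nab\trchc$, $\nab\trchb=\nab\trchbc$, but the substance is identical.
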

\begin{rk}
    Proposition \ref{roundsphere} implies that the spheres $S=S(u,\ub)$ are only almost round at future null infinity $\II_+$.
\end{rk}
\begin{proof}[Proof of Proposition \ref{roundsphere}]
    We have from \eqref{gauss} and Propositions \ref{R1estima} and \ref{prop10.9}
    \begin{align*}
    \left\|\mathbf{K}-\frac{1}{r^2}\right\|_{4,S}&=\left\|\mathbf{K}+\frac{1}{4}\trch\trchb-\left(\frac{1}{4}\trch\,\trchb+\frac{1}{r^2}\right)\right\|_{4,S}\\
    &\les \|\rhoc\|_{4,S}+r^{-1}\left\|\trch-\frac{1}{r}\right\|_{4,S}+r^{-1}\left\|\trchb+\frac{4}{r}\right\|_{4,S}\\
    &\les\frac{\ep_0}{r^2}.
    \end{align*}
Similarly, we have
\begin{align*}
\|r\nab\mathbf{K}\|_{4,S}\les\|r\nab\rhoc\|_{4,S}+\|\nab\trch\|_{4,S}+\|\nab\trchb\|_{4,S}\les\frac{\ep_0}{r^{2-\de}|u|^\de}.
\end{align*}
This concludes the proof of Proposition \ref{roundsphere}.
\end{proof}
Combining Propositions \ref{R1estima}--\ref{roundsphere}, this concludes the proof of Theorem \ref{M3}.

\end{document}